\numberwithin{equation}{section}
\newcommand{\R}{{\mathbb R}}
\newcommand{\be}{\begin{equation}}
\newcommand{\ee}{\end{equation}}
\newcommand{\ben}{\begin{eqnarray*}}
\newcommand{\enn}{\end{eqnarray*}}
\newtheorem{proposition}{Proposition}[section]
\newtheorem{theorem}{\textbf Theorem}[section]
\newtheorem{lemma}{\textbf Lemma}[section]
\newtheorem{corollary}{Corollary}
 \numberwithin{equation}{section}
\newtheorem{remark}{Remark}[section]
\renewcommand{\theequation}{\arabic{section}.\arabic{equation}}
\begin{document}

\title{\textbf{
Critical Mass Phenomena and Blow-up Behaviors of Ground States in Stationary Second Order Mean-field Games Systems with Decreasing Cost }}
\author{Marco Cirant \thanks{Dipartimento di Matematica, Tullio ``Levi-Civita”, Universit\`{a} di Padova,
Via Trieste 63, 35121 Padova, Italy; cirant@math.unipd.it},
Fanze Kong \thanks{Department of Mathematics, University of British Columbia, Vancouver, BC, V6T 1Z2, Canada; fzkong@math.ubc.ca},
Juncheng Wei \thanks{Department of Mathematics, Chinese University of Hong Kong
 Shatin, NT, Hong Kong; wei@math.cuhk.edu.hk}
and Xiaoyu Zeng \thanks{Department of Mathematics, Wuhan University of Technology, Wuhan 430070, China; xyzeng@whut.edu.cn}
        }
\date{\today}
\maketitle
\abstract{
This paper is devoted to the study of Mean-field Games (MFG) systems in the mass critical exponent case. We firstly establish the optimal Gagliardo-Nirenberg type inequality associated with the potential-free MFG system.  Then, under some mild assumptions on the potential function, we show that there exists a critical mass $M^*$ such that the MFG system admits a least energy solution if and only if the total mass of population density $M$ satisfies $M<M^*$.  Moreover, the blow-up behavior of energy minimizers are captured as $M\nearrow M^*$.  In particular, given the precise asymptotic expansions of the potential, we establish the refined blow-up behavior of ground states as $M\nearrow M^*.$  While studying the existence of least energy solutions, we establish new local $W^{2,p}$ estimates of solutions to Hamilton-Jacobi equations with superlinear gradient terms}. 



\medskip

{\sc Keywords}: Mean-field Games, Maximal Regularities, Variational Approach, Ground States, Constrained Minimization Problem, Blow-up Profiles 

\maketitle

  \hypersetup{linkcolor=black}

 \tableofcontents

\section{Introduction}
A very important topic in the fields of economics, finance and management is how participants in the market maximize their utilities and minimize costs.  Whereas, in various economical and financial problems, researchers have to consider a huge number of players, which brings many challenges to numerical and theoretical studies since they are forced to tackle numerous coupled equations from the analytic perspective.  More specifically, it is very difficult to determine the value function of each player one by one. In this situation, it is necessary to propose some simplified model that can be investigated qualitatively by using mathematical tools.  Motivated by this, scholars borrowed ideas from statistical physics and developed the theory of Mean-field games (MFG) \cite{Lasry,LST,Gue091,Gue092}.

\subsection{Mean-field Games Theory and Systems}
As mentioned above, MFG models proposed by Lasry et al. \cite{Lasry} and Huang et al. \cite{Huang} independently in 2007, are well-used to describe complex decision processes involving a huge number of homogeneous agents, which are a class of backward-forward parabolic equations consisting of Hamilton-Jacobi equations and Fokker-Planck equations. In the setting described below, their mathematical form reads as
 \begin{equation}\label{MFG-time}
\left\{
\begin{array}{ll}
u_t= -\Delta u+H(\nabla u)-V[x]-f(m), &x \in \mathbb R^n,t>0,\\
m_t=\Delta m +\nabla\cdot (\nabla H(\nabla u)m),&x \in \mathbb R^n, t>0,\\
u\vert_{t=T}=u_T, m|_{t=0}=m_0,&x\in \mathbb R^n,
\end{array}
\right.
\end{equation}
where $m$ represents the population density and $u$ denotes the value function of a typical player.  Here $m_0$ is the initial data of density and $u_T$ is the terminal data of value function $u$.  In particular, $H:\mathbb R^n\rightarrow \mathbb R$ is the so-called Hamiltonian and $V(x)$ denotes a potential function.  


To illustrate the MFG theory, we assume that for $i=1,\cdots,N$, the dynamics of the $i$-th participant is governed by the following controlled stochastic differential equation (SDE):
\begin{align}\label{game-process-dXti}
dX_t^i=-\gamma^i_t dt+\sqrt{2}dB_t^i, \ \ X_0^i=x^i\in\mathbb R^n,
\end{align}
where $x^i$ are the initial states, $\gamma^i_t$ represent the control terms and $B_t^i$ are independent Brownian motions.  For simplification, we assume that all agents are indistinguishable and follow the same game process, then drop ``$i$" in \eqref{game-process-dXti}.  The goal of each player is to minimize the following average cost :
\begin{align}\label{longsenseexpectation}
J(\gamma_t):=\mathbb E\int_0^T[L(\gamma_t)+V(X_t)+f(m(X_t))] dt + u_T(X_T),
\end{align}
where $L$ is the so-called Lagrangian, which is the Legendre transform of $H$ satisfying $H(p)=\sup_{\gamma\in\mathbb R^n}(p\gamma-L(\gamma))$.  Lasry et al. applied the dynamic programming principle to study (\ref{longsenseexpectation}) and formulated the time-dependent system (\ref{MFG-time}).  

One popular research topic in the study of (\ref{MFG-time}) is the global well-posedness and long-time dynamics of (\ref{MFG-time}), see \cite{Car12,Car13,CGMT13,cirantgoffi2021,GPM12,GPM13,GPV13}.  Numerical techniques including the finite difference method are also useful on the analysis of solutions to the backward-forward system (\ref{MFG-time}) and we refer the readers to \cite{ACD10,achdou2012mean,CS12,CS13}.

We next consider the stationary problem of (\ref{MFG-time}), which serves as a paradigm to describe the distribution of Nash equilibria of infinite-horizon differential games among numerous players. 
The mathematical form of the stationary problem of (\ref{MFG-time}) is 
 \begin{equation}\label{MFG-SS}
\left\{
\begin{array}{ll}
-\Delta u+H(\nabla u)+\lambda=f(m)+V(x) , &x \in \mathbb R^n,     \\
 \Delta m+\nabla\cdot (m\nabla H(\nabla u))=0,&x \in \mathbb R^n,\\
\int_{\mathbb R^N} mdx=M>0,
\end{array}
\right.
\end{equation}
where $\lambda\in \mathbb R$ is the Lagrange multiplier.  Here a triple $(m,u,\lambda)$ is defined as the solution to (\ref{MFG-SS}) and constant $M>0$ is the total mass of population.  System (\ref{MFG-SS}) models in the market, the long time collective behavior of a huge number of homogeneous agents whose goals are to minimize a cost that depends on the population distribution $m$.  One can understand the solution to (\ref{MFG-SS}) as follows:  firstly, the optimal control of a typical agent is inputted into the Fokker-Planck equation in (\ref{MFG-SS}), yielding $m$. In the long-time regime, this has to coincide with the population density.  Then, the optimal control is outputted via the HJB equation.  In particular, the Nash equilibrium is attained when the input coincides with the output.  

We would like to point out that Hamiltonian $H$ is convex, and we assume that it has the following typical form:
\begin{align}\label{MFG-H}
H:={C_H}\vert p\vert^{r'},~~\exists r'>1,~C_H>0.
\end{align}
By using the definition, the corresponding Lagrangian is given by 
\begin{align}\label{MFG-L}
L={C_L}\vert\gamma\vert^r,~~r=\frac{r'}{r'-1}>1,~C_L=\frac{1}{r}(r'C_H)^{\frac{1}{1-r'}}>0,
\end{align}
where $r$ is the conjugate number of $r'.$

Focusing on the stationary problem (\ref{MFG-SS}), some works appeared in the last few years \cite{cesaroni2018concentration,GM15,gomes2016regularity,meszaros2015variational,cirant2016stationary}.  It is worthy mentioning that if the interaction between agents is assumed to be of congestion type, i.e. the cost is monotone increasing, then the uniqueness of the stationary solution can be proved \cite{Lasry,GM15,gomes2016regularity}.  However, if the cost is monotone decreasing, (\ref{MFG-SS}) may admit many solutions, and the analysis of existence becomes more challenging if the cost is also unbounded.  Motivated by this, Cesaroni and the first author \cite{cesaroni2018concentration} employed the variational structure possessed by (\ref{MFG-SS}) and showed the existence and concentration behaviors of ground states via the direct method under some assumptions on $f$, $V$ and $H$. 

Before stating the technical conditions on the functions $f$, $V$ and $H$, we discuss the strong relationship between (\ref{MFG-SS}) and nonlinear Schr\"{o}dinger equations.  In fact, whenever it is possible to trivialize the Fokker-Planck equation in (\ref{MFG-SS}) as
\begin{align}\label{FPeqpartially}
\nabla m+mC_H|\nabla u|^{r'-2}\nabla u=0~~\text{a.e.,}~~x\in\mathbb R^n,
\end{align}
then setting $v:=m^{\frac{1}{r}}$, one finds from the $u$-equation in (\ref{MFG-SS}) that
\begin{align}\label{nonlinear-Schrodinger}
\left\{\begin{array}{ll}
-\mu\Delta_r v+[f(v^r)+V(x)-\lambda]v^{r-1}=0,~x\in\mathbb R^n,\\
\int_{\mathbb R^n} v^r\,dx=M,~v>0,~\mu=\big(\frac{r}{C_H}\big)^{r-1},
\end{array}
\right.
\end{align}
where $\Delta_r$ is the $r$-Laplacian and given by $\Delta_rv=\nabla\cdot(|\nabla v|^{r-2}\nabla v)$.  The $v$-equation is the famous Schr\"{o}dinger equation with the mass constraint and there are rich literatures \cite{guo2014massRobert,guo2016energy} on the study of stationary problem (\ref{nonlinear-Schrodinger}) via the following useful variational structures:
\begin{align}\label{variation-schrodinger}
\mathcal F(v):=\int_{\mathbb R^n}\bigg[\frac{\mu}{r}|\nabla v|^r+F(v)+\frac{1}{r}V(x)v^r\bigg]\, dx,
\end{align}
where $F(v)$ denotes the anti-derivative of $f(v^r)v^{r-1}.$  We would like to mention that when $r=2$ and $f(v^r)=-v^{r\alpha}$ in (\ref{nonlinear-Schrodinger}), the properties of the ground states to (\ref{nonlinear-Schrodinger}) associated with (\ref{variation-schrodinger}) are well-understood.  As shown in \cite{kwong1989uniqueness}, it is well-known that under the subcritical Sobolev exponent case, the solution to (\ref{nonlinear-Schrodinger}) is radially symmetric, unique up to scaling and translation, has the exponential decay property if $\lambda<0$, etc.  It is necessary to point out that \eqref{FPeqpartially} is not expected to hold in general \cite{CirantHC}, but it is true for instance in the quadratic case $r' = 2$ or if one considers radial solutions. This strong connection when $r'=2$ is seen also at the variational level: the ground states to (\ref{MFG-SS}) and (\ref{nonlinear-Schrodinger}) are the same under some general assumptions on $f(m)$ and $V$  given in \eqref{MFG-SS}, see the detailed discussion in Appendix \ref{appendixA}. 



Inspired by the discussion above, Cirant et al. \cite{cesaroni2018concentration} proved the existence of least energy solutions to (\ref{MFG-SS}) via the variational method with some technical assumptions imposed on $H$, $V$ and $f$.  In detail, Hamiltonian $H$ is assumed to satisfy $H\in C^2(\mathbb R^n\backslash\{0\})$; moreover, there exist $C_H>0$, $K>0$ and $r'>1$ such that
\begin{align}\label{cirant-H}
C_H |p|^{r'}-K\leq H\leq C_H|p|^{r'},
\end{align}
and 
\begin{align}\label{cirant-H2}
\nabla H(p)\cdot p-H(p)\geq K^{-1}|p|^{\gamma}-K\text{~and~}|\nabla H(p)|\leq K|p|^{\gamma-1}.
\end{align}
In addition, they suppose that the potential $V$ is locally H\"{o}lder continuous and satisfy 
\begin{align}\label{cirant-V}
 C_V^{-1}(\max\{|x|-C_V,0\})^b\leq V(x)\leq C_V(1+|x|)^b,~~\exists b,C_V>0.
 \end{align}
On the other hand, the local coupling $f:[0,+\infty)\rightarrow \mathbb R$ is imposed to be locally Lipschitz continuous, which satisfies
\begin{align}\label{cirant-f}
-C_fm^{\alpha}-K\leq f(m)\leq -C_fm^{\alpha}+K,~~\exists C_f,K,\alpha>0.
\end{align}
In particular, the author were concerned with the mass subcritical exponent case, which is
\begin{align}\label{subcritical-exponent}
0<\alpha<\frac{r}{n}.
\end{align}

To illustrate (\ref{subcritical-exponent}), we firstly state the following energy functional associated with (\ref{MFG-SS}):
\begin{align}\label{energy-dual}
\mathcal E(m,w):=
\int_{\mathbb R^n} \left[mL\bigg(-\frac{w}{m}\bigg)+V(x)m+F(m)\right]\, dx,
\end{align}
where $F(m):=\int_0^m f(s)ds$ for $m\geq 0$ and $F(m)=0$ for $m\leq 0.$  As shown in \cite{cesaroni2018concentration}, Lagrangian $L$ is defined by
\begin{align}\label{general-Lagrangian}
L\bigg(-\frac{w}{m}\bigg):=\left\{\begin{array}{ll}
\sup\limits_{p\in\mathbb R^n}\big(-\frac{pw}{m}-H(p)\big),&m>0,\\
0,&(m,w)=(0,0),\\
+\infty,&\text{otherwise},
\end{array}
\right.
\end{align}
and $mL\big(-\frac{w}{m}\big)$ is the Legendre transform of $mH(p)$.  In addition, the admissible set $\mathcal K_{M}$ is defined as
\begin{align}\label{constraint-set-K}
\mathcal K_{ M}:=\Big\{&(m,w)\in (L^1(\mathbb R^n)\cap W^{1,\hat q}(\mathbb R^n))\times L^{1}(\mathbb R^n)\nonumber\\
~&\text{s. t. }\int_{\mathbb R^n}\nabla m\cdot\nabla\varphi\,dx=\int_{\mathbb R^n}w\cdot\nabla\varphi\, dx,\forall \varphi\in C_c^{\infty}(\mathbb R^n),\nonumber\\
~&\int_{\mathbb R^n} V(x)m\,dx<+\infty,~
\int_{\mathbb R^n}m\,dx=M>0,~m\geq 0\text{~a.e.~}\Big\},
\end{align}
where $\hat q$ is given by
\begin{equation}\label{hatqconstraint}
\hat q:=\begin{cases}
\frac{n}{n-r+1} &\text{ if }r<n,\\
\in\left(\frac{2n}{n+2},n\right)&\text{ if }r=n,\\
r &\text{ if }r> n.
\end{cases}
\end{equation}

With these preliminaries, to find the ground states of (\ref{MFG-SS}) associated with energy (\ref{energy-dual}), we consider the following problem: 
\begin{align}\label{ealphaM-117}
e_{\alpha,M}:=\inf_{(m,w)\in \mathcal K_{M}}\mathcal E(m,w).
\end{align}

We would like to remark that $e_{\alpha,M}<+\infty.$  To show this, one sets $(m_s,w_s)=\Big(ce^{-|x|},-\frac{xe^{-|x|}}{|x|}\Big)$ with $c$ determined by $\int_{\mathbb R^n}m\,dx=M$ and $w_s=\nabla m_s$, then can straightforward check $(m_s,w_s)\in\mathcal K_{\alpha,M}$ and $\mathcal E(m_s,w_s)<+\infty.$  With the upper bound of $e_{\alpha,M},$ we claim that (\ref{subcritical-exponent}) is the necessary condition to guarantee that $\mathcal E(m,w)$ is bounded from below for all $M>0.$  In this case, $e_{\alpha,M}$ given in (\ref{ealphaM-117}) is well-defined.  Indeed, if $\alpha>\frac{r}{n},$ for any $(\bar m,\bar w)\in \mathcal K_{\alpha, M},$
\begin{align*}
\mathcal E(\bar m_{\delta},\bar w_{\delta})\rightarrow -\infty,
\end{align*}
as $\delta\rightarrow 0,$ in which $(\bar m_{\delta},\bar w_{\delta}):=(\delta^{-n}\bar m(\delta^{-1}x),\delta^{-(n+1)}\bar w(\delta^{-1}x))\in \mathcal K_{\alpha,M}$ since $\delta^{-n}\int_{\mathbb R^n}\bar m(\delta^{-1}x)\, dx\equiv M.$  It follows that $e_{\alpha,M}$ is not well-defined when $\alpha>\frac{r}{n}$.  As a consequence, Cirant et al. \cite{cesaroni2018concentration} discussed the ground states to (\ref{MFG-SS}) associated with energy (\ref{energy-dual}) under condition (\ref{subcritical-exponent}).  In particular, the authors \cite{cesaroni2018concentration} assumed $H$, $V$ and $f$ satisfy (\ref{cirant-H}), (\ref{cirant-H2}), (\ref{cirant-V}) and (\ref{cirant-f}), respectively. 
 Moreover, they showed the ground states to (\ref{MFG-SS}) are in fact classical solutions by applying the regularization techniques and standard elliptic estimates.  The concentration behaviors of the ground states were also established in the vanishing viscosity limit sense under the mass subcritical exponent case \eqref{subcritical-exponent} in  \cite{cesaroni2018concentration}.

In this paper, we shall investigate the qualitative behaviors of ground states to (\ref{MFG-SS}) with $n\geq 2$ and focus on the \textit{mass critical exponent case}  $\alpha=\alpha^*:=\frac{r}{n}.$  More specifically, we aim to discuss the existence and asymptotic profiles of ground states to (\ref{energy-dual}).  This critical case is much more delicate than the subcritical one, and many arguments used in \cite{cesaroni2018concentration} break down.  It is worthy mentioning that there exists the other critical exponent $\alpha=\frac{r}{n-r}$ arising from Sobolev embedding on analyzing the regularity of $m$-component to (\ref{MFG-SS}).  Recently, some results in the mass supercritical case up to the Sobolev critical exponent were completed by the first author and his collaborators \cite{cirant2023ergodic}, in which the domain is assumed to be bounded and Neumann boundary conditions are imposed on $(m,u)$.  More specifically, it is shown that energy (\ref{energy-dual}) admits local minimizers with the mass supercritical exponent.    

Now, we state our main results in the following subsection.

\subsection{Main results}\label{mainresults11}

This paper extends the results in \cite{cesaroni2018concentration} to the mass critical case, and allows for much more general potentials $V$.  In detail, we consider $\alpha=\alpha^*:=\frac{r}{n}$, $H$ is given by (\ref{MFG-H}), $f(m):=- m^{\alpha}$ and locally H\"{o}lder continuous potential $V$ satisfies some of the following conditions
\begin{center}
\begin{itemize}
\item[(V1). ] $\inf\limits_{x\in\mathbb R^n}V(x)=0\leq V(x)\in L^\infty_{\text{loc}}(\mathbb R^n)$.   
\item[(V2). ] there exist constants $C_1,C_2,K>0$ and $b, \delta>0$  such that 
\begin{subequations}\label{V2mainasumotiononv}
\begin{align}
    &C_1(1+|x|^b)\leq V(x)\leq C_2e^{\delta|x|}, \ \ |x|\geq K; \label{V2mainassumption_1}\\
   &0< C_1\leq \frac{V(x+y)}{V(x)}\leq C_2,\text{~for~all~}|x|\geq K \text{~with~}|y|<2; \label{V2mainassumption_2}\\
   &\sup_{\nu\in[0,1]}V(\nu x)\leq C_2V(x)\text{~for~}|x|\geq K.\label{V2mainassumption_3}
\end{align}
\end{subequations}

\item[(V3).]  $|\mathcal Z|=0$ with $\mathcal Z:=\{x\in\mathbb R^n~|~V(x)=0\}$.
\end{itemize}
\end{center}

Then, we are able to classify the ground states to (\ref{MFG-SS}) by using the variational method and introducing some critical mass threshold $M^*.$  A vital step here is to study the regularity of the solution after the minimizer is obtained.  In fact, the regularity of solutions to the Fokker-Planck equation is well understood \cite{bogachev2022fokker} and some useful results are stated in Section \ref{preliminaries}.  Whereas, focusing on the Hamilton-Jacobi equation, the quasi-linear structure may cause difficulties.  There are some references on the regularity of the Hamilton-Jacobi  equation that deserve discussion.  Consider the following equation:
\begin{align}\label{introductioneqmaximalnew}
-\Delta u+|\nabla u|^{r'}=f,
\end{align}
where $r'>1$.  A natural question is the $W^{2,p}$ regularity of the solution to (\ref{introductioneqmaximalnew}) provided that $f\in L^{p}$ for some $p>1.$   
P.-L. Lions conjectured that for all $p>\frac{n}{r},$ the $L^p$ estimate should  hold for the $D^2 u$ of (\ref{introductioneqmaximalnew}) when $f$ is assumed to satisfy $f\in L^p.$  This conjecture is confirmed when the periodic boundary condition is imposed on equation (\ref{introductioneqmaximalnew}) \cite{cirant2021problem}.  For the case of $1<r'\leq 2$, the global estimates were discussed in \cite{grenon2014priori, betta2015gradient, goffi2023optimal, goffipediconi2023} with Dirichlet or Neumann boundary conditions.  There are a few results on the local $L^p$ estimates for $D^2 u$; these have been obtained by the first author \cite{cirant2022local} and Verzini, in the superquadratic case $r'>2.$  However, the local $L^p$ regularity in the case of $r'\leq 2$ is open to our knowledge. It turns out that in our analysis of the MFG system in the mass critical case, the local maximal regularity estimate plays a crucial role. Motivated by this, we obtain the following result: 

\begin{theorem}\label{thmmaximalregularity}
Let $p>\frac{n}{r}$,  $ r'\geq \frac{n}{n-1}$, $f\in L^p(\Omega)$ and assume $u\in W^{2,p}(\Omega)$ solves 
$$-\Delta u+C_H|\nabla u|^{r'}= f\text{~in~}\Omega,$$
in the strong sense.  Then for each $M>0$ and $\Omega'\subset\subset \Omega$, we have
{$$ \Vert \nabla u\Vert_{L^p(\Omega')} + \Vert D^2 u\Vert_{L^p(\Omega')}\leq C,$$}
where $\Vert f\Vert_{L^p(\Omega)}\leq M $ and the constant $C=C(M,\mathrm{dist}(\Omega',\partial\Omega),n,p, C_H,r)>0$.

\end{theorem}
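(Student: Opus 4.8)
The plan is to establish the inequality as an a priori bound, the crucial structural point being that each constant produced must depend only on $\|f\|_{L^p}$ and the geometric data, never on $u$ itself. Writing the equation as $\Delta u = C_H|\nabla u|^{r'}-f$, the whole task reduces to upgrading the local integrability of $|\nabla u|^{r'}$ to $L^p$ with a bound of this type; a single interior Calder\'on--Zygmund estimate for the Laplacian then gives $D^2u\in L^p(\Omega')$, and $\nabla u\in L^p(\Omega')$ follows. The threshold $p>n/r$ is the scaling-critical one: under $u_\lambda(x):=\lambda^{(2-r')/(r'-1)}u(\lambda x)$ the equation is invariant, and both $\|f\|_{L^p}$ and $\|D^2u\|_{L^p}$ scale like $\lambda^{r-n/p}$, so $p=n/r$ is critical and $p>n/r$ is the subcritical regime in which a bootstrap can close. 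All cut-offs below are supported in $\Omega$ and equal $1$ on $\Omega'$; by the qualitative interior theory (or a routine approximation/difference-quotient argument) one may assume $u$ regular enough to justify the manipulations.

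\textbf{Step 1 (a $u$-independent base estimate).} First I would test the equation against $\zeta^{r}$, with $\zeta$ a cut-off between concentric balls $B_{R/2}\subset B_R\subset\Omega$, $|\nabla\zeta|\le C/R$. Integrating by parts,
\[ C_H\int |\nabla u|^{r'}\zeta^{r}\,dx = \int f\,\zeta^{r}\,dx - r\int \zeta^{r-1}\,\nabla u\cdot\nabla\zeta\,dx . \]
Since $(r-1)r'=r$, Young's inequality bounds $r\,\zeta^{r-1}|\nabla u|\,|\nabla\zeta|\le \tfrac{C_H}{2}\zeta^{r}|\nabla u|^{r'}+C|\nabla\zeta|^{r}$, and absorbing the first term gives
\[ \int_{B_{R/2}}|\nabla u|^{r'}\,dx\le C\big(\|f\|_{L^p(B_R)}R^{n/p'}+R^{\,n-r}\big),\qquad C=C(n,p,r,C_H). \]
No term involving $u$ survives — this is exactly why the weight is $\zeta^{r}$ and not $\zeta$. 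Equivalently, $|\nabla u|^{r'}$ lies in the Morrey space $\mathcal L^{1,n-r}_{\mathrm{loc}}$ (here $r\le n$, i.e.\ $r'\ge n/(n-1)$, makes this meaningful), with norm controlled by $\|f\|_{L^p}$ and the distance from $\Omega'$ to $\partial\Omega$.

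\textbf{Step 2 (crossing the critical gradient integrability — the crux).} The output of Step~1 does not yet suffice to start a Calder\'on--Zygmund/Sobolev iteration: from $|\nabla u|^{r'}\in L^1_{\mathrm{loc}}$ one only obtains $\nabla u\in L^{q}_{\mathrm{loc}}$ for $q<n/(n-1)$, which improves on $L^{r'}$ only when $r'<n/(n-1)$, outside our range. Indeed, the Calder\'on--Zygmund/Sobolev step sends a local $L^{q}$ bound on $\nabla u$ to a local $L^{q'}$ bound with $q'=\big(\min\{q/r',p\}\big)^{*}$, and while $q/r'<p$ the exponent recursion $s\mapsto ns/(r'(n-s))$ (with $s=q/r'$) has its unique fixed point in $(0,n)$ at $s=n/r$, which for $r'\ge n/(n-1)$ sits at or above the base value $s_0=1$, so the naive iteration stalls. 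To gain integrability I would differentiate the equation: $v:=|\nabla u|^2$ satisfies
\[ -\Delta v + C_H r'\,|\nabla u|^{r'-2}\nabla u\cdot\nabla v + 2|D^2u|^2 = 2\,\nabla u\cdot\nabla f , \]
whose Hessian term obeys the extra lower bound $2|D^2u|^2\ge \tfrac{2}{n}(\Delta u)^2\ge \tfrac{C_H^2}{n}|\nabla u|^{2r'}-\tfrac{2}{n}f^2$. Testing this identity against $v^{\beta}\zeta^{\gamma}$ (in practice against $(\min\{v,N\})^{\beta}\zeta^{\gamma}$, then $N\to\infty$), the integration by parts of the drift term produces a supercritical term $\sim\int|\nabla u|^{2r'+2\beta}\zeta^{\gamma}$ with coefficient $O(1/\beta)$, which for $\beta$ large is absorbed by the extra coercivity just quoted; the remaining terms containing $f$ are controlled by H\"older's inequality using the current $L^{q}$ bound on $\nabla u$, and the coercive terms $\beta v^{\beta-1}|\nabla v|^2\zeta^{\gamma}$ yield, via the Sobolev inequality, an improved local $L^{q'}$ bound on $\nabla u$. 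Iterating with $\beta,\gamma$ growing appropriately drives $\nabla u$ into $L^{q}_{\mathrm{loc}}$ for every finite $q$ (in particular $q\ge pr'$), with constants depending only on $\|f\|_{L^p}$ and the data; the hypothesis $p>n/r$ is precisely what leaves room in the exponent bookkeeping for each step to be a strict gain. (Alternatively, Step~2 can be run through a duality/``nonlinear adjoint'' argument with the adjoint Fokker--Planck equation, whose regularity is recalled in the preliminaries, or through Morrey-space regularity starting from the bound of Step~1.)

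\textbf{Step 3 (conclusion) and the main obstacle.} Once $|\nabla u|^{r'}\in L^{p}_{\mathrm{loc}}(\Omega)$ with a bound in terms of $\|f\|_{L^p}$ alone, I would write $\Delta u=C_H|\nabla u|^{r'}-f\in L^{p}_{\mathrm{loc}}(\Omega)$ and apply the interior Calder\'on--Zygmund estimate on a ball enclosing $\Omega'$, the lower-order term being controlled (after subtracting a constant and using Poincar\'e) by the local $L^{p}$ bound on $\nabla u$ already obtained; this yields $\|D^2u\|_{L^p(\Omega')}\le C$ and hence $\|\nabla u\|_{L^p(\Omega')}\le C$, with $C=C(M,\mathrm{dist}(\Omega',\partial\Omega),n,p,C_H,r)$. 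The only genuinely non-routine step is Step~2: extracting a gain in gradient integrability past the scaling-critical threshold out of merely the $L^{r'}$ bound of Step~1, with all constants independent of $u$. This is where the new local maximal-regularity input is needed, where both $p>n/r$ and $r'\ge n/(n-1)$ enter, and where the subquadratic range $n/(n-1)\le r'\le 2$ (the superquadratic case being due to the first author and Verzini) and the regime $p$ close to $n/r$ demand the most delicate bookkeeping.
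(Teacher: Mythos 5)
Your overall architecture is sound and matches the paper at both ends: your Step 1 is exactly the paper's base estimate (Lemma \ref{lemmavitalingredientmaximalregularity} gives $\int_{B_{R/2}}|\nabla u|^{r'}\le KR^{n-\hat r}$ with $\hat r=\max\{n/p,r\}=r$, i.e.\ $|\nabla u|^{r'}$ in the Morrey space of exponent $r$), and your Step 3 is the paper's final Calder\'on--Zygmund step. You also correctly diagnose that the whole theorem hinges on upgrading the gradient integrability past this base bound. The problem is that your proposed mechanism for that upgrade does not work in the stated generality.

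The gap is in Step 2. Differentiating the equation produces the source term $2\,\nabla u\cdot\nabla f$, and $f$ is only assumed to lie in $L^p(\Omega)$ with $p>n/r$ --- in particular $p$ may be smaller than $2$ and smaller than $n$ (e.g.\ $n=3$, $r'=r=2$, any $p\in(3/2,2)$). After you integrate by parts to remove the derivative from $f$, you are forced to control terms of the form $\int |f|\,|D^2u|\,v^{\beta}\zeta^{\gamma}$ or, after Young's inequality against the coercive term $\tfrac2n(\Delta u)^2$, a term $\int f^2 v^{\beta}\zeta^{\gamma}=\int f^2|\nabla u|^{2\beta}\zeta^{\gamma}$. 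H\"older applied to this requires $f\in L^{s}$ for some $s\ge 2$ locally (indeed $s$ growing with $\beta$ if the companion factor is to be a controlled power of $|\nabla u|$), which is simply not available from $\|f\|_{L^p}\le M$ when $p<2$. This is not a bookkeeping issue: the Bernstein/Moser route is known to deliver the conclusion only under stronger hypotheses on $f$ (roughly $p>n$, where one in fact gets Lipschitz bounds), and the point of the $p>n/r$ regime is precisely that one must avoid differentiating the equation. Your parenthetical fallback (``Morrey-space regularity starting from the bound of Step 1'') is the route the paper actually takes: the improvement from the Morrey exponent $r$ down to $q=r'(\tfrac np-1)<r$ is obtained in Theorem \ref{thm21zuidazhengze} by a rescaling/blow-up argument whose limit solves $-\Delta w+h|\nabla w|^{r'}=0$ on $\mathbb R^n$ with $\nabla w\in L^{r',q}$, and is then killed by the Liouville result of Lemma \ref{liouvillthmcontramaxi}; the conversion of that Morrey gradient bound into a local $W^{2,p}$ bound is Lemma \ref{lemma21prop33keylemma}, which rests on an improved Gagliardo--Nirenberg inequality in Morrey norms rather than on any differentiated equation. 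As written, your proof does not close.
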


Theorem \ref{thmmaximalregularity} is an analogue of Calder\'{o}n-Zygmund regularity for the linear second order elliptic equation. We would like to remark that the Lipschitz gradient estimate of the solution can be obtained \cite{lions1985quelques} when $p>n$, see also \cite{} for previous results.  For the proof of Theorem \ref{thmmaximalregularity}, we shall perform a blow-up analysis which is inspired by the strategy shown in \cite{cirant2022local}, but tailored to Morrey spaces rather than H\"older spaces. 

\medskip

Next, we come back to the MFG system, and focus on the existence of ground states under the critical mass exponent case. 
To begin with, we consider the following constrained minimization problem:  
\begin{align}\label{minimization-problem-critical}
e_{\alpha^*,M}=\inf_{(m,w)\in \mathcal K_{M}} \mathcal E(m,w), \ \ \alpha^*:=\frac{r}{n},
\end{align}
where the energy $\mathcal E(m,w)$ is given by
\begin{align}\label{41engliang}
\mathcal E(m,w)=C_L\int_{\mathbb R^n}\Big|\frac{w}{m}\Big|^{r}m\,dx+\int_{\mathbb R^n}V(x) m\,dx-\frac{n}{n+r}\int_{\mathbb R^n}m^{1+\frac{r}{n}}\, dx,
\end{align}
and $\mathcal K_{M}$ is given in \eqref{constraint-set-K}.  Before studying the existence of minimizers to (\ref{minimization-problem-critical}), one must establish the optimal Gagliardo-Nirenberg type inequality subject to the mass critical exponent involving the Lagrangian term in energy (\ref{energy-dual}). This is in fact crucial to understand whether the infimum in \eqref{minimization-problem-critical} is finite or not.
 More precisely, we focus on the following minimization problem:
\begin{align}\label{sect2-equivalence-scaling}
\Gamma_{\alpha}:=\inf_{(m,w)\in\mathcal A}\frac{\Big(C_L\int_{\mathbb R^n}m\big|\frac{w}{m}\big|^{r}\,dx\Big)^{\frac{n\alpha}{r}}\Big(\int_{\mathbb R^n}m\,dx\Big)^{\frac{(\alpha+1)r-n\alpha}{r}}}{\int_{\mathbb R^n}m^{\alpha+1}\, dx}, \ \  \alpha\in\bigg(0,\frac{r}{n}\bigg],
\end{align}
where 
\begin{align}\label{mathcalA-equivalence}
\mathcal A:=\Big\{&(m,w)\in (L^1(\mathbb R^n)\cap W^{1,\hat q}(\mathbb R^n))\times L^{1}(\mathbb R^n)\nonumber\\
~&\text{s. t. }\int_{\mathbb R^n}\nabla m\cdot\nabla\varphi\,dx=\int_{\mathbb R^n}w\cdot\nabla\varphi\, dx,\forall \varphi\in C_c^{\infty}(\mathbb R^n),~\int_{\mathbb R^n} |x|^bm\,dx<+\infty,~m\geq,\not\equiv 0\text{~a.e.~}\Big\},
\end{align}
with $\hat q$ defined as (\ref{hatqconstraint}) and
$b>0$.  We would like to remark that problem (\ref{sect2-equivalence-scaling}) is  scaling invariant  under the scaling $(t^{\beta}m(tx),t^{\beta+1}w(tx))$ for any $t>0$ and $\beta>0.$ As a consequence, it is easy to see that (\ref{sect2-equivalence-scaling}) is equivalent to the following minimization problem 
\begin{align}\label{optimal-inequality-sub}
\Gamma_{\alpha}=\inf_{(m,w)\in { {\mathcal A}_M}}\frac{\Big(C_L\int_{\mathbb R^n}m\big|\frac{w}{m}\big|^{r}\,dx\Big)^{\frac{n\alpha}{r}}\Big(\int_{\mathbb R^n}m\,dx\Big)^{\frac{(\alpha+1)r-n\alpha}{r}}}{\int_{\mathbb R^n}m^{\alpha+1}\, dx},\ \ \alpha\in\bigg(0,\frac{r}{n}\bigg],
\end{align}
where
\begin{align}\label{mathcalAMbegining}
\mathcal A_M:=\Big\{(m,w)\in  \mathcal{A}, \int_{\mathbb R^n}m\,dx=M>0\Big\}.
\end{align}


We first prove there exist minimizers to (\ref{optimal-inequality-sub}) for any $\alpha\in(0,\frac{r}{n})$ thanks to Theorem 1.3 in \cite{cesaroni2018concentration}.  Then, using an approximation argument, we show that $\Gamma_{\alpha}$ can be also attained when $\alpha=\frac{r}{n}$, \textit{provided that $M=M^*$ is suitably chosen}. The result is is summarized as 
\begin{theorem}\label{thm11-optimal}
For any $r>1$, assume that $\alpha=\alpha^*:=\frac{r}{n}$ in (\ref{sect2-equivalence-scaling}), then we have $\Gamma_{\alpha^*}$ is finite and attained by some minimizer $(m_{\alpha^*},w_{\alpha^*})\in\mathcal A$.  Correspondingly, there exists a solution {$\big(m_{\alpha^*},u_{\alpha^*}\big)\in W^{1,p}(\mathbb R^n)\times C^2(\mathbb R^n)$}, $\forall p>1,$ to the following potential-free MFG systems:
\begin{align}\label{limitingproblemminimizercritical0}
\left\{\begin{array}{ll}
-\Delta u+C_H|\nabla u|^{r'}-\frac{r}{nM^*}=-m^{\alpha^*},&x\in\mathbb R^n,\\
\Delta m+C_Hr'\nabla\cdot (m|\nabla u|^{r'-2}\nabla u)=0,&x\in\mathbb R^n,\\
w=-C_Hr'm\big|\nabla u\big|^{r'-2}\nabla u,\ \int_{\mathbb R^n}m\,dx=M^*,
\end{array}
\right.
\end{align}
where
\begin{align}\label{Mstar-critical-mass}
M^*:=[{\Gamma_{\alpha^*}(\alpha^*+1)}]^{\frac{1}{\alpha^*}}.
\end{align}
In particular, there exists some positive constants $c_1$ and $c_2$ such that $0<  m_{\alpha^*}(x)\leq c_1e^{-c_2|x|}$.
\end{theorem}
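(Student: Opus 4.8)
The plan is to obtain the minimizer for the critical exponent $\alpha^*=r/n$ as a limit of minimizers at subcritical exponents $\alpha<r/n$. First I would record that, by Theorem 1.3 of \cite{cesaroni2018concentration}, for each $\alpha\in(0,r/n)$ the minimization problem \eqref{optimal-inequality-sub} (equivalently \eqref{sect2-equivalence-scaling}) admits a minimizer $(m_\alpha,w_\alpha)\in\mathcal A$, which up to the scale-invariance can be normalized so that, say, $\int_{\mathbb R^n}m_\alpha\,dx=1$ and $C_L\int m_\alpha|w_\alpha/m_\alpha|^r\,dx=1$; in particular $\Gamma_\alpha=\big(\int m_\alpha^{\alpha+1}\,dx\big)^{-1}$. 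The associated Euler-Lagrange system (after fixing the normalization of the Lagrange multiplier by the scaling) is the potential-free MFG system with coupling $-m^\alpha$; this is where the connection to \eqref{limitingproblemminimizercritical0} comes from. The first real task is a uniform bound: I would show $0<c\le\Gamma_\alpha\le C<\infty$ for $\alpha$ in a left-neighbourhood of $r/n$, the lower bound coming from the subcritical Gagliardo-Nirenberg inequality applied to $v=m^{1/r}$ (using $(m,w)\in\mathcal A$ forces $w=\nabla m$ a.e. whenever the test-function identity can be upgraded, or more carefully $|\nabla m|\le |w|$ in an averaged sense via Jensen, so $\int m|w/m|^r\,dx\gtrsim\int|\nabla m^{1/r}|^r\,dx$), and the upper bound from a fixed competitor such as a smooth compactly supported bump.

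Next I would establish compactness. With the normalization above, $m_\alpha$ is bounded in $L^1$ and $m_\alpha^{1/r}$ is bounded in $W^{1,r}(\mathbb R^n)$, hence by Gagliardo-Nirenberg $m_\alpha$ is bounded in $L^{1+\alpha}$ uniformly, and $w_\alpha$ is bounded in $L^1$. The key point is to prevent vanishing and dichotomy: since $\int m_\alpha^{\alpha+1}\,dx=\Gamma_\alpha^{-1}$ stays bounded below, concentration-compactness in the form used in \cite{cesaroni2018concentration} (or a direct Lions-type argument) yields, up to translations, a nontrivial weak limit $(m_{\alpha^*},w_{\alpha^*})$ with $m_{\alpha^*}\not\equiv0$; the translations are harmless because the functional is translation-invariant and there is no potential. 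Lower semicontinuity of the convex Lagrangian term $\int m|w/m|^r$ under weak $L^1$ convergence (jointly convex, so weakly lower semicontinuous on the constraint set), together with the convergence $m_\alpha^{\alpha+1}\to m_{\alpha^*}^{\alpha^*+1}$ in $L^1$ of the relevant region, gives $\Gamma_{\alpha^*}\ge$ the quotient evaluated at $(m_{\alpha^*},w_{\alpha^*})$, while $\limsup_{\alpha\uparrow\alpha^*}\Gamma_\alpha\le\Gamma_{\alpha^*}$ from using a near-optimal competitor for $\alpha^*$ in the $\alpha$-problem. Combining, $(m_{\alpha^*},w_{\alpha^*})$ is a minimizer and $\Gamma_{\alpha^*}$ is finite and attained. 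The subtlety is that mass may be lost in the limit; I expect to recover the full normalization by noting that any mass loss strictly increases the quotient, contradicting minimality — this is the main obstacle, and it will require care because the exponent is exactly critical so the scaling does not help to rule out splitting for free.

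Finally I would upgrade regularity and derive the stated decay. From the Euler-Lagrange conditions, passing to the limit $\alpha\uparrow\alpha^*$ in the systems (the Lagrange multiplier converges to $r/(nM^*)$ by the identity \eqref{Mstar-critical-mass}, which is forced by testing the $u$-equation against $m$ and integrating), $(m_{\alpha^*},u_{\alpha^*})$ solves \eqref{limitingproblemminimizercritical0}. Bootstrapping: the Fokker-Planck equation gives $m_{\alpha^*}\in W^{1,p}$ for all $p$ by the Fokker-Planck regularity recalled in Section \ref{preliminaries}; plugging $f=m_{\alpha^*}^{\alpha^*}+\lambda\in L^p$ for all $p$ into the Hamilton-Jacobi equation and invoking Theorem \ref{thmmaximalregularity} for local $W^{2,p}$ estimates, then Sobolev embedding and Schauder, yields $u_{\alpha^*}\in C^2(\mathbb R^n)$. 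Positivity $m_{\alpha^*}>0$ follows from the strong maximum principle applied to the Fokker-Planck equation (a nonnegative supersolution that is not identically zero is strictly positive). For the exponential decay $0<m_{\alpha^*}(x)\le c_1e^{-c_2|x|}$, I would first show $m_{\alpha^*}(x)\to0$ as $|x|\to\infty$ (from $m_{\alpha^*}\in L^1\cap W^{1,p}$ and uniform continuity), so $m_{\alpha^*}^{\alpha^*}$ is small at infinity; then the relation $\nabla m_{\alpha^*}+C_Hr'm_{\alpha^*}|\nabla u_{\alpha^*}|^{r'-2}\nabla u_{\alpha^*}=0$ (the $w$-equation) together with a lower bound $|\nabla u_{\alpha^*}|\ge\delta>0$ for large $|x|$ — obtained because the $u$-equation forces $C_H|\nabla u_{\alpha^*}|^{r'}\approx\lambda-m_{\alpha^*}^{\alpha^*}+\Delta u_{\alpha^*}\to\lambda>0$ — gives a differential inequality forcing exponential decay of $m_{\alpha^*}$ along rays, which I would make rigorous with a standard comparison/barrier argument. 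This last step is routine once the positivity of $\lambda$ and the $C^1$ control on $u_{\alpha^*}$ at infinity are in hand.
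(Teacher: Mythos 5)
Your overall strategy—approximating the critical problem by the subcritical minimizers of \cite{cesaroni2018concentration}, proving uniform two-sided bounds on $\Gamma_\alpha$, and passing to the limit $\alpha\nearrow r/n$—is exactly the route the paper takes, and your treatment of the uniform bounds, the limiting Euler--Lagrange system, the regularity bootstrap, and the exponential decay (a Lyapunov/barrier argument using that the multiplier $-r/(nM^*)$ is negative, so $-m^{\alpha^*}-\lambda$ is bounded below at infinity) all match Lemmas \ref{uniformlyboundC1CalphaC2}, \ref{mdecaylemma} and the proof of Theorem \ref{thm11-optimal}.

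The genuine gap is precisely at the step you flag as ``the main obstacle'': ruling out vanishing and mass loss at the critical exponent. Deferring to ``concentration-compactness or a direct Lions-type argument'' does not close this, because at $\alpha=r/n$ the usual strict subadditivity of the infimum with respect to mass degenerates (the quotient scales like $M^{r/n}$, so splitting is not penalized for free, as you yourself note), and your proposed mechanism is stated in the wrong direction: losing mass does not \emph{increase} the quotient. The paper's resolution, which is the actual content of the theorem, has two ingredients you are missing. First, non-vanishing is obtained not from compactness machinery but from the maximum principle at the minimum point of $u_\alpha$: evaluating the HJB equation there gives $\bar m_{\alpha}^{\alpha}(0)\geq -\lambda_{\alpha}$, and a judicious $\alpha$-dependent choice of the mass $M_\alpha$ (made so that the Pohozaev identities \eqref{sect3-poho-final} force $\lambda_{\alpha}\to -r/(nM^*)<0$ and pin down the limits of $\int m_\alpha^{\alpha+1}$ and of the kinetic term) keeps this lower bound uniformly positive; combined with uniform local H\"older bounds on $m_\alpha$ this gives a fixed ball on which $m_\alpha\geq \delta>0$. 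Second, no mass is lost because the limit $(m_{\alpha^*},w_{\alpha^*})$ solves the potential-free system, decays exponentially, and hence satisfies the Pohozaev identity $C_L\int m|w/m|^r=\frac{n}{n+r}\int m^{1+r/n}$; its Gagliardo--Nirenberg quotient therefore equals $\frac{n}{n+r}a^{r/n}$ with $a=\int m_{\alpha^*}\leq M^*$, and since this quotient cannot drop below $\Gamma_{r/n}=\frac{n}{n+r}(M^*)^{r/n}$ (after showing $\liminf_\alpha\Gamma_\alpha=\Gamma_{r/n}$, a continuity claim you also need and do not address), one is forced to $a=M^*$. Without the Pohozaev identities and the specific normalization $M_\alpha$, your limit could a priori be trivial or carry strictly less mass, and the proof does not go through.
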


Note that working on the whole Euclidean space here involves non trivial compactness issues. These were solved in \cite{cesaroni2018concentration} by a concentration-compactness argument; here we follow a different strategy based on Pohozaev type identities.
With the aid of Theorem \ref{thm11-optimal}, we analyze the boundedness of $e_{\alpha^*,M}$ from below and obtain the sharp existence of minimizers for \eqref{minimization-problem-critical}, which is shown in the following theorem:
\begin{theorem}\label{thm11}
Assume that $V$ satisfies (V1)-(V2)  and $M^*$ is defined by (\ref{Mstar-critical-mass}),
then the following alternatives hold: 
\begin{itemize}
    \item[(i).] 
    If $0<M<M^*$, problem (\ref{minimization-problem-critical}) admits at least one minimizer $(m_M,w_M)\in W^{1,p}(\mathbb R^n)\times L^{p}(\mathbb R^n)$  $\forall p>1$, which satisfies for some $\lambda_M\in \mathbb R$,
    \begin{align}\label{125potentialfreesystem}
\left\{\begin{array}{ll}
-\Delta u_{M}+C_H|\nabla u_{M}|^{r'}+\lambda_M=V(x)- m_{M}^{\frac{r}{n}},\\
\Delta m_{M}+C_Hr'\nabla\cdot (m_{M}|\nabla u_{M}|^{r'-2}\nabla u_{M})=0,\\
w_{M}=-C_Hr'm_{M}|\nabla u_{M}|^{r'-2}\nabla u_{M}, \ \int_{\mathbb R^n}m_{M}\,dx=M<M^*.
\end{array}
\right.
\end{align}

    \item[(ii).] If $M> M^*$, there is no minimizer of problem (\ref{minimization-problem-critical}).
    \item [(iii).] If $M=M^*$, when $1<r\leq n$ and potential $V$ satisfies also (V3), or when $r>n$, then problem (\ref{minimization-problem-critical}) has no minimizer.
\end{itemize}
\end{theorem}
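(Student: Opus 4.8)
The plan is to analyze the three regimes separately, exploiting the optimal Gagliardo--Nirenberg inequality from Theorem \ref{thm11-optimal} as the central tool throughout. The inequality states that for all $(m,w)\in\mathcal A$,
\begin{align}\label{GNplan}
\int_{\mathbb R^n}m^{1+\frac rn}\,dx\leq \frac{1}{\Gamma_{\alpha^*}}\Big(C_L\int_{\mathbb R^n}m\Big|\tfrac wm\Big|^r\,dx\Big)\Big(\int_{\mathbb R^n}m\,dx\Big)^{\frac rn},
\end{align}
with equality exactly on the scaling orbit of $(m_{\alpha^*},w_{\alpha^*})$. Recalling $M^*=[\Gamma_{\alpha^*}(\alpha^*+1)]^{1/\alpha^*}$, i.e. $\frac{n}{n+r}\cdot\frac{M^{r/n}}{\Gamma_{\alpha^*}}<1$ iff $M<M^*$, one rewrites the energy on $\mathcal K_M$ as
\begin{align}\label{energyboundplan}
\mathcal E(m,w)\;\geq\;\Big(1-\tfrac{n}{n+r}\tfrac{M^{r/n}}{\Gamma_{\alpha^*}}\Big)\,C_L\int_{\mathbb R^n}m\Big|\tfrac wm\Big|^r\,dx+\int_{\mathbb R^n}V(x)m\,dx.
\end{align}

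\emph{Part (i): $0<M<M^*$.} From \eqref{energyboundplan} the bracketed constant is strictly positive, so $\mathcal E$ is coercive on $\mathcal K_M$ in the sense that it controls both $\int m|w/m|^r$ and $\int Vm$; in particular $e_{\alpha^*,M}>-\infty$, and it is finite because the exponential test pair used for $e_{\alpha,M}$ lies in $\mathcal K_M$ and has finite energy under (V2). Take a minimizing sequence $(m_k,w_k)$. The kinetic bound together with the $W^{1,\hat q}$ constraint structure (and the Sobolev-type embedding already invoked in \cite{cesaroni2018concentration}) gives weak compactness of $m_k$ in $W^{1,\hat q}$ and of $w_k$ in $L^1$; the potential bound $\int Vm_k\le C$ with (V1)--(V2) prevents mass from escaping to infinity, so no dichotomy or vanishing occurs and $m_k\to m_M$ strongly in $L^1$ and in $L^{1+r/n}$. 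Lower semicontinuity of the convex functional $(m,w)\mapsto\int m|w/m|^r$ and of $\int Vm$, combined with $L^{1+r/n}$ convergence of the concave term, yields that $(m_M,w_M)$ attains $e_{\alpha^*,M}$ and $\int m_M=M$. This part should be essentially the critical-mass analogue of the subcritical construction in \cite{cesaroni2018concentration}; the new input is only that the GN constant is sharp so that the coercivity constant in \eqref{energyboundplan} is positive precisely below $M^*$. The Euler--Lagrange system \eqref{125potentialfreesystem} and the regularity $(m_M,w_M)\in W^{1,p}\times L^p$ for all $p$ then follow from the Fokker--Planck regularity theory of Section \ref{preliminaries} and the new Hamilton--Jacobi estimate Theorem \ref{thmmaximalregularity}, exactly as in the proof of Theorem \ref{thm11-optimal}: bootstrap $m_M\in L^\infty$, hence the right-hand side of the HJB equation is in $L^p_{\mathrm{loc}}$, apply Theorem \ref{thmmaximalregularity} to get $u_M\in W^{2,p}_{\mathrm{loc}}$, feed back into the Fokker--Planck equation.

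\emph{Part (ii): $M>M^*$.} Here $e_{\alpha^*,M}=-\infty$, so there is trivially no minimizer. To see this, start from the GN optimizer $(m_{\alpha^*},w_{\alpha^*})$ (mass $M^*$), rescale mass to $M$ by a constant multiple, and then apply the mass-preserving dilation $(m_\delta,w_\delta)=(\delta^{-n}m(\delta^{-1}x),\delta^{-(n+1)}w(\delta^{-1}x))$ used already in the discussion of $e_{\alpha,M}$. Under this dilation $\int m_\delta|w_\delta/m_\delta|^r=\delta^{-r}\int m|w/m|^r$ and $\int m_\delta^{1+r/n}=\delta^{-r}\int m^{1+r/n}$ both scale like $\delta^{-r}$, while $\int V(x)m_\delta\,dx\to 0$ as $\delta\to 0$ by (V1) and dominated convergence (this is where (V2) guarantees $V$ is locally bounded so the integral is finite to begin with). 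Because equality holds in \eqref{GNplan} for this pair, the leading $\delta^{-r}$ coefficient of $\mathcal E(m_\delta,w_\delta)$ equals $C_L\int m|w/m|^r\big(1-\frac{n}{n+r}\frac{M^{r/n}}{\Gamma_{\alpha^*}}\big)<0$, so $\mathcal E(m_\delta,w_\delta)\to-\infty$.

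\emph{Part (iii): $M=M^*$.} Now the $\delta^{-r}$ coefficient vanishes, so the argument of Part (ii) gives only $\limsup_{\delta\to0}\mathcal E(m_\delta,w_\delta)\le 0$; combined with \eqref{energyboundplan} (whose bracket is now $0$) this shows $e_{\alpha^*,M^*}=0$ and that any minimizer $(m,w)$ must make both $\int V(x)m\,dx=0$ \emph{and} saturate the GN inequality \eqref{GNplan}. The latter forces $(m,w)$ to be a dilation/translation of $(m_{\alpha^*},w_{\alpha^*})$, hence $m>0$ everywhere with $m(x)\le c_1e^{-c_2|x|}$; the former forces $m$ to be supported in $\mathcal Z=\{V=0\}$. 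When $r>n$ the admissible class requires $m\in W^{1,r}\hookrightarrow C^0$, and a positive continuous function cannot be supported in a set where, by (V2)'s lower bound $V(x)\ge C_1(1+|x|^b)$ for $|x|\ge K$, $V$ is bounded below away from the origin --- more simply, $\int Vm=0$ with $m>0$ a.e. forces $V\equiv0$ a.e., contradicting (V2). When $1<r\le n$ one invokes the extra hypothesis (V3), $|\mathcal Z|=0$: a translate of $m_{\alpha^*}$ is strictly positive on a set of full measure, so $\int Vm>0$ unless $|\mathcal Z|>0$, contradiction. Either way no minimizer exists. The main obstacle, and the step deserving the most care, is the rigidity/uniqueness statement in Part (iii) --- namely that equality in the sharp GN inequality \eqref{GNplan} holds \emph{only} on the scaling--translation orbit of the single profile $(m_{\alpha^*},w_{\alpha^*})$ --- since this is what converts the soft energy lower bound into the pointwise positivity of $m$ that clashes with the support constraint; this should be extracted from the strict convexity in the equality case of the GN inequality established in the proof of Theorem \ref{thm11-optimal}, together with the classification of the limiting system \eqref{limitingproblemminimizercritical0}.
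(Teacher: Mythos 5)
Your overall strategy — the sharp Gagliardo--Nirenberg constant from Theorem \ref{thm11-optimal} gives coercivity of $\mathcal E$ on $\mathcal K_M$ below $M^*$ and unboundedness from below above it, with the borderline case handled by forcing equality in the inequality together with $\int Vm\,dx=0$ — is the same as the paper's, and your Part (ii) is essentially the paper's computation (the only slip is that $\int V m_\delta\,dx\to MV(x_0)$, not $0$, which is harmless since the $\delta^{-r}$ term dominates). There are, however, two genuine gaps. The first is in Part (i) for $1<r\leq n$: your regularity bootstrap runs in the wrong order and is circular. You propose to ``bootstrap $m_M\in L^\infty$, hence the right-hand side of the HJB equation is in $L^p_{\rm loc}$, apply Theorem \ref{thmmaximalregularity}\dots'', but the only a priori regularity the direct method yields on a minimizer is $m_M\in W^{1,\hat q}$ with $\hat q\leq n$, which does not embed into $L^\infty$ (or even $C^0$); you cannot get $m_M\in L^\infty$ before you have the HJB equation, and you cannot produce a strong solution $u_M$ of the HJB equation until the coupling $V-m_M^{r/n}$ has enough regularity — which is exactly what the missing $L^\infty$ bound was supposed to supply. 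The paper breaks this circle by minimizing the mollified functional \eqref{approxenergy}: the smoothed coupling $(\eta_\epsilon\ast m_\epsilon)^{r/n}\ast\eta_\epsilon$ is bounded, so Lemma \ref{lemma22preliminary} yields a genuine solution $(u_\epsilon,\lambda_\epsilon)$, Lemma \ref{blowupanalysismlinfboundcritical} (where Theorem \ref{thmmaximalregularity} actually enters) gives $\epsilon$-uniform $L^\infty$ bounds, and only then does one pass to the limit $\epsilon\to 0$. For $r>n$ your direct argument is fine because $W^{1,\hat q}\hookrightarrow C^{0,\theta}(\mathbb R^n)$.

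The second gap is that your Part (iii) rests on a rigidity statement — that equality in the sharp inequality holds \emph{only} on the scaling--translation orbit of the single profile $(m_{\alpha^*},w_{\alpha^*})$ — which is proved nowhere in the paper; uniqueness of ground states of the potential-free system is in fact listed as an open problem in the Discussion, so you cannot ``extract'' it from the proof of Theorem \ref{thm11-optimal}. Fortunately it is not needed. For $1<r\leq n$: $\int V\hat m\,dx=0$ with $V,\hat m\geq 0$ gives $\hat m=0$ a.e.\ on $\{V>0\}$, and (V3) ($|\mathcal Z|=0$) then forces $\hat m=0$ a.e., contradicting $\int\hat m\,dx=M^*>0$ — no pointwise positivity of $\hat m$ is used. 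For $r>n$, the paper derives the Euler--Lagrange system for the putative minimizer (possible since $\hat m$ is H\"older continuous) and applies the strong maximum principle to the Fokker--Planck equation to conclude $\hat m>0$ everywhere, whence $\int V\hat m\,dx>0$. Replacing your appeal to GN-rigidity by these two arguments repairs Part (iii).
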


It is worthy mentioning that while discussing the case of $M<M^*$ in Theorem \ref{thm11}, we need to apply the maximal regularity shown in Theorem \ref{thmmaximalregularity} to obtain the uniformly boundedness of $m$-component in $L^\infty$ and the detailed discussion is shown in Section \ref{sect4-criticalmass}. 
 Theorem \ref{thm11} implies that in the mass critical exponent case, the existence of minimizers to (\ref{minimization-problem-critical}) depends on the total mass of population density $m$ and there exists a mass threshold $M^*$ explicitly given by (\ref{Mstar-critical-mass}).  In particular, when $M=M^*,$ the constrained problem (\ref{minimization-problem-critical}) is not attained.
 
 To further understand the delicate case $M=M^*$, we explore the blow-up behaviors of minimizers as $M\nearrow M^*$ and obtain that
\begin{theorem}\label{thm13basicbehavior} Assume that $V(x)$ satisfies either $(V1)-(V3)$ for $1<r\leq n$, or $(V1)-(V2)$ for $r\geq n$. 
Let $(m_M,w_M)$ be the minimizer of $e_{\alpha^*,M}$ obtained in Theorem \ref{thm11} with $0<M<M^*$ and $\mathcal Z:=\{x\in\mathbb R^n~|~V(x)=0\}$ with $|\mathcal Z|=0$.  Then, we have 
\begin{itemize}
    \item[(i).] 
    \begin{align}\label{thm51property1}
    {\varepsilon}_M={\varepsilon}:=\Big(C_L\int_{\mathbb R^n}\bigg|\frac{w_M}{m_M}\bigg|^{r}m_M\,dx\Big)^{-\frac{1}{r}}\rightarrow 0\text{~as~}M \nearrow M^*.
    \end{align}
    \item[(ii).] 
    Let $\{x_{\varepsilon}\}$ be one of the global minimum points of $u_M$, then $\text{dist}(x_{\varepsilon},\mathcal Z)\rightarrow 0$ as $M \nearrow M^*$, where $\mathcal Z=\{x\in\mathbb R^n~|~V(x)=0\}$. Moreover, 
    \begin{align}\label{thm51property2}
    u_{\varepsilon}:=\varepsilon^{\frac{2-r'}{r'-1}} u_{M}(\varepsilon x+x_{\varepsilon}),~
m_{\varepsilon}:=\varepsilon^n m_{M}(\varepsilon x+x_{\varepsilon}),~ w_{\varepsilon}:=\varepsilon^{n+1}w_M(\varepsilon x+x_{\varepsilon}),
    \end{align}
    satisfies  up to a subsequence, 
     \begin{equation}\label{eq1.40}
      u_{\varepsilon}\rightarrow u_0\text{ in }C^2_{\rm loc}(\mathbb R^n), ~
     m_{\varepsilon}\rightarrow m_0 \text{ in }L^p(\mathbb R^n) ~\forall ~p\in[1,{\hat q}^*),~ \text{ and } w_{\varepsilon}\rightharpoonup  w_0 \text{ in }L^{\hat q}(\mathbb R^n),\end{equation}
    where $(m_0,w_0)$ is  a minimizer of (\ref{sect2-equivalence-scaling}), and  $(u_0,m_0,w_0)$ satisfies \eqref{limitingproblemminimizercritical0}.
In particular, when $V$ satisfies (\ref{cirant-V}), let $\bar x_{\varepsilon}$ be any one of global maximum points of $m_M$, then  
\begin{align}\label{eq141realtionuminmmax}
\limsup_{\varepsilon\rightarrow 0}\frac{|\bar x_{\varepsilon}-x_{\varepsilon}|}{\varepsilon}<+\infty.
\end{align}
\end{itemize}
\end{theorem}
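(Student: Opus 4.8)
The plan is to run a blow-up analysis for this $L^r$-critical constrained minimization problem, in the spirit of the $L^2$-critical Schr\"odinger case, combining a sharp expansion of $e_{\alpha^*,M}$ near $M^*$, the a priori bounds from the Gagliardo--Nirenberg inequality of Theorem~\ref{thm11-optimal}, and the local maximal regularity of Theorem~\ref{thmmaximalregularity}. I would first prove $e_{\alpha^*,M}\to 0$ as $M\nearrow M^*$. For the upper bound I use as competitor $\big(t^{n}\tfrac{M}{M^*}m_{\alpha^*}(t(\cdot-x_0)),\,t^{n+1}\tfrac{M}{M^*}w_{\alpha^*}(t(\cdot-x_0))\big)\in\mathcal K_{M}$, where $(m_{\alpha^*},w_{\alpha^*})$ is the minimizer of Theorem~\ref{thm11-optimal} and $x_0\in\mathcal Z$ (nonempty and bounded by (V1)--(V2)): the scaling relation $(M^*)^{r/n}=\tfrac{n+r}{n}\,\Gamma_{\alpha^*}$ forces $\tfrac{n}{n+r}\int m_{\alpha^*}^{1+r/n}=C_L\int m_{\alpha^*}|\tfrac{w_{\alpha^*}}{m_{\alpha^*}}|^{r}$, so the kinetic-minus-aggregation part collapses to a positive multiple of $\tfrac{M}{M^*}\big(1-(M/M^*)^{r/n}\big)t^{r}$, while the potential part $\tfrac{M}{M^*}\int V(x_0+\tfrac{y}{t})m_{\alpha^*}(y)\,dy\to V(x_0)=0$ as $t\to\infty$ (the exponential decay of $m_{\alpha^*}$ beats the rescaled growth of $V$); choosing $t$ large and then $M$ near $M^*$ gives the claim. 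For the lower bound, Gagliardo--Nirenberg at mass $M$ together with $\Gamma_{\alpha^*}=\tfrac{n}{n+r}(M^*)^{r/n}$ gives $\mathcal E(m,w)\ge\big(C_L\int m|\tfrac{w}{m}|^{r}\big)\big(1-(M/M^*)^{r/n}\big)+\int Vm$ on $\mathcal K_{M}$; at $(m_M,w_M)$ this reads $\varepsilon_M^{-r}\big(1-(M/M^*)^{r/n}\big)\le e_{\alpha^*,M}\to 0$, and in particular $\int Vm_M\le e_{\alpha^*,M}\to 0$.

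For part (i), suppose $\varepsilon_M\not\to0$ along a subsequence, so $C_L\int m_M|\tfrac{w_M}{m_M}|^{r}$ stays bounded; by the a priori estimates of \cite{cesaroni2018concentration}, $m_M$ is then bounded in $L^{1}\cap L^{1+r/n}\cap W^{1,\hat q}$ and $w_M$ in $L^{\hat q}$. Since $V\ge C_1$ on $\{|x|\ge K\}$, $\int Vm_M\to0$ forces $\int_{\{|x|\ge K\}}m_M\to0$, so the mass stays in the fixed ball $B_K$; interpolating $L^1$ with $L^{\hat q^*}$ (using $1+\tfrac rn<\hat q^*$) and Rellich compactness on $B_K$, one gets $m_M\to m_\infty$ strongly in $L^{1+r/n}(\mathbb R^n)$ and $w_M\rightharpoonup w_\infty$ in $L^{\hat q}$, with $\int m_\infty=M^*$ and $\int Vm_\infty=0$. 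As $C_L\int m_M|\tfrac{w_M}{m_M}|^{r}-\tfrac{n}{n+r}\int m_M^{1+r/n}=e_{\alpha^*,M}-\int Vm_M\to0$ and the kinetic functional is weakly lower semicontinuous, $\mathcal E(m_\infty,w_\infty)\le0$; but Gagliardo--Nirenberg at $M^*$ gives $\mathcal E\ge0$ on $\mathcal K_{M^*}$, so $(m_\infty,w_\infty)$ would be a minimizer of $e_{\alpha^*,M^*}=0$, contradicting Theorem~\ref{thm11}(iii), which holds under exactly the standing hypotheses on $(V,r)$. Hence $\varepsilon_M\to0$.

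For part (ii), put $\varepsilon=\varepsilon_M\to0$, pick a global minimum point $x_\varepsilon$ of $u_M$ (normalizing $\min u_M=0$), and define $u_\varepsilon,m_\varepsilon,w_\varepsilon$ by \eqref{thm51property2}; the exponents are chosen so that $\int m_\varepsilon=M$, $C_L\int m_\varepsilon|\tfrac{w_\varepsilon}{m_\varepsilon}|^{r}=1$, and $-\Delta u_\varepsilon+C_H|\nabla u_\varepsilon|^{r'}+\varepsilon^{r}\lambda_M=\varepsilon^{r}V(\varepsilon x+x_\varepsilon)-m_\varepsilon^{r/n}$, $\Delta m_\varepsilon+C_Hr'\nabla\cdot(m_\varepsilon|\nabla u_\varepsilon|^{r'-2}\nabla u_\varepsilon)=0$. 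Testing the $u_M$-equation against $m_M$ and integrating by parts via the Fokker--Planck equation (legitimate by the decay of $m_M$) yields $\lambda_M M=-\tfrac rn\varepsilon^{-r}-\tfrac rn\int Vm_M+\tfrac{n+r}{n}e_{\alpha^*,M}$, hence $\varepsilon^{r}\lambda_M\to-\tfrac{r}{nM^*}$ by the previous step; and substituting $\tfrac{n}{n+r}\int m_M^{1+r/n}=\varepsilon^{-r}+\int Vm_M-e_{\alpha^*,M}$ into the Gagliardo--Nirenberg quotient evaluated at $(m_\varepsilon,w_\varepsilon)$ shows it converges to $\tfrac{n}{n+r}(M^*)^{r/n}=\Gamma_{\alpha^*}$, i.e. $(m_\varepsilon,w_\varepsilon)$ is a minimizing sequence for the scale- and translation-invariant problem $\Gamma_{\alpha^*}$. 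By concentration--compactness — vanishing being excluded (it would force $\int m_\varepsilon^{1+r/n}\to0$, whereas this integral tends to $\tfrac{n+r}{n}$) and dichotomy being excluded (splitting the mass strictly raises the Gagliardo--Nirenberg quotient) — one obtains, up to a subsequence and translations $y_\varepsilon$, $m_\varepsilon(\cdot+y_\varepsilon)\to m_0$ in $L^{p}$ for $p\in[1,\hat q^*)$ and $w_\varepsilon(\cdot+y_\varepsilon)\rightharpoonup w_0$ in $L^{\hat q}$, with $(m_0,w_0)$ a minimizer of $\Gamma_{\alpha^*}$ of mass $M^*$.

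It remains to localize $x_\varepsilon$ and identify the limit. Writing $z_\varepsilon:=x_\varepsilon+\varepsilon y_\varepsilon$ and using $\int Vm_M=\int V(\varepsilon z+z_\varepsilon)\,m_\varepsilon(z+y_\varepsilon)\,dz\to0$ together with the coercivity of $V$, I get that $z_\varepsilon$ is bounded with every limit point in $\mathcal Z$. To see that $y_\varepsilon$ itself is bounded — that recentering at a minimum of $u_M$ is the correct normalization — I use the maximum principle in the rescaled HJB equation at $x=0$: since $\nabla u_\varepsilon(0)=0$ and $\Delta u_\varepsilon(0)\ge0$ there, $m_\varepsilon(0)^{r/n}\ge-\varepsilon^{r}\lambda_M\to\tfrac{r}{nM^*}>0$, so the minimum of $u_M$ lies in the bulk of $m_M$; matching this with the tightness of $m_\varepsilon(\cdot+y_\varepsilon)$ forces $y_\varepsilon$ bounded, hence $\varepsilon y_\varepsilon\to0$, $\mathrm{dist}(x_\varepsilon,\mathcal Z)\to0$, and (after translating $m_0$) the convergences in the stated form without an extra shift. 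The right-hand side of the rescaled HJB equation then converges in $L^{(n+r)/r}_{\mathrm{loc}}$, so Theorem~\ref{thmmaximalregularity} gives uniform $W^{2,(n+r)/r}_{\mathrm{loc}}$ bounds for $u_\varepsilon$; standard elliptic regularity for the Fokker--Planck equation (cf.~\cite{bogachev2022fokker}) gives local $L^\infty$ bounds for $m_\varepsilon$, and bootstrapping yields $u_\varepsilon\to u_0$ in $C^{2}_{\mathrm{loc}}$, $m_\varepsilon\to m_0$ in $C^{1}_{\mathrm{loc}}$, with $(u_0,m_0,w_0)$ solving \eqref{limitingproblemminimizercritical0}. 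If in addition $V$ satisfies \eqref{cirant-V}, uniform exponential decay for $m_\varepsilon$ confines all of its global maxima to a fixed ball, which in the original scale reads $\limsup_{\varepsilon\to0}|\bar x_\varepsilon-x_\varepsilon|/\varepsilon<+\infty$. The step I expect to be the main obstacle is precisely this normalization: excluding dichotomy for the rescaled minimizing sequence on $\mathbb R^n$ and, above all, transferring the concentration of $m_\varepsilon$ to the value function so as to bound $y_\varepsilon$, which seems to need — besides the maximum-principle input — a uniform coercivity and tail-decay estimate for the rescaled value functions $u_\varepsilon$.
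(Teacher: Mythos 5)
Your part (i) and your derivations of $e_{\alpha^*,M}\to 0$, $\varepsilon^r\lambda_M\to -\frac{r}{nM^*}$, and the normalization identities for $(m_\varepsilon,w_\varepsilon)$ all match the paper's argument. For part (ii), however, you take a concentration--compactness route (strong convergence up to unknown translations $y_\varepsilon$, followed by an attempt to bound $y_\varepsilon$), whereas the paper deliberately avoids concentration--compactness: it centers at $x_\varepsilon$ from the outset, proves a uniform positive lower bound $m_\varepsilon\ge C>0$ on a fixed ball $B_{R_0}(0)$ (see \eqref{eq510mlowerbounduniform} and \eqref{eq516lowerboundmindependent}), passes to a weak limit solving the potential-free system, and then uses the Pohozaev identities of Lemma~\ref{poholemma} (as in \eqref{combining1strongconverge2}) to force $\int_{\mathbb R^n} m_0\,dx=M^*$, which upgrades weak to strong $L^1$ convergence with no translation ambiguity. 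Your route carries two extra burdens that you do not discharge: exclusion of dichotomy (strict subadditivity) for the critical, scale-invariant quotient \eqref{sect2-equivalence-scaling}, and the reconciliation of the two centers $0$ and $y_\varepsilon$.

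The concrete gap is the missing uniform control of the rescaled potential near the blow-up point. Every local regularity statement you invoke --- the $L^{(n+r)/r}_{\mathrm{loc}}$ boundedness of the right-hand side of the rescaled HJB equation, the $W^{2,p}_{\mathrm{loc}}$ bounds from Theorem~\ref{thmmaximalregularity}, the uniform local H\"older bounds for $m_\varepsilon$, and hence the upgrade of the pointwise inequality $m_\varepsilon(0)^{r/n}\ge-\varepsilon^r\lambda_M$ to a lower bound on a fixed ball --- requires that $\varepsilon^r V(\varepsilon x+x_\varepsilon)$ be bounded on compact sets uniformly in $\varepsilon$, i.e.\ (via (V2)) that $\varepsilon^r V(x_\varepsilon)\le C$. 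This is precisely the claim \eqref{eq6.110} in the paper, and it is not free: it is proved there by a separate blow-up argument at the scale $a_l=(\varepsilon_l^rV(x_{\varepsilon_l}))^{-1/r}$, which uses the maximal regularity theorem to produce a uniform H\"older bump of the density and then contradicts $\int_{\mathbb R^n} Vm_M\,dx\to 0$. Without this step your ``bulk'' argument bounding $y_\varepsilon$ does not close, and neither does $\mathrm{dist}(x_\varepsilon,\mathcal Z)\to 0$. The same issue reappears, in worse form, in your proof of \eqref{eq141realtionuminmmax}: ``uniform exponential decay for $m_\varepsilon$'' is asserted but is essentially equivalent to the conclusion (the Lyapunov-function argument needs $m_\varepsilon$ to be uniformly small outside a fixed ball, which is what you are trying to prove); the paper instead argues by contradiction, with a case analysis on whether $\varepsilon^rV(\bar x_\varepsilon)$ stays bounded, recentering at $\bar x_\varepsilon$ and producing a second bump of fixed mass that contradicts the strong $L^1$ convergence of $m_\varepsilon$.
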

With mild assumptions imposed on potential $V$, Theorem \ref{thm13basicbehavior} exhibits the basic blow-up behaviors of ground states as $M\nearrow M^*$.  The reader may observe that when $r \le n$ we need stronger information on $V$. Moreover, by imposing some more specific local asymptotics on the potential $V(x)$, we can get the refined blow-up behaviors of ground states, which is 
\begin{theorem}\label{thm14preciseblowup}
Suppose all conditions shown in Theorem \ref{thm13basicbehavior} hold.  Assume that $V$ has $l\in\mathbb{N}_+$ distinct zeros given by $\{P_1,\cdots,P_l\}$ and  $\exists \ a_i>0$, $q_i>0$ and $d>0$ such that 
\begin{align*}
V(x)=a_i|x-P_i|^{q_i}+O\big(|x-P_i|^{q_i+1}\big), \ \ 0<|x-P_i|\leq d,\ i=1,\cdots,l.
\end{align*}
Denote
$$Z:=\{P_i~|~q_i=q,\ i=1,\cdots,l\} ~\text{ and }~Z_0:=\{P_i~|~q_i\in Z \text{~and~}\mu_i=\mu,i=1,\cdots,l\},$$  
where $q:=\max\{q_1,\cdots,q_l\}$ and $
\mu:=\min\{\mu_i~|~P_i\in Z, i=1,\cdots,l\}$  with 
\begin{equation*}
\mu_i:=\min\limits_{y\in\mathbb R^n}H_i(y),\ H_i(y):=\int_{\mathbb R^n} a_i|x+y|^{q_i}m_0(x)\,dx\ i=1,\cdots,l.
\end{equation*}
Let $(m_{\varepsilon},w_{\varepsilon},u_{\varepsilon})$ be  the convergent subsequence,  $(m_0,w_0,u_0)$ be  the  corresponding limit and $x_{\varepsilon}$ be the global minimum point of each $u_M$ chosen in Theorem \ref{thm13basicbehavior}. Then $x_{\varepsilon}\rightarrow P_i\in Z_0$.  Moreover,  as $M\nearrow M^*,$
\begin{align*}
\frac{e_{\alpha^*,M}}{\frac{q+r}{q}\Big(\frac{q\mu}{r}\Big)^{\frac{r}{r+1}}\Big[1-\Big(\frac{M}{M^*}\Big)^{\frac{r}{n}}\Big]^{\frac{q}{r+q}}}\rightarrow 1,
\end{align*}
and
\begin{align}\label{131thm14}
\frac{\varepsilon}{\left[\frac{r}{q\mu}\left[1-\big(\frac{M}{M^*}\big)^{\frac{r}{n}}\right]\right]^{\frac{1}{r+q}}}\rightarrow 1, \text{ as } M\nearrow M^*,
\end{align}
where $e_{\alpha^*,M}$ and $\varepsilon=\varepsilon_M$ are defined in (\ref{minimization-problem-critical}) and (\ref{thm51property1}), respectively.  In addition, up to a subsequence,
\begin{align}\label{132thm14}
\frac{x_{\varepsilon}-P_i}{\varepsilon_M}\rightarrow y_0 ~\text{ with }~
P_i\in Z_0 ~\text{ and }~  H_i(y_0)=\inf_{y\in\mathbb R^n} H_i(y)=\mu.
\end{align}
\end{theorem}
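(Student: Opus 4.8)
\medskip
\noindent\emph{Proof strategy.}
The plan is to trap $e_{\alpha^*,M}$ between matching lower and upper bounds of the shape $(1+o(1))\,\Theta(M)$ as $M\nearrow M^*$, where $\Theta(M)=\tfrac{q+r}{q}\big(\tfrac{q\mu}{r}\big)^{r/(r+q)}\big[1-(M/M^*)^{r/n}\big]^{q/(r+q)}$ is the quantity in the denominator of the first displayed limit of the statement, and then to read off from this analysis the localisation of $x_\varepsilon$, the sharp rate of $\varepsilon_M$, and the limit of $(x_\varepsilon-P_i)/\varepsilon_M$. Throughout I rely on Theorem \ref{thm13basicbehavior} and its proof: $\varepsilon_M\to0$; the rescaled densities $m_\varepsilon$ converge to $m_0$ in $L^1(\mathbb R^n)$; $(m_0,w_0)$ minimises $\Gamma_{\alpha^*}$ and $(u_0,m_0,w_0)$ solves \eqref{limitingproblemminimizercritical0} with $\int_{\mathbb R^n}m_0\,dx=M^*$ and $0<m_0(x)\le c_1e^{-c_2|x|}$; and the uniform‑in‑$M$ bound $m_\varepsilon(x)\le Ce^{-c|x|}$ obtained along the way. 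I abbreviate $\delta_M:=1-(M/M^*)^{r/n}\to0^+$.

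\medskip
\noindent\emph{Step 1 (energy lower bound).}
First I would set $\varepsilon_M^{-r}=C_L\int m_M|w_M/m_M|^r$ and apply the sharp Gagliardo--Nirenberg inequality of Theorem \ref{thm11-optimal} with $\alpha=\alpha^*$; combined with $M^*=[\Gamma_{\alpha^*}(\alpha^*+1)]^{1/\alpha^*}$, this gives $\tfrac{n}{n+r}\int m_M^{1+r/n}\le(M/M^*)^{r/n}\varepsilon_M^{-r}$, whence
\[ e_{\alpha^*,M}\ \ge\ \int_{\mathbb R^n}Vm_M\,dx\qquad\text{and}\qquad e_{\alpha^*,M}\ \ge\ \delta_M\varepsilon_M^{-r}+\int_{\mathbb R^n}Vm_M\,dx . \]
Since $\mathrm{dist}(x_\varepsilon,\mathcal Z)\to0$ and $\mathcal Z=\{P_1,\dots,P_l\}$ is finite, along a subsequence $x_\varepsilon\to P_i$ for some $i$. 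Rescaling, using the local expansion of $V$ near $P_i$, the nonnegativity of $V$ to drop the far region, and $m_\varepsilon\to m_0$ in $L^1$ with uniform decay (so that $\inf_{z}\int a_i|x+z|^{q_i}m_\varepsilon\,dx\to\mu_i$, using $H_i(z)\to+\infty$ as $|z|\to\infty$), I would obtain $\int_{\mathbb R^n}Vm_M\,dx\ge\varepsilon_M^{\,q_i}(\mu_i+o(1))$. Minimising $\delta_M\varepsilon^{-r}+(\mu_i+o(1))\varepsilon^{q_i}$ over $\varepsilon>0$ then yields $e_{\alpha^*,M}\ge(1+o(1))\tfrac{q_i+r}{q_i}\delta_M^{\,q_i/(r+q_i)}\big(q_i\mu_i/r\big)^{r/(r+q_i)}$.

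\medskip
\noindent\emph{Step 2 (energy upper bound).}
The key preliminary fact is that $C_L\int m_0|w_0/m_0|^r=1$ and $\tfrac{n}{n+r}\int m_0^{1+r/n}=1$: this follows by combining the Pohozaev identity $C_L\int m_0|w_0/m_0|^r=\int m_0^{1+r/n}-\tfrac rn$ (obtained by testing the $u$-equation in \eqref{limitingproblemminimizercritical0} against $m_0$ and using the $m$-equation) with the equality case of Gagliardo--Nirenberg at the mass-$M^*$ minimiser, $\int m_0^{1+r/n}=\tfrac{n+r}{n}C_L\int m_0|w_0/m_0|^r$. Then, for $P_{i_0}\in\mathcal Z$, a minimiser $y^*$ of $H_{i_0}$ and $\tau>0$, I would take the trial pair $m_\tau(x):=\tfrac{M}{M^*}\tau^nm_0(\tau(x-P_{i_0})-y^*)$, $w_\tau(x):=\tfrac{M}{M^*}\tau^{n+1}w_0(\tau(x-P_{i_0})-y^*)\in\mathcal K_M$. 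Using the two relations above, and for $\int Vm_\tau$ the expansion of $V$ near $P_{i_0}$ plus the exponential decay of $m_0$ and the at most exponential growth of $V$ from (V2) (which makes the far contribution exponentially small in $\tau$), a direct computation gives $\mathcal E(m_\tau,w_\tau)=\tfrac{M}{M^*}(\delta_M\tau^r+\mu_{i_0}\tau^{-q_{i_0}})+o(\tau^{-q_{i_0}})$ as $\tau\to\infty$. Optimising in $\tau$ and then in $P_{i_0}$ --- since $t\mapsto t/(r+t)$ is increasing and $\delta_M<1$, the best choice is $q_{i_0}=q$, and among those $\mu_{i_0}=\mu$, i.e.\ $P_{i_0}\in Z_0$ --- gives $e_{\alpha^*,M}\le(1+o(1))\tfrac{q+r}{q}\delta_M^{\,q/(r+q)}\big(q\mu/r\big)^{r/(r+q)}$.

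\medskip
\noindent\emph{Step 3 (matching, sharp rate, blow-up point; main obstacle).}
Comparing Steps 1 and 2 as $M\nearrow M^*$: the lower bound would strictly dominate the upper bound if $q_i<q$ (a smaller power of $\delta_M\to0^+$), and also if $q_i=q$ but $\mu_i>\mu$ (recall $\mu_i\ge\mu$ since $P_i\in Z$); hence $q_i=q$, $\mu_i=\mu$, i.e.\ $x_\varepsilon\to P_i\in Z_0$, and the two bounds coincide, establishing the stated asymptotics of $e_{\alpha^*,M}$. Substituting this back into $e_{\alpha^*,M}\ge\delta_M\varepsilon_M^{-r}+\varepsilon_M^{q}(\mu+o(1))$ and using $e_{\alpha^*,M}=(1+o(1))\min_{\varepsilon>0}(\delta_M\varepsilon^{-r}+\mu\varepsilon^q)$, I would write $\varepsilon_M=\bar\varepsilon\,t_M$ with $\bar\varepsilon^{\,r+q}=r\delta_M/(q\mu)$ the minimiser, reducing the inequality to $\tfrac{q}{q+r}t_M^{-r}+\tfrac{r}{q+r}t_M^q\le1+o(1)$; since the left side attains its minimum $1$ only at $t=1$ and blows up as $t\to0^+$ or $t\to\infty$, $t_M\to1$, which is \eqref{131thm14}. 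Finally, $e_{\alpha^*,M}\ge\delta_M\varepsilon_M^{-r}+\int Vm_M$ together with the sharp rate forces $\int Vm_M\le(1+o(1))\mu\varepsilon_M^q$, while $\int Vm_M\ge(1-o(1))\varepsilon_M^q\int a_i|x+y_{\varepsilon_M}|^qm_{\varepsilon_M}(x)\,dx$ with $y_{\varepsilon_M}:=(x_\varepsilon-P_i)/\varepsilon_M$; the latter would be $\gg\varepsilon_M^q$ if $|y_{\varepsilon_M}|\to\infty$ (since $\int_{B_1}m_{\varepsilon_M}\to\int_{B_1}m_0>0$ and $|x_\varepsilon-P_i|=\varepsilon_M|y_{\varepsilon_M}|\to0$), so $y_{\varepsilon_M}$ is bounded and, along a subsequence, $y_{\varepsilon_M}\to y_0$ with $H_i(y_0)\le\mu=\inf H_i$, hence $H_i(y_0)=\mu$: this is \eqref{132thm14}. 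The main obstacle is the precise evaluation of $\int Vm_M$ and $\int Vm_\tau$ --- deploying the uniform exponential decay of $m_\varepsilon$ (resp.\ of $m_0$) to control both the region where the expansion of $V$ is invalid and the integrability of the polynomial weights $|x-P_i|^{q_i}$ against the densities --- and verifying that the remainders are genuinely of lower order than the competing terms $\delta_M\varepsilon_M^{-r}$ and $\mu_i\varepsilon_M^{q_i}$ (equivalently $\delta_M\tau^r$ and $\mu_{i_0}\tau^{-q_{i_0}}$), uniformly in $M$; the remaining one-variable optimisations are elementary.
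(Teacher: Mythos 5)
Your proposal is correct and follows essentially the same route as the paper: the upper bound is the paper's Lemma \ref{lemma61} (a trial pair rescaled from $(m_0,w_0)$ and recentred at a weighted-flattest zero, normalised via the Pohozaev identities $C_L\int m_0|w_0/m_0|^r=\tfrac{n}{n+r}\int m_0^{1+r/n}=1$), the lower bound comes from the sharp Gagliardo--Nirenberg inequality plus a Fatou-type evaluation of $\int V m_M\,dx$, and matching the two bounds yields the selection of $P_i\in Z_0$, the rate of $\varepsilon_M$, and the limit of $(x_\varepsilon-P_i)/\varepsilon_M$ exactly as in the paper (which rules out $q_i<q$, $\mu_i>\mu$ and $|x_\varepsilon-P_i|/\varepsilon\to\infty$ by contradiction with an arbitrarily large constant $\beta$ --- equivalent to your inf-over-shifts formulation). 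One caution: you invoke a uniform-in-$M$ exponential decay $m_\varepsilon(x)\le Ce^{-c|x|}$, which the paper does not establish (only the limit $m_0$ is shown to decay exponentially); this is not actually needed, since the only direction you use, $\liminf_{\varepsilon\to0}\int a_i|x+z_\varepsilon|^{q_i}m_\varepsilon\,dx\ge\mu_i$, follows from Fatou's lemma together with the uniform positive lower bound of $m_\varepsilon$ on a fixed ball (to handle $|z_\varepsilon|\to\infty$), while the upper bound only requires the decay of $m_0$.
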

Theorem \ref{thm14preciseblowup} captures the precise blow-up behaviors of ground states arising from problem (\ref{minimization-problem-critical}), in which {the flattest minima of V are selected.}

\medskip

The rest of this article is organized as follows: In Section \ref{sectmaximalregular}, we establish maximal regularities of Hamilton-Jacobi equations with subquadratic gradient terms.  Section \ref{preliminaries} is devoted to some preliminaries for the proof of the existence of ground states.  In Section \ref{sect3-optimal}, we formulate the optimal Gagliardo-Nirenberg type inequality, which is Theorem \ref{thm11-optimal}.  Then in Section \ref{sect4-criticalmass}, we applied this essential inequality to derive the critical mass phenomenon shown in Theorem \ref{thm11} under the mass critical exponent case.  Finally, in Section \ref{sect5preciseblowup}, we investigate the blow-up behaviors of ground states obtained in Theorem \ref{thm11} and finish the proof of Theorem \ref{thm13basicbehavior} and \ref{thm14preciseblowup}.  Without of confusion, we define constant $C>0$ is generic, which may change line to line. 

\medskip

\section{Local Maximal Regularity of Hamilton-Jacobi Equations 
}\label{sectmaximalregular}

In this section, we focus on the following elliptic PDEs involving gradient terms:
\begin{align}\label{newsectioneq1}
-\Delta u+C_H|\nabla u|^{r'}=f\text{~in~}\Omega,
\end{align}
where $r'>1$, $C_H>0$ are constants and $\Omega\subset \mathbb R^n$ is a bounded Lipschitz domain. Our goal is to study the $W^{2,p}$ regularities of solutions to (\ref{newsectioneq1}) if we assume that $f\in L^p(\Omega)$ with $p>\frac{n}{r}$. To begin with, we give some preliminary notations and results.

\subsection{Preliminaries}
The Morrey space $L^{r',s}(\mathbb R^n)$ with $r'\in[1,\infty)$ and $s\in[0,n]$ is defined as a functional space consisting of all measurable functions $u:\mathbb R^n\rightarrow \mathbb R$ satisfying
\begin{align}\label{preliminmorreydef}
\Vert u\Vert_{L^{r',s}(\mathbb R^n)}^{r'}:=\sup_{R>0;x\in\mathbb R^n}R^{s}\fint_{B_R(x)}|u|^{r'}\,dy<+\infty,
\end{align}
where 
$$\fint_{B_R(x)}|u|^{r'}\,dy:=\frac{1}{|B_R(x)|}\int_{B_R(x)}|u|^{r'}\,dy.$$
We have facts that when $s=n,$ the Morrey space $L^{r', n}(\mathbb R^n)$ coincides with the Lebesgue space $L^{r'}(\mathbb R^n)$ and the Morrey space $L^{r',0}(\mathbb R^n)$ coincides with $L^\infty(\mathbb R^n).$  Correspondingly, if $\Omega$ is a bounded domain, we define $u\in L^{r',s}(\Omega)$ as the space of $u:\Omega \rightarrow \mathbb R$ satisfying
\begin{align*}
    \Vert u\Vert_{L^{r',s}(\Omega)}^{r'}:=\sup_{0<R<\text{diam}\Omega;x\in\Omega}R^{s}\fint_{B_R(x)\cap \Omega}|u|^{r'}\,dy<+\infty.
\end{align*}

With the definition of Morrey spaces, we recall the following improved Gagliardo-Nirenberg inequality involving Morrey's norm in the whole space $\mathbb R^n$:
\begin{lemma}\label{lemma11maximalregularity}
Let $1\leq p<n$, $1\leq r'< p^*:=\frac{np}{n-p}$, and $q=r'(\frac{n}{p}-1)$.   Then there exists a constant $C>0$ depending on $n$ and $p$ such that for any $\frac{p}{p^*}\leq \theta<1$, 
\begin{align*}
\Vert u\Vert_{L^{p^*}(\mathbb R^n)}\leq C\Vert\nabla u\Vert^{\theta}_{L^p(\mathbb R^n)}\Vert u\Vert^{1-\theta}_{L^{r',q}(\mathbb R^n)} ~\text{ for all }~u \in W^{1,p}(\mathbb R^n).
\end{align*}
\end{lemma}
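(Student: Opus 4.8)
The plan is to reduce the inequality to a pointwise estimate for $|u(x)|$ involving the Hardy--Littlewood maximal function $M(|\nabla u|)$ and the Morrey norm of $u$, then to optimize over an auxiliary radius and integrate. Since $L^{r',q}(\mathbb R^n)$ is not separable and $C_c^\infty(\mathbb R^n)$ is not dense in $W^{1,p}(\mathbb R^n)\cap L^{r',q}(\mathbb R^n)$, I would avoid a smoothing/density reduction and instead work directly with the almost-everywhere valid form of the pointwise Poincar\'e inequality.

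First, for a.e. $x$ (every Lebesgue point of $u$) and every $R>0$, write $|u(x)|\le \big|u(x)-\fint_{B_R(x)}u\big|+\fint_{B_R(x)}|u|$. For the oscillation term I would use the classical bound $\big|u(x)-\fint_{B_R(x)}u\big|\le C\int_{B_R(x)}|x-y|^{1-n}|\nabla u(y)|\,dy$ and split $B_R(x)$ into the dyadic annuli $\{2^{-j-1}R\le|x-y|<2^{-j}R\}$; on each annulus $|x-y|^{1-n}\le (2^{-j-1}R)^{1-n}$ and $\int_{B_{2^{-j}R}(x)}|\nabla u|\le c\,(2^{-j}R)^n M(|\nabla u|)(x)$, so summing the resulting geometric series gives $\big|u(x)-\fint_{B_R(x)}u\big|\le C R\, M(|\nabla u|)(x)$. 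For the average term, Jensen's inequality ($r'\ge 1$) together with the definition of $\|\cdot\|_{L^{r',q}}$ and the identity $q/r'=n/p-1$ gives $\fint_{B_R(x)}|u|\le\big(\fint_{B_R(x)}|u|^{r'}\big)^{1/r'}\le R^{-q/r'}\|u\|_{L^{r',q}(\mathbb R^n)}=R^{1-n/p}\|u\|_{L^{r',q}(\mathbb R^n)}$. Because $p<n$ the two powers $R^1$ and $R^{1-n/p}$ have opposite signs, so minimizing $R\mapsto CR\,M(|\nabla u|)(x)+R^{1-n/p}\|u\|_{L^{r',q}}$ over $R>0$ yields the pointwise bound $|u(x)|\le C\,\big(M(|\nabla u|)(x)\big)^{1-p/n}\,\|u\|_{L^{r',q}(\mathbb R^n)}^{p/n}$ for a.e. $x$ (and there is nothing to prove if $\|u\|_{L^{r',q}}=\infty$).

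Raising this to the power $p^*$ and integrating, the algebraic identity $p^*(1-p/n)=p$ turns the right-hand side into a multiple of $\|M(|\nabla u|)\|_{L^p}^{p}$, and the Hardy--Littlewood maximal theorem (valid since $p>1$) gives $\|u\|_{L^{p^*}(\mathbb R^n)}\le C\|\nabla u\|_{L^p(\mathbb R^n)}^{1-p/n}\|u\|_{L^{r',q}(\mathbb R^n)}^{p/n}$, which is the asserted inequality at the endpoint $\theta=1-p/n=p/p^*$. To pass to an arbitrary $\theta\in(p/p^*,1)$ I would invoke the embedding $W^{1,p}(\mathbb R^n)\hookrightarrow L^{r',q}(\mathbb R^n)$, i.e. $\|u\|_{L^{r',q}}\le C\|\nabla u\|_{L^p}$; this follows from the Gagliardo--Nirenberg--Sobolev inequality $\|u\|_{L^{p^*}}\le C\|\nabla u\|_{L^p}$ together with the containment $L^{p^*}(\mathbb R^n)\hookrightarrow L^{r',q}(\mathbb R^n)$, which is precisely H\"older's inequality on balls exploiting $q=r'(n/p-1)$ and $n/p^*=n/p-1$ (so the scaling exponent cancels). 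Inserting $\|u\|_{L^{r',q}}^{\theta-p/p^*}\le C\|\nabla u\|_{L^p}^{\theta-p/p^*}$ into the endpoint estimate then gives the inequality for every $\theta\in[p/p^*,1)$.

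The one genuinely delicate point is the endpoint exponent $p=1$, where $M$ is only of weak type $(1,1)$, so the integration step produces merely the weak bound $\|u\|_{L^{p^*,\infty}}\le C\|\nabla u\|_{L^1}^{1-1/n}\|u\|_{L^{r',q}}^{1/n}$. I would upgrade this to the strong $L^{p^*}$ estimate by the standard truncation argument: apply the weak inequality to the truncates $u_j=\operatorname{sgn}(u)\min\big((|u|-2^j)_+,2^j\big)$, using $|\nabla u_j|=|\nabla u|\,\mathbf 1_{\{2^j<|u|\le 2^{j+1}\}}$ and $\|u_j\|_{L^{r',q}}\le\|u\|_{L^{r',q}}$, bound $|\{|u|>2^{j+1}\}|$ by Chebyshev's inequality, and sum the dyadic series using $p^*(1-1/n)=1$. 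Apart from this subtlety the proof only assembles the pointwise Poincar\'e inequality, the dyadic-annulus maximal function estimate, and the Hardy--Littlewood theorem, so I do not anticipate further obstacles.
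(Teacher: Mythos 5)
Your proof is correct, and it is genuinely different from the paper's treatment: the paper does not prove this lemma at all but simply cites Theorem 2 of Palatucci--Pisante \cite{palatucci2014improved}, whereas you give a self-contained argument of Hedberg type. The key exponent bookkeeping all checks out: $\fint_{B_R(x)}|u|\le R^{-q/r'}\Vert u\Vert_{L^{r',q}}=R^{1-n/p}\Vert u\Vert_{L^{r',q}}$ since $q/r'=n/p-1$; the two powers $R$ and $R^{1-n/p}$ indeed have opposite signs for $p<n$, so the optimization gives $|u(x)|\le C\,(M(|\nabla u|)(x))^{1-p/n}\Vert u\Vert_{L^{r',q}}^{p/n}$; and $p^*(1-p/n)=p$ together with $p/p^*=1-p/n$ yields exactly the endpoint $\theta=p/p^*$. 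Your reduction of the general case $\theta\in(p/p^*,1)$ to the endpoint via $L^{p^*}(\mathbb R^n)\hookrightarrow L^{r',q}(\mathbb R^n)$ is also sound: the scaling cancellation $q-nr'/p^*=r'\big(\tfrac{n}{p}-1\big)-r'\tfrac{n-p}{p}=0$ (using $r'<p^*$ for H\"older on balls) makes that embedding hold with a universal constant, and combining it with the Sobolev inequality absorbs the extra factor $\Vert u\Vert_{L^{r',q}}^{\theta-p/p^*}$ into $\Vert\nabla u\Vert_{L^p}^{\theta-p/p^*}$. You correctly identified the only delicate point, $p=1$, and the truncation/dyadic-summation upgrade from the weak-type bound (using $p^*(1-1/n)=1$ and $\Vert u_j\Vert_{L^{r',q}}\le\Vert u\Vert_{L^{r',q}}$) is the standard and valid fix. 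Two cosmetic remarks: the constant in your ball-H\"older step (and in the Jensen step) depends on $r'$ as well as on $n,p$, so strictly speaking $C=C(n,p,r')$ --- but this imprecision is already present in the statement as written; and it is worth noting that $r'<p^*$ guarantees $q<n$, so the Morrey exponent stays in the admissible range $[0,n]$ used in the paper's definition.
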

\begin{proof}
See the proof of Theorem 2 in \cite{palatucci2014improved}.
\end{proof}

With the aid of Lemma \ref{lemma11maximalregularity}, we next formulate the Gagliardo-Nirenberg's inequality involving Morrey norms in the ball $B_R$.  To this end, we recall the following extension Theorem and interpolation inequalities.
\begin{lemma}[C.f. Lemma 1.5 in \cite{li2021note} ]\label{lemma12maximalregularity}
Let $\Omega$ be a Lipschitz bounded domain and assume $f\in W^{1,p}(\Omega)$ with some $1\leq p\leq+\infty.$  Then there exists a bounded linear extension $T: W^{1,p}(\Omega)\rightarrow W^{1,p}(\mathbb R^n)$  such that 
\begin{align*}
\Vert Tf\Vert_{W^{1,p}(\mathbb R^n)}\leq C\Vert f\Vert_{W^{1,p}(\Omega)},~\forall f\in W^{1,p}(\Omega),
\end{align*}
where $C$ is a constant depending on $n$, $p$ and $\Omega.$
\end{lemma}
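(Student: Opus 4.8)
The statement is the classical Sobolev extension theorem for Lipschitz bounded domains, and the reference already given (\cite{li2021note}, Lemma~1.5) supplies the full argument; my plan is to reconstruct the standard proof and indicate how the bound is obtained. The plan is to proceed by a partition-of-unity reduction to the model case where $\Omega$ is locally the region above the graph of a Lipschitz function. First I would use the compactness of $\partial\Omega$ together with its Lipschitz character: cover $\partial\Omega$ by finitely many open sets $U_1,\dots,U_N$ such that, after a rigid rotation $R_j$, the set $R_j(\Omega\cap U_j)$ is the intersection of a box with the epigraph $\{(y',y_n): y_n>\gamma_j(y')\}$ of a Lipschitz function $\gamma_j$ with Lipschitz constant $\le L$. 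Adjoin an open set $U_0\subset\subset\Omega$ so that $\{U_j\}_{j=0}^N$ covers $\overline\Omega$, and pick a smooth partition of unity $\{\eta_j\}_{j=0}^N$ subordinate to this cover with $\sum_j\eta_j\equiv 1$ on a neighbourhood of $\overline\Omega$.

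Next I would handle each localized piece $f\eta_j$ for $j\ge 1$. After flattening via $R_j$ and the bi-Lipschitz change of variables $(y',y_n)\mapsto(y',y_n-\gamma_j(y'))$ — which preserves $W^{1,p}$ with norm control depending only on $L$ — one reduces to extending a $W^{1,p}$ function supported near the origin in the half-space $\{y_n>0\}$ to all of $\R^n$. For this model problem I would use the standard higher-order reflection operator: set $(Eg)(y',y_n)=g(y',y_n)$ for $y_n>0$ and $(Eg)(y',y_n)=\sum_{k=1}^{m} c_k\, g(y',-k y_n)$ for $y_n<0$, where the constants $c_1,\dots,c_m$ (with $m=2$ sufficient for $W^{1,p}$) solve the linear system $\sum_k c_k(-k)^i=1$ for $i=0,1$; this choice makes $Eg$ and its first-order distributional derivatives match across $\{y_n=0\}$, so $Eg\in W^{1,p}(\R^n)$ with $\|Eg\|_{W^{1,p}(\R^n)}\le C(n,p)\|g\|_{W^{1,p}(\{y_n>0\})}$. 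For the interior piece $j=0$, $f\eta_0$ already has compact support inside $\Omega$, so it extends by zero. Undoing the change of variables and the rotation, each piece yields an extension $T_jf\in W^{1,p}(\R^n)$ with $\|T_jf\|_{W^{1,p}(\R^n)}\le C(n,p,L,U_j)\|f\|_{W^{1,p}(\Omega)}$, and I set $Tf:=\sum_{j=0}^N T_jf$. Linearity of $T$ follows from linearity of reflection, multiplication by $\eta_j$, and the coordinate maps; the bound $\|Tf\|_{W^{1,p}(\R^n)}\le C\|f\|_{W^{1,p}(\Omega)}$ with $C=C(n,p,\Omega)$ follows by summing the finitely many estimates, using that $|\nabla\eta_j|$ is bounded on the fixed cover. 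Finally one checks $Tf|_\Omega=f$ since $\sum_j\eta_j\equiv1$ there and each $T_j(f\eta_j)$ restricts to $f\eta_j$ on $\Omega$.

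The endpoint cases $p=1$ and $p=\infty$ need a brief separate remark: the same reflection formula works verbatim (for $p=\infty$ one gets Lipschitz-in-each-half extensions that glue to a global Lipschitz function, using the matching of traces), so no modification beyond bookkeeping is required. The main obstacle — really the only nonroutine point — is controlling the change of variables that flattens the Lipschitz boundary: one must verify that composition with a bi-Lipschitz map preserves membership in $W^{1,p}$ and that the chain rule holds in the weak sense, with the operator norm depending only on the Lipschitz constant $L$ of the boundary charts and not on any higher regularity of $\partial\Omega$. This is standard (it follows from the a.e.\ differentiability of Lipschitz maps and approximation by smooth diffeomorphisms), but it is the step where the hypothesis ``$\Omega$ Lipschitz'' is genuinely used, and where the constant's dependence on $\Omega$ enters. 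Everything else is a finite sum of the model half-space estimate. For the purposes of this paper it suffices to cite \cite{li2021note}; the sketch above is only to make the statement self-contained.
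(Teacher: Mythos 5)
Your proposal is correct: the paper itself offers no argument beyond citing Lemma 1.5 of the reference, and your sketch is exactly the standard partition-of-unity, boundary-flattening and half-space reflection proof that underlies that cited result, with the one genuinely Lipschitz-specific point (bi-Lipschitz changes of variables preserve $W^{1,p}$ with constants depending only on the Lipschitz character) correctly identified. Since only first-order Sobolev regularity is needed, the simple reflection construction you describe indeed suffices for all $1\leq p\leq+\infty$, so nothing further is required.
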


\begin{lemma}[C.f. Lemma 2.1 in \cite{li2021note}]\label{lemma13maximalregularity}
Let $\Omega$ be a bounded Lipschitz  domain and $u\in W^{1,s}(\Omega)$, $1\leq s<+\infty$.  Then for any $\epsilon>0$, $p\geq 1$ with 
$$\frac{n}{p}>\frac{n}{s}-1,~~q>1,$$
there exists a constant $C$ depending on $n$, $q$, $r$, $\epsilon$, $\Omega$ such that
\begin{align*}
\Vert u\Vert_{L^p(\Omega)}\leq \epsilon\Vert\nabla u\Vert_{L^s(\Omega)}+C\Vert u\Vert_{L^q(\Omega)}.
\end{align*}
\end{lemma}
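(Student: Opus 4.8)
The plan is to prove Lemma~\ref{lemma13maximalregularity} by a soft compactness (contradiction) argument, after first disposing of the trivial range of exponents. The key observation is that the hypothesis $\frac{n}{p}>\frac{n}{s}-1$ says exactly that $p$ lies \emph{strictly} below the Sobolev conjugate exponent $s^{*}$ (with $s^{*}:=\frac{ns}{n-s}$ when $s<n$ and $s^{*}:=+\infty$ when $s\ge n$); consequently, on the bounded Lipschitz domain $\Omega$ the embedding $W^{1,s}(\Omega)\hookrightarrow L^{p}(\Omega)$ is \emph{compact} by the Rellich--Kondrachov theorem (and when $s\ge n$ one even has compact embedding into every $L^{p}(\Omega)$, $p<+\infty$). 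Moreover, if $q\ge p$ the asserted inequality is immediate: since $|\Omega|<+\infty$, H\"older's inequality gives $\Vert u\Vert_{L^{p}(\Omega)}\le|\Omega|^{\frac1p-\frac1q}\Vert u\Vert_{L^{q}(\Omega)}$, which one absorbs into the right-hand side. Hence it suffices to handle the case $1<q<p<s^{*}$.

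First I would argue by contradiction: suppose that for some $\epsilon_{0}>0$ the inequality fails for every constant, so that there is a sequence $u_{k}\in W^{1,s}(\Omega)$, necessarily $u_k\not\equiv 0$, with
$$\Vert u_{k}\Vert_{L^{p}(\Omega)}>\epsilon_{0}\Vert\nabla u_{k}\Vert_{L^{s}(\Omega)}+k\,\Vert u_{k}\Vert_{L^{q}(\Omega)}.$$
Dividing by $\Vert u_{k}\Vert_{L^{p}(\Omega)}$ we may normalize $\Vert u_{k}\Vert_{L^{p}(\Omega)}=1$, which forces $\Vert\nabla u_{k}\Vert_{L^{s}(\Omega)}\le\epsilon_{0}^{-1}$ and $\Vert u_{k}\Vert_{L^{q}(\Omega)}\le k^{-1}\to 0$. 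To see that $\{u_{k}\}$ is bounded in $W^{1,s}(\Omega)$, I would combine this gradient bound with a standard Poincar\'e-type inequality on bounded Lipschitz domains, $\Vert v\Vert_{L^{s}(\Omega)}\le C_{\Omega}\big(\Vert\nabla v\Vert_{L^{s}(\Omega)}+\Vert v\Vert_{L^{1}(\Omega)}\big)$, together with $\Vert u_{k}\Vert_{L^{1}(\Omega)}\le|\Omega|^{1-\frac1p}\Vert u_{k}\Vert_{L^{p}(\Omega)}=|\Omega|^{1-\frac1p}$.

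By the compact embedding $W^{1,s}(\Omega)\hookrightarrow\hookrightarrow L^{p}(\Omega)$ there is a subsequence (not relabelled) with $u_{k}\to u$ strongly in $L^{p}(\Omega)$, so that $\Vert u\Vert_{L^{p}(\Omega)}=1$ and in particular $u\not\equiv 0$. On the other hand, since $q<p$ and $|\Omega|<+\infty$, strong $L^{p}$ convergence implies strong $L^{q}$ convergence, whence $\Vert u\Vert_{L^{q}(\Omega)}=\lim_{k}\Vert u_{k}\Vert_{L^{q}(\Omega)}=0$, i.e. $u\equiv 0$ a.e. on $\Omega$ --- contradicting $\Vert u\Vert_{L^{p}(\Omega)}=1$. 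This establishes the claim: for each fixed $\epsilon>0$ there exists $C=C(\epsilon,n,p,q,s,\Omega)$ for which the stated bound holds.

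There is no serious obstacle here; the one point requiring care is the verification that the Sobolev embedding into $L^{p}(\Omega)$ is genuinely compact, which is precisely where the strict inequality $\frac{n}{p}>\frac{n}{s}-1$ enters, and which also uniformly covers the borderline case $s=n$ and the case $s>n$. As an alternative yielding an explicit constant, one could instead derive the estimate from a Gagliardo--Nirenberg interpolation inequality on $\Omega$ of the form $\Vert u\Vert_{L^{p}(\Omega)}\le C\Vert\nabla u\Vert_{L^{s}(\Omega)}^{\theta}\Vert u\Vert_{L^{q}(\Omega)}^{1-\theta}+C\Vert u\Vert_{L^{q}(\Omega)}$ (valid for $q<p<s^{*}$, after controlling $\Vert u\Vert_{L^{s}(\Omega)}$ by $\Vert\nabla u\Vert_{L^{s}(\Omega)}+\Vert u\Vert_{L^{q}(\Omega)}$ via Poincar\'e, and if one wishes passing through the extension operator of Lemma~\ref{lemma12maximalregularity}), followed by Young's inequality $a^{\theta}b^{1-\theta}\le\epsilon a+C_{\epsilon}b$; the contradiction route above is shorter and is the one I would write out.
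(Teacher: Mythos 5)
Your proof is correct. Note that the paper itself does not prove Lemma \ref{lemma13maximalregularity}; it only cites Lemma 2.1 of \cite{li2021note}, so there is no in-paper argument to compare against. Your compactness route is a complete and standard way to establish the estimate: the hypothesis $\frac{n}{p}>\frac{n}{s}-1$ is exactly $p<s^{*}$, so Rellich--Kondrachov gives compactness of $W^{1,s}(\Omega)\hookrightarrow L^{p}(\Omega)$ on the bounded Lipschitz domain (including the cases $s=n$ and $s>n$), and the normalization $\Vert u_k\Vert_{L^p(\Omega)}=1$, $\Vert\nabla u_k\Vert_{L^s(\Omega)}\le\epsilon_0^{-1}$, $\Vert u_k\Vert_{L^q(\Omega)}\to 0$ yields the contradiction $\Vert u\Vert_{L^p(\Omega)}=1$ with $u\equiv 0$; the auxiliary steps (the trivial case $q\ge p$ via H\"older, the $W^{1,s}$ bound via the Poincar\'e--Wirtinger inequality together with the $L^1$ bound from the $L^p$ normalization) are all sound. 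The price of this soft argument is a non-explicit constant, which is all the statement requires; the alternative you sketch (Gagliardo--Nirenberg interpolation on $\Omega$, possibly through the extension operator of Lemma \ref{lemma12maximalregularity}, followed by Young's inequality) is closer in spirit to how such estimates are usually derived in the cited reference and would give a more quantitative constant, but either route is perfectly adequate here.
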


Now, we are ready to establish the improved Gagliardo-Nirenberg inequality in the bounded domain, which is

\begin{lemma}\label{lemma14maximalregularity}
Let  $p,q,r$ and $\theta$ satisfy the assumptions of Lemma \ref{lemma11maximalregularity}.
Assume $u\in W^{1,p}(B_R)\cap L^{r',q}(B_R)$ with $R>0$,  then there exists a constant $C>0$ depending on $n$ and $p$ such that,
\begin{align}\label{conclusion1inlemma14maximalregularity}
\Vert u\Vert_{L^{p^*}(B_R)}\leq C\Vert\nabla u\Vert^{\theta}_{L^p(B_R)}\Vert u\Vert^{1-\theta}_{L^{r',q}(B_R)}+C\Vert u\Vert_{L^{r',q}(B_R)}.
\end{align}
\end{lemma}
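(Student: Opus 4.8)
The plan is to deduce the bounded-domain inequality \eqref{conclusion1inlemma14maximalregularity} from the global inequality in Lemma \ref{lemma11maximalregularity} by an extension argument, combined with the interpolation Lemma \ref{lemma13maximalregularity} to absorb the lower-order terms introduced by the extension. First I would apply Lemma \ref{lemma12maximalregularity} to $u\in W^{1,p}(B_R)$ to obtain an extension $Tu\in W^{1,p}(\mathbb R^n)$ with $\|Tu\|_{W^{1,p}(\mathbb R^n)}\le C\|u\|_{W^{1,p}(B_R)}$. Since $p^*\ge r'$ on $B_R$ one also controls $\|u\|_{L^{r',q}(B_R)}$ by $\|u\|_{W^{1,p}(B_R)}$ (Morrey norms on a bounded domain are dominated by the Sobolev norm for these exponents), so $Tu$ lies in $W^{1,p}(\mathbb R^n)\cap L^{r',q}(\mathbb R^n)$. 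Then Lemma \ref{lemma11maximalregularity} gives
\begin{align*}
\|u\|_{L^{p^*}(B_R)}\le \|Tu\|_{L^{p^*}(\mathbb R^n)}\le C\|\nabla Tu\|_{L^p(\mathbb R^n)}^\theta\|Tu\|_{L^{r',q}(\mathbb R^n)}^{1-\theta}.
\end{align*}

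The next step is to convert the right-hand side back to norms over $B_R$. Here there are two issues: (a) the Morrey norm of the extension $Tu$ over all of $\mathbb R^n$ need not be controlled by the Morrey norm of $u$ over $B_R$ alone, because balls near $\partial B_R$ see the extended values; and (b) $\|\nabla Tu\|_{L^p(\mathbb R^n)}$ is bounded by $\|u\|_{W^{1,p}(B_R)}=\|\nabla u\|_{L^p(B_R)}+\|u\|_{L^p(B_R)}$, not just by $\|\nabla u\|_{L^p(B_R)}$. For (a), I expect to use the fact that the extension operator can be taken to be supported in a fixed neighborhood of $B_R$ and that $\|Tu\|_{L^{r',q}(\mathbb R^n)}\le C\|Tu\|_{W^{1,p}(\mathbb R^n)}\le C\|u\|_{W^{1,p}(B_R)}$, then further bound $\|u\|_{L^p(B_R)}$ by interpolation; alternatively one bounds $\|Tu\|_{L^{r',q}(\mathbb R^n)}\le C(\|\nabla u\|_{L^p(B_R)}+\|u\|_{L^{r',q}(B_R)})$ using the same Sobolev-to-Morrey comparison on the bounded support of $Tu$. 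For (b), the term $\|u\|_{L^p(B_R)}$ is lower order: I would invoke Lemma \ref{lemma13maximalregularity} with $s=p$, $q=r'$ (legitimate since $n/p>n/p-1$ trivially and $r'\ge 1$; if $r'=1$ one uses a slightly larger exponent), to write $\|u\|_{L^p(B_R)}\le \epsilon\|\nabla u\|_{L^p(B_R)}+C_\epsilon\|u\|_{L^{r',q}(B_R)}$ — noting $L^{r',q}$ controls an $L^{r'}$-type average so this is consistent.

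Putting these together, the right-hand side becomes a product of the form $C(\|\nabla u\|_{L^p(B_R)}+\|u\|_{L^{r',q}(B_R)})^\theta(\|\nabla u\|_{L^p(B_R)}+\|u\|_{L^{r',q}(B_R)})^{1-\theta}$ up to the lower-order absorption, and then Young's inequality $ab\le \delta a^{1/\theta}+C_\delta b^{1/(1-\theta)}$ (or simply the elementary bound $(x+y)\le x^\theta y^{1-\theta}\cdot(\text{const})$ reversed) splits the product into the two terms on the right of \eqref{conclusion1inlemma14maximalregularity}: one scaling like the full interpolation product $\|\nabla u\|_{L^p(B_R)}^\theta\|u\|_{L^{r',q}(B_R)}^{1-\theta}$, and a leftover purely in terms of $\|u\|_{L^{r',q}(B_R)}$, which is exactly the additive term $C\|u\|_{L^{r',q}(B_R)}$ allowed in the statement. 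The main obstacle I anticipate is the careful handling of the Morrey norm under extension — point (a) above — since Morrey norms are nonlocal in a way Sobolev norms are not; the cleanest route is probably to fix the extension to have support in $B_{2R}$ and bound its global Morrey norm by its $W^{1,p}(B_{2R})$ norm via a standard embedding, accepting the price of a universal constant depending on $n,p$ (but not on $R$, after rescaling to the unit ball). I would also double-check the admissibility of the exponents when $r'=1$ or when $\theta$ sits at the endpoint $p/p^*$, but these are routine checks rather than genuine difficulties.
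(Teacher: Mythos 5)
Your proposal is correct and follows essentially the same route as the paper: extend $u$ via Lemma \ref{lemma12maximalregularity}, apply the whole-space inequality of Lemma \ref{lemma11maximalregularity}, control the Morrey norm of the extension by $\Vert u\Vert_{L^{r',q}(B_R)}$, absorb the lower-order $\Vert u\Vert_{L^p}$ term with Lemma \ref{lemma13maximalregularity} (using that the Morrey norm dominates the $L^{r'}$ norm), and finish by an elementary splitting plus rescaling to the unit ball. The subtlety you flag about the Morrey norm of the extension is handled in the paper by the same asserted bound $\Vert Tu\Vert_{L^{r',q}(\mathbb R^n)}\leq C\Vert u\Vert_{L^{r',q}(B_1)}$, so no genuinely different ingredient is involved.
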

\begin{proof}
We consider the case of $R=1$.  By using Lemma \ref{lemma12maximalregularity}, we extend $u$ to $Tu\in W^{1,p}(\mathbb R^n)\cap L^{r',q}(\mathbb R^n)$ for $u\in W^{1,p}(B_1)$.  Then we invoke Lemma  \ref{lemma11maximalregularity} to get
\begin{equation}\label{eqTu}
\begin{split}
\Vert u\Vert_{L^{p^*}(B_1)}\leq& \Vert Tu\Vert_{L^{p^*}(\mathbb R^n)}\leq C\Vert \nabla Tu\Vert_{L^p(\mathbb R^n)}^{1-\theta}\Vert T u\Vert^{\theta}_{L^{r',q}(\mathbb R^n)}\\
\leq& C\big[\Vert u\Vert_{L^p(B_1)}+\Vert\nabla u\Vert_{L^p(B_1)}\big]^{1-\theta}\Vert u\Vert_{L^{r',q}(B_1)}^{\theta},
\end{split}
\end{equation}
where we have used the fact that $\Vert T u\Vert_{L^{r',q}(\mathbb R^n)}\leq C\Vert u\Vert_{L^{r',q}(B_1)}$. Moreover, from the definition of Morrey space, one can easily deduce that $\Vert  u\Vert_{L^{r'}(B_1)}\leq C\Vert u\Vert_{L^{r',q}(B_1)}$, it then follows from Lemma \ref{lemma13maximalregularity} that 
$$\Vert u\Vert_{L^p(B_1)}\leq \epsilon  \Vert\nabla u\Vert_{L^p(B_1)}+C\Vert u\Vert_{L^{r',q}(B_1)}.$$
This together with \eqref{eqTu} gives that 
\begin{align*}
\Vert u\Vert_{L^{p^*}(B_1)}\leq &C\big[\epsilon \Vert\nabla u\Vert_{L^p(B_1)}+C\Vert u\Vert_{L^{r',q}(B_1)}+\Vert\nabla u\Vert_{L^p(B_1)}\big]^{1-\theta}\Vert u\Vert^{\theta}_{L^{r',q}(B_1)}\\
\leq &C\Vert \nabla u\Vert^{1-\theta}_{L^p(B_1)}\Vert u\Vert_{L^{r',q}(B_1)}^{\theta}+C\Vert u\Vert_{L^{r',q}(B_1)},
\end{align*}
where $C>0$ is some constant depending on $p$. Finally, we perform the scaling argument  to obtain the desired estimate \eqref{conclusion1inlemma14maximalregularity}.
\end{proof}
A vital ingredient in the proof of Theorem \ref{thmmaximalregularity} is the following Harnack type's inequality:
\begin{lemma}\label{lemmavitalingredientmaximalregularity}
Let  $\Omega$ be a bounded Lipschitz  domain and $f\in L^{p}_{\text{loc}}(\Omega)$ for some $p\geq 1.$  Assume that $u\in W^{1,r'}_{\text{loc}}(\Omega)$ is a solution of the following equation in the sense of distributions:
\begin{align}\label{testeqinlemma15maximalregularity}
-\Delta u+|\nabla u|^{r'}\leq f,\text{~in~}\Omega\subset \mathbb R^n,
\end{align}
where $r'>1.$ 
Then for $B_R\subset \Omega,$ we have 
\begin{align*}
\int_{B_{{R/2}}}|\nabla u|^{r'}\,dx\leq K R^{n-\hat r},
\end{align*}
where $\hat r:=\max \big\{\frac{n}{p},r\big\}$ and constant $K$ depends on $r'$, $p$, $n$,  and $\Vert f\Vert_{L^p(B_R)}$.
\end{lemma}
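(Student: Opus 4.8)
The plan is to test the differential inequality (\ref{testeqinlemma15maximalregularity}) against a suitable cutoff function and absorb the gradient term, producing a local $L^{r'}$ bound on $\nabla u$ with the correct scaling in $R$. First I would fix $B_R = B_R(x_0) \subset\subset \Omega$ and pick $\phi \in C_c^\infty(B_R)$ with $0\le\phi\le 1$, $\phi \equiv 1$ on $B_{R/2}$, and $|\nabla\phi|\le C/R$; in fact, to handle the superlinear term cleanly, I would use a power $\phi^\beta$ with $\beta = \beta(r')>1$ large enough (e.g.\ $\beta \ge r'/(r'-1)$) so that derivatives of the cutoff land on a high enough power of $\phi$ to be reabsorbed by Young's inequality. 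Multiplying (\ref{testeqinlemma15maximalregularity}) by $\phi^\beta \ge 0$ and integrating by parts gives
\[
\int_{B_R} \phi^\beta |\nabla u|^{r'}\,dx \le \int_{B_R} \phi^\beta f\,dx + \beta\int_{B_R}\phi^{\beta-1}\nabla u\cdot\nabla\phi\,dx .
\]
The second term on the right is estimated by Young's inequality with exponents $r'$ and $r$: $\beta\,\phi^{\beta-1}|\nabla u||\nabla\phi| \le \tfrac12\phi^\beta|\nabla u|^{r'} + C(\beta,r')\phi^{\beta-r}|\nabla\phi|^{r}$, and the choice of $\beta$ ensures $\phi^{\beta-r}\le 1$. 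Absorbing the $\tfrac12\int\phi^\beta|\nabla u|^{r'}$ into the left side yields
\[
\int_{B_{R/2}}|\nabla u|^{r'}\,dx \le \int_{B_R}\phi^\beta|\nabla u|^{r'}\,dx \le C\Big(\int_{B_R}|f|\,dx + R^{-r}|B_R|\Big) \le C\,R^{-r}R^{n} + C\int_{B_R}|f|\,dx.
\]
The term $R^{n-r}$ already has the claimed form since $\hat r \ge r$. For the $f$-term, apply Hölder with exponents $p$ and $p'$ on $B_R$: $\int_{B_R}|f|\,dx \le \|f\|_{L^p(B_R)}|B_R|^{1/p'} = C\,\|f\|_{L^p(B_R)}\,R^{n(1-1/p)} = C\,\|f\|_{L^p(B_R)}\,R^{n - n/p}$, which is of the form $R^{n-\hat r}$ since $\hat r \ge n/p$. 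Combining the two contributions and using $R \le \mathrm{diam}\,\Omega$ to bound the worse power by $R^{n-\hat r}$ up to a constant, we get $\int_{B_{R/2}}|\nabla u|^{r'}\,dx \le K R^{n-\hat r}$ with $K$ depending on $r'$, $p$, $n$ and $\|f\|_{L^p(B_R)}$, as desired.

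One technical point to address is the regularity needed to justify the integration by parts: since $u$ is only assumed in $W^{1,r'}_{\mathrm{loc}}$ and the inequality holds in the distributional sense, I would either interpret $\phi^\beta$ as an admissible test function directly (legitimate once $\phi^\beta \in W^{1,\infty}_c$ and $|\nabla u|^{r'}\phi^\beta \in L^1$, both of which hold a priori after a standard truncation/approximation of $u$), or first establish the estimate for a truncation $u_k := \min\{u, k\}$ or a mollification and pass to the limit via Fatou. The main obstacle is precisely making the absorption step rigorous at this low regularity level while keeping all constants independent of $u$; once the cutoff power $\beta$ is chosen correctly, the Young's inequality absorption and the two Hölder estimates are routine. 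A secondary subtlety is that the statement as phrased drops the $C_H$ in front of $|\nabla u|^{r'}$ (the inequality here is normalized with coefficient $1$), so in the application to (\ref{newsectioneq1}) one simply rescales $u$ or tracks $C_H$ through the constant $K$; this does not affect the scaling exponent $n-\hat r$.
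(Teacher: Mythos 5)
Your proof is correct. Note that the paper does not actually prove this lemma: it simply cites Lemma 2.3 of \cite{goffi2023holder}, so you have supplied a self-contained argument where the paper defers to the literature. Your argument is the standard Caccioppoli-type one (and essentially the one behind the cited result): test the distributional inequality with $\phi^\beta$, use Young's inequality with exponents $r'$ and $r$ to absorb the gradient term into the left-hand side (the condition $\beta\ge r=r'/(r'-1)$ is exactly what keeps the leftover power of $\phi$ nonnegative), and then H\"older on the $f$-term produces $R^{n-n/p}$ while the cutoff term produces $R^{n-r}$; both are controlled by $R^{n-\hat r}$ since $\hat r=\max\{n/p,r\}$ and $R$ is bounded by $\mathrm{diam}\,\Omega$. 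Two small remarks. First, the regularity worry in your last paragraph largely dissolves: since $u\in W^{1,r'}_{\rm loc}(\Omega)$ by hypothesis, $|\nabla u|^{r'}\in L^1_{\rm loc}$ a priori, so $\int\phi^\beta|\nabla u|^{r'}$ is finite and the absorption step is legitimate without truncating $u$; and if you take $\beta$ to be a positive integer $\ge r$, then $\phi^\beta\in C_c^\infty(\Omega)$ is an admissible test function outright, so no mollification of the test function is needed either. Second, there is a harmless sign slip in your integration by parts (the boundary term enters with a minus sign), which is immaterial since you immediately pass to absolute values.
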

\begin{proof}
{We refer the readers to Lemma 2.3 in \cite{goffi2023holder}.}

\end{proof}
It is necessary to establish the following improved Poincar\'{e} inequality.
\begin{lemma}\label{lemma16newpoincare}
Let $R \geq 1$ and $\gamma >1$, then  for any $v\in W^{1,\gamma}(B_R)$ satisfying
\begin{equation}\label{eq-aver}
\fint_{B_1}v\,dx=0,
\end{equation}
there exists constant  $C=C(R)>0$ such that, 
\begin{align}\label{conclusionholdsinlemma16maxmial}
\Vert v\Vert_{L^{\gamma}(B_R)}\leq C \Vert \nabla v\Vert_{L^{\gamma}(B_R)}.
\end{align}
\end{lemma}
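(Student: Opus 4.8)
The plan is to reduce the claim to the classical Poincar\'e--Wirtinger inequality on the ball $B_R$, and to control the average of $v$ over $B_R$ in terms of $\|\nabla v\|_{L^\gamma(B_R)}$ by exploiting the normalization \eqref{eq-aver} over the smaller ball $B_1$ (recall $R\geq 1$, so $B_1\subset B_R$).

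First, since $B_R$ is a bounded Lipschitz (indeed convex) domain, the standard Poincar\'e--Wirtinger inequality provides a constant $C_P=C_P(n,\gamma,R)>0$ such that
\[
\Big\| v-\fint_{B_R}v\,dx\Big\|_{L^\gamma(B_R)}\leq C_P\,\Vert\nabla v\Vert_{L^\gamma(B_R)}.
\]
Write $\bar v_R:=\fint_{B_R}v\,dx$ and note $\fint_{B_1}v\,dx=0$ by \eqref{eq-aver}. Then, by Jensen's inequality and H\"older's inequality on $B_1$,
\[
|\bar v_R|=\Big|\fint_{B_1}(v-\bar v_R)\,dx\Big|\leq \fint_{B_1}|v-\bar v_R|\,dx\leq |B_1|^{-1/\gamma}\,\Vert v-\bar v_R\Vert_{L^\gamma(B_1)}\leq |B_1|^{-1/\gamma}\,\Vert v-\bar v_R\Vert_{L^\gamma(B_R)},
\]
so combining with the Poincar\'e--Wirtinger estimate gives $|\bar v_R|\leq C_P|B_1|^{-1/\gamma}\Vert\nabla v\Vert_{L^\gamma(B_R)}$. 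Finally, by the triangle inequality in $L^\gamma(B_R)$,
\[
\Vert v\Vert_{L^\gamma(B_R)}\leq \Vert v-\bar v_R\Vert_{L^\gamma(B_R)}+|B_R|^{1/\gamma}|\bar v_R|\leq C_P\Big(1+(|B_R|/|B_1|)^{1/\gamma}\Big)\Vert\nabla v\Vert_{L^\gamma(B_R)},
\]
which is \eqref{conclusionholdsinlemma16maxmial} with $C=C(R):=C_P\big(1+(|B_R|/|B_1|)^{1/\gamma}\big)$.

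There is no real obstacle here; the only mild point is that the constant is permitted to depend on $R$ (harmless, as $R$ is fixed, and one can track that $C_P$ scales linearly in $R$ by a dilation). If a softer argument is preferred, the same conclusion follows by a compactness/contradiction scheme: a normalized sequence $v_k\in W^{1,\gamma}(B_R)$ with $\Vert v_k\Vert_{L^\gamma(B_R)}=1$, $\fint_{B_1}v_k\,dx=0$ and $\Vert\nabla v_k\Vert_{L^\gamma(B_R)}\to 0$ is bounded in $W^{1,\gamma}(B_R)$, hence by Rellich--Kondrachov converges strongly in $L^\gamma(B_R)$ (and weakly in $W^{1,\gamma}$) to some $v$ with $\nabla v=0$, i.e. $v$ constant on the connected set $B_R$; passing the constraint to the limit forces $\fint_{B_1}v\,dx=0$, hence $v\equiv0$, contradicting $\Vert v\Vert_{L^\gamma(B_R)}=1$.
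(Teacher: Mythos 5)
Your argument is correct. The chain
$|\bar v_R|=\big|\fint_{B_1}(v-\bar v_R)\,dx\big|\le |B_1|^{-1/\gamma}\Vert v-\bar v_R\Vert_{L^\gamma(B_R)}$
is valid because $B_1\subset B_R$ and \eqref{eq-aver} kills the mean over $B_1$, and combining it with the Poincar\'e--Wirtinger inequality on $B_R$ and the triangle inequality yields \eqref{conclusionholdsinlemma16maxmial} with an explicit constant. The compactness alternative you sketch is also sound.

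Your route is genuinely different from, and substantially shorter than, the paper's. The paper proceeds by iteration: starting from the standard Poincar\'e inequality on $B_1$, it passes to a sequence of balls $B_{R_1}, B_{R_2},\dots$ with $R_{k+1}^n=2R_k^n$, at each stage splitting $v$ into $v-\fint_{B_{R_{k+1}}}v$ plus the difference of consecutive averages, and absorbing a term $(1/2)^{1/\gamma'}\Vert v\Vert_{L^\gamma(B_{R_{k+1}})}$ into the left-hand side before iterating up to the given $R$. Both arguments rest on the same ingredient (the mean-zero Poincar\'e inequality on a ball), but you apply it once on the large ball $B_R$ and then control the $B_R$-average directly through the $B_1$-constraint, whereas the paper rebuilds the estimate scale by scale. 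Your version gives a cleaner, explicit constant $C_P\bigl(1+(|B_R|/|B_1|)^{1/\gamma}\bigr)$ and avoids the bookkeeping of the absorption step; the paper's iteration offers no additional generality here, since the lemma is only ever invoked for a fixed finite $R$.
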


\begin{proof}
First of all, by using the standard Poincar\'{e} inequality, one obtains
\begin{align}\label{onecanobtainlemma16maximalregular}
\Vert v\Vert_{L^{\gamma}(B_1)}\leq C\Vert\nabla v\Vert_{L^{\gamma}(B_1)},
\end{align}
where $C$ is a constant depending on $\gamma.$  Then for some $R_1>1$ which will be chosen later on, we find there exists some  $C>0$ independent of $R_1$ such that
\begin{align*}
\Vert v\Vert_{L^{\gamma}(B_{R_1})}\leq &\bigg\Vert v-\fint_{B_{R_1}}v\,dx\bigg\Vert_{L^{\gamma}(B_{R_1})}+\bigg|\fint_{B_{R_1}}v\,dx\bigg||B_{R_1}|^{\frac{1}{\gamma}}\\
\leq & CR_1\Vert\nabla v\Vert_{L^{\gamma}(B_{R_1})}+\bigg|\fint_{B_{R_1}}v\,dx-\fint_{B_1}v\,dx\bigg||B_{R_1}|^{\frac{1}{\gamma}}+\bigg|\fint_{B_1}v\,dx\bigg||B_{R_1}|^{\frac{1}{\gamma}}\\
=& CR_1\Vert\nabla v\Vert_{L^{\gamma}(B_{R_1})}+\bigg|\fint_{B_{R_1}}v\,dx-\fint_{B_1}v\,dx\bigg||B_{R_1}|^{\frac{1}{\gamma}},
\end{align*}
where we have used the condition \eqref{eq-aver}.
On the other hand , one gets
\begin{align*}
&|B_{R_1}|^{\frac{1}{\gamma}}\bigg|\fint_{B_{R_1}}v\,dx-\fint_{B_1}v\,dx\bigg|\nonumber\\
=&|B_{R_1}|^{\frac{1}{\gamma}}\bigg|\frac{1}{|B_{R_1}|}\int_{B_{R_1}}v\,dx-\frac{1}{|B_{R_1}|}\int_{B_1}v\,dx+\frac{1}{|B_{R_1}|}\int_{B_1}v\,dx-\frac{1}{|B_1|}\int_{B_1}v\,dx\bigg|\nonumber\\
\leq &\frac{1}{|B_{R_1}|^{\frac{1}{\gamma'}}}\Vert v\Vert_{L^{\gamma}(B_{R_1})}|B_{R_1}\backslash B_1|^{\frac{1}{\gamma'}}+\bigg|\frac{1}{|B_{R_1}|}-\frac{1}{|B_1|}\bigg|\Vert v\Vert_{L^{\gamma}(B_1)}|B_{R_1}|\nonumber\\
\leq &\Big(\frac{1}{2}\Big)^\frac{1}{\gamma'} \Vert v\Vert_{L^{\gamma}(B_{R_1})}+\frac{2}{|B_1|}|B_{R_1}|\Vert v\Vert_{L^{\gamma}(B_1)},
\end{align*}
where $R_1$ is chosen as $R_1=2^{\frac{1}{n}}$ such that
$$\frac{R_1^n-1}{R_1^n}\leq \frac{1}{2}.$$
 Thus, we further obtain from \eqref{onecanobtainlemma16maximalregular} that 
\begin{align*}
\Vert v\Vert_{L^{\gamma}(B_{R_1})}\leq& CR_1\Vert\nabla v\Vert_{L^{\gamma}(B_{R_1})}+\Big(\frac{1}{2}\Big)^\frac{1}{\gamma'}\Vert v\Vert_{L^{\gamma}(B_{R_1})}+\frac{2}{|B_1|}|B_{R_1}|\Vert v\Vert_{L^{\gamma}(B_1)}\\
\leq&CR_1\Vert\nabla v\Vert_{L^{\gamma}(B_{R_1})}+\Big(\frac{1}{2}\Big)^\frac{1}{\gamma'}\Vert v\Vert_{L^{\gamma}(B_{R_1})}+C|B_{R_1}|\Vert \nabla v\Vert_{L^{\gamma}(B_1)}.
\end{align*}
Let {$C(R_1):=\frac{C\max\{R_1,|B_{R_1}|\}}{1-(\frac{1}{2})^\frac{1}{\gamma'}}$}, then it follows that
\begin{align}\label{eq-aver1}
\Vert v\Vert_{L^{\gamma}(B_{R_1})}
\leq & C(R_1)\Vert \nabla v\Vert_{L^{\gamma}(B_{R_1})}.
\end{align}
Next, we let $R_2>R_1$ which will be chosen later on such that
\begin{align}
\Vert v\Vert_{L^{\gamma}(B_{R_2})}\leq &\bigg\Vert v-\fint_{B_{R_2}}v\,dx\bigg\Vert_{L^{\gamma}(B_{R_2})}+\bigg\Vert \fint_{B_{R_2}}v\,dx\bigg\Vert_{L^{\gamma}(B_{R_2})}\nonumber\\
\leq& C R_2\Vert \nabla v\Vert_{L^{\gamma}(B_{R_2})}+\bigg|\fint_{B_{R_2}}v\,dx\bigg||B_{R_2}|^{\frac{1}{\gamma}}\nonumber\\
\leq & C R_2 \Vert \nabla v\Vert_{L^{\gamma}(B_{R_2})}+\bigg|\fint_{B_{R_2}}v\,dx-\fint_{B_{R_1}}v\,dx+\fint_{B_{R_1}}v\,dx\bigg||B_{R_2}|^{\frac{1}{\gamma}}\nonumber\\
\leq&C R_2\Vert \nabla v\Vert_{L^{\gamma}(B_{R_2})}+\bigg|\fint_{B_{R_2}}v\,dx-\fint_{B_{R_1}}v\,dx\bigg||B_{R_2}|^{\frac{1}{\gamma}}+\bigg|\fint_{B_{R_1}}v\,dx\bigg||B_{R_2}|^{\frac{1}{\gamma}},\label{eq-aver2}
\end{align}
where $C>0$ is independent of $R_1$ and $R_2$.  In light of \eqref{eq-aver1} and H\"{o}lder's  inequality, one finds
\begin{align}\label{eq-aver3}
\bigg|\fint_{B_{R_1}} v\,dx\bigg||B_{R_2}|^{\frac{1}{\gamma}} 
\leq& \frac{|B_{R_2}|^{\frac{1}{\gamma}} }{|B_{R_1}|} |B_{R_1}|^{\frac{1}{\gamma'}}\|v\|_{L^{\gamma}(B_{R_1})}\leq  \frac{|B_{R_2}| }{|B_{R_1}|} \|v\|_{L^{\gamma}(B_{R_1})}.
\end{align}
Similarly, we have 
\begin{align}
&\bigg|\fint_{B_{R_2}}v\,dx-\fint_{B_{R_1}}v\,dx\bigg||B_{R_2}|^{\frac{1}{\gamma}}\nonumber\\
=&\bigg|\frac{1}{|B_{R_2}|}\int_{B_{R_2}}v\,dx-\frac{1}{|B_{R_2}|}\int_{B_{R_1}}v\,dx+\frac{1}{|B_{R_2}|}\int_{B_{R_1}}v\,dx-\frac{1}{|B_{R_1}|}\int_{B_{R_1}}v\,dx\bigg||B_{R_2}|^{\frac{1}{\gamma}}\nonumber\\
=&\bigg|\frac{1}{|B_{R_2}|^{\frac{1}{\gamma'}}}\int_{B_{R_2}\backslash B_{R_1}}v\,dx\bigg|+\bigg|\frac{1}{|B_{R_2}|}-\frac{1}{|B_{R_1}|}\bigg|\bigg|\int_{B_{R_1}}v\,dx\bigg|| B_{R_2}|^{\frac{1}{\gamma}}\nonumber\\
\leq &\Vert v\Vert_{L^{\gamma}(B_{R_2})}\frac{|B_{R_2}\backslash B_{R_1}|^{\frac{1}{\gamma'}}}{|B_{R_2}|^{\frac{1}{\gamma'}}}+\Vert v\Vert_{L^{\gamma}(B_{R_1})}\frac{2|B_{R_2}|}{|B_{R_1}|}.\label{eq-aver4}
\end{align}
Choosing  $R_2$ such that $R_2^n=2R_1^n $, Then 
$\frac{R_2^n-R_1^n}{R_2^n}\leq \frac{1}{2},$ and  it follows  from \eqref{eq-aver3} and \eqref{eq-aver4} that 
\begin{align*}
\bigg|\fint_{B_{R_2}}v\,dx-\fint_{B_{R_1}}v\,dx\bigg||B_{R_2}|^{\frac{1}{\gamma}}+\bigg|\fint_{B_{R_1}} v\,dx\bigg||B_{R_2}|^{\frac{1}{\gamma}} 
\leq &\Big(\frac{1}{2}\Big)^\frac{1}{\gamma'}\Vert v\Vert_{L^{\gamma}(B_{R_2})}+6\Vert v\Vert_{L^{\gamma}(B_{R_1})}.
\end{align*}
This combines with \eqref{eq-aver1} and \eqref{eq-aver2} indicate that, there exists $C(R_2)>0$ such that 
\begin{align*}
\Vert v\Vert_{L^{\gamma}(B_{R_2})}\leq & C(R_2)\Vert \nabla v\Vert_{L^{\gamma}(B_{R_2})}.
\end{align*}
 By performing the iteration argument, one can obtain for any $R>0$, the conclusion (\ref{conclusionholdsinlemma16maxmial}) holds.
\end{proof}

We collect the following Calder\'{o}n-Zygmund estimates for linear second order elliptic equations:
\begin{lemma}[C.f. \cite{gilbarg1977elliptic}]\label{basicellipticregulartrudinger}
Define $\Omega$ as a bounded domain.  Assume $u\in W^{2,p}_{\text{loc}}(\Omega)\cap L^p(\Omega)$ with $1<p<\infty$ be a strong solution of
$$-\Delta u=g\text{~in~}\Omega.$$
Then we have for each $B_R\subset \subset\Omega$ with $R\leq \delta$ and every $\sigma\in(0,1),$
\begin{align*}
\Vert D^2 u\Vert_{L^p(B_{{\sigma R}})}\leq &\frac{C}{(1-\sigma)^2R^2}(R^2 \Vert g\Vert_{L^p(B_R)}+\Vert u\Vert_{L^p(B_{R})}),
\end{align*}
where constants $\delta>0$ and $C=C(n,p,\delta)$.
\end{lemma}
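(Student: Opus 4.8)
The plan is to reduce the statement to the classical global Calder\'on--Zygmund inequality on $\mathbb R^n$ and then localize, paying attention to the way a cutoff function generates the factor $(1-\sigma)^{-2}$. Since $u$ is already assumed to lie in $W^{2,p}_{\mathrm{loc}}(\Omega)\cap L^p(\Omega)$ there is nothing to bootstrap: the task is purely to produce the quantitative bound. The one nontrivial ingredient is the global estimate: for $w\in W^{2,p}(\mathbb R^n)$ with $1<p<\infty$ one has $\partial_{ij}w=R_iR_j(\Delta w)$, where the $R_i$ are the Riesz transforms, bounded on $L^p(\mathbb R^n)$, hence $\|D^2w\|_{L^p(\mathbb R^n)}\le C_0(n,p)\,\|\Delta w\|_{L^p(\mathbb R^n)}$; this may simply be quoted from \cite{gilbarg1977elliptic}.

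By the dilation $y\mapsto x_0+Ry$ it suffices to treat $v\in W^{2,p}(B_1)\cap L^p(B_1)$ solving $-\Delta v=h$ in $B_1$ and to prove $\|D^2v\|_{L^p(B_\sigma)}\le C(1-\sigma)^{-2}\big(\|h\|_{L^p(B_1)}+\|v\|_{L^p(B_1)}\big)$ for every $\sigma\in(0,1)$; applying this with $v(y)=u(x_0+Ry)$ and $h(y)=R^2 g(x_0+Ry)$ and undoing the dilation returns exactly the stated inequality with the $R^2$ weights, the hypothesis $R\le\delta$ being used only to keep $B_R(x_0)\subset\subset\Omega$. For the unit-ball estimate, given $\sigma\le t<s\le 1$ one picks $\eta\in C_c^\infty(B_s)$ with $\eta\equiv1$ on $B_t$, $0\le\eta\le1$, $|\nabla\eta|\le C(s-t)^{-1}$, $|D^2\eta|\le C(s-t)^{-2}$. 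Extending by zero, $\eta v\in W^{2,p}(\mathbb R^n)$ and $-\Delta(\eta v)=\eta h-2\nabla\eta\cdot\nabla v-v\Delta\eta$, so the global estimate yields
\[
\|D^2v\|_{L^p(B_t)}\le\|D^2(\eta v)\|_{L^p(\mathbb R^n)}\le C\Big(\|h\|_{L^p(B_1)}+\tfrac{1}{s-t}\|\nabla v\|_{L^p(B_s)}+\tfrac{1}{(s-t)^2}\|v\|_{L^p(B_1)}\Big).
\]
The interpolation inequality, rescaled to $B_s$, gives $\tfrac{1}{s-t}\|\nabla v\|_{L^p(B_s)}\le\epsilon_0\|D^2v\|_{L^p(B_s)}+\tfrac{C}{\epsilon_0(s-t)^2}\|v\|_{L^p(B_1)}$ for any $\epsilon_0>0$; choosing $\epsilon_0$ so that the resulting coefficient of $\|D^2v\|_{L^p(B_s)}$ is $<1$ and writing $\phi(t):=\|D^2v\|_{L^p(B_t)}$, one arrives at $\phi(t)\le a\,\phi(s)+C(s-t)^{-2}\big(\|h\|_{L^p(B_1)}+\|v\|_{L^p(B_1)}\big)$ with $a\in(0,1)$, valid for all $\sigma\le t<s\le1$. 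The standard iteration lemma (if $\phi(t)\le a\phi(s)+A(s-t)^{-2}$ on $[\sigma,1]$ with $a\in(0,1)$ then $\phi(\sigma)\le C(a)\,A\,(1-\sigma)^{-2}$) then closes the argument.

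The one genuine obstacle is the first-order term $\nabla\eta\cdot\nabla v$ produced by the cutoff: $\nabla v$ sits strictly between $v$ and $D^2v$ in differentiability order, so it cannot be thrown onto the data directly and has to be interpolated and then absorbed; moreover, extracting the sharp blow-up rate $(1-\sigma)^{-2}$ rather than a cruder power forces the absorption to be carried out through the iteration lemma on the nested balls $B_t\subset B_s$ instead of in a single step. All remaining ingredients — the cutoff bounds, the scaling bookkeeping, and the interpolation on a ball — are routine. Alternatively, one may bypass the cutoff and iteration altogether and simply quote the interior $W^{2,p}$ estimate of \cite{gilbarg1977elliptic}, performing only the scaling reduction, at the price of not exhibiting the explicit $(1-\sigma)^{-2}$ dependence.
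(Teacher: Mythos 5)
Your proof is correct. The paper itself offers no argument for this lemma --- it is quoted directly from \cite{gilbarg1977elliptic} --- so there is nothing internal to compare against; what you have written is precisely the standard derivation underlying that citation, and it has the added value of actually exhibiting the explicit $(1-\sigma)^{-2}R^{-2}$ dependence, which the textbook interior $W^{2,p}$ estimate does not state. All the steps check out: the scaling bookkeeping $\Vert D^2 v\Vert_{L^p(B_\sigma)}=R^{2-n/p}\Vert D^2u\Vert_{L^p(B_{\sigma R})}$ does return the stated inequality; the hypothesis $u\in W^{2,p}_{\rm loc}(\Omega)$ together with $B_R\subset\subset\Omega$ guarantees $\phi(t)=\Vert D^2v\Vert_{L^p(B_t)}$ is finite on $[\sigma,1]$, which is needed for the iteration lemma; and the interpolation constant on $B_s$ is scale-invariant, so it is uniform over $s\in[\sigma,1]$ as your absorption step requires. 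The only point worth making explicit if you wrote this out in full is that the Riesz-transform identity $\partial_{ij}w=R_iR_j\Delta w$ is first established on $C_c^\infty$ and then extended by density to compactly supported $W^{2,p}$ functions such as $\eta v$; this is routine.
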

\subsection{Weighted Morrey Norm Estimates and $W^{2,p}$ Theories}
This subsection is devoted to the local gradient estimates and $W^{2,p}$ regularity of solutions to (\ref{newsectioneq1}).  It is worthy mentioning that for the superquadratic case, i.e. $r'>2,$ the local H\"{o}lder and maximal regularities were established in \cite{cirant2022local}.  We follow the ideas shown in \cite{cirant2022local} to study the case of $1<r'\leq 2.$  More precisely, our strategy is to perform the blow-up analysis and apply the Liouville type's results to derive the contradiction, now through the analysis of weighted Morrey estimates.  In general, by using Lemma \ref{lemmavitalingredientmaximalregularity}, we can obtain that the solution $u$ of equation (\ref{newsectioneq1}) satisfies $\nabla u\in L^{r',r}(\Omega)$ provided that $p>\frac{n}{r}$.  If $r'>2$, i.e. $r'>r$, one further has {$u \in C^{0,\alpha}_{\text{loc}}(\Omega)$ for $\alpha = 2-r$} by Morrey's Lemma.  However, if $1<r'\leq 2$, we need to argue in (weighted) Morrey spaces.  To this end, we first establish the following key lemma:

\begin{lemma}\label{lemma21prop33keylemma}
Let $R>0$, $r'\geq \frac{n}{n-1}$, $g\in L^p(B_{2R})$ with $n>p>\frac{n}{r}$ and $v\in W^{2,p}(B_{2R})$ satisfy
\begin{align}\label{governedeqvsubmaximalregular}
|\Delta v|\leq C_{H}|\nabla v|^{r'}+|g|,\text{~a.e.~in~}B_{2R},
\end{align}
where $C_H>0$ is a constant.
Then if
\begin{align*}
\Vert g\Vert_{L^p(B_{2R})}+\Vert\nabla v\Vert_{L^{r',q}(B_{2R})}\leq K,
\end{align*}
where $q=r'(\frac{n}{p}-1)$, there exists $C>0$ depending on $K$ and $R$ such that 
\begin{align*}
\Vert D^2v\Vert_{L^p(B_{R})}\leq C.
\end{align*}

\end{lemma}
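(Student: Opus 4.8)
The goal is a local $W^{2,p}$ bound for solutions of the differential inequality $|\Delta v| \le C_H |\nabla v|^{r'} + |g|$, given that $g \in L^p$ and $\nabla v$ is controlled in the Morrey space $L^{r',q}$ with $q = r'(n/p - 1)$. The plan is to split the right-hand side and estimate each piece in $L^p$ on balls, then feed this into the Calderón–Zygmund estimate of Lemma \ref{basicellipticregulartrudinger}. The term $|g|$ is already in $L^p(B_{2R})$ by hypothesis, so the real work is controlling $\||\nabla v|^{r'}\|_{L^p}$ locally in terms of the Morrey norm of $\nabla v$ and the $W^{2,p}$-seminorm of $v$ with a small coefficient, so as to absorb.

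First I would observe that $\||\nabla v|^{r'}\|_{L^p(B_\rho)} = \|\nabla v\|_{L^{pr'}(B_\rho)}^{r'}$, so the point is to interpolate $\|\nabla v\|_{L^{pr'}}$ between $\|D^2 v\|_{L^p}$ (via Gagliardo–Nirenberg, using that $pr' < p^* = np/(n-p)$, which is exactly where the restriction $r' \ge n/(n-1)$ together with $p > n/r$ enters the bookkeeping on the exponents) and the weighted Morrey norm $\|\nabla v\|_{L^{r',q}}$. This is precisely the content of Lemma \ref{lemma14maximalregularity} applied to $u = \partial_i v$ on a ball: one gets $\|\nabla v\|_{L^{p^*}(B_\rho)} \le C\|D^2 v\|_{L^p(B_\rho)}^\theta \|\nabla v\|_{L^{r',q}(B_\rho)}^{1-\theta} + C\|\nabla v\|_{L^{r',q}(B_\rho)}$ for an appropriate $\theta$, and then Hölder interpolation between $L^{r'}$ (dominated by the Morrey norm) and $L^{p^*}$ yields the $L^{pr'}$ bound. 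Raising to the power $r'$ and using Young's inequality, the $\|D^2 v\|_{L^p}$ factor appears with an exponent $\theta r'$, which must be checked to be $< 1$ — this is the crucial exponent computation that makes absorption possible, and it is exactly the role of the hypotheses $p > n/r$ and $r' \ge n/(n-1)$.

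Next I would run a standard covering/iteration argument on dyadic (or a one-parameter family of) concentric balls $B_{\sigma R} \subset B_R \subset B_{2R}$: combining the Calderón–Zygmund estimate $\|D^2 v\|_{L^p(B_{\sigma R})} \lesssim \|\Delta v\|_{L^p(B_R)} + \|v\|_{L^p(B_R)}$ with the differential inequality and the interpolation above gives $\|D^2 v\|_{L^p(B_{\sigma R})} \le \epsilon \|D^2 v\|_{L^p(B_R)} + C(\epsilon, K, R)$, where the lower-order term $\|v\|_{L^p}$ is itself handled by Poincaré-type control (Lemma \ref{lemma16newpoincare}, after subtracting a mean) against $\|\nabla v\|_{L^p} \le C\|\nabla v\|_{L^{r',q}} \le CK$. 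The superlinear $\|D^2 v\|_{L^p}^{\theta r'}$ term is split via Young's inequality into $\epsilon\|D^2 v\|_{L^p}$ plus a constant, using $\theta r' < 1$. A Giaquinta–Giusti-type iteration lemma (absorbing the $\epsilon$-term across a sequence of shrinking radii with summable geometric weights) then removes the $\|D^2 v\|_{L^p(B_R)}$ on the right and produces the claimed bound $\|D^2 v\|_{L^p(B_R)} \le C(K,R)$.

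\textbf{Main obstacle.} The delicate point is the exponent arithmetic: one must verify that the parameters $\theta$ from Lemma \ref{lemma14maximalregularity} and $r'$ combine so that the highest power of $\|D^2 v\|_{L^p}$ appearing after interpolation and Young's inequality is strictly sublinear, i.e. $\theta r' < 1$; this is where the precise threshold $r' \ge n/(n-1)$ (equivalently $r \le n$, ensuring $pr' \le p^*$ is a \emph{strict} inequality giving room in $\theta$) and $p > n/r$ are used in an essential way, and getting these inequalities to line up — together with checking the admissibility constraint $p/p^* \le \theta < 1$ in Lemma \ref{lemma11maximalregularity} — is the heart of the argument. A secondary technical nuisance is that the Poincaré inequality of Lemma \ref{lemma16newpoincare} requires normalizing $v$ by subtracting its average on the unit ball, which is harmless since the equation and all the quantities estimated depend only on $\nabla v$ and $D^2 v$.
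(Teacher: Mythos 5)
Your proposal follows essentially the same route as the paper: the Morrey--Gagliardo--Nirenberg interpolation of Lemma \ref{lemma14maximalregularity} to bound $\||\nabla v|^{r'}\|_{L^p}$ by a sublinear power $\|D^2v\|_{L^p}^{\theta r'}$ with $\theta r'<1$ (which is exactly where $p>\frac{n}{r}$ enters), the Calder\'on--Zygmund estimate of Lemma \ref{basicellipticregulartrudinger} on concentric balls, and an absorption/iteration to close. Two small corrections: the hypothesis $r'\ge\frac{n}{n-1}$ is used to guarantee $q=r'(\frac{n}{p}-1)\le n$ so that the Morrey space is admissible (not to make $pr'<p^*$, which already follows from $p>\frac{n}{r}$); and the inequality $\|\nabla v\|_{L^p(B_R)}\le C\|\nabla v\|_{L^{r',q}(B_R)}$ you invoke for the lower-order term fails when $p>r'$ --- the paper instead controls $\|\tilde v\|_{L^p}$ via Poincar\'e (after subtracting a mean) and the same $L^{p^*}$ interpolation, which yields another sublinear power of $\|D^2v\|_{L^p}$ that is absorbed in the same iteration.
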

\begin{proof}

For any $0<\rho<2R,$ we let 
$\tilde v=v-\fint_{B_{\rho}} v\,dx$. 
Then we have from \eqref{governedeqvsubmaximalregular} that 
\begin{align*}
|\Delta \tilde v|\leq C_{H}|\nabla\tilde  v|^{r'}+|g|,\text{~a.e.~in~}B_{2R}.
\end{align*}
By employing the Poincar\'{e} inequality and H\"{o}lder's inequality, we obtain 
\begin{align}\label{byusing1inprop33new}
\Vert \tilde v\Vert_{L^p(B_{\rho})}\leq& C\rho \Vert \nabla \tilde v\Vert_{L^p(B_{\rho})}\leq C\rho^2\Vert \nabla \tilde  v\Vert_{L^{p^*}(B_{\rho})},
\end{align}
where $C>0$ is independent of $\rho>0$.

Since $\tilde v\in W^{2,p}(B_{2R})$, we apply Gagliardo-Nirenberg inequality (\ref{conclusion1inlemma14maximalregularity}) to get
\begin{align}\label{byusing2inlemmaprop33new}
\Vert \nabla \tilde v\Vert_{L^{p^*}(B_\rho)}\leq C\Vert D^2\tilde v\Vert^{\theta}_{L^p(B_\rho)}\Vert\nabla \tilde v\Vert^{1-\theta}_{L^{r',q}(B_\rho)}+\Vert \nabla \tilde v\Vert_{L^{r',q}(B_\rho)}  ~\text{ for any  }~\frac{p}{p^*}\leq\theta<1,
\end{align}
where $q=r'(\frac{n}{p}-1)$  and $C>0$ is independent of $\rho.$  From \eqref{byusing1inprop33new} and \eqref{byusing2inlemmaprop33new}, we see that there exists  $C=C(K)>0$  independent of $\rho$ such that  
\begin{align*}
\Vert \tilde v\Vert_{L^p(B_{\rho})}\leq& C\rho^2\Vert D^2\tilde v\Vert^{\theta}_{L^p(B_{2\rho})}\Vert \nabla \tilde v\Vert^{1-\theta}_{L^{r',q}(B_{
\rho})}+\rho^2\Vert \nabla \tilde v\Vert_{L^{r',q}(B_{\rho})}\\
\leq &C\rho^2\Vert D^2\tilde v\Vert_{L^p(B_\rho)}^{\theta r'}+C\rho^2,
\end{align*}
and 
\begin{align*}
\Vert \nabla \tilde v \Vert_{L^{p^*}(B_\rho)}^{r'}\leq& C\Vert D^2\tilde v\Vert_{L^p(B_\rho)}^{\theta r'}\Vert \nabla v\Vert_{L^{r',q}(B_\rho)}^{(1-\theta)r'}+C\Vert \nabla \tilde v\Vert^{r'}_{L^{r',q}(B_\rho)}\\
\leq &C(\Vert D^2\tilde v\Vert_{L^p(B_\rho)}^{\theta r'}+1),
\end{align*}
where we have used $\Vert \nabla \tilde v\Vert_{L^{r',q}(B_{\rho})}\leq K$.

We next fix $R<\rho<2R$ and apply Lemma \ref{basicellipticregulartrudinger} on equation (\ref{governedeqvsubmaximalregular}) to obtain 
\begin{align*}
\Vert D^2\tilde v\Vert_{L^p(B_{\sigma\rho})}\leq &\frac{C}{(1-\sigma)^2R^2}\bigg(R^2\Vert |\nabla \tilde v|^{r'}+g\Vert_{L^p(B_\rho)}+\Vert \tilde v\Vert_{L^p(B_{\rho})}\bigg)\\
\leq &\frac{C}{(1-\sigma)^2}\bigg(R^{r'-\frac{n}{p}(r'-1)}\Vert \nabla \tilde v\Vert_{L^{p^*}(B_\rho)}^{r'}+\frac{1}{R^2}\Vert \tilde v\Vert_{L^p(B_\rho)}+1\bigg)\\
\leq &\frac{C}{(1-\sigma)^2}\bigg(R^{r'-\frac{n}{p}(r'-1)}\Vert D^2\tilde v\Vert_{L^{p}(B_\rho)}^{\theta r'}+R^{r'-\frac{n}{p}(r'-1)}+\Vert D^2\tilde v\Vert_{L^p(B_\rho)}^{\theta r'}+1\bigg)\\
\leq &\frac{C}{(1-\sigma)^2}\bigg(\max\{1,R^{r'-\frac{n}{p}(r'-1)}\}\Vert D^2\tilde v\Vert_{L^p(B_\rho)}^{\theta r'}+1+R^{r'-\frac{n}{p}(r'-1)}\bigg)
\end{align*}
where $\sigma\in(0,1),$ and $\theta$ is chosen such that $0<\theta r'<1.$
Noting that $D^2\tilde v=D^2 v,$ we have
\begin{align*}
\Vert D^2 v\Vert_{L^p(B_{\sigma\rho})}\leq &\frac{C}{(1-\sigma)^2}\bigg(\max\{1,R^{r'-\frac{n}{p}(r'-1)}\}\Vert D^2 v\Vert_{L^p(B_\rho)}^{\theta r'}+1+R^{r'-\frac{n}{p}(r'-1)}\bigg).
\end{align*}
Now, we find if $R$ satisfies
\begin{align*}
R^{r'-\frac{n}{p}(r'-1)}\Vert D^2v \Vert_{L^p(B_\rho)}^{\theta r'}\leq 1+R^{r'-\frac{n}{p}(r'-1)},
\end{align*}
we complete the proof of our desired conclusion.  Otherwise, we arrive at
\begin{align*}
\Vert D^2v\Vert_{L^p(B_{\sigma\rho})}\leq \frac{E}{(1-\sigma)^2}\Vert D^2v\Vert_{L^p(B_{\rho})}^{\theta r'},
\end{align*}
where $0<\theta r'<1$ and $E$ is defined as
\begin{align*}
E:=C\max\Big\{1,R^{r'-\frac{n}{p}(r'-1)}\Big\},~~r'-\frac{n}{p}(r'-1)>0.
\end{align*}
Then we follow the argument below (3.6) in Proposition \cite{cirant2022local} and perform the iteration argument to complete the proof.

\end{proof}
\begin{remark}
We would like to point out that the condition $r'\geq \frac{n}{n-1}$ in Lemma \ref{lemma21prop33keylemma} is imposed to guarantee that $q\leq n$, which satisfies the range of $s$ given in the definition of Morrey space (\ref{preliminmorreydef}). 
 {When $r' < \frac{n}{n-1}$, the Morrey scale is not anymore natural, and we expect $\nabla u$ to be controlled in some H\"older space. We are not going to investigate here this kind of analysis.}
\end{remark}

On the other hand, we require the following Liouville type results:

\begin{lemma}\label{liouvillthmcontramaxi}
Assume $w\in W^{2,p}_{\text{loc}}(\mathbb R^n)$ with $p>\frac{n}{r}$ solves the following equation:
\begin{align}\label{simplfyeqmaximalregular}
-\Delta w+h|\nabla w|^{r'}=0\text{~in~}\mathbb R^n,
\end{align}
where $h\geq 0$ is a constant.  If $w$ satisfies
\begin{align}\label{andsatisfiesprop33}
\nabla w\in L^{r',q}(\mathbb R^n),~\text{with some~}
q\in(0,n],
\end{align}
then we have $w\equiv C$ with $C\in\mathbb{R}$ being some constant.
\end{lemma}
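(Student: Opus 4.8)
The plan is to isolate the trivial case $h=0$, and for $h>0$ to reduce the equation to a scalar differential inequality for the gradient energy on balls. If $h=0$, then $w$ is harmonic, hence so is each $\partial_i w$, and by convexity of $t\mapsto|t|^{r'}$ ($r'>1$) the function $|\partial_i w|^{r'}$ is subharmonic; the sub--mean value property together with the hypothesis on $\nabla w$ gives, for all $x\in\mathbb R^n$ and $R>0$,
\[
|\partial_i w(x)|^{r'}\ \le\ \fint_{B_R(x)}|\partial_i w|^{r'}\,dy\ \le\ \fint_{B_R(x)}|\nabla w|^{r'}\,dy\ \le\ \|\nabla w\|_{L^{r',q}(\mathbb R^n)}^{r'}\,R^{-q}\ \xrightarrow[R\to\infty]{}\ 0,
\]
since $q>0$, whence $\nabla w\equiv0$. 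So assume $h>0$; replacing $w$ by $h^{1/(1-r')}w$ leaves both the equation (now with constant $1$) and the Morrey class of $\nabla w$ unchanged, so we may take $h=1$, and then $\Delta w=|\nabla w|^{r'}\ge0$: $w$ is subharmonic and, by elliptic bootstrap, classical. Finally, applying Lemma~\ref{lemmavitalingredientmaximalregularity} on the balls $B_{2R}(x_0)$ with $f\equiv0$ and $p$ arbitrarily large (so $\hat r=r$) yields the \emph{universal} bound $\int_{B_R(x_0)}|\nabla w|^{r'}\,dx\le C(n,r')\,R^{\,n-r}$ for all $x_0\in\mathbb R^n$ and $R>0$.

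Set $\Phi(R):=\int_{B_R}|\nabla w|^{r'}\,dx$. Using $\Delta w=|\nabla w|^{r'}$ and the divergence theorem (valid for a.e.\ $R$, as $\nabla w\in W^{1,p}_{\rm loc}$), $\Phi(R)=\int_{\partial B_R}\partial_\nu w\,d\mathcal H^{n-1}$, so $\Phi$ is nonnegative, nondecreasing and absolutely continuous with $\Phi'(R)=\int_{\partial B_R}|\nabla w|^{r'}\,d\mathcal H^{n-1}$ a.e.; H\"older's inequality on $\partial B_R$ with exponents $r',r$ gives $\Phi(R)\le(n\omega_n R^{n-1})^{1/r}\Phi'(R)^{1/r'}$, i.e.\ $\Phi'(R)/\Phi(R)^{r'}\ge (n\omega_n)^{-(r'-1)}R^{-(n-1)(r'-1)}$ whenever $\Phi>0$. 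If $r'\le\frac{n}{n-1}$ — in particular for \emph{every} admissible $r'$ when $n=2$ — this already concludes: were $\nabla w\not\equiv0$, pick $R_1$ with $\Phi(R_1)>0$; since $\Phi$ is nondecreasing it stays positive on $[R_1,\infty)$, and integrating over $[R_1,R]$,
\[
\Phi(R_1)^{1-r'}\ >\ \Phi(R_1)^{1-r'}-\Phi(R)^{1-r'}\ \ge\ (r'-1)(n\omega_n)^{-(r'-1)}\!\int_{R_1}^{R}\!s^{-(n-1)(r'-1)}\,ds\ \xrightarrow[R\to\infty]{}\ +\infty,
\]
because $(n-1)(r'-1)\le1$, a contradiction; hence $\nabla w\equiv0$ and $w$ is constant.

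There remains the range $\frac{n}{n-1}<r'\le2$ (which occurs only for $n\ge3$), where the inequality above is merely \emph{consistent with} the singular profile $w_\star(x)=c_\star|x|^{2-r}$: it solves $-\Delta w+|\nabla w|^{r'}=0$ and has $\int_{B_R}|\nabla w_\star|^{r'}\asymp R^{\,n-r}$, yet it does not lie in $W^{2,p}_{\rm loc}$ for any $p>\frac nr$ — so the hypothesis $w\in W^{2,p}_{\rm loc}$, $p>\frac nr$, must now be used. When $r'=2$ this is immediate by Hopf--Cole: $v:=e^{-w}$ satisfies $\Delta v=e^{-w}(|\nabla w|^2-\Delta w)=0$, so $v$ is a \emph{positive} harmonic function on $\mathbb R^n$, hence constant, and so is $w$. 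For $r'\in(\tfrac{n}{n-1},2)$ I would argue by blow--down: with $v_R(y):=R^{(2-r')/(r'-1)}w(Ry)$ — which solves the same equation and satisfies $|\nabla v_R(y)|^{r'}=R^{\,r}|\nabla w(Ry)|^{r'}$, so that $\{\nabla v_R\}$ is bounded in the scale--invariant Morrey space $L^{r',r}(\mathbb R^n)$ by the universal bound — one extracts, via Lemma~\ref{lemma21prop33keylemma}, Lemma~\ref{lemma14maximalregularity} and the Poincar\'e inequality of Lemma~\ref{lemma16newpoincare}, a limit $v_\infty\in W^{2,p}_{\rm loc}(\mathbb R^n)$ ($p>\tfrac nr$) solving $-\Delta v_\infty+|\nabla v_\infty|^{r'}=0$ and homogeneous of degree $2-r$ (from the scaling $v_{\lambda R}(y)=\lambda^{(2-r')/(r'-1)}v_R(\lambda y)$); such a profile must be constant, since otherwise $D^2v_\infty$ would be homogeneous of degree $-r$ and hence not in $L^{p}_{\rm loc}$ near the origin for $p>\frac nr$. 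A contradiction argument centered at near--maximizing balls of $\rho\mapsto\rho^{r}\fint_{B_\rho}|\nabla w|^{r'}$ then forces $\nabla w\equiv0$.

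The reductions, the divergence identity and the ODE manipulation are routine; \emph{the main obstacle is the last paragraph} — making the blow--down rigorous, i.e.\ reconciling the Morrey exponent $q=r'(\tfrac np-1)<r$ demanded by Lemma~\ref{lemma21prop33keylemma} with the scale--invariant exponent $s=r$ natural for the limit, passing to the limit in the nonlinear term $|\nabla v_R|^{r'}$, and establishing the homogeneity and rigidity of $v_\infty$. This step is vacuous whenever $r'\le\frac{n}{n-1}$ (hence always if $n=2$), and for $r'=2$ it is bypassed by the one--line Hopf--Cole argument.
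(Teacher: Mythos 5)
Your treatment of the case $h=0$ is correct and, if anything, cleaner than the paper's: the paper estimates $|\nabla w(x_0)|$ via the interior gradient estimate for harmonic functions combined with Poincar\'e, landing on the same decay rate $R^{-q/r'}$, whereas you get it from subharmonicity of $|\partial_i w|^{r'}$ and the sub-mean-value property; both are valid. For $h>0$, however, the paper does not prove anything: it simply invokes Lemma 2.5 of \cite{cirant2022local}. Your attempt to give a self-contained proof succeeds in two regimes --- the ODE/Gr\"onwall argument for $r'\le \frac{n}{n-1}$ (hence all of $n=2$) and the Hopf--Cole reduction for $r'=2$ --- and both of those pieces are sound (modulo the harmless slip that the normalizing factor should be $h^{1/(r'-1)}$, not $h^{1/(1-r')}$).

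The genuine gap is exactly the range $\frac{n}{n-1}<r'<2$, which is the range this paper actually needs (Theorem \ref{thm21zuidazhengze} assumes $r'\ge\frac{n}{n-1}$, and the whole point of the section is the subquadratic case). Your blow-down sketch is not a proof as it stands: (i) compactness of $\{v_R\}$ in $W^{2,p}_{\rm loc}$ cannot be extracted directly from Lemma \ref{lemma21prop33keylemma}, because that lemma requires a bound in $L^{r',q}$ with $q=r'(\frac np-1)<r$, and the universal bound $\int_{B_\rho}|\nabla v_R|^{r'}\le C\rho^{\,n-r}$ controls only the scale-invariant exponent $s=r$; since $\rho^{q}\fint_{B_\rho}|\nabla v_R|^{r'}\le C\rho^{\,q-r}$ blows up as $\rho\to0$, the $L^{r',r}$ bound does not yield an $L^{r',q}$ bound, which is precisely the mismatch you flag but do not resolve; (ii) homogeneity of the limit $v_\infty$ does not follow from the scaling relation alone --- it requires a monotonicity formula or an almost-rigidity statement that you have not supplied; (iii) the final ``contradiction argument centered at near-maximizing balls'' is only named, not carried out. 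Since you yourself identify this paragraph as the main obstacle, the honest conclusion is that the proof is incomplete for $\frac{n}{n-1}<r'<2$ (and silent for $r'>2$). The quickest repair is the one the paper uses: quote the Liouville theorem of \cite{cirant2022local} for $h>0$, keeping your arguments for $h=0$, $r'=2$, and $r'\le\frac{n}{n-1}$ as self-contained alternatives where they apply.
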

\begin{proof}
    If $h>0$, the conclusion is stated in Lemma 2.5 of \cite{cirant2022local}.  We remark that by the standard bootstrap argument, one has $w\in C^3(\mathbb R^n)$ when $h>0$ since $p>\frac{n}{r}$.  Next, we consider $h=0$ and simplify equation (\ref{simplfyeqmaximalregular}) as
    $$\Delta w=0~~\text{in~~}\mathbb R^n.$$
    
    Next,  we estimate $\nabla w (x_0)$ for arbitrary $x_0\in\mathbb{R}^n$.  To this end, define $\tilde w(x):=w(x)-w(x_0)$, then $\tilde w$ satisfies 
 $$\Delta \tilde w=0~~\text{in~~}\mathbb R^n \ \text{ and }\ \fint_{B_R(x_0)}\tilde w\,dx=\tilde w(x_0)=0\ \text{for any } R>0.$$
    By using the gradient estimates for harmonic functions, we find from H\"{o}lder's inequality, Poincar\'{e}'s inequality  and (\ref{andsatisfiesprop33}) that
\begin{align*}
|\nabla w(x_0)|=|\nabla \tilde w(x_0)|\leq \frac{C}{R^{n+1}}\Vert \tilde w\Vert_{L^1(B_R(x_0))}\leq &\frac{C}{R^{n+1}}\Vert \tilde w\Vert_{L^{r'}(B_{R}(x_0))}R^{\frac{n}{r}}\\
\leq \frac{C}{R^{n+1}}R\Vert \nabla\tilde  w\Vert_{L^{r'}(B_{R}(x_0))}R^{\frac{n}{r}}\leq &\frac{C}{R^{n-\frac{n}{r}}}\bigg(\frac{1}{R^{q-n}}\bigg)^{\frac{1}{r'}}=\frac{C}{R^\frac{q}{r'}},
\end{align*}
where $C>0$ is a constant independent of $R$.  In light of the definition of $q$ in (\ref{andsatisfiesprop33}), one obtain $
|\nabla w(x_0)|= 0$
by letting $R\rightarrow +\infty$ in the above estimate. 
Thus, we have $w\equiv C$ for some constant $C\in \mathbb{R}$ since $x_0$ is arbitrary.
\end{proof}
We are ready to show the weighted Morrey estimates satisfied by solutions of (\ref{newsectioneq1}), which is
\begin{theorem}\label{thm21zuidazhengze}
Let $p>\frac{n}{r}$ and assume $u\in W^{2,p}(\Omega)$ solves 
\begin{align*}
-\Delta u+C_H|\nabla u|^{r'}=f\text{~in~}\Omega\subset \mathbb R^n,
\end{align*}
in the strong sense, where $r'\geq \frac{n}{n-1}$ and there exists a constant $M>0$ such that  $\Vert f\Vert_{L^p(\Omega)}\leq M.$  Then  
   \begin{align*}
    \sup_{B_{2R}(\hat x)\subset \Omega}R^q\fint_{B_R(\hat x)}|\nabla u|^{r'}\,dx \ (\mathrm{dist}(B_{R}(\hat x),\partial\Omega))^{r-q}\leq C, 
    \end{align*}
    where $C=C(M,n,p,r',\Omega)$ is a positive constant.  In particular, if $p<n$, $q=r'(\frac{n}{p}-1)$; otherwise if $p\geq n,$ $q$ is any number satisfying $q\in (0,r)$.
\end{theorem}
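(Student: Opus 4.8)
The plan is to argue by contradiction via a blow-up analysis carried out on the Morrey scale, in the spirit of the superquadratic argument of \cite{cirant2022local} but arranged so as to cover the subquadratic range. Given a strong solution $u$ of $-\Delta u+C_H|\nabla u|^{r'}=f$ in $\Omega$ with $\|f\|_{L^p(\Omega)}\le M$, set
\begin{align*}
\Theta(u):=\sup_{B_{2\rho}(z)\subset\Omega}\ \rho^q\Big(\fint_{B_\rho(z)}|\nabla u|^{r'}\,dx\Big)\,\mathrm{dist}(B_\rho(z),\partial\Omega)^{r-q};
\end{align*}
the theorem is precisely the bound $\Theta(u)\le C(M,n,p,r',\Omega)$. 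I focus on the case $\tfrac{n}{r}<p<n$ (when $p\ge n$ the claim follows more directly from the Lipschitz-type gradient estimate of \cite{lions1985quelques}, since then $\nabla u\in L^\infty_{\rm loc}$). Suppose the bound fails: pick solutions $u_k$ with $\|f_k\|_{L^p(\Omega)}\le M$ and $\Theta_k:=\Theta(u_k)\to\infty$. By Lemma \ref{lemmavitalingredientmaximalregularity}, applied with $\hat r=\max\{n/p,r\}=r$ (here $p>n/r$), one has $\int_{B_\rho(z)}|\nabla u_k|^{r'}\,dx\le K\rho^{\,n-r}$ whenever $B_{2\rho}(z)\subset\Omega$, with $K=K(M,n,p,r')$; hence the quantity under the supremum is $\le cK\,(\mathrm{dist}(B_\rho(z),\partial\Omega)/\rho)^{r-q}$, so any near-maximizing configuration $(y_k,\rho_k)$ (that is, $B_{2\rho_k}(y_k)\subset\Omega$ with $\rho_k^q\fint_{B_{\rho_k}(y_k)}|\nabla u_k|^{r'}\,d_k^{\,r-q}\ge\tfrac12\Theta_k$, $d_k:=\mathrm{dist}(B_{\rho_k}(y_k),\partial\Omega)$) necessarily satisfies $(d_k/\rho_k)^{r-q}\gtrsim\Theta_k$, in particular $\rho_k/d_k\to0$ and $T_k:=\mathrm{dist}(y_k,\partial\Omega)/\rho_k\to\infty$.

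Now rescale: put $v_k(y):=\mu_k\,u_k(y_k+\rho_k y)$ on $B_{T_k}(0)$, with $\mu_k>0$ fixed by $\mu_k^{r'}:=\Theta_k^{-1}\rho_k^{\,q-r'}d_k^{\,r-q}$, so that $\tfrac12\le\fint_{B_1(0)}|\nabla v_k|^{r'}\,dy\le1$. The crucial point is that the weight exponent $r-q$ is chosen exactly so that all powers of $d_k$ cancel in the estimates produced by this rescaling. Concretely: (i) from the definition of $\Theta_k$ as a supremum and the choice of $\mu_k$ one gets, for each fixed $T$ and $k$ large, $\sup_{B_{2\rho}(z)\subset B_T(0)}\rho^q\fint_{B_\rho(z)}|\nabla v_k|^{r'}\,dy\le 2^{\,r-q}$ (using $\rho_k/d_k\to0$ to compare the effective distance-to-boundary), so $\nabla v_k$ is bounded in $L^{r',q}$ uniformly on compact subsets and the bound passes to weak limits; and (ii) $v_k$ solves
\begin{align*}
-\Delta v_k+h_k|\nabla v_k|^{r'}=g_k\quad\text{in }B_{T_k}(0),\qquad h_k:=C_H\mu_k^{1-r'}\rho_k^{\,2-r'}\ge0,\quad g_k(y):=\mu_k\rho_k^{2}f_k(y_k+\rho_k y),
\end{align*}
where a short computation with $\mu_k^{r'}=\Theta_k^{-1}\rho_k^{q-r'}d_k^{r-q}$ gives $h_k=C_H\,\Theta_k^{(r'-1)/r'}(\rho_k/d_k)^{\beta}$ with $\beta=r'-\tfrac np(r'-1)>0$; since $(d_k/\rho_k)^{r-q}\gtrsim\Theta_k$ and $\beta/(r-q)=(r'-1)/r'$ (again a consequence of the precise value of $q$), the coefficients $h_k$ are uniformly bounded, and similarly $\|g_k\|_{L^p(B_T)}\le c\,M\,(\mathrm{diam}\,\Omega)^{(r-q)/r'}\,\Theta_k^{-1/r'}\to0$ for each fixed $T$. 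Passing to a subsequence, $h_k\to h_\infty\in[0,+\infty)$.

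With these uniform bounds the compactness argument closes the proof. Since $|\Delta v_k|\le h_k|\nabla v_k|^{r'}+|g_k|$ with $h_k$, $\|g_k\|_{L^p(B_T)}$ and $\|\nabla v_k\|_{L^{r',q}(B_T)}$ all bounded, Lemma \ref{lemma21prop33keylemma} (applied with the coefficients $h_k$ in place of $C_H$, its constant then depending on the uniform bound for $h_k$) yields $\|D^2 v_k\|_{L^p(B_{T/2})}\le C(T)$. Replacing $v_k$ by $v_k-\fint_{B_1(0)}v_k$, the sequence is bounded in $W^{2,p}_{\rm loc}(\mathbb R^n)$; since $q<n$ is equivalent to $r'<p^*=\tfrac{np}{n-p}$ --- and $q<n$ is forced here by $p>\tfrac nr$ together with $r'\ge\tfrac{n}{n-1}$ --- the Rellich--Kondrachov theorem gives, along a subsequence, weak convergence in $W^{2,p}_{\rm loc}$ and strong convergence in $W^{1,r'}_{\rm loc}$ to some $v\in W^{2,p}_{\rm loc}(\mathbb R^n)$, with $\nabla v\in L^{r',q}(\mathbb R^n)$ by (i). Passing to the limit in the equation (the strong $L^{r'}_{\rm loc}$ convergence of the gradients controls the term $h_k|\nabla v_k|^{r'}$ and $g_k\to0$) gives $-\Delta v+h_\infty|\nabla v|^{r'}=0$ in $\mathbb R^n$. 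By the Liouville result Lemma \ref{liouvillthmcontramaxi}, $v$ is constant, so $\nabla v\equiv0$; but strong $L^{r'}(B_1)$ convergence forces $\fint_{B_1(0)}|\nabla v|^{r'}=\lim_k\fint_{B_1(0)}|\nabla v_k|^{r'}\ge\tfrac12$, a contradiction. Tracking constants, the resulting bound depends on $M,n,p,r'$ and on $\Omega$ only through $\mathrm{diam}\,\Omega$ (used in the estimate of $\|g_k\|_{L^p}$ and in Lemma \ref{lemmavitalingredientmaximalregularity}), which is the stated dependence.

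The part I expect to be the main obstacle is the bookkeeping in the second step: one must single out \emph{exactly} the weight $\mathrm{dist}(\cdot,\partial\Omega)^{r-q}$ for which the three objects produced by the rescaling --- the renormalized Morrey norm of $\nabla v_k$, the coefficient $h_k$, and the $L^p$ norm of $g_k$ --- are \emph{simultaneously} controlled uniformly in $k$; with any other power one of them diverges, and this is genuinely more delicate than the scale-invariant set-up of \cite{cirant2022local} because in the subquadratic regime the nonlinear term does not automatically vanish in the blow-up. A secondary but essential point is the passage to the limit in the nonlinearity, which needs strong $W^{1,r'}_{\rm loc}$ compactness and is precisely where the hypothesis $r'\ge\tfrac{n}{n-1}$ (equivalently $q\le n$, i.e. the Morrey exponent being admissible) enters.
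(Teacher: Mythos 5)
Your proof is correct and follows essentially the same route as the paper's: a contradiction/blow-up argument on the weighted Morrey scale, selecting near-maximizers of the same quantity, rescaling so that the unit-ball average of $|\nabla v_k|^{r'}$ is normalized, checking that the rescaled coefficient, the interior Morrey norm of $\nabla v_k$, and the $L^p$ norm of the right-hand side are simultaneously bounded (your exponent computations match the paper's \eqref{eq-2.33}, \eqref{stareqmaximalregular} and \eqref{fmconverge0strongly}), and then concluding via Lemma \ref{lemma21prop33keylemma}, local $W^{2,p}$ compactness, and the Liouville result of Lemma \ref{liouvillthmcontramaxi} against the normalization on $B_1$. The only cosmetic deviations are your choice of normalization constant, which gives $\fint_{B_1}|\nabla v_k|^{r'}\in[\tfrac12,1]$ rather than $=1$, and your treatment of $p\ge n$ via the Lipschitz gradient estimate, where the paper instead reduces to some $p_0\in(\tfrac{n}{r},n)$ realizing an arbitrary $q\in(0,r)$.
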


    \begin{proof}
   To prove this theorem, we argue by contradiction and assume sequences $(f_k)_k\subset L^q(\Omega)$ and $(u_k)_k\subset W^{2,p}(\Omega) $ satisfying for every $n$,
\begin{align}\label{fromumeqkeylemma}
-\Delta u_k+C_H|\nabla u_k|^{r'}= f_k,~~x\in\Omega,
\end{align}
and $\sup_k\Vert f_k\Vert_{L^p(\Omega)}\leq M.$  Moreover,
\begin{align}\label{moreovernotingthat202443}
  \sup_{B_{2R}(\hat x)\subset \Omega}R^q\fint_{B_R(\hat x)}|\nabla u_k|^{r'}\,dx \ (\mathrm{dist}(B_{R}(\hat x),\partial\Omega))^{r-q}:=L_k\rightarrow +\infty \text{~as~}k\rightarrow +\infty.
  \end{align}
   Consequently, there exist $x_k$ and $R_k$, $k=1,2,\cdots$, such that
\begin{align*}
\frac{L_k}{2}\leq \mathrm{dist}(B_{R_k}(x_k),\partial\Omega)^{r-q}R_k^q\fint_{B_{R_k}(x_k)}|\nabla u_k|^{r'}\,dx\leq L_k,
\end{align*}
where $B_{2R_k}(x_k)\subset \Omega.$  
Define
\begin{align}\label{defwnew202443}
w_k(y):=\frac{R_k^{q/r'}}{R_kM_k^{1/r'}}u_k(x_k+R_k y),
\end{align}
where
\begin{align*}
y\in \frac{\Omega-x_k}{R_k}:=\Omega_k,~~M_k:=R_k^q\fint_{B_{R_k}(x_k)}|\nabla u_k|^{r'}\,dx.
\end{align*}
In light of $p>\frac{n}{r}$, we obtain from  Lemma \ref{lemmavitalingredientmaximalregularity} that 
\begin{equation}\label{eq-2.28}
R_k^r\fint_{B_{R_k}(x_k)}|\nabla u_k|^{r'}\,dx\leq C\end{equation}
for some constant $C>0$ independent of $k$.  Moreover, thanks to (\ref{moreovernotingthat202443}), we obtain
\begin{align}\label{rationrelationweused}
 \bigg[\frac{\text{dist}(B_{R_k}(x_k),\partial\Omega)}{R_k}\bigg]^{r-q}\geq&\frac{1}{2}\frac{L_k}{R_k^{r}\fint_{B_{R_k}(x_k)}|\nabla u_k|^{r'}\,dx}\rightarrow +\infty.
\end{align}
As a consequence, we see that  $R_k\rightarrow 0$ and $M_k\rightarrow +\infty.$ 

We next claim that for any $s>0$ and $\hat y\in \mathbb{R}^n$, the following estimate holds for $k$ large enough: 
\begin{equation}\label{stareqmaximalregular}
    \limsup_k s^q\fint_{B_s(\hat y)}|\nabla w_k(y)|^{r'}\,dy\leq C<\infty,
\end{equation}
where the  constant $C>0$ independent of $k$, $s$ and $\hat y$.  To begin with, one can see from (\ref{rationrelationweused}) that $\Omega_k\rightarrow \mathbb{R}^n$ as $k\rightarrow+\infty$, hence $\hat y\in \Omega_k$ for $k$ large enough. Noting that  $B_{R_k}(x_k)\subset \Omega$,  we further find from (\ref{rationrelationweused}) that $B_{R_ks}(x_k+R_k\hat y)\subset\Omega$, then apply the triangle inequality to get 
$$\text{dist}(B_{R_ks}(x_k+R_k\hat y),\partial\Omega)\geq \text{dist}(B_{R_k}(x_k),\partial\Omega)-(|\hat y|+s-1)R_k.$$
By using (\ref{rationrelationweused})  again,  we deduce that for $k$ large enough,
\begin{align}
s^q&\fint_{B_s(\hat y)}|\nabla w_k(y)|^{r'}\,dy=\frac{(R_ks)^q}{M_k}\fint_{B_{R_ks}(x_k+R_k\hat y)}|\nabla u_k(x)|^{r'}\,dx
\leq \frac{L_k}{M_k[{\text{dist}(B_{R_ks}(x_k+R_k\hat y),\partial\Omega)}]^{r-q}} \nonumber\\
\leq& 2 \bigg(\frac{\text{dist}(B_{R_k}(x_k),\partial\Omega)}{\text{dist}(B_{R_ks}(x_k+R_k\hat y),\partial\Omega)}\bigg)^{r-q} \leq 2 \bigg(\frac{\text{dist}(B_{R_k}(x_k),\partial\Omega)}{\text{dist}(B_{R_k}(x_k),\partial\Omega)-(|\hat y|+s-1)R_k}\bigg)^{r-q}\overset{k}{\to}2,\nonumber
\end{align}
which finishes the proof of claim \eqref{stareqmaximalregular}.


On the other hand, invoking  (\ref{defwnew202443}), we have
\begin{align}\label{finalcontrapart}
\fint_{B_1(0)}|\nabla w_k(y)|^{r'}\,dy =1.
\end{align}
Moreover, one obtains from (\ref{fromumeqkeylemma}) that $w_k$ satisfy
\begin{align}\label{inlightoflemmanewprop33}
-\Delta w_k+\frac{M_k^{{1/r}}}{R_m^{q/r-1}}|\nabla w_k|^{r'}=\frac{R_k^{q/r'+1}}{M_k^{1/r'}}f_k(x_k+R_ky):=\hat f_k,
\end{align}
where 
\begin{align}\label{eq-2.33}
\Bigg(\frac{M_k^{1/r}}{R_k^{q/r-1}}\Bigg)^{r}=\frac{M_k}{R_k^{q-r}}=R_k^r\fint_{B_{R_k}(x_k)}|\nabla u_k|^{r'}\,dx\leq C<\infty 
\end{align}
by \eqref{eq-2.28}, and it follows from  the boundness of $f$ in $L^p(\Omega)$ and $\frac{q}{r'}-\frac{n}{p}+1=0$ that 
\begin{align}\label{fmconverge0strongly}
&\Vert \hat f_k\Vert_{L^p(\Omega_k)}=\bigg\Vert \frac{R_k^{q/r'+1}}{M_k^{1/r'}}f_k(x_k+R_ky)\bigg\Vert_{L^p(\Omega_k)}\nonumber\\
=&\frac{R_k^{q/r'+1}}{M_k^{1/r'}}\bigg(\int|f_k(x_k+R_ky)|^p\,dy\bigg)^{\frac{1}{p}}=\frac{R_k^{q/r'+1}}{M_k^{1/r'}}\frac{1}{R_k^{\frac{n}{p}}}\Vert f_k\Vert_{L^p(\Omega)}\nonumber\\
=&\frac{R_k^{q/r'-\frac{n}{p}+1}}{M_k^{1/r'}}\Vert f_k\Vert_{L^p(\Omega)}=\frac{1}{M_k^{1/r'}}\Vert f_k\Vert_{L^p(\Omega)}\rightarrow 0\text{~as~}k\rightarrow +\infty.
\end{align}
In light of (\ref{inlightoflemmanewprop33}), we obtain for any $R>0$,
\begin{align}\label{eq-2.35}
|\Delta w_k|\leq \frac{M_k^{\frac{1}{r}}}{R_k^{\frac{q}{r}-1}}|\nabla w_k|^{r'}+\frac{R_k^{\frac{q}{r'}+1}}{M_k^{\frac{1}{r'}}}|f_k|,\text{~in~}B_{2R}.
\end{align}
Using (\ref{stareqmaximalregular}), \eqref{eq-2.33} and  \eqref{fmconverge0strongly}, one can invoke Lemma \ref{lemma21prop33keylemma} to get
$$\Vert D^2w_k\Vert_{L^p(B_{R})}\leq C(R)<\infty $$
for some $C(R)>0$ independent of $k.$
Moreover, we can deduce from  Lemma \ref{lemma14maximalregularity}  and \eqref{stareqmaximalregular} that 
\begin{equation}\label{eq-2.36}
  \Vert \nabla w_k\Vert_{L^{p}(B_R)}\leq C \Vert  \nabla w_k\Vert_{L^{p^*}(B_R)}\leq C\Vert D^2 w_k\Vert^{\theta}_{L^p(B_R)}\Vert \nabla w_k\Vert^{1-\theta}_{L^{r',q}(B_R)}+C\Vert \nabla  w_k\Vert_{L^{r',q}(B_R)}\leq C(R)<\infty.
\end{equation}
Without loss of generality, we may  fix $\fint_{B_1(0)}w_k\,dx=0$, and then obtain from  Lemma \ref{lemma16newpoincare} that 
\begin{align}\label{eq-2.37}
\Vert w_k\Vert_{L^{r'}(B_R)}\leq C \Vert \nabla w_k\Vert_{L^{r'}(B_R)}\leq C(R)<\infty.
\end{align}

Therefore, by using a standard diagonal argument, we can obtain from \eqref{eq-2.35}, \eqref{eq-2.36} and \eqref{eq-2.37} that 
    \begin{align*}
    w_k\rightharpoonup w_{\infty}\text{~in~}W^{2,p}_{\text{loc}}(\mathbb R^n).
\end{align*}
Thus $|\nabla w_k|\rightharpoonup |\nabla w_{\infty}|$ in $W^{1,p}_{\text{loc}}(\mathbb R^n)\hookrightarrow\hookrightarrow L^s_{\text{loc}}(\mathbb R^n)$ for all $s\in[p, p^*)$. Noting that $r'p\in(p,p^*)$ by $p>\frac{n}{r}$, we thus deduce that 
\begin{align*}
C_H|\nabla w_k|^{r'}\rightarrow h_{\infty}|\nabla w_{\infty}|^{r'}\text{~in~}L^p_{\text{loc}}(\mathbb R^n) \text{ for some }h_\infty\geq 0,
\end{align*}
where \eqref{eq-2.33} is used.

In summary, one arrives at the following limiting problem
\begin{align*}
-\Delta w_{\infty}+h_{\infty} |\nabla w_{\infty}|^{r'}=0\text{~in~}\mathbb R^n,
\end{align*}
where $w_{\infty}\in W^{2,p}_{\text{loc}}(\mathbb R^n)$ with $p>\frac{n}{r}$, and in particular, $|\nabla w_{\infty}|\in L^{r',q}(\mathbb R^n)$ followed  by  \eqref{stareqmaximalregular}. By invoking Lemma \ref{liouvillthmcontramaxi}, we find $w_{\infty}$ is a constant and thus  $\nabla w_{\infty}\equiv 0$, which contradicts (\ref{finalcontrapart}).  This completes the proof of our theorem.  
\end{proof}

With the aid of Theorem \ref{thm21zuidazhengze}, we can prove Theorem \ref{thmmaximalregularity}, which is

\vspace{2mm}

\textit{Proof of Theorem \ref{thmmaximalregularity}:}
\begin{proof}
The argument is similar as the proof of Theorem 1.3 in \cite{cirant2022local} and we give the sketch of proof here for the completeness.  

We first consider the case for $\frac{n}{r}<p<n$. Choosing  $\Omega''$ such that $\Omega'\subset\subset \Omega''\subset \subset \Omega$,   by invoking Theorem \ref{thm21zuidazhengze}, one has
$$\Vert \nabla u\Vert_{L^{r',q}(\Omega'')}\leq C,~q=r'\bigg(\frac{n}{p}-1\bigg).$$
where $C=C(M,\text{dist}(\Omega'',\partial\Omega),n,p, C_H,r)>0$ and $q<r$.  In addition, for ${\bar \Omega}'$, we have there exists a finite cover $\{B_{R}(x_k)\}_k$ such that $\bar \Omega'\subset \cup_k B_{R}(x_k)$ and $B_{2R}(x_k)\subset \Omega''$ for any $k.$  With the aid of Lemma \ref{lemma21prop33keylemma}, one obtains
$$\Vert D^2 u\Vert_{L^p(\Omega')}\leq C,$$
where $C=C(M,\text{dist}(\Omega',\partial\Omega''),n,p, C_H,r)>0$  is some constant depends on .  Moreover, thanks to Lemma \ref{lemma14maximalregularity}, we can get the gradient estimate of $u$.  By fixing the average of $u$, one finally gets the desired conclusion.  

If $p\geq n$, we pick up some $\frac{n}{r}<q<n$ at first, then follow the discussion shown above to arrive at 
$$\Vert D^2u\Vert_{L^q(\Omega')}\leq C,$$
where $C>0$ is some constant.  Next, we perform the bootstrap argument and complete the proof of this theorem since $q>\frac{n}{r}.$
\end{proof}
\begin{remark}
We remark that our results of $L^p$ estimates shown in Theorem \ref{thmmaximalregularity} also hold for $r'>2$, which covers the results of Theorem 1.3 in \cite{cirant2022local}.  Indeed, we find when $r'\geq 2$, solution $u$ still satisfies $\nabla u\in L^{r', r}_{\text{loc}}(\Omega)$ provided with $p>\frac{n}{r}.$  By establishing the key Lemma \ref{lemmavitalingredientmaximalregularity} and performing the blow-up argument, we show that $\nabla u\in L^{r',q}_{\text{loc}}(\Omega)$ with $q<r$ given in Theorem \ref{thm21zuidazhengze}, which is a higher regularity compared to the conclusion shown in Theorem 1.2 of \cite{cirant2022local}.  In addition, we give a unified argument to prove the $L^p$ maximal regularity of the solution to (\ref{newsectioneq1}) for the case of $\frac{n}{n-1}\leq r'\leq 2$ and $r'>2.$
\end{remark}
Next, we focus on the existence and asymptotic behaviors of least energy solutions to (\ref{MFG-SS}) under with critical mass exponent.

\section{Existence and Regularities: Hamliton-Jacobi and Fokker-Planck equations}\label{preliminaries}
In this section, we shall state some key lemmas and important properties satisfied by the solutions to (\ref{MFG-SS}).  First of all, we collect existence and regularity results of Hamilton-Jacobi-Bellman (HJB) equations and Fokker-Planck equations, which are summarized in Subsection \ref{subsection1} and \ref{subsection2}, respectively.
\subsection{Hamilton-Jacobi Equations}\label{subsection1}
Consider the following form of Hamilton-Jacobi equations:
\begin{align}\label{HJB-regularity}
-\Delta u_k+C_H|\nabla u_k|^{r'}+\lambda_k=V_k(x)+f_k(x),\ \ x\in\mathbb R^n,
\end{align}
where $r'>1$ is fixed, $C_H$ is a given positive  constant independent of $k$ and $(u_k,\lambda_k)$ denote the solutions to (\ref{HJB-regularity}).  Focusing on the regularities of $u_k$, one has
\begin{lemma}\label{sect2-lemma21-gradientu}
Suppose that $f_k\in L^{\infty}(\mathbb R^n)$ satisfies  $\Vert f_k\Vert_{L^\infty}\leq C_f$, $|\lambda_k|\leq \lambda$, and the potential functions $V_k(x)\in C^{0,\theta}_{\rm loc}(\mathbb R^n)$ with $\theta\in(0,1)$ satisfy  $0\leq V_k(x)\rightarrow +\infty$ as $|x|\rightarrow +\infty,$ and $\exists~ R>0$  sufficiently large such that 
\begin{align}\label{themoregeneralvkcondition}
0< C_1\leq \frac{V_k(x+y)}{V_k(x)}\leq C_2,\text{~for~all~}k\text{~and~all~}|x|\geq R \text{~with~}|y|<2,
\end{align}
where the positive constants $C_f$, $\lambda$, $R$, $C_1$ and $C_2$ are independent of $k$.  Let $(u_k,\lambda_k)\in C^2(\mathbb R^n)\times \mathbb R$ be a sequence of solutions to (\ref{HJB-regularity}).  Then,  for all $k$,
\begin{align}\label{usolutiongradientestimatepre1}
|\nabla u_k(x)|\leq C(1+V_k(x))^{\frac{1}{r'}}, \text{ for all } x\in\mathbb{R}^n,
\end{align}
where constant $C$ depends on $C_H$, $C_1$, $C_2$, $\lambda$, $r$, $n$ and $C_f.$

In particular, if there exist   $b\geq 0$ and $C_{F}>0$  independent of $k,$ such that  following conditions hold on $V_k$
\begin{align}\label{cirant-Vk}
 C_F^{-1}(\max\{|x|-C_F,0\})^b\leq V_k(x)\leq C_F(1+|x|)^b,~~\text{for all }k\text{ and }x\in\mathbb R^n,
 \end{align}
 then  we have 
\begin{align}\label{usolutiongradientestimatepre}
|\nabla u_k|\leq C(1+|x|)^{\frac{b}{r'}}, ~\text{for all }k\text{ and }x\in\mathbb R^n,
\end{align}
where constant $C$ depends on $C_H$, $C_{F}$, $b$, $\lambda$, $r$, $n$ and $C_f.$ 
\begin{proof}
The approach what we shall employ is based on Theorem 2.5 in \cite{cesaroni2018concentration}.  Indeed, when $V_k$ satisfy (\ref{cirant-Vk}), (\ref{usolutiongradientestimatepre}) hold and the arguments are stated in \cite{cesaroni2018concentration}.  

Next, we focus on the proof of (\ref{usolutiongradientestimatepre1}) for the more general $V_k$ satisfying (\ref{themoregeneralvkcondition}).  It is shown in  (2-6)  of \cite{cesaroni2018concentration} that if 
$$|-\Delta v+|\nabla v|^{r'}|\leq K~\text{in~}B_2(0)\text{~with~positive constant~} K,$$
then
\begin{align}\label{26lemmagradientestimategeneralvu}
\Vert \nabla v\Vert_{L^\gamma(B_1(0)}\leq \tilde C,~\forall \gamma\in[1,\infty],
\end{align}
where positive constant $\tilde C$ depends on $K$, $r$, $n$ and $C_H.$ For any fixed  $x_0 \in \mathbb{R}^n$, 
let $\delta=(1+V_n(x_0))^{-\frac{1}{r}}$   and define
$$v_k(y)=\delta^{\frac{2-r'}{r'-1}}u_k(x_0+\delta y),$$
then we have $v_k$ solves
\begin{align*}
-\Delta v_k+C_H|\nabla v_k|^{r'}=\delta^{r}[V_k(x_0+\delta y)-f_k(x_0+\delta y)-\lambda_k].
\end{align*}
Since $\delta>0$ is sufficiently small and $|x_0|>R$, one finds from (\ref{themoregeneralvkcondition}) that 
\begin{align*}
\delta^{r}|V_k(x_0+\delta y)-f_k(x_0+\delta y)-\lambda_k|\leq \frac{\bar C V_k(x_0+\delta y)+C_f+\lambda}{1+V_k(x_0)}\leq \hat C\text{~for~}|y|\leq 2,
\end{align*}
where the positive constants $\bar C$ and $\hat C$ are independent of $k.$  Then it follows from \eqref{26lemmagradientestimategeneralvu} that 
$$\Vert \nabla v_k(y)\Vert_{L^\infty(B_1(0))}\leq \tilde C$$ 
for some $\tilde C>0.$  In particular, choosing $y=0$, we arrive at
$$|\nabla u_k(x_0)|=\delta^{-\frac{1}{r'-1}}|\nabla v_k(0)|\leq \tilde C(1+V_k(x_0))^{\frac{1}{r'}},$$
which gives the desired estimate \eqref{usolutiongradientestimatepre1}.
In addition, noting that $u_k\in C^2(\mathbb R^n),$ we further obtain the desired conclusion (\ref{usolutiongradientestimatepre1}).
\end{proof}
\end{lemma}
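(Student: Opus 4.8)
The plan is to obtain \eqref{usolutiongradientestimatepre1} from a \emph{universal} interior Lipschitz bound for the coercive viscous Hamilton--Jacobi operator, combined with a rescaling whose scale is calibrated to the local size of the potential. First I would dispose of the polynomial case: the implication \eqref{cirant-Vk}$\,\Rightarrow\,$\eqref{usolutiongradientestimatepre} is exactly Theorem 2.5 of \cite{cesaroni2018concentration}, and in fact once \eqref{usolutiongradientestimatepre1} is known, \eqref{usolutiongradientestimatepre} follows by inserting the upper bound in \eqref{cirant-Vk} into $(1+V_k(x))^{1/r'}$. So the real work is \eqref{usolutiongradientestimatepre1} under the doubling assumption \eqref{themoregeneralvkcondition}.

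For the main estimate, fix $k$ and a point $x_0$ with $|x_0|\ge R$. I would set $\delta:=(1+V_k(x_0))^{-1/r}\in(0,1]$ and rescale by
\[
v_k(y):=\delta^{\frac{2-r'}{r'-1}}\,u_k(x_0+\delta y),\qquad |y|\le 2 .
\]
The exponent is forced by scale invariance: it is the unique choice making $\Delta v_k$ and $|\nabla v_k|^{r'}$ carry the same power of $\delta$, and since $\frac{2-r'}{r'-1}+2=\frac{r'}{r'-1}=r$, equation \eqref{HJB-regularity} turns into
\[
-\Delta v_k+C_H|\nabla v_k|^{r'}=\delta^{r}\bigl[V_k(x_0+\delta y)+f_k(x_0+\delta y)-\lambda_k\bigr]\quad\text{in }B_2(0).
\]
The point of taking $\delta=(1+V_k(x_0))^{-1/r}$ is that $\delta|y|<2$ on $B_2$, so the doubling bound \eqref{themoregeneralvkcondition} gives $V_k(x_0+\delta y)\le C_2 V_k(x_0)$, whence $\delta^{r}V_k(x_0+\delta y)\le C_2$; and $\delta\le1$ handles the $f_k$ and $\lambda_k$ terms. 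Thus the right-hand side is bounded in $L^\infty(B_2)$ by a constant $K=K(C_2,C_f,\lambda)$ independent of $k$ and $x_0$.

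Then I would invoke the classical interior gradient estimate for $-\Delta v+C_H|\nabla v|^{r'}=h$ with $r'>1$: by coercivity of the gradient term, $\|\nabla v\|_{L^\infty(B_1)}\le \widetilde C(K,n,r,C_H)$, with \emph{no} dependence on $\|v\|_\infty$ --- this is exactly estimate (2-6) of \cite{cesaroni2018concentration}, ultimately going back to \cite{lions1985quelques}. Applying it to $v_k$, evaluating at $y=0$, and using $\nabla v_k(0)=\delta^{1/(r'-1)}\nabla u_k(x_0)$ together with the conjugacy $r(r'-1)=r'$, one gets
\[
|\nabla u_k(x_0)|=\delta^{-\frac{1}{r'-1}}|\nabla v_k(0)|\le \widetilde C\,(1+V_k(x_0))^{\frac{1}{r(r'-1)}}=\widetilde C\,(1+V_k(x_0))^{\frac{1}{r'}},
\]
which is \eqref{usolutiongradientestimatepre1} on $\{|x|\ge R\}$. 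On the bounded region $\{|x|\le R\}$ I would run the identical rescaling with $\delta$ of order one; since then $x_0+\delta y\in B_{R+2}(0)$, the forcing is controlled by the bound on $V_k$ over $B_{R+2}$, and because $(1+V_k)^{1/r'}\ge1$ the resulting uniform constant is absorbed into the right-hand side.

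I do not anticipate a genuine obstacle here; the two delicate points are (i) pinning down the scaling exponents, and recognising that the multiplicative/doubling hypothesis \eqref{themoregeneralvkcondition} is precisely what makes the rescaled forcing bounded on the \emph{unit} ball after dilation by $\delta$ --- which in turn is what dictates the choice $\delta=(1+V_k(x_0))^{-1/r}$; and (ii) the compact region $\{|x|\le R\}$, where one needs a bound on $V_k$ over $B_{R+2}$ that is uniform in $k$, a condition available in all situations in which the lemma is applied (where the $V_k$ are, e.g., translates or vanishing-viscosity rescalings of a single fixed potential).
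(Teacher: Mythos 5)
Your proposal is correct and follows essentially the same route as the paper: the same calibrated rescaling $\delta=(1+V_k(x_0))^{-1/r}$, the same use of the doubling hypothesis \eqref{themoregeneralvkcondition} to bound the rescaled forcing on $B_2$, and the same appeal to the universal interior gradient estimate (2-6) of \cite{cesaroni2018concentration}, with the identical exponent bookkeeping $r(r'-1)=r'$. Your treatment of the compact region $\{|x|\le R\}$ is in fact slightly more explicit than the paper's (which merely invokes $u_k\in C^2$), and you correctly flag that a $k$-uniform bound on $V_k$ over $B_{R+2}$ is needed there.
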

Besides the gradient estimates of $u_k$, we also have the following results for the lower bounds of $u_k$: 
\begin{lemma}\label{lowerboundVkgenerallemma22}
Suppose all conditions in Lemma \ref{sect2-lemma21-gradientu} hold. 
Let $u_k$ be a family of $C^2$ solutions and assume that $u_k(x)$ are bounded from below uniformly.  Then there exist positive constants $C_3$ and $C_4$ independent of $k$ such that 
\begin{align}\label{29uklemma22}
u_k(x)\geq C_3V^{\frac{1}{r'}}_k(x)-C_4,\text{~}\forall x\in\mathbb R^n,~\text{for all }k.
\end{align}
In particular, if the following conditions hold on $V_k$
\begin{align}\label{cirant-Vk-1}
 C_F^{-1}(\max\{|x|-C_F,0\})^b\leq V_k(x)\leq C_F(1+|x|)^b,~~\text{for all }k\text{ and }x\in\mathbb R^n,
 \end{align}
 where constants $b> 0$ and $C_{F}$ are independent of $k,$ then we have 
\begin{align}\label{usolutionlowerestimatepre-11}
 u_k(x)\geq C_3|x|^{1+\frac{b}{r'}}-C_4,\text{~for all }k, x\in\mathbb R^n.
\end{align}
If $b=0$ in (\ref{cirant-Vk-1}) and there exist $R>0$ and $\hat \delta>0$ independent of $k$ such that 
\begin{align}\label{lemma22holdsbeforeconclusion}
f_k+V_k-\lambda_k>\hat \delta>0\text{~for~all }|x|>R,
\end{align}
then \eqref{usolutionlowerestimatepre-11} also holds.
\end{lemma}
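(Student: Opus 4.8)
The plan is to build a subsolution barrier that grows like $V_k^{1/r'}$ and invoke the comparison principle for the Hamilton-Jacobi equation \eqref{HJB-regularity}, using the gradient estimate \eqref{usolutiongradientestimatepre1} from Lemma \ref{sect2-lemma21-gradientu} as the crucial input. First I would fix a reference point, say the global minimum point $x_k$ of $u_k$ (which exists since $u_k\in C^2$ is bounded below and, by the coercivity forced by $V_k\to+\infty$, attains its infimum; alternatively work from any point where $u_k$ is close to its infimum). By the uniform lower bound hypothesis, $u_k(x_k)\geq -C$ for some $C$ independent of $k$. The idea is then to integrate the gradient bound along a segment, but that only gives $|u_k(x)-u_k(x_k)|\le C\int (1+V_k)^{1/r'}$, which is an \emph{upper} bound on the oscillation, not the desired \emph{lower} bound. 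So instead I would argue directly from the PDE: at a point $x$ where $u_k$ is large one does not immediately get information, so the honest route is the barrier/comparison argument below.

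The main step is the following. Using \eqref{usolutiongradientestimatepre1}, we have $C_H|\nabla u_k|^{r'}\le C(1+V_k(x))$ pointwise, hence from \eqref{HJB-regularity},
\begin{align}\label{eqplanlaplacian}
-\Delta u_k = V_k(x)+f_k(x)-\lambda_k - C_H|\nabla u_k|^{r'} \ge V_k(x) - C(1+V_k(x)) - C_f - \lambda,
\end{align}
which is not sign-definite and so not directly useful. The correct device is a \emph{local} rescaling exactly as in the proof of Lemma \ref{sect2-lemma21-gradientu}: for fixed $x_0$ with $|x_0|\ge R$ set $\delta=(1+V_k(x_0))^{-1/r}$ and $v_k(y)=\delta^{(2-r')/(r'-1)}u_k(x_0+\delta y)$, so that $v_k$ solves $-\Delta v_k+C_H|\nabla v_k|^{r'}=\delta^r[V_k(x_0+\delta y)-f_k-\lambda_k]$ with right-hand side bounded above and below on $B_2(0)$ by constants independent of $k$ (the lower bound uses \eqref{themoregeneralvkcondition}, choosing $R$ larger if needed so that $C_1 V_k(x_0) \ge 2(C_f+\lambda)$ and the right-hand side is, say, $\ge \tfrac12 C_1 >0$). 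Then $v_k$ is a positive-right-hand-side HJB solution on $B_2$ with two-sided bounds on the data; by the interior Harnack/Hölder estimates for such equations (the same cited estimate \eqref{26lemmagradientestimategeneralvu}, upgraded to a $C^{0,\alpha}$ bound, or directly a local Harnack inequality for $-\Delta v + C_H|\nabla v|^{r'}=g$ with $g$ bounded above and below) one gets $\mathrm{osc}_{B_1(0)} v_k \le \tilde C$ and moreover $v_k(0) \ge \min_{B_1} v_k \ge \max_{B_1} v_k - \tilde C$. Rescaling back, $|u_k(x_0) - u_k(x_0')| \le \tilde C \,\delta^{-1/(r'-1)} = \tilde C (1+V_k(x_0))^{1/r'}$ for nearby $x_0'$, i.e. one controls the oscillation of $u_k$ at scale $\delta$ by $(1+V_k)^{1/r'}$. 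Chaining these estimates along a path from $x_0$ out to spatial infinity — where, by the assumed uniform lower bound and the fact that the rescaled problem forces $v_k$ to grow (its right-hand side is bounded \emph{below} by a positive constant, so $v_k$ cannot stay bounded on all of $B_2$ without violating a maximum-principle lower-growth estimate) — yields, after the Gronwall-type summation over dyadic annuli, the bound $u_k(x)\ge C_3 V_k(x)^{1/r'}-C_4$.

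The cleanest way to close the final chaining is actually a \emph{global subsolution comparison}: one constructs $\psi_k(x) := C_3(1+V_k(x))^{1/r'} - C_4$ and checks, using $|\nabla V_k|\lesssim V_k$ type consequences of \eqref{themoregeneralvkcondition} and local Hölder bounds on $V_k$, that $\psi_k$ is a viscosity subsolution of \eqref{HJB-regularity} outside a large ball once $C_3$ is small and $C_4$ large (the dominant balance being $\lambda_k + C_H|\nabla\psi_k|^{r'} \lesssim C_3^{r'}V_k \le V_k + f_k$ for $V_k$ large); combined with $u_k\ge \psi_k$ on the boundary of that ball (from the uniform lower bound and the fact that $V_k$ is bounded there, absorbing constants into $C_4$), the comparison principle for \eqref{HJB-regularity} gives $u_k\ge\psi_k$ everywhere, which is \eqref{29uklemma22}. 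The specializations are then immediate: under \eqref{cirant-Vk-1} with $b>0$ one has $V_k^{1/r'}\gtrsim |x|^{b/r'}$, but the \emph{sharper} exponent $1+b/r'$ in \eqref{usolutionlowerestimatepre-11} requires running the chaining/Gronwall argument rather than the crude barrier, since integrating $(1+V_k)^{1/r'}\gtrsim |x|^{b/r'}$ along a ray produces the extra power of $|x|$; this part is exactly as in \cite{cesaroni2018concentration} and I would cite it. Finally, the degenerate case $b=0$ with the strict-positivity hypothesis \eqref{lemma22holdsbeforeconclusion} is handled by the same comparison argument with the linear barrier $\psi_k(x)=C_3|x|-C_4$: since $f_k+V_k-\lambda_k\ge\hat\delta>0$ for $|x|>R$, for $C_3$ small $C_H|\nabla\psi_k|^{r'}=C_H C_3^{r'}<\hat\delta$ and $-\Delta\psi_k = -C_3(n-1)/|x| \le 0 \le f_k+V_k-\lambda_k - C_H|\nabla\psi_k|^{r'}$, so $\psi_k$ is a subsolution outside $B_R$, and comparison gives \eqref{usolutionlowerestimatepre-11}.

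I expect the main obstacle to be the \emph{lower} direction of the Harnack-type estimate for the rescaled equation — ensuring that the positivity of the right-hand side (from \eqref{themoregeneralvkcondition}) actually forces $v_k$, hence $u_k$, to \emph{grow} and not merely oscillate in a controlled way. For $r'\le 2$ this is delicate because the gradient term is not coercive enough to dominate, so I would lean on the comparison-principle formulation (building the explicit subsolution $\psi_k$ and verifying the differential inequality) rather than on Harnack, since the former sidesteps the need for a quantitative lower Harnack constant and only uses the well-posedness/comparison theory for \eqref{HJB-regularity} together with the structural bound \eqref{usolutiongradientestimatepre1}.
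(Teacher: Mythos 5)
Your instinct to use comparison is reasonable, and your treatment of the two specialized cases (citing \cite{cesaroni2018concentration} for $b>0$, and the linear barrier $C_3|x|-C_4$ for $b=0$ under \eqref{lemma22holdsbeforeconclusion}) matches what the paper does, since it simply refers those cases to Theorem 2.6 of \cite{cesaroni2018concentration}. The problem is the new case — general $V_k$ satisfying only \eqref{themoregeneralvkcondition} — where your central device, the global subsolution $\psi_k=C_3(1+V_k)^{1/r'}-C_4$, cannot be verified. The potentials $V_k$ are only locally H\"older continuous, and \eqref{themoregeneralvkcondition} gives a purely multiplicative oscillation bound $C_1\le V_k(x+y)/V_k(x)\le C_2$ at unit scale; it provides no pointwise control of $\nabla V_k$ or $\Delta V_k$, so the claimed balance $\lambda_k+C_H|\nabla\psi_k|^{r'}\lesssim C_3^{r'}V_k$ and in particular the term $-\Delta\psi_k$ cannot be estimated, even in the viscosity sense. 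You flag this yourself as "the main obstacle," but the fallback you offer (Harnack/chaining) only bounds oscillations from above, which — as you also correctly observe — does not produce a lower bound on $u_k$.

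The paper closes this gap by a different mechanism: a contradiction and blow-down argument. Assuming $u_{k_l}(x_l)/V_{k_l}^{1/r'}(x_l)\to 0$ along $|x_l|\to\infty$, set $\mu_l=V_{k_l}^{1/r'}(x_l)$ and $v_l(x)=\mu_l^{-1}u_{k_l}(x_l+x)$. Then $v_l$ solves
\begin{align*}
-\mu_l^{1-r'}\Delta v_l+C_H|\nabla v_l|^{r'}=\mu_l^{-r'}\bigl[f_{k_l}+V_{k_l}-\lambda_{k_l}\bigr](x_l+x),
\end{align*}
and the right-hand side is bounded below by some $\delta>0$ on $B_2(0)$ using only the ratio bound \eqref{themoregeneralvkcondition}. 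The decisive point, which your proposal misses, is that $\mu_l^{1-r'}\to 0$ because $r'>1$, so the second-order term disappears in the limit and one is left with the first-order viscosity inequality $|\nabla v|^{r'}\ge\delta$ in $B_2(0)$ with $v\ge 0$ and $v(0)=0$ (the gradient estimate \eqref{usolutiongradientestimatepre1} supplies the uniform Lipschitz bound needed to pass to the limit by Arzel\`a--Ascoli). Comparison with the explicit cone $h(x)=\delta^{1/r'}(2-|x|)$ then forces $v(0)\ge 2\delta^{1/r'}>0$, a contradiction. This route needs no regularity of $V_k$ beyond \eqref{themoregeneralvkcondition}, and it also dissolves your worry about $r'\le 2$: the coercivity of the gradient term is irrelevant because the rescaling kills the Laplacian for every $r'>1$. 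If you want to salvage your write-up, replace the global barrier step with this blow-down-to-eikonal argument.
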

\begin{proof}
When $V_k$ is assumed to satisfy (\ref{cirant-Vk-1}) or (\ref{lemma22holdsbeforeconclusion}) holds with $b=0$, the proofs are the same as in \cite{cesaroni2018concentration}, Theorem 2.6. 

Next, we focus on the case of general potential $V_k$ satisfying (\ref{themoregeneralvkcondition}).  Since $u_k$ are bounded from below uniformly, we assume that $u_k(x)\geq 0.$  Note that we only need to prove the conclusion for $|x|$ large since it can be shown straightforward if there exists $R_0>0$ independent of $k$ such that $|x|<R_0$. 
 When $|x|$ is sufficiently large, we argue by contradiction and assume up to a subsequence, there exists $|x_l|\rightarrow +\infty$ such that 
\begin{align}\label{eq2.13}
\lim_{l\rightarrow +\infty}\frac{u_{k_l}(x_l)}{V^{\frac{1}{r'}}_{k_l}(x_l)}=0.
\end{align}
Let 
$$v_l(x):=\frac{1}{\mu_l}u_{k_l}(x_l+ x), ~\text{ where}~\mu_l:=V^{\frac{1}{r'}}_{k_l}(x_l)\rightarrow+\infty.$$
Then, we have from (\ref{HJB-regularity}) that $v_l(x)$ solves
\begin{align*}
-\mu_l^{1-r'}\Delta v_l(x)+C_H|\nabla v_l(x)|^{r'}=\mu_l^{-r'}[f_{k_l}(x_l+ x)+V_{k_l}(x_l+ x)-\lambda_{k_l}].
\end{align*}
Note that 
\begin{align*}
\mu_l^{-r'}|f_{k_l}(x+x_l)-\lambda_{k_l}+V_{k_l}(x_l+x)|\geq \frac{V_{k_l}(x+x_l)-C_f-\lambda}{V_{k_l}(x_l)}\geq \delta>0,\text{~for~}|x|<2,
\end{align*}
where we have used (\ref{themoregeneralvkcondition}) and $\delta>0$ independent of $k.$
On the other hand, Lemma \ref{sect2-lemma21-gradientu} implies
$$|\nabla u_{k_l}(x)|\leq C\bigg[1+V^{\frac{1}{r'}}_{k_l}(x)\bigg].$$
Moreover, $v_l$ satisfies
\begin{align}\label{214gradientuniformboundvnew}
|\nabla v_l(x)|=\frac{1}{V^{\frac{1}{r'}}_{k_l}(x_l)}|\nabla u_{k_l}(x_l+x)|\leq \frac{C[1+V_{k_l}(x_l+x)]}{V^{\frac{1}{r'}}_{k_l}(x_l)}\leq \delta_2,~\text{for any~}|x|<2,
\end{align}
where $C>0$ and $\delta_2>0$ are constants independent of $k$.  In particular, \eqref{eq2.13} implies 
$v_l(0)=\frac{u_{k_l}(x_l)}{\mu_l}\rightarrow 0,$
and thus, $v_l(x)\leq 3\delta_2,$ $\forall |x|<2.$  Letting $l\rightarrow \infty$ and invoking (\ref{214gradientuniformboundvnew}) one applies Arzel\`{a}-Ascoli theorem to get $v_l\rightarrow v\geq 0$ uniformly in $B_2(0).$  Thus, $v$ is a solution to
\begin{align*}
|\nabla v|^{r'}\geq \delta >0 \text{~in~}B_2(0),~~v\geq 0,
\end{align*}
in a viscosity sense.  Noting that $h(x)=\delta^{\frac{1}{r'}}(2-|x|)$ is a viscosity solution to $|\nabla h|^{r'}=\delta$ in $B_2(0)$ with $h(x)=0$ on $\partial B_2(0).$  By comparison,
$$v(x)\geq h(x),\text{~for any~}x\in B_2(0).$$
As a consequence, $v(0)\geq 2\delta^{\frac{1}{r'}}>0$
which reaches a contradiction to $v_l(0)\rightarrow v(0)=0$ as $l\rightarrow \infty.$  This completes the proof of (\ref{29uklemma22}).
\end{proof}

For the existence of the classical solution to (\ref{HJB-regularity}), we have the following results:
\begin{lemma}  \label{lemma22preliminary}
Suppose $V_k+f_k$ are locally H\"{o}lder continuous and bounded from below uniformly in $k$.  Define 
\begin{align}
\bar \lambda_k:=\sup\{\lambda\in\mathbb R~|~(\ref{HJB-regularity})\text{ has a solution }u_k\in C^2(\mathbb R^n)\}.
\end{align}
Then 
\begin{itemize}
    \item[(i).] $\bar \lambda_k$ are finite for every $k$ and (\ref{HJB-regularity}) admits a solution $(u_k,\lambda_k)\in C^2(\mathbb R^n)\times \mathbb R$  with $\lambda_k=\bar \lambda_k$ and $u_k(x)$ being bounded from below (may not uniform in $k$).  Moreover,
    $$\bar \lambda_k=\sup\{\lambda\in\mathbb R~|~(\ref{HJB-regularity})\text{ has a subsolution }u_k\in C^2(\mathbb R^n)\}.$$
    \item[(ii).] If $V_k$ satisfies (\ref{cirant-Vk}) with $b>0$, then $u_k$ is unique up to constants for fixed $k$ and there exists a positive constant $C$ independent of $k$ such that 
    \begin{align}\label{lowerboundusect2}
    u_k(x)\geq C|x|^{\frac{b}{r'}+1}-C, \forall x\in\mathbb R^n.
    \end{align}
    In particular, if $V_k\equiv 0$, $b=0$ in (\ref{cirant-V}) and there exists $\sigma>0$ independent of $k$ such that 
    \begin{align}\label{verifylemma22}
    f_k-\lambda_k\geq \sigma>0, \ \ \text{for~} |x|>K_2,
    \end{align}
    where $K_2>0$ is a large constant independent of $k$, then (\ref{lowerboundusect2}) also holds.
 \end{itemize}
 (iii). If $V_k$ satisfies \eqref{V2mainassumption_2} with $V$ replaced by $V_k$ and positive constants $C_1$, $C_2$ and $\delta$ independent of $k,$ then there exist uniformly bounded from below classical solutions $u_k$ to problem (\ref{HJB-regularity}) satisfying estimate (\ref{29uklemma22}).
\end{lemma}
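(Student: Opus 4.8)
The plan is to combine the ergodic Hamilton--Jacobi theory of \cite{cesaroni2018concentration} with the a priori bounds of Lemmas~\ref{sect2-lemma21-gradientu}--\ref{lowerboundVkgenerallemma22}, using throughout that adding a constant to a solution of \eqref{HJB-regularity} alters neither the equation nor $\lambda_k$, so one may normalize freely. For (i): since the constant $0$ is a classical subsolution of \eqref{HJB-regularity} whenever $\lambda\le\inf_{\mathbb R^n}(V_k+f_k)$, the admissible set of $\lambda$ is a nonempty half-line, and a standard comparison argument (with solutions on bounded subdomains, as in \cite{cesaroni2018concentration}) shows $\bar\lambda_k<+\infty$. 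To realize a solution at $\lambda_k=\bar\lambda_k$ I would take $\lambda_j\nearrow\bar\lambda_k$ with classical solutions $u_j$ (available for $\lambda_j<\bar\lambda_k$ by Perron's method), normalize $u_j(0)=0$, and use that the superlinear term $C_H|\nabla u_j|^{r'}$ yields interior Lipschitz bounds for $u_j$ depending only on the local size of $V_k+f_k-\lambda_j$, hence on $k$ alone; local $C^{2,\alpha}$ estimates then extract a $C^2_{\rm loc}$ limit $(u_k,\bar\lambda_k)$, bounded below as in \cite{cesaroni2018concentration}. The identification of $\bar\lambda_k$ with the supremum over $\lambda$ admitting a $C^2$ subsolution is the usual remark that this set is a half-line and that, by Perron, a subsolution at $\lambda$ produces a solution at $\lambda$.

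For (ii): when $V_k$ is coercive ($b>0$ in \eqref{cirant-Vk}), normalize the solution from (i) by $\min u_k=0$, so it is bounded below; Lemma~\ref{sect2-lemma21-gradientu} then gives $|\nabla u_k|\le C(1+|x|)^{b/r'}$ and Lemma~\ref{lowerboundVkgenerallemma22} gives \eqref{usolutionlowerestimatepre-11}, which is exactly \eqref{lowerboundusect2}, with $C$ depending only on $C_F,b,C_f,\lambda,n,r$. Uniqueness up to additive constants is a comparison argument: two solutions with the same $\lambda_k$ are both proper by \eqref{usolutionlowerestimatepre-11}, so their difference attains a maximum, and the strong maximum principle for the uniformly elliptic linear equation it satisfies forces that difference to be constant --- this is Theorem~2.6 of \cite{cesaroni2018concentration}. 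The degenerate subcase $V_k\equiv0$, $b=0$ together with \eqref{verifylemma22} is immediate from the last assertion of Lemma~\ref{lowerboundVkgenerallemma22}, since \eqref{verifylemma22} is \eqref{lemma22holdsbeforeconclusion} with $b=0$.

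For (iii): the content is the uniform-in-$k$ conclusion, and here the order of the argument is what matters. For each fixed $k$, part (i) supplies a classical solution $(u_k,\bar\lambda_k)$ that is bounded below; since $V_k\to+\infty$, Lemma~\ref{lowerboundVkgenerallemma22} applied to that single index shows $u_k(x)\ge c_kV_k^{1/r'}(x)-C_k\to+\infty$ as $|x|\to\infty$, so $u_k$ attains its minimum, and we normalize $\min_{\mathbb R^n}u_k=0$. The family $\{u_k\}$ is then uniformly bounded below by $0$, and it remains to verify that the hypotheses of Lemmas~\ref{sect2-lemma21-gradientu}--\ref{lowerboundVkgenerallemma22} hold with $k$-independent constants: the bound \eqref{themoregeneralvkcondition} is precisely \eqref{V2mainassumption_2} with the uniform $C_1,C_2$; $\|f_k\|_{L^\infty}\le C_f$ is assumed; and $|\bar\lambda_k|\le\lambda$ uniformly --- evaluating \eqref{HJB-regularity} at the minimum point of $u_k$ gives $\bar\lambda_k\ge\inf(V_k+f_k)$, uniformly bounded below, while the uniform upper bound comes from the sub/super-solution comparison behind (i) (a constant subsolution from $V_k+f_k\ge-C_0$, a supersolution of exponential type from the growth bound on $V_k$). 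With these uniform inputs in hand, Lemma~\ref{lowerboundVkgenerallemma22} delivers \eqref{29uklemma22} with $C_3,C_4$ independent of $k$.

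The main obstacle is precisely this uniform step in (iii). Lemma~\ref{lowerboundVkgenerallemma22} can only be invoked once one already possesses, for each $k$, a solution known a priori to be bounded below; hence one must first run the existence argument of (i), then use it for a single $k$ to obtain properness and a legitimate normalization, and only then bootstrap to the $k$-uniform estimates. The genuinely delicate ingredient in this chain is the uniform bound $|\bar\lambda_k|\le\lambda$ on the ergodic constants --- without it, Lemmas~\ref{sect2-lemma21-gradientu}--\ref{lowerboundVkgenerallemma22} cannot be applied with constants independent of $k$, and it is this that forces one to keep careful track of how the auxiliary quantities in the sub/super-solution comparison depend on the data of $V_k$ and $f_k$.
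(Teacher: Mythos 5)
Your proposal is correct and follows essentially the same route as the paper, whose entire proof of this lemma is the citation ``the proof is the same as Theorem 2.7 in \cite{cesaroni2018concentration}'': you reconstruct exactly that argument (approximation in $\lambda$ plus local gradient bounds for (i), Lemmas \ref{sect2-lemma21-gradientu}--\ref{lowerboundVkgenerallemma22} for (ii)--(iii)), and you correctly isolate the uniform bound on $\bar\lambda_k$ as the one genuinely $k$-dependent issue in (iii). The only imprecision is in the uniqueness sketch of (ii): properness of $u_1$ and $u_2$ separately does not make $u_1-u_2$ attain a maximum (it could diverge), and the cited proof instead uses the matched two-sided growth rates of the two solutions before the strong maximum principle can be applied --- but since you defer this step to the same reference the paper does, this does not affect the verdict.
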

\begin{proof}
The proof is the same as Theorem 2.7 shown in \cite{cesaroni2018concentration}.
\end{proof}

It is worthy mentioning that if locally H\"{o}lder continuous potential functions $V_k$ satisfies  $C_1e^{\delta |x|}\leq V_k\leq C_2e^{\delta|x|}$ for some $C_1$, $C_2>0$ independent of $k,$ (\ref{V2mainasumotiononv}) with $V$ replaced by $V_k$ also holds for $V_k$.  With a-priori estimates and existence results of solutions to HJ equations given by (\ref{HJB-regularity}), we next discuss the regularities of solutions to Fokker-Planck equations, which is exhibited in Subsection \ref{subsection2}.

\subsection{Fokker-Planck Equations}\label{subsection2}
Before stating the gradient estimates satisfied by solutions to Fokker-Planck equations, we recall the following key lemma for any function $m\in L^p(\mathbb R^n)$:
\begin{lemma}\label{proposition-lemma21-FP}
 Suppose $p>1$ and $m\in L^p(\mathbb R^n)$ such that
 \begin{align*}
 \Big|\int_{\mathbb R^n}m\Delta\varphi\,dx\Big|\leq N\Vert\nabla \varphi\Vert_{L^{p'}(\mathbb R^n)}\ \ \text{~for~all~}\varphi\in C_c^{\infty}(\mathbb R^n),
 \end{align*}
 where $N>0$ is a positive constant.  Then we have $m\in W^{1,p}(\mathbb R^n)$ and 
 \begin{align*}
 \Vert\nabla m\Vert_{L^p(\mathbb R^n)}\leq C_pN,
 \end{align*}
 where $C_p$ is a positive constant depending only on $p.$
\end{lemma}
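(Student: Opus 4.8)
The plan is to dualize the hypothesis into a statement about a vector field in $L^p$, apply Calder\'on--Zygmund theory, and then use the integrability of $m$ itself to remove a polynomial ambiguity. First I would reduce to smooth $m$: for a standard mollifier $\rho_\epsilon$, the identity $(\Delta\varphi)\ast\check\rho_\epsilon=\Delta(\varphi\ast\check\rho_\epsilon)$ together with Young's inequality shows that $m_\epsilon:=m\ast\rho_\epsilon\in C^\infty(\mathbb R^n)\cap L^p(\mathbb R^n)$ satisfies the \emph{same} hypothesis with the \emph{same} constant $N$. Hence it suffices to prove $\|\nabla m_\epsilon\|_{L^p}\le C_pN$ uniformly in $\epsilon$: since $1<p<\infty$, a weakly convergent subsequence of $\nabla m_\epsilon$ in $L^p$ must have $\nabla m$ as its limit (both converge to $\nabla m$ in $\mathcal D'$), and weak lower semicontinuity of the norm gives $\|\nabla m\|_{L^p}\le C_pN$. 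So from now on $m$ is smooth and belongs to $L^p$.

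Next I would read the hypothesis as the boundedness of the linear functional $\nabla\varphi\mapsto\int_{\mathbb R^n}m\,\Delta\varphi\,dx$, defined on the subspace $\{\nabla\varphi:\varphi\in C_c^\infty(\mathbb R^n)\}\subset L^{p'}(\mathbb R^n;\mathbb R^n)$ (well defined, since a compactly supported function with zero gradient vanishes), with norm at most $N$. By Hahn--Banach there is $G\in L^p(\mathbb R^n;\mathbb R^n)$ with $\|G\|_{L^p}\le N$ and $\int m\,\Delta\varphi\,dx=-\int G\cdot\nabla\varphi\,dx$ for all $\varphi\in C_c^\infty$; equivalently $\Delta m=\operatorname{div}G$ in $\mathcal D'(\mathbb R^n)$. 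I would then invoke the $L^p$-boundedness, for $1<p<\infty$, of the second-order Riesz transforms, i.e.\ of the operator with Fourier multiplier $\xi_j\xi_k/|\xi|^2$, which is a Mikhlin multiplier. Setting $H=(H_1,\dots,H_n)$ with $\widehat{H_j}=\sum_k(\xi_j\xi_k/|\xi|^2)\widehat{G_k}$, one gets $H\in L^p$ and $\|H\|_{L^p}\le C_p\|G\|_{L^p}\le C_pN$. Taking Fourier transforms in $\Delta m=\operatorname{div}G$ gives $i\sum_k\xi_k\widehat{G_k}=-|\xi|^2\widehat m$, hence $\widehat{H_j}=i\xi_j\widehat m=\widehat{\partial_j m}$ on $\mathbb R^n\setminus\{0\}$. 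Therefore $\partial_j m-H_j$ is a tempered distribution whose Fourier transform is supported at the origin, i.e.\ a polynomial $P_j$.

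It remains to show $P_j\equiv0$, and this is the step that genuinely uses $m\in L^p$ (note that $m(x)=x_1$ satisfies the hypothesis with $N=0$ while $\nabla m\notin L^p$, so some such input is unavoidable). Suppose $P_j\not\equiv0$. By Fubini, for a.e.\ $x'\in\mathbb R^{n-1}$ (the variables other than $x_j$) the slices $m(x',\cdot)$ and $H_j(x',\cdot)$ lie in $L^p(\mathbb R)$, and for $x'$ outside the zero set of a suitable coefficient polynomial the map $t\mapsto P_j(x',t)$ is a nonzero polynomial, say of degree $d\ge0$. Fixing such an $x'$ and using smoothness of $m$, the fundamental theorem of calculus yields $m(x',t)=m(x',0)+\int_0^t\big(H_j(x',s)+P_j(x',s)\big)\,ds$; the contribution of $P_j$ is a polynomial in $t$ of degree $d+1\ge1$ with nonzero leading coefficient, while $\big|\int_0^t H_j(x',s)\,ds\big|\le\|H_j(x',\cdot)\|_{L^p(\mathbb R)}\,|t|^{1/p'}=o(|t|)$ since $p>1$. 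Thus $|m(x',t)|$ grows at a polynomial rate as $|t|\to\infty$, contradicting $m(x',\cdot)\in L^p(\mathbb R)$. Hence $P_j\equiv0$, so $\partial_j m=H_j\in L^p$ with $\|\nabla m\|_{L^p}\le C_pN$, which together with the reduction step finishes the proof.

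The main obstacle is precisely this last paragraph: the hypothesis only controls $\Delta m$ as an element of $\dot W^{-1,p}$, which pins $\nabla m$ down only modulo an additive polynomial, so a quantitative use of $m\in L^p$ is indispensable; by contrast the Hahn--Banach dualization and the Calder\'on--Zygmund estimate are routine once the problem is cast as $\Delta m=\operatorname{div}G$ with $\|G\|_{L^p}\le N$. (The argument uses $1<p<\infty$; for $p=\infty$ the Riesz transforms are not $L^\infty$-bounded and a separate argument would be needed, but the statement, with $C_p$ depending only on $p$, is understood in the range $1<p<\infty$.)
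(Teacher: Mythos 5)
Your proof is correct. Note that the paper does not actually reproduce an argument for this lemma --- it simply cites Proposition 2.4 of \cite{cesaroni2018concentration} --- so there is nothing in the text to compare against line by line; your write-up supplies the standard Calder\'on--Zygmund proof that such a citation is implicitly relying on. The three ingredients are all in order: the mollification reduction preserves the hypothesis with the same constant $N$ and reduces matters to a uniform bound plus weak lower semicontinuity (valid since $1<p<\infty$ makes $L^p$ reflexive); the Hahn--Banach/Riesz dualization correctly converts the hypothesis into $\Delta m=\operatorname{div}G$ with $\|G\|_{L^p}\le N$; and the second-order Riesz transform bound gives $H\in L^p$ with $\|H\|_{L^p}\le C_pN$ and $\partial_j m-H_j$ equal to a polynomial. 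Your final step --- slicing along the $x_j$-axis, using the fundamental theorem of calculus and the H\"older bound $\bigl|\int_0^tH_j(x',s)\,ds\bigr|\le\|H_j(x',\cdot)\|_{L^p(\mathbb R)}|t|^{1/p'}=o(|t|)$ to show a nonzero polynomial would force $m(x',\cdot)\notin L^p(\mathbb R)$ --- is the part most often waved away, and you handle it cleanly; the remark that $m(x)=x_1$ shows the hypothesis alone cannot suffice is a good sanity check. The only point where I would ask for a touch more care is the definition of $H_j$ when $p>2$: there $\widehat{G_k}$ is merely a tempered distribution and the multiplier $\xi_j\xi_k/|\xi|^2$ is not smooth at the origin, so one should define $H_j=-\sum_kR_jR_kG_k$ via the bounded extension of the operator from $L^2\cap L^p$ and verify the identity $\widehat{\partial_jm}=\widehat{H_j}$ away from the origin by approximating $G$ in $L^p$ by elements of $L^2\cap L^p$. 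This is entirely routine and does not affect the validity of the argument.
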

\begin{proof}
See Proposition 2.4 in \cite{cesaroni2018concentration}.
\end{proof}
Now, we are concerned with the following Fokker-Planck equations:
\begin{align}\label{sect2-FP-eq}
-\Delta m+\nabla\cdot w=0,\ \ x\in\mathbb R^n,
\end{align}
where $w$ is given and $m$ denotes the solution.  By invoking Lemma \ref{proposition-lemma21-FP}, we can obtain the crucial a-priori estimates satisfied by $m$.  To begin with, we recall that $\hat q$ is defined as \eqref{hatqconstraint}, and set 
${\hat q}^*=\frac{n\hat q}{n-\hat q}$ if $\hat q<n$, and ${\hat q}^*=+\infty$ if $\hat q\geq n$. Choose
 $\beta\in [\hat q,{\hat q^*}]$ such that 
\begin{equation}\label{eq2-160}
 \frac{1}{\hat q}=\frac{1}{r}+\frac{1}{r'\beta}.\end{equation}
Then one can deduce from \eqref{hatqconstraint} that 
\begin{equation}\label{beta}
\beta=\begin{cases}{\hat q}^*, \ &\text{ if }r<n,\\
\in(\hat q,{\hat q}^*) \ &\text{ if }r=n,\\
\infty,&\text{ if }r>n.\end{cases}
\end{equation}
Set 
\begin{equation}\label{Gag}
0<\mathcal{S}_{\hat q,r}^{-1}:=\inf_{m\in W^{1, \hat q}(\mathbb R^n)} \frac{\|\nabla m\|^\theta_{L^{\hat q}(\mathbb R^n)}\|\nabla m\|^{1-\theta}_{L^{\hat q}(\mathbb R^n)}}{\|m\|_{L^{\beta}(\mathbb R^n)}}<\infty,\text{ where }\theta\in[0,1] \text{ satisfying }\frac{1}{\beta}=\theta(\frac{1}{{\hat q}}-\frac{1}{n})+1-\theta.
\end{equation}
Then we have the following lemma which addresses the regularity of solutions for equation \eqref{sect2-FP-eq}.
\begin{lemma}\label{lemma21-crucial}
Assume $(m,w)\in \left(L^1(\mathbb R^n)\cap W^{1, \hat q}(\mathbb R^n)\right)\times L^1(\mathbb R^n)$ is a  solution to (\ref{sect2-FP-eq}) and
\begin{equation*}
\Lambda_r:=\int_{\mathbb R^n}|m|\Big|\frac{w}{m}\Big|^{r}\, dx<\infty.
\end{equation*}
Then, we have $w\in L^{1}(\mathbb R^n)\cap L^{\hat q}(\mathbb R^n)$ and there exists constant $\mathcal{C}=\mathcal{C}(\Lambda_r,\|m\|_{L^1(\mathbb R^n)})>0$ such that
$$\|m\|_{W^{1,\hat q}(\mathbb R^n)}, \|w\|_{L^1(\mathbb R^n)},\|w\|_{L^{\hat q}(\mathbb R^n)}\leq \mathcal{C}.$$ More precisely, we have
\begin{equation}\label{eq240}
\|\nabla m\|_{L^{\hat q}(\mathbb R^n)}\leq \mathcal{S}_{\hat q,r}^{\frac{1}{r'-\theta}}\left(C_{\hat q} \Lambda_r^\frac{1}{r}\right)^{\frac{r'}{r'-\theta}}\|m\|_{L^1(\mathbb R^n)}^\frac{1-\theta} {r'-\theta},~
~
\|m\|_{L^{\hat q}(\mathbb R^n)}\leq  \mathcal{S}_{\hat q,r}^{\frac{1}{r'-\theta}} \left(C_{\hat q} \Lambda_r^\frac{1}{r}\right)^{\frac{\theta}{r'-\theta}}\|m\|_{L^1(\mathbb R^n)}^{\frac{1-\theta} {r'-\theta}+\frac{1}{r}}.
\end{equation}
and 
\begin{equation}\label{eq2.31}
\|w\|_{L^1(\mathbb R^n)}\leq  \Lambda_r^\frac{1}{r}\|m\|_{L^1(\mathbb R^n)}^\frac{r-1}{r},~~\|w\|_{L^{\hat q}(\mathbb R^n)}\leq \Lambda_r^\frac{1}{r} \left(\mathcal{S}_{\hat q,r}\right)^{\frac{1}{r'-\theta}}\left(C_{\hat q} \Lambda_r^\frac{1}{r}\right)^{\frac{\theta}{r'-\theta}}\|m\|_{L^1(\mathbb R^n)}^\frac{1-\theta} {r'-\theta},
\end{equation}
where 
\begin{equation}\label{eq2.201}
\theta=\frac{nr(\hat q-1)}{(r-1)(nq-n+q)}=\begin{cases} 1, \ &\text{ if }r<n,\\
\frac{n^2(\hat q-1)}{(n-1)(n\hat q-n+\hat q)} \ &\text{ if }r=n,\\
\frac{nr}{nr-n+r},&\text{ if }r>n.
\end{cases}
\end{equation}

\end{lemma}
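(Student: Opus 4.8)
The proof is a chain of interpolation estimates whose only non-elementary ingredient is Lemma~\ref{proposition-lemma21-FP}. The guiding identity is the pointwise factorization $|w|=m^{1/r'}\big(m\,|w/m|^{r}\big)^{1/r}$ on $\{m>0\}$, together with $w=0$ a.e.\ on $\{m=0\}$ (which one reads off from $\Lambda_r<\infty$, as already encoded in \eqref{general-Lagrangian}); moreover every exponent matching below is dictated by the single relation \eqref{eq2-160}, i.e.\ $\tfrac1{\hat q}=\tfrac1r+\tfrac1{r'\beta}$. First, Hölder's inequality with exponents $(r,r')$ applied to the factorization gives at once $\|w\|_{L^1(\mathbb R^n)}\le\Lambda_r^{1/r}\|m\|_{L^1(\mathbb R^n)}^{1/r'}$, which is the first bound in \eqref{eq2.31}; the same computation performed on $\int_{\mathbb R^n}|w|^{\hat q}$ with the conjugate exponents $\tfrac{r'\beta}{\hat q},\tfrac{r}{\hat q}$ (conjugate precisely by \eqref{eq2-160}) yields
\begin{equation}\label{planwhatq}
\|w\|_{L^{\hat q}(\mathbb R^n)}\le \|m\|_{L^{\beta}(\mathbb R^n)}^{1/r'}\,\Lambda_r^{1/r},
\end{equation}
so in particular $w\in L^{\hat q}(\mathbb R^n)$, since $m\in W^{1,\hat q}(\mathbb R^n)$ implies $m\in L^{\beta}(\mathbb R^n)$ by Sobolev embedding (and interpolation when $r=n$), in view of \eqref{beta}.

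Next I turn \eqref{sect2-FP-eq} into a bound for $\nabla m$. Testing the equation against $\varphi\in C^\infty_c(\mathbb R^n)$ gives $\int m\,\Delta\varphi=-\int w\cdot\nabla\varphi$, hence $\big|\int m\,\Delta\varphi\big|\le\int|w|\,|\nabla\varphi|$, and Hölder's inequality with the three exponents $(r'\beta,\,r,\,\hat q')$ --- again conjugate by \eqref{eq2-160} --- applied to $\int m^{1/r'}(m|w/m|^r)^{1/r}|\nabla\varphi|$ produces $\big|\int m\,\Delta\varphi\big|\le\|m\|_{L^{\beta}}^{1/r'}\Lambda_r^{1/r}\,\|\nabla\varphi\|_{L^{\hat q'}}$. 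Since the coefficient is finite (same reason as above), Lemma~\ref{proposition-lemma21-FP} with $p=\hat q$ gives
\begin{equation}\label{planstar}
\|\nabla m\|_{L^{\hat q}(\mathbb R^n)}\le C_{\hat q}\,\|m\|_{L^{\beta}(\mathbb R^n)}^{1/r'}\,\Lambda_r^{1/r}.
\end{equation}
Now I close the loop with the Gagliardo--Nirenberg inequality $\|m\|_{L^{\beta}}\le\mathcal S_{\hat q,r}\|\nabla m\|_{L^{\hat q}}^{\theta}\|m\|_{L^1}^{1-\theta}$, whose interpolation exponent is forced to equal the value in \eqref{eq2.201} upon inserting the three choices of $\hat q$ from \eqref{hatqconstraint}. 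Substituting into \eqref{planstar} and absorbing the factor $\|\nabla m\|_{L^{\hat q}}^{\theta/r'}$ --- legitimate because $\theta/r'<1$ in each regime $r<n$, $r=n$, $r>n$ --- gives the first estimate in \eqref{eq240}; reinserting it into the Gagliardo--Nirenberg inequality bounds $\|m\|_{L^{\beta}}$, which through \eqref{planwhatq} gives the second estimate in \eqref{eq2.31}. Finally, \eqref{eq2-160} is exactly the Hölder interpolation identity behind $\|m\|_{L^{\hat q}}\le\|m\|_{L^1}^{1/r}\|m\|_{L^{\beta}}^{1/r'}$; combining this with \eqref{planstar} rewritten as $\|m\|_{L^{\beta}}^{1/r'}\le(C_{\hat q}\Lambda_r^{1/r})^{-1}\|\nabla m\|_{L^{\hat q}}$ and the bound on $\|\nabla m\|_{L^{\hat q}}$ just obtained produces the second estimate in \eqref{eq240}, and all the asserted $\mathcal C$-bounds follow.

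The result is essentially a bootstrap made quantitative, so there is no deep obstacle; the care lies in two places. The first is the absorption step: it is valid only because $\theta/r'<1$, so one must verify the strict inequality $\theta<r'$ uniformly over the cases in \eqref{hatqconstraint} --- this is where the precise form of $\hat q$ enters, and also why the borderline value $\hat q=n$ must be excluded when $r=n$. The second is matching the powers of $\mathcal S_{\hat q,r}$, $C_{\hat q}\Lambda_r^{1/r}$ and $\|m\|_{L^1}$ to the exact exponents $\tfrac{r'}{r'-\theta}$, $\tfrac{\theta}{r'-\theta}$, $\tfrac{1-\theta}{r'-\theta}$ (and the extra $\tfrac1r$ in the $\|m\|_{L^{\hat q}}$ bound), which is pure bookkeeping. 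Note that no circularity occurs: $m\in W^{1,\hat q}$, hence $m\in L^{\beta}$, is part of the hypotheses, so Lemma~\ref{proposition-lemma21-FP} applies from the outset and the iteration only improves the constant rather than establishing membership.
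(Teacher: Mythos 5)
Your proof follows the paper's argument essentially step for step: Hölder's inequality applied to the factorization $|w|=m^{1/r'}\bigl(m|w/m|^{r}\bigr)^{1/r}$ for the $w$-bounds, the dual estimate $\bigl|\int m\,\Delta\varphi\bigr|\le \Lambda_r^{1/r}\|m\|_{L^{\beta}}^{1/r'}\|\nabla\varphi\|_{L^{\hat q'}}$ combined with Lemma \ref{proposition-lemma21-FP} for $\|\nabla m\|_{L^{\hat q}}$, the Gagliardo--Nirenberg inequality \eqref{Gag} with an absorption step (valid since $\theta\le 1<r'$, with finiteness of $\|\nabla m\|_{L^{\hat q}}$ guaranteed by the hypothesis $m\in W^{1,\hat q}$), and finally the interpolation $\|m\|_{L^{\hat q}}\le\|m\|_{L^{1}}^{1/r}\|m\|_{L^{\beta}}^{1/r'}$. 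The one local slip is in your last step: your gradient estimate reads $\|\nabla m\|_{L^{\hat q}}\le C_{\hat q}\Lambda_r^{1/r}\|m\|_{L^{\beta}}^{1/r'}$, so ``rewriting'' it as $\|m\|_{L^{\beta}}^{1/r'}\le (C_{\hat q}\Lambda_r^{1/r})^{-1}\|\nabla m\|_{L^{\hat q}}$ reverses the inequality and is false as stated; to get the second estimate in \eqref{eq240} you should instead substitute the upper bound on $\|m\|_{L^{\beta}}$ that you already derived in the same paragraph (Gagliardo--Nirenberg applied to the established bound on $\|\nabla m\|_{L^{\hat q}}$) into the interpolation inequality, after which the exponents come out exactly as claimed.
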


\begin{proof}
We refer the readers to Lemma 2.8 in \cite{cesaroni2018concentration} and Proposition 2.5 in \cite{cirant2023ergodic}.  For the sake of completeness, we give the proof of this lemma as follows:

Let $\beta\in [\hat q,{\hat q^*}]$ satisfy \eqref{eq2-160} and \eqref{beta}.
Then we deduce from (\ref{sect2-FP-eq}) that
\begin{equation}\label{eq2-16}
\Big|\int_{\mathbb R^n}\nabla m \cdot \nabla\varphi dx\Big|=\Big|\int_{\mathbb R^n}w \cdot\nabla\varphi dx\Big|\leq \Lambda_r^\frac{1}{r}\|m\|^\frac{1}{r'}_{L^\beta(\mathbb R^n)}\|\nabla\varphi\|_{L^{\hat q'}(\mathbb R^n)}\text{~for~all~}\varphi\in C_c^{\infty}(\mathbb R^n).
\end{equation}
In view of Lemma \ref{proposition-lemma21-FP}, one has there exists $C_{\hat q}>0$ such that
\begin{equation}\label{eq2-17}
\|\nabla m\|_{L^{\hat q}(\mathbb R^n)}\leq  C_{\hat q} \Lambda_r^\frac{1}{r}\|m\|^\frac{1}{r'}_{L^\beta(\mathbb R^n)}.
\end{equation}
Moreover, we apply  \eqref{Gag} to get
\begin{equation}\label{eq2-19}
\|m\|_{L^\beta(\mathbb R^n)}\leq \mathcal{S}_{\hat q,r}\|\nabla m\|_{L^{\hat q}(\mathbb R^n)}^\theta\|m\|_{L^1(\mathbb R^n)}^{1-\theta} \text{ where }\frac{1}{\beta}=\theta(\frac{1}{{\hat q}}-\frac{1}{n})+1-\theta.
\end{equation}
In light of \eqref{eq2-160}, one obtains
\begin{equation}\label{eq2.20}
\theta=\frac{nr(\hat q-1)}{(r-1)(nq-n+q)}
\end{equation}

Invoking \eqref{eq2-17} and \eqref{eq2-19}, we find the following inequalities hold:
\begin{equation}\label{eq2.22}
\|m\|_{L^\beta(\mathbb R^n)}\leq \left(\mathcal{S}_{\hat q,r}\right)^{\frac{r'}{r'-\theta}} \left(\mathcal{S}_{\hat q,r}C_{\hat q} \Lambda_r^\frac{1}{r}\right)^{\frac{r'\theta}{r'-\theta}}\|m\|_{L^1(\mathbb R^n)}^\frac{(1-\theta)r'} {r'-\theta}
\end{equation}
and
\begin{equation}\label{240eq-in-priori-FP}
\|\nabla m\|_{L^{\hat q}(\mathbb R^n)}\leq \mathcal{S}_{\hat q,r}^{\frac{1}{r'-\theta}}\left(C_{\hat q} \Lambda_r^\frac{1}{r}\right)^{\frac{r'}{r'-\theta}}\|m\|_{L^1(\mathbb R^n)}^\frac{1-\theta} {r'-\theta}.
\end{equation}
Then letting $\tau:=\frac{1-\frac{1}{\hat q}}{1-\frac{1}{\beta}}=\frac{1}{r'}$ since \eqref{eq2-160} holds, we apply H\"older's inequality to obtain
\begin{equation}\label{241eq-in-priori-FP}
\|m\|_{L^{\hat q}(\mathbb R^n)}\leq \|m\|_{L^\beta(\mathbb R^n)}^{\tau}\|m\|_{L^1(\mathbb R^n)}^{1-\tau}\leq  \mathcal{S}_{\hat q,r}^{\frac{1}{r'-\theta}} \left(C_{\hat q} \Lambda_r^\frac{1}{r}\right)^{\frac{\theta}{r'-\theta}}\|m\|_{L^1(\mathbb R^n)}^{\frac{1-\theta} {r'-\theta}+\frac{1}{r}}.
\end{equation}
Moreover, using \eqref{hatqconstraint}  and \eqref{eq2.20} again, we obtain \eqref{eq2.201}

From \eqref{240eq-in-priori-FP} and  \eqref{241eq-in-priori-FP} we obtain \eqref{eq240}.

Now, we focus on the estimates of $w.$ 
 Noting that for any $\nu\in[1,\hat q]$, we have $\frac{r}{\nu}>\frac{r}{\hat q}>1$.  Then, by H\"{o}lder's inequality,
\begin{equation*}
\int_{\mathbb R^n}|w|^\nu dx=\int_{\mathbb R^n}|w|^\nu |m|^{-\frac{(r-1)\nu}{r}} |m|^{\frac{(r-1)\nu}{r}}dx\leq \left(\int_{\mathbb R^n}|m|\Big|\frac{w}{m}\Big|^{r}\, dx \right)^\frac{\nu}{r}\left(\int_{\mathbb R^n}|m|^{\frac{r-1}{r-\nu}\nu}dx\right)^\frac{r-\nu}{r},
\end{equation*}
which implies
\begin{equation}\label{eq2.29}
\|w\|_{L^\nu(\mathbb R^n)}\leq  \Lambda_r^\frac{1}{r}\|m\|_{L^{\frac{r-1}{r-\nu}\nu}(\mathbb R^n)}^\frac{r-1}{r} \text{ for all }\nu\in[1,\hat q].
\end{equation}
Choosing $\nu=\hat q$, it follows from   \eqref{eq2-160} and \eqref{eq2.22} that
\begin{equation}\label{eq2.30}
\|w\|_{L^{\hat q}(\mathbb R^n)}\leq  \Lambda_r^\frac{1}{r}\|m\|_{L^{\beta}(\mathbb R^n)}^\frac{r-1}{r}\leq \Lambda_r^\frac{1}{r} \left(\mathcal{S}_{\hat q,r}\right)^{\frac{1}{r'-\theta}}\left(C_{\hat q} \Lambda_r^\frac{1}{r}\right)^{\frac{\theta}{r'-\theta}}\|m\|_{L^1(\mathbb R^n)}^\frac{1-\theta} {r'-\theta}.
\end{equation}
Taking $\nu=1$ in \eqref{eq2.29}, together with \eqref{eq2.30}, we obtain   \eqref{eq2.31}.  We complete the proof of this lemma.

\end{proof}

By the same arguments of \eqref{eq2-160}, \eqref{eq2-16} and \eqref{eq2-17}, we  have the following corollary:
\begin{corollary}\label{lemma21-crucial-cor}
Assume that $(m,w)\in (L^1(\mathbb R^n)\cap L^{1+\alpha}(\mathbb R^n)\cap W^{1, q}(\mathbb R^n))\times L^1(\mathbb R^n)$ be the solution to (\ref{sect2-FP-eq}) with
$$\frac{1}{ q}=\frac{1}{r}+\frac{1}{r'(1+\alpha)}.$$
Then for $\alpha\in(0,\frac{r}{n}\big]$, there exists a positive constant $C$ depending only on $n$ and $\alpha$ such that
\begin{align}\label{lemma24eq28w1q}
\Vert \nabla m\Vert_{L^{q}(\mathbb R^n)}\leq C\Big(m\Big|\frac{w}{m}\Big|^{r}\, dx\Big)^\frac{1}{r}\Vert m\Vert_{L^{1+\alpha}}^{\frac{1}{r'}}.
\end{align}
Moreover, there exists a positive constant $C$ only depending on $r,$ $n$ and $\alpha$ such that   
\begin{align}
\Vert m\Vert^{1+\alpha}_{L^{1+\alpha}(\mathbb R^n)}\leq  C\bigg(\int_{\mathbb R^n}m\,dx\bigg)^{\frac{(\alpha+1)r-n\alpha}{r}}\bigg(\int_{\mathbb R^n}m\Big|\frac{w}{m}\Big|^{r}\,dx\bigg)^{\frac{n\alpha}{r}}.
\end{align}
\end{corollary}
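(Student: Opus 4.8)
The plan is to reproduce, almost verbatim, the chain of estimates in the proof of Lemma~\ref{lemma21-crucial}, with the free exponent $\beta$ there now frozen to $\beta=1+\alpha$ and the mass $\int_{\mathbb R^n}m\,dx$ kept explicit in every inequality instead of being normalized. Accordingly, the statement should be viewed as a corollary of that lemma rather than a new result, and the write-up can be kept short.

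First I would check the admissibility of the exponents. With $q$ given by $\frac1q=\frac1r+\frac1{r'(1+\alpha)}$, the computation in \eqref{eq2-160} shows that the exponent conjugate to $q$ equals $r'$ times the exponent conjugate to $1+\alpha$; moreover $\frac1q=1-\frac1{r'}\cdot\frac{\alpha}{1+\alpha}<1$, so $q>1$ and Lemma~\ref{proposition-lemma21-FP} is applicable with $p=q$. Using $0<\alpha\le\frac rn$ one checks directly that $1+\alpha<q^*$ — indeed $1+\alpha\le q^*$ is equivalent to $\alpha\le\frac r{n-r}$ when $r<n$ and is automatic when $r\ge n$, and $\frac rn<\frac r{n-r}$ — so the Gagliardo--Nirenberg inequality $\|m\|_{L^{1+\alpha}(\mathbb R^n)}\le C\|\nabla m\|_{L^q(\mathbb R^n)}^{\theta}\|m\|_{L^1(\mathbb R^n)}^{1-\theta}$ is available with the interior exponent $\theta\in(0,1)$ fixed by $\frac1{1+\alpha}=\theta\big(\frac1q-\frac1n\big)+(1-\theta)$, as in \eqref{eq2-19}.

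Next I would run the duality estimate. Testing \eqref{sect2-FP-eq} against $\varphi\in C_c^\infty(\mathbb R^n)$, writing $|w|=m^{1/r}|w/m|\cdot m^{1/r'}$ and applying H\"older twice — once with the pair $(r,r')$, and once with $(1+\alpha,\tfrac{1+\alpha}{\alpha})$ to estimate $\int_{\mathbb R^n}m|\nabla\varphi|^{r'}\,dx$, exactly as in \eqref{eq2-16} with $\beta=1+\alpha$ — gives $\big|\int_{\mathbb R^n}\nabla m\cdot\nabla\varphi\,dx\big|\le\big(\int_{\mathbb R^n}m|w/m|^r\,dx\big)^{1/r}\|m\|_{L^{1+\alpha}}^{1/r'}\|\nabla\varphi\|_{L^{q'}}$. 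Lemma~\ref{proposition-lemma21-FP} then upgrades $m$ to $W^{1,q}(\mathbb R^n)$ and produces $\|\nabla m\|_{L^q}\le C\big(\int_{\mathbb R^n}m|w/m|^r\,dx\big)^{1/r}\|m\|_{L^{1+\alpha}}^{1/r'}$, which is the first claimed estimate.

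Finally I would feed this bound into the Gagliardo--Nirenberg inequality and absorb the $L^{1+\alpha}$ norm: substitution gives $\|m\|_{L^{1+\alpha}}^{1-\theta/r'}\le C\big(\int_{\mathbb R^n}m|w/m|^r\,dx\big)^{\theta/r}\|m\|_{L^1}^{1-\theta}$, and raising both sides to the power $\frac{1+\alpha}{1-\theta/r'}$ yields the second inequality. A short computation — or, more cleanly, the observation that the target inequality is forced to be invariant under the scaling $(m,w)\mapsto(t^{\beta}m(tx),t^{\beta+1}w(tx))$ — shows that the exponents of $\int_{\mathbb R^n}m\,dx$ and of $\int_{\mathbb R^n}m|w/m|^r\,dx$ must then be $\frac{(\alpha+1)r-n\alpha}{r}$ and $\frac{n\alpha}{r}$ respectively. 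There is no genuine obstacle here; the only place where care is needed is the first step — keeping $q>1$, $1+\alpha<q^*$ and $\theta\in(0,1)$ simultaneously consistent across the whole range $\alpha\in(0,\frac rn]$, which is exactly where the mass-critical bound $\alpha\le\frac rn$ (strictly below the Sobolev threshold $\frac r{n-r}$) enters — after which everything is routine exponent bookkeeping.
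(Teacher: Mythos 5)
Your proposal is correct and coincides with the paper's own argument: the paper derives Corollary \ref{lemma21-crucial-cor} precisely "by the same arguments of \eqref{eq2-160}, \eqref{eq2-16} and \eqref{eq2-17}," i.e.\ by rerunning the duality estimate of Lemma \ref{lemma21-crucial} with $\beta=1+\alpha$, applying Lemma \ref{proposition-lemma21-FP}, and closing via Gagliardo--Nirenberg. Your exponent checks ($q>1$, $1+\alpha<q^*$ for $\alpha\le\frac rn$, $\theta\in(0,1)$) and the scaling-invariance shortcut for identifying the final exponents are all valid.
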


Next, we turn our attention to system (\ref{MFG-SS}), a coupled system consisting of a HJ equation and a Fokker-Planck Equation.  Indeed, with some assumptions imposed on population density $m$ and Lagrange multiplier $\lambda,$ we have the following lemma for the decay property of $m$:

\begin{lemma}[ C.f. Proposition 5.3 in \cite{cesaroni2018concentration} ]\label{mdecaylemma}
Assume that $(u,\lambda, m)\in C^2(\mathbb R^n)\times \mathbb R\times \big(W^{1,p}(\mathbb R^n)\cap L^1(\mathbb R^n)\big)$ with $u$ bounded from below, $ p>n$ and $\lambda<0$ is the solution of the following potential-free problem  
\begin{align}\label{26preliminaryfinal}
\left\{\begin{array}{ll}
-\Delta u+C_H|\nabla u|^{r'}+\lambda=-m^{\alpha}, &x\in\mathbb R^n,\\
\Delta m+C_Hr'\nabla\cdot(m|\nabla u|^{r'-2}\nabla u )=0, &x\in\mathbb R^n.
\end{array}
\right.
\end{align}
 Then, there exist $\kappa_1,\kappa_2>0$ such that 
\begin{align}\label{exponentialdecaym}
m(x)\leq \kappa_1 e^{-\kappa_2|x|}  ~\text{ for all } x\in \mathbb R^n.
\end{align}
\end{lemma}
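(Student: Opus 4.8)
\textbf{Proof strategy for Lemma \ref{mdecaylemma}.}

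The plan is to exploit the sub-solution structure of the $u$-equation to get a lower bound $u(x) \gtrsim |x|$ at infinity, and then feed this into a Lyapunov/comparison argument for the Fokker-Planck equation, whose drift $-C_Hr'|\nabla u|^{r'-2}\nabla u$ points inward once $\nabla u$ is large and coherently oriented. First I would observe that since $\lambda<0$ and $m\geq 0$ with $m\to 0$ at infinity (as $m\in L^1\cap W^{1,p}$, $p>n$, so $m$ is continuous and vanishes at infinity), the right-hand side of the HJB equation satisfies $-m^\alpha - \lambda \geq \tfrac{|\lambda|}{2} > 0$ for $|x|\geq R_0$ large. Thus $u$ satisfies $-\Delta u + C_H|\nabla u|^{r'} \geq \tfrac{|\lambda|}{2}$ outside a large ball, and $u$ is bounded below by hypothesis; the Lipschitz/gradient estimates of Lemma \ref{sect2-lemma21-gradientu} (applied with $V_k\equiv 0$) together with the lower-bound mechanism of Lemma \ref{lowerboundVkgenerallemma22}(iii) / Lemma \ref{lemma22preliminary}(ii) in the case $b=0$ — precisely the situation \eqref{verifylemma22} with $\sigma = |\lambda|/2$ — give $u(x)\geq C_3|x| - C_4$ for all $x$. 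This is the ``$b=0$'' branch already recorded in the excerpt, so I would simply cite it.

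Next I would set up the comparison for $m$. Rewrite the Fokker-Planck equation as $\Delta m + \nabla\cdot(m\, b) = 0$ with $b := C_Hr'|\nabla u|^{r'-2}\nabla u$, i.e. $b = \nabla H(\nabla u)$. The key point is that near infinity $u$ grows linearly and its gradient is essentially radial and bounded below: combining $u(x)\geq C_3|x|-C_4$ with the gradient bound $|\nabla u|\leq C$ (from Lemma \ref{sect2-lemma21-gradientu} with $V\equiv0$, which gives $|\nabla u|\leq C(1+0)^{1/r'}=C$) and a Harnack-type propagation, one gets that $b\cdot x \geq c_0|x|$, or at least that the drift has a uniformly inward radial component, for $|x|$ large. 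Then I would test the equation against $\varphi = e^{c_2|x|}\zeta$ for a suitable cutoff $\zeta$, or more cleanly, multiply by $e^{c_2|x|}$ and integrate: the idea is that $\int \nabla m\cdot\nabla(e^{c_2|x|}) + \int m\, b\cdot\nabla(e^{c_2|x|})$ must vanish, and $\nabla(e^{c_2|x|}) = c_2 \tfrac{x}{|x|}e^{c_2|x|}$, so the second term contributes $c_2\int m (b\cdot\tfrac{x}{|x|}) e^{c_2|x|} \geq c_2 c_0'\int_{|x|\geq R} m\, e^{c_2|x|}$, which dominates the first term after integration by parts once $c_2$ is chosen small relative to $c_0'$. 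This forces $\int m\,e^{c_2|x|}\,dx < \infty$; upgrading from this integral bound to the pointwise bound \eqref{exponentialdecaym} is then standard local elliptic regularity (De Giorgi--Nash--Moser or $L^p$ bootstrap on balls of radius $1$, using that $b$ is bounded).

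Alternatively — and this is probably the cleaner route to cite, since it is ``Proposition 5.3 in \cite{cesaroni2018concentration}'' — one builds an explicit supersolution. Since $v := m^{1/r}$ satisfies (when the Fokker--Planck relation trivializes, e.g.\ $r'=2$, or working with the identity $\nabla m + m\nabla H(\nabla u)=0$ that holds here because the pair comes from the variational problem) a scalar elliptic inequality $-\mu\Delta_r v + (\text{something} \geq \tfrac{|\lambda|}{2})v^{r-1}\leq 0$ at infinity, the function $\kappa_1 e^{-\kappa_2|x|/r}$ is a supersolution for $\kappa_2$ small and $\kappa_1$ large (chosen so the inequality holds on $\partial B_{R_0}$), and the comparison principle for the $r$-Laplacian yields $v\leq \kappa_1 e^{-\kappa_2|x|/r}$, hence \eqref{exponentialdecaym}. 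The main obstacle I anticipate is justifying the inward-drift / sign control on $b\cdot x$ rigorously without assuming radial symmetry: the linear growth of $u$ controls $u$ but not directly the direction of $\nabla u$, so one needs either the gradient bound combined with a quantitative Hopf-type argument, or to work at the level of $v=m^{1/r}$ where the drift is replaced by the zeroth-order coefficient $f(m)-\lambda \geq |\lambda|/2$ and the comparison is with a genuine scalar $r$-Laplacian — which sidesteps the drift entirely. I would therefore carry out the argument in the $v$-formulation, reducing to: (1) $v$ solves the quasilinear inequality, (2) construct the exponential supersolution, (3) apply the weak comparison principle for $\Delta_r$ on $\mathbb R^n\setminus B_{R_0}$, using $v\in L^\infty$ (from $m\in W^{1,p}$, $p>n$) and $v\to 0$ at infinity as the boundary condition at $\infty$.
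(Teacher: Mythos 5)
Your preliminary steps match the paper's: $m\in W^{1,p}$, $p>n$, gives $m\to 0$ at infinity, hence $-m^\alpha-\lambda\geq -\lambda/2>0$ outside a large ball (condition \eqref{verifylemma22}); Lemma \ref{sect2-lemma21-gradientu} with $b=0$ gives $|\nabla u|\leq C$; and the $b=0$ branch of the lower-bound lemmas gives $u(x)\geq C_3|x|-C_4$. But both of your routes for the decay of $m$ itself have a genuine gap. In the first route you need the drift $b=C_Hr'|\nabla u|^{r'-2}\nabla u$ to have a uniformly inward radial component, i.e.\ $b\cdot x\gtrsim |x|$, and as you yourself note, the linear growth of $u$ plus $|\nabla u|\leq C$ do not control the \emph{direction} of $\nabla u$; this step cannot be completed as stated. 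In the second route you invoke the pointwise identity $\nabla m+m\,\nabla H(\nabla u)=0$ to pass to $v=m^{1/r}$ and a scalar $r$-Laplacian comparison, but the paper explicitly warns (around \eqref{FPeqpartially}) that this trivialization of the Fokker--Planck equation is \emph{not} expected to hold in general — only for $r'=2$ or radial solutions — so this does not prove the lemma in the stated generality.

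The missing idea, and what the paper actually does, is to take the Lyapunov weight to be $\Phi=e^{\kappa u}$ rather than $e^{c_2|x|}$. Then $\nabla\Phi=\kappa\Phi\,\nabla u$, so the drift term becomes $b\cdot\nabla\Phi=\kappa C_Hr'|\nabla u|^{r'}\Phi\geq 0$ \emph{automatically} — no directional information on $\nabla u$ is needed. Substituting $-\Delta u=-C_H|\nabla u|^{r'}-\lambda-m^\alpha$ yields
\begin{equation*}
-\Delta\Phi+C_Hr'|\nabla u|^{r'-2}\nabla u\cdot\nabla\Phi
=\kappa\bigl(C_H(r'-1)|\nabla u|^{r'}-\lambda-\kappa|\nabla u|^2-m^\alpha\bigr)\Phi\geq -\tfrac{\kappa\lambda}{4}\Phi
\end{equation*}
for $|x|>R$ and $0<\kappa<-\lambda/4$, using $|\nabla u|\leq C_1$ and $m^\alpha\to0$. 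This makes $\Phi$ a genuine Lyapunov function for the adjoint of the Fokker--Planck operator, which (via the Metafune--Pallara--Rhandi-type results the paper cites) gives $\int m\,e^{\kappa u}\,dx<\infty$; the lower bound $u\geq C_3|x|-C_4$ then converts this into exponential integrability in $|x|$, and local elliptic estimates upgrade it to the pointwise bound \eqref{exponentialdecaym}. In short: your energy/comparison scaffolding is sound, but you must replace the radial weight $e^{c_2|x|}$ by $e^{\kappa u}$ to get the sign of the drift term for free.
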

\begin{proof}
Since $p>n$, we see that  $m\in W^{1,p}(\mathbb R^n)\hookrightarrow C^{0,\theta}(\mathbb R^n)$ for some $\theta\in(0,1)$, which indicates   $m\rightarrow 0$ as $|x|\rightarrow +\infty$.  In addition, noting $-\lambda>0,$ we obtain
\begin{align}\label{eq2.45}
\liminf_{|x|\rightarrow \infty}(-m^{\alpha}-\lambda)\geq -\frac{\lambda}{2}>0,
\end{align}
which satisfies (\ref{verifylemma22}).  Now, we fix $u(0)=0\leq u(x)$ for $x\in\mathbb R^n$ and deduce from \eqref{usolutiongradientestimatepre} with $b=0$ that 
\begin{align}\label{29ulowerboundpotentialfree}
|\nabla u(x)|\leq C_1, \ \ x\in\mathbb R^n ~\text{ for some }~C_1>0.
\end{align}
 To show (\ref{exponentialdecaym}), we consider the Lyapunov function $\Phi(x)=e^{\kappa u}$ with $0<\kappa<-\frac{\lambda}{4}$.  By using the $u$-equation in (\ref{26preliminaryfinal}) we obtain from \eqref{eq2.45} and \eqref{usolutiongradientestimatepre} that, $\exists \ R>0$ large enough such that for $|x|>R$, 
\begin{equation*}
\begin{split}
-\Delta\Phi+C_Hr'|\nabla u|^{r'-2}\nabla u \cdot\nabla\Phi&=\kappa(C_H(r'-1)|\nabla u|^{r'}-\lambda-\kappa|\nabla u|^2-m^{\alpha})\Phi\\
&\geq \kappa(-\lambda-\kappa|\nabla u|^2-m^{\alpha})\Phi\geq -\frac{\kappa\lambda}{4} \Phi.
\end{split}
\end{equation*}
Then by using (\ref{29ulowerboundpotentialfree}), as shown in \cite{cesaroni2018concentration}, we finish the proof of (\ref{exponentialdecaym}).
\end{proof}
We next collect the Pohozaev identities satisfied by the solution to (\ref{26preliminaryfinal}) in the following lemma: 
\begin{lemma}[C.f. Proposition 3.1 in \cite{cirant2016stationary}]\label{poholemma}
Let  $(u,\lambda, m)$ satisfy the assumptions of Lemma  \ref{mdecaylemma} and denote $w=-C_Hr'm|\nabla u|^{r'-2}\nabla u$. 
 Then the following identities hold:
\begin{align}\label{eq2.49}
\left\{\begin{array}{ll}
\lambda\int_{\mathbb R^n}m\, dx=-\frac{(\alpha+1)r-n\alpha}{(\alpha+1)r}\int_{\mathbb R^n}m^{\alpha+1}\,dx,\\
C_L\int_{\mathbb R^n}m\big|\frac{w}{m}\big|^r\, dx=\frac{n\alpha}{(\alpha+1)r}\int_{\mathbb R^n}m^{\alpha+1}\, dx=(r'-1)C_H\int_{\mathbb R^n} m|\nabla u|^{r'}\, dx.
\end{array}
\right.
\end{align}

\end{lemma}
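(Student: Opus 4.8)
\textit{Proof proposal.} These are exactly the Pohozaev relations for the potential–free system \eqref{26preliminaryfinal} with $f(m)=-m^{\alpha}$, and the plan is to reproduce the derivation of \cite[Prop.~3.1]{cirant2016stationary}, the real point being to check that the information inherited from Lemma~\ref{mdecaylemma} legitimizes every integration by parts. Under those hypotheses $u$ is Lipschitz, $|\nabla u|\le C$ (Lemma~\ref{sect2-lemma21-gradientu} with $b=0$, as in the proof of Lemma~\ref{mdecaylemma}), hence $|u(x)|\le C(1+|x|)$; $m\le\kappa_1 e^{-\kappa_2|x|}$ by \eqref{exponentialdecaym}; and interior regularity for the Fokker--Planck equation gives a matching exponential decay for $\nabla m$. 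Consequently all the integrands below lie in $L^1(\mathbb R^n)$ and all boundary integrals over $\partial B_R$ tend to $0$ as $R\to\infty$.

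First I would test the Hamilton--Jacobi equation against $m$. Writing the Fokker--Planck equation as $\Delta m=\nabla\cdot w$ with $w=-C_Hr'\,m|\nabla u|^{r'-2}\nabla u$ and integrating by parts, $\int_{\mathbb R^n} m\,\Delta u\,dx=\int_{\mathbb R^n} u\,\Delta m\,dx=\int_{\mathbb R^n} u\,\nabla\cdot w\,dx=-\int_{\mathbb R^n}\nabla u\cdot w\,dx=C_Hr'\int_{\mathbb R^n} m|\nabla u|^{r'}\,dx$, so that
\begin{equation}\label{pohoplanA}
\lambda\int_{\mathbb R^n}m\,dx=C_H(r'-1)\int_{\mathbb R^n} m|\nabla u|^{r'}\,dx-\int_{\mathbb R^n}m^{\alpha+1}\,dx .
\end{equation}
This is one relation among $\lambda\int m$, $\int m|\nabla u|^{r'}$ and $\int m^{\alpha+1}$; a second one, linking $\int m|\nabla u|^{r'}$ to $\int m^{\alpha+1}$, is still needed, and this is the genuine Pohozaev step. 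One route is to test the Hamilton--Jacobi equation against $x\cdot\nabla m$ and the Fokker--Planck equation against $x\cdot\nabla u$ and add the outcomes (precisely the computation in \cite[Prop.~3.1]{cirant2016stationary}); an equivalent route is to use that $(m,w)$ is a constrained critical point of $\mathcal E$ and differentiate at $\delta=1$ the energy along the mass–preserving dilation $(m_\delta,w_\delta):=\big(\delta^{-n}m(\cdot/\delta),\,\delta^{-n-1}w(\cdot/\delta)\big)\in\mathcal K_{M}$, for which one computes $\mathcal E(m_\delta,w_\delta)=C_L\,\delta^{-r}\int m|w/m|^{r}\,dx-\tfrac{1}{\alpha+1}\,\delta^{-n\alpha}\int m^{\alpha+1}\,dx$. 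Either way one gets
\begin{equation}\label{pohoplanB}
C_L\int_{\mathbb R^n}m\Big|\frac{w}{m}\Big|^{r}\,dx=\frac{n\alpha}{(\alpha+1)r}\int_{\mathbb R^n}m^{\alpha+1}\,dx .
\end{equation}

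Finally I would invoke the elementary identity $C_L\int m|w/m|^{r}\,dx=(r'-1)C_H\int m|\nabla u|^{r'}\,dx$, which follows from $|w/m|^{r}=(C_Hr')^{r}|\nabla u|^{(r'-1)r}=(C_Hr')^{r}|\nabla u|^{r'}$ (since $(r'-1)r=r'$) together with the definition \eqref{MFG-L} of $C_L$: combined with \eqref{pohoplanB} this gives the second line of \eqref{eq2.49}, and substituting \eqref{pohoplanB} rewritten as $C_H(r'-1)\int m|\nabla u|^{r'}\,dx=\tfrac{n\alpha}{(\alpha+1)r}\int m^{\alpha+1}\,dx$ into \eqref{pohoplanA} yields $\lambda\int m\,dx=\big(\tfrac{n\alpha}{(\alpha+1)r}-1\big)\int m^{\alpha+1}\,dx=-\tfrac{(\alpha+1)r-n\alpha}{(\alpha+1)r}\int m^{\alpha+1}\,dx$, the first line. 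The only delicate point is \eqref{pohoplanB}: the work there is to verify that the $x$–weighted integrations by parts leave no boundary contribution (resp. that the dilated pairs are admissible and the first variation of $\mathcal E$ vanishes on them), which is exactly where the exponential decay of $(m,\nabla m)$ and the linear growth bound on $u$ are used; all remaining steps are short computations.
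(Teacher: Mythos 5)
Your proposal follows essentially the same route as the paper: test the HJ equation against $m$ and the FP equation against $u$ to get the first relation, obtain the genuine Pohozaev identity by testing against $x\cdot\nabla m$ and $x\cdot\nabla u$, and close with the algebraic identity $C_L(C_Hr')^{r}=(r'-1)C_H$; the justification of the integrations by parts via the exponential decay of $m$ and the linear growth of $u$ is also the paper's. Two minor caveats: your alternative dilation route for \eqref{pohoplanB} requires $(m,w)$ to be a critical point of the energy, which is not assumed in the lemma (only that $(u,\lambda,m)$ solves the system), so the direct computation is the one that applies in general; and for $1<r'<2$ the paper additionally regularizes $H$ by $H_{\epsilon}(p)=C_H(\epsilon+|p|^2)^{r'/2}$ and passes to the limit, since the Fokker--Planck equation then holds only weakly.
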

\begin{proof}
From Lemma \ref{mdecaylemma} we see that  $m\leq \kappa_1e^{-\kappa_2 |x|}$ is exponential decay.  In addition, one can obtain from  \eqref{29ulowerboundpotentialfree} that there exists $R>0$ such that $|u|\leq C|x|$ for  $|x|>R$.  It is necessary to mention that if $1<r'<2$, the Fokker-Planck equation holds in the weak sense.  In this case, we take the approximation argument and let $H_{\epsilon}(p):=C_H(\epsilon+|p|^2)^{\frac{r'}{2}}$ to approximate $H$ given by (\ref{MFG-H}).  After performing the computations on $m_{\epsilon}$, we take the limit $\epsilon\rightarrow 0$ to obtain our desired conclusion.

 We test the $u$-equation in (\ref{26preliminaryfinal}) against $m$ and integrate it by parts to obtain
\begin{align}\label{sect3-eq1-poho}
\int_{\mathbb R^n}\nabla u\cdot\nabla m\, dx+C_H\int_{\mathbb R^n}|\nabla u|^{r'}m\, dx+\lambda \int_{\mathbb R^n}m\,dx=-\int_{\mathbb R^n}m^{\alpha+1}\,dx.
\end{align}
Similarly, multiplying the $m$-equation in (\ref{eq-attained-sub}) by $u$, we integrate it to find
\begin{align}\label{sect3-eq2-poho}
\int_{\mathbb R^n}\nabla u\cdot\nabla m\,dx=-C_Hr'\int_{\mathbb R^n} m|\nabla u|^{r'}\,dx.
\end{align}
Subtracting (\ref{sect3-eq1-poho}) from \eqref{sect3-eq2-poho}, we arrive at
\begin{align}\label{sect3-combine11}
(1-r')C_H\int_{\mathbb R^n}m|\nabla u|^{r'}\,dx+\lambda\int_{\mathbb R^n}m\,dx =-\int_{\mathbb R^n}m^{\alpha+1}\, dx.
\end{align}

We next prove that 
\begin{align}\label{pohoequation2}
- n\lambda\int_{\mathbb R^n}m\,dx-\frac{n }{\alpha+1}\int_{\mathbb R^n}m^{\alpha+1}\,dx+C_H\frac{n-r}{r-1}\int_{\mathbb R^n} m|\nabla u|^{r'}\,dx=0,
\end{align}
 Multiply the $u$-equation in (\ref{26preliminaryfinal}) by $\nabla m\cdot x$ and integrate it by parts to get
\begin{align}\label{pohocombine1}
\int_{\mathbb R^n}(-m^{\alpha}-\lambda)\nabla m\cdot x\,dx=&-\int_{\mathbb R^n}\Delta u(\nabla m\cdot x)\,dx+{C_H}\int_{\mathbb R^n}|\nabla u|^{r'}(\nabla m\cdot x)\,dx\nonumber\\
=&\overbrace{\int_{\mathbb R^n}\nabla u\cdot \nabla(\nabla m\cdot x)\,dx}^{I_1}-{C_H}\int_{\mathbb R^n}\nabla\cdot(|\nabla u|^{r'}x)m\,dx.
\end{align}
Test the $m$-equation in (\ref{26preliminaryfinal}) against $\nabla u\cdot x$, then we use the integration by parts to obtain
\begin{align}\label{pohocombine2}
-{C_H}\int_{\mathbb R^n}\nabla(|\nabla u|^{r'})\cdot xm\,dx=\int_{\mathbb R^n}\nabla m\cdot \nabla(\nabla u\cdot x)\,dx+C_Hr'\int_{\mathbb R^n}|\nabla u|^{r'}m\,dx,
\end{align}
where we have used
$${C_H}\nabla(|\nabla u|^{r'})\cdot x=C_Hr'|\nabla u|^{r'-2}u_{x_i}u_{x_ix_j}x_j=C_Hr'|\nabla u|^{r'-2}\nabla u\cdot\nabla(\nabla u\cdot x)-r'C_H|\nabla u|^{r'}.$$

For $I_1$, we have from the integration by parts that 
\begin{align}\label{pohocombine3}
\int_{\mathbb R^n}\nabla u\cdot \nabla(\nabla m\cdot x)\,dx=&\int_{\mathbb R^n} u_{x_i}m_{x_ix_j}x_j\,dx+\int_{\mathbb R^n}\nabla u\cdot \nabla m\,dx\nonumber\\
=&-\int_{\mathbb R^n}m_{x_i}u_{x_ix_j}x_j\,dx+(1-n)\int_{\mathbb R^n}\nabla u\cdot\nabla m\,dx\nonumber\\
=&-\int_{\mathbb R^n} \nabla m\cdot\nabla(\nabla u\cdot x)\,dx+(2-n)\int_{\mathbb  R^n}\nabla u\cdot \nabla m\,dx,
\end{align}
Combining (\ref{pohocombine1}), (\ref{pohocombine2}) and (\ref{pohocombine3}), one finds
\begin{align}\label{pohofinalbefore}
\overbrace{\int_{\mathbb R^n}(-m^{\alpha}-\lambda)\nabla m\cdot x\,dx}^{I_2}=C_H\big(r'-{n}\big)\int_{\mathbb R^n}|\nabla u|^{r'}m\,dx+(2-n)\int_{\mathbb R^n}\nabla u\cdot \nabla m\,dx.
\end{align}
For $I_2$, we integrate by parts again to find
\begin{align}\label{pohofinalbefore2}
I_2=n\int_{\mathbb R^n}\Big(\frac{1}{\alpha+1}m^{\alpha+1}+\lambda m\Big)\,dx.
\end{align}
By using (\ref{pohofinalbefore}) and (\ref{pohofinalbefore2}), we have shown 
\begin{align*}
- n\lambda\int_{\mathbb R^n}m\,dx-\frac{n }{\alpha+1}\int_{\mathbb R^n}m^{\alpha+1}\,dx+ C_H\Big(r'-{n}\Big)\int_{\mathbb R^n} |\nabla u|^{r'}m\,dx+(2-n)\int_{\mathbb R^n}\nabla u\cdot \nabla m\,dx=0,
\end{align*}
This together with \eqref{sect3-eq2-poho} indicates 
(\ref{pohoequation2}).  Since $w=-C_Hr'm|\nabla u|^{r'-2}\nabla u$ and $C_L=\frac{1}{r}(r'C_H)^{\frac{1}{1-r'}}$, one  gets
\begin{align}\label{sect3-combine200}
C_L\int_{\mathbb R^n}m\bigg|\frac{w}{m}\bigg|^{r}\, dx=C_L (C_Hr')^{r}\int_{\mathbb R^n}m|\nabla u|^{r'}\,dx=(r'-1)C_H\int_{\mathbb R^n}m|\nabla u|^{r'}\,dx.
\end{align}
Finally, \eqref{eq2.49} follows directly  from \eqref{sect3-combine11}, (\ref{pohoequation2}) and \eqref{sect3-combine200}. 

\end{proof}
Now, we are ready to show Theorem \ref{thm11-optimal}, the attainability of problem (\ref{sect2-equivalence-scaling}). 
 We would like to recall that minimization problem (\ref{sect2-equivalence-scaling}) is equivalent to (\ref{optimal-inequality-sub}).
\section{Gagliardo-Nirenberg Type Inequality: Potential-free MFGs}\label{sect3-optimal}
This section is devoted to the existence of minimizers to problem (\ref{optimal-inequality-sub}).  Before studying the mass critical case, we consider the case of $\alpha\in(0,\frac{r}{n})$ and recall that with this condition, Cirant et al. \cite{cesaroni2018concentration} in Theorem 1.3 showed for any $M>0,$ the following minimization problem is attained by pair $( \bar m_{\alpha,M},\bar w_{\alpha,M}),$
\begin{align}\label{energy-epsilon0}
e_{0,\alpha,M}:=\inf\limits_{(m,w)\in {\mathcal A}_M}\mathcal E_0(m,w) ~\text{ where }~\mathcal E_0(m,w)=\int_{\mathbb R^n}\Bigg(C_Lm\bigg|\frac{w}{m}\bigg|^r-\frac{1}{\alpha+1}m^{\alpha+1}\Bigg) \,dx.
\end{align}
Moreover, $ \bar m_{\alpha,M}\in W^{1,p}(\mathbb R^n)$  $\forall \ p\geq 1$ satisfies  
\begin{equation}\label{eq3.2}
    0<\bar m_{\alpha,M}<c_{1,M}e^{-c_{2,M}|x|} \text{ for some }c_{1,M}, c_{2,M}>0.
\end{equation}
and 
\begin{equation}\label{eq3.3}
    \text{$\exists\ \bar u_{\alpha,M}\in C^2(\mathbb R^n)$ bounded from below s. t. $\bar w_{\alpha,M}=-C_Hr'\bar m_{\alpha,M}|\nabla\bar u_{\alpha,M}|^{r'-2}\nabla\bar u_{\alpha,M}$. }
\end{equation}
  In addition, $(\bar m_{\alpha,M},\bar u_{\alpha,M})$  solves the following equation with some $\lambda_{\alpha,M}<0$
\begin{align}\label{eq-attained-sub}
\left\{\begin{array}{ll}
-\Delta u+C_H|\nabla u|^{r'}+\lambda=-m^{\alpha},&x\in\mathbb R^n,\\
\Delta m+C_Hr'\nabla\cdot(m|\nabla u|^{r'-2}\nabla u)=0,&x\in\mathbb R^n,\\
\int_{\mathbb R^n}m\, dx=M,
\end{array}
\right.
\end{align}
and  from Lemma \ref{poholemma} we have the following Pohozaev identities
\begin{align}\label{sect3-poho-final}
\left\{\begin{array}{ll}
\lambda\int_{\mathbb R^n}\bar m_{\alpha,M}\, dx=-\frac{(\alpha+1)r-n\alpha}{(\alpha+1)r}\int_{\mathbb R^n}\bar m_{\alpha,M}^{\alpha+1}\,dx,\\
C_L\int_{\mathbb R^n}\bar m_{\alpha,M}\big|\frac{\bar w_{\alpha,M}}{\bar m_{\alpha,M}}\big|^r\, dx=\frac{n\alpha}{(\alpha+1)r}\int_{\mathbb R^n}\bar m_{\alpha,M}^{\alpha+1}\, dx=(r'-1)C_H\int_{\mathbb R^n} \bar m_{\alpha,M}|\nabla \bar u_{\alpha,M}|^{r'}\, dx.
\end{array}
\right.
\end{align}

We next show that $(\bar m_{\alpha,M},\bar w_{\alpha,M})$, the minimizer of \eqref{energy-epsilon0}, is also a minimizer of problem (\ref{optimal-inequality-sub}).
\begin{lemma}\label{sect3-lemma32}
For any fixed $\alpha\in\big(0,\frac{r}{n}\big)$ and $M>0$, problem \eqref{sect2-equivalence-scaling} is attained by $(\bar m_{\alpha,M},\bar w_{\alpha,M})$ with $e_{0,\alpha,M}=\mathcal E_0(\bar m_{\alpha,M},\bar w_{\alpha,M})$.  Moreover, we have
\begin{align}\label{sect3-relation-Calpha-emalpha}
\Gamma_{\alpha}=\frac{n\alpha (-e_{0,\alpha,M})^{\frac{n\alpha-r}{r}}M^{\frac{(\alpha+1)r-n\alpha}{r}}}{r(1+\alpha)}\bigg(\frac{r-n\alpha}{n\alpha}\bigg)^{\frac{r-n\alpha}{r}}.
\end{align}
\end{lemma}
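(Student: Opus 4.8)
The plan is to prove the two bounds $\Gamma_\alpha\ge(\text{the claimed value})$ and $\Gamma_\alpha\le Q(\bar m_{\alpha,M},\bar w_{\alpha,M})=(\text{the claimed value})$ separately, writing, for $(m,w)\in\mathcal A$,
$$Q(m,w):=\frac{\Big(C_L\int_{\mathbb R^n}m\big|\tfrac{w}{m}\big|^{r}\,dx\Big)^{\frac{n\alpha}{r}}\Big(\int_{\mathbb R^n}m\,dx\Big)^{\frac{(\alpha+1)r-n\alpha}{r}}}{\int_{\mathbb R^n}m^{\alpha+1}\,dx},$$
so that $\Gamma_\alpha=\inf_{\mathcal A}Q$. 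Since $Q$ is invariant under the dilation $(m,w)\mapsto(t^{\beta}m(tx),t^{\beta+1}w(tx))$ recorded after (\ref{sect2-equivalence-scaling}), it equals $\inf_{\mathcal A_M}Q$ for the fixed $M$, so it suffices to show this infimum equals the right-hand side of (\ref{sect3-relation-Calpha-emalpha}) and is attained inside $\mathcal A_M$. I will use the minimizer $(\bar m_{\alpha,M},\bar w_{\alpha,M})$ of $e_{0,\alpha,M}$ and the facts recalled from \cite{cesaroni2018concentration}: the decay and regularity (\ref{eq3.2})--(\ref{eq3.3}), which in particular give $(\bar m_{\alpha,M},\bar w_{\alpha,M})\in\mathcal A_M$, and the Pohozaev identities (\ref{sect3-poho-final}). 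For a competitor $(m,w)\in\mathcal A_M$ abbreviate $\Lambda':=\int_{\mathbb R^n}m|w/m|^r\,dx$ and $A:=\int_{\mathbb R^n}m^{\alpha+1}\,dx$; then $A>0$, and unless $Q(m,w)=+\infty$ we have $0<\Lambda'<\infty$ (if $\Lambda'=0$ the continuity equation forces $\Delta m=0$ with $m\in L^1$, hence $m\equiv0$, which is excluded).

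For the lower bound, fix such an $(m,w)$ with $\Lambda'<\infty$ and apply the mass-preserving dilation $\beta=n$: the pair $(m_t,w_t):=(t^{n}m(tx),t^{n+1}w(tx))$ stays in $\mathcal A_M$ (the weak continuity equation and the moment bound are preserved), and a change of variables gives $\mathcal E_0(m_t,w_t)=C_L\Lambda' t^{r}-\frac{A}{\alpha+1}t^{n\alpha}=:g(t)$. Because $\alpha<r/n$, this two-term power function is negative near $t=0$, tends to $+\infty$ as $t\to\infty$, and has a unique interior minimizer $t_*=\big(\frac{n\alpha A}{(\alpha+1)r\,C_L\Lambda'}\big)^{1/(r-n\alpha)}$ with $g(t_*)=-\frac{r-n\alpha}{r(\alpha+1)}A\,t_*^{\,n\alpha}$; in particular $e_{0,\alpha,M}\le g(t_*)<0$. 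From $(m_{t_*},w_{t_*})\in\mathcal A_M$ we get $g(t_*)\ge e_{0,\alpha,M}$, i.e. $\frac{r-n\alpha}{r(\alpha+1)}A\,t_*^{\,n\alpha}\le -e_{0,\alpha,M}$. Substituting the formula for $t_*$ and using $\int_{\mathbb R^n}m\,dx=M$ to eliminate $A$ and $\Lambda'$ in favour of $Q(m,w)$ (note $Q(m,w)=(C_L\Lambda')^{n\alpha/r}M^{\frac{(\alpha+1)r-n\alpha}{r}}/A$), this inequality rearranges, after raising to the power $\frac{r-n\alpha}{r}$, exactly to $Q(m,w)\ge\frac{n\alpha}{r(1+\alpha)}(-e_{0,\alpha,M})^{\frac{n\alpha-r}{r}}M^{\frac{(\alpha+1)r-n\alpha}{r}}\big(\frac{r-n\alpha}{n\alpha}\big)^{\frac{r-n\alpha}{r}}$. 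Taking the infimum over $\mathcal A_M$ yields $\Gamma_\alpha\ge$ the right-hand side of (\ref{sect3-relation-Calpha-emalpha}).

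For the matching upper bound, I evaluate $Q$ at $(\bar m_{\alpha,M},\bar w_{\alpha,M})$. The first equality in the second line of (\ref{sect3-poho-final}), namely $C_L\int_{\mathbb R^n}\bar m_{\alpha,M}|\bar w_{\alpha,M}/\bar m_{\alpha,M}|^{r}\,dx=\frac{n\alpha}{(\alpha+1)r}\int_{\mathbb R^n}\bar m_{\alpha,M}^{\alpha+1}\,dx$, is precisely $g'(1)=0$ for this pair, so it is already scaled optimally, $t_*=1$. Consequently $e_{0,\alpha,M}=g(1)=-\frac{r-n\alpha}{r(\alpha+1)}\int_{\mathbb R^n}\bar m_{\alpha,M}^{\alpha+1}\,dx$, so $\int_{\mathbb R^n}\bar m_{\alpha,M}^{\alpha+1}\,dx=\frac{r(\alpha+1)}{r-n\alpha}(-e_{0,\alpha,M})$; inserting this and the Pohozaev relation into $Q(\bar m_{\alpha,M},\bar w_{\alpha,M})$ and collecting the exponents reproduces the right-hand side of (\ref{sect3-relation-Calpha-emalpha}) exactly. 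Hence $\Gamma_\alpha\le Q(\bar m_{\alpha,M},\bar w_{\alpha,M})=$ that value, which together with the previous paragraph gives equality, shows $(\bar m_{\alpha,M},\bar w_{\alpha,M})$ attains $\Gamma_\alpha$, and (by definition of the minimizer) gives $e_{0,\alpha,M}=\mathcal E_0(\bar m_{\alpha,M},\bar w_{\alpha,M})$.

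The only genuine work is the exponent bookkeeping in the last two paragraphs: checking that the powers of $n\alpha$, $r$, $\alpha+1$, $r-n\alpha$, $M$ and $-e_{0,\alpha,M}$ collapse into the stated closed form. The structural ingredients — scale-invariance of $Q$, minimizing a one-variable two-term power function, and recognizing the Pohozaev identity as the Euler--Lagrange (stationarity) condition along the dilation ray — are all routine; the only other point worth noting explicitly is that $(m,w)\mapsto(t^{n}m(tx),t^{n+1}w(tx))$ preserves membership in $\mathcal A_M$, which is an immediate change of variables.
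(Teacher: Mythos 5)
Your proposal is correct and follows essentially the same route as the paper: reduce by scale invariance to $\mathcal A_M$, obtain the lower bound by optimizing the two-term power function $t\mapsto\mathcal E_0(t^nm(tx),t^{n+1}w(tx))$ against $e_{0,\alpha,M}$, and match it at $(\bar m_{\alpha,M},\bar w_{\alpha,M})$ via the Pohozaev identities \eqref{sect3-poho-final}. Your observation that the Pohozaev identity is exactly the stationarity condition $g'(1)=0$, and your explicit handling of the degenerate case $\int m|w/m|^r\,dx=0$, are just slightly more explicit packagings of the paper's computation, not a different argument.
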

\begin{proof}
For simplicity, we denote
\begin{align}\label{sect3-Galpha-39}
G_{\alpha}(m,w):=\frac{\Big(C_L\int_{\mathbb R^n}m\big|\frac{w}{m}\big|^{r}\, dx\Big)^{\frac{n\alpha}{r}}\Big(\int_{\mathbb R^n}m\,dx\Big)^{\frac{(\alpha+1)r-n\alpha}{r}}}{\int_{\mathbb R^n}m^{\alpha+1}\, dx},
\end{align}
and  because of (\ref{optimal-inequality-sub}), we rewrite minimization problem \eqref{energy-epsilon0} as
\begin{align}\label{sect3-Calpha-equivalent}
\Gamma_{\alpha}=\inf_{(m,w)\in{{\mathcal A}}_M}G_{\alpha}(m,w).
\end{align}

To show (\ref{sect3-Calpha-equivalent}) is attained by $(\bar m_{\alpha,M},\bar w_{\alpha,M})$, we first analyze the lower bound of $\mathcal E_0$ defined by (\ref{energy-epsilon0}).  For  this purpose, we first note that  $\mathcal E_0(m,w)=G_\alpha(m,w)=\infty$ provided that $\int_{\mathbb R^n}m\big|\frac{w}{m}\big|^{r}\, dx=+\infty$. Therefore, we just need to consider the case that $(m,w)\in{{\mathcal A}}_M$ satisfying $\int_{\mathbb R^n}m\big|\frac{w}{m}\big|^{r}\, dx<\infty$.
We define $(m_{\mu}(x),w_{\mu}(x))=(\mu^nm(\mu x),\mu^{n+1}w(\mu x))$ for $\mu\in\mathbb R^+\backslash\{0\}$, then substitute the pair into $\mathcal E_0$ to obtain
\begin{align}\label{sect3-311-mathcalE0}
\mathcal E_0(m_{\mu},w_{\mu})=&\mu^r\int_{\mathbb R^n}C_Lm\Big|\frac{w}{m}\Big|^{r}\, dx-\frac{\mu^{n\alpha}}{\alpha+1}\int_{\mathbb R^n}m^{1+\alpha}\, dx\nonumber\\
\geq&-\bigg(\frac{n\alpha}{r}\bigg)^{\frac{r}{r-n\alpha}}\bigg[\frac{r-n\alpha}{n\alpha}\bigg]\bigg(\frac{1}{\alpha+1}\int_{\mathbb R^n}m^{\alpha+1}\,dx\bigg)^{\frac{r}{r-n\alpha}}\bigg(C_L\int_{\mathbb R^n}m\Big|\frac{w}{m}\Big|^{r}\, dx\bigg)^{-\frac{n\alpha}{r-n\alpha}},
\end{align}
where the equality holds if and only if
\begin{align*}
\mu=\Bigg[\frac{n\alpha\int_{\mathbb R^n}m^{\alpha+1}\, dx}{(\alpha+1)C_L\int_{\mathbb R^n}m\big|\frac{w}{m}\big|^{r}\, dx}\Bigg]^{\frac{1}{r-n\alpha}}.
\end{align*}
 Recall the definition of $e_{0,\alpha,M}:=\inf\limits_{(m,w)\in { {\mathcal A}}_M}\mathcal E_0(m,w)$, then we find from (\ref{sect3-311-mathcalE0}) that
\begin{align*}
-\bigg(\frac{n\alpha}{r}\bigg)^{\frac{r}{r-n\alpha}}\bigg(\frac{r-n\alpha}{n\alpha}\bigg)\bigg(\frac{1}{\alpha+1}\int_{\mathbb R^n}m^{\alpha+1}\,dx\bigg)^{\frac{r}{r-n\alpha}}\bigg(C_L\int_{\mathbb R^n}m\Big|\frac{w}{m}\Big|^{r}\, dx\bigg)^{-\frac{n\alpha}{r-n\alpha}}\geq e_{0,\alpha,M},
\end{align*}
which implies
\begin{align}\label{313-estimate-sect3}
\frac{\bigg(C_L\int_{\mathbb R^n}m\Big|\frac{w}{m}\Big|^{r}\, dx\bigg)^{\frac{n\alpha}{r-n\alpha}}}{\bigg(\frac{1}{\alpha+1}\int_{\mathbb R^n}m^{\alpha+1}\,dx\bigg)^{\frac{r}{r-n\alpha}}}\geq (-e_{0,\alpha,M})^{-1}\Big(\frac{n\alpha}{r}\Big)^{\frac{r}{r-n\alpha}}\bigg(\frac{r-n\alpha}{n\alpha}\bigg).
\end{align}
By using the definition $G_{\alpha}$ given in (\ref{sect3-Galpha-39}), one obtains from (\ref{313-estimate-sect3}) that
\begin{align}\label{combine-sect3-314}
G_{\alpha}(m,w)=\frac{\Big(C_L\int_{\mathbb R^n}m\big|\frac{w}{m}\big|^r\, dx\Big)^{\frac{n\alpha}{r}}\Big(\int_{\mathbb R^n}m\,dx\Big)^{\frac{(\alpha+1)r-n\alpha}{r}}}{\int_{\mathbb R^n}m^{\alpha+1}\, dx}\geq\frac{ H_{\alpha,M}}{\alpha+1}M^{\frac{(\alpha+1)r-n\alpha}{r}},
\end{align}
where
\begin{align}\label{H-alpham-definition}
\int_{\mathbb R^n}m\,dx=M,~~H_{\alpha,M}:=\frac{n\alpha}{r}(-e_{0,\alpha,M})^{\frac{n\alpha-r}{r}}\bigg(\frac{r-n\alpha}{n\alpha}\bigg)^{\frac{r-n\alpha}{r}}.
\end{align}

Recall that $(\bar m_{\alpha,M},\bar w_{\alpha,M})$ is a minimizer of problem  (\ref{energy-epsilon0}), then we apply \eqref{sect3-poho-final} to get
\begin{align}\label{sect3-combine-315}
G_\alpha(\bar m_{\alpha,M},\bar w_{\alpha,M})=\frac{H_{\alpha,M}}{\alpha+1}M^{\frac{(\alpha+1)r-n\alpha}{r}}.
\end{align}
Combining (\ref{combine-sect3-314}) with (\ref{sect3-combine-315}), one can see that (\ref{sect3-Calpha-equivalent}) is attained by $(\bar m_{\alpha,M},\bar w_{\alpha,M}).$  Moreover, noting that $H_{\alpha,M}$ is defined by \eqref{H-alpham-definition}, we have (\ref{sect3-relation-Calpha-emalpha}) holds.
\end{proof}
With the aid of Lemma \ref{sect3-lemma32}, one can use \eqref{sect3-poho-final} to establish the relationship between Lagrange multiplier $\lambda$ and $\Gamma_{\alpha}$ defined by \eqref{energy-epsilon0}.  Indeed, invoking (\ref{sect3-poho-final}) and  (\ref{sect3-relation-Calpha-emalpha}), we   get that 
\begin{align}\label{sect3-lemma31-imply}
e_{0,\alpha,M}=\frac{n\alpha-r}{(\alpha+1)r-n\alpha}\lambda M,
\end{align}
and
\begin{align}\label{sect3-318lambdam}
\lambda M=-S_{\alpha,M}\frac{(\alpha+1)r-n\alpha}{n\alpha}\bigg(\frac{n\alpha}{r}\bigg)^{\frac{r}{r-n\alpha}}, \ \ S_{\alpha,M}:=\Bigg[\frac{M^{\frac{(\alpha+1)r-n\alpha}{r}}}{(1+\alpha)\Gamma_{\alpha}}\Bigg]^{\frac{r}{r-n\alpha}}.
\end{align}
Lemma \ref{sect3-lemma32} demonstrates that for all $M>0$, Gagliardo-Nirenberg type inequalities given by (\ref{optimal-inequality-sub}) can be attained under the mass subcritical exponent case $\alpha\in\big(0,\frac{r}{n}\big)$.  In addition, $\lambda$, $\Gamma_{\alpha}$ and $M$ satisfy (\ref{sect3-318lambdam}).  Next, we shall investigate the mass critical exponent case and prove Theorem \ref{thm11-optimal}.  To begin with, we show that $\Gamma_{\alpha}$ defined in (\ref{optimal-inequality-sub}) is uniformly bounded as  $\alpha\nearrow\frac{r}{n}$.
\begin{lemma}\label{uniformlyboundC1CalphaC2}
There exist positive constants $C_1$ and $C_2$ independent of $\alpha$ such that, for all $\alpha\in(\frac{r}{n}-\epsilon,\frac{r}{n}]$ with $\epsilon>0$ small,
\begin{align}\label{317inlemma33alphaC2}
0<C_1\leq \Gamma_{\alpha}\leq C_2.
\end{align}
\end{lemma}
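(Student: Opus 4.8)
The plan is to establish the two inequalities in \eqref{317inlemma33alphaC2} by independent arguments: the upper bound by evaluating the ratio defining $\Gamma_\alpha$ on one fixed, $\alpha$-independent admissible pair, and the lower bound by invoking the Gagliardo--Nirenberg inequality of Corollary \ref{lemma21-crucial-cor}, the delicate point being that the constant there has to be shown to stay bounded as $\alpha\nearrow\frac rn$.

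For the upper bound I would test $\Gamma_\alpha$ (recall $\Gamma_\alpha=\inf_{(m,w)\in\mathcal A}G_\alpha(m,w)$ with $G_\alpha$ as in \eqref{sect3-Galpha-39}) against the single pair $(\bar m,\bar w):=\big(e^{-|x|},-\tfrac{x}{|x|}e^{-|x|}\big)$. Since $\bar w=\nabla\bar m$, one checks directly that $(\bar m,\bar w)\in\mathcal A$ (cf. \eqref{mathcalA-equivalence}): $\bar m\in L^1(\mathbb R^n)\cap W^{1,p}(\mathbb R^n)$ for every $p\ge1$, $\bar w\in L^1(\mathbb R^n)$, the continuity equation holds, $\int_{\mathbb R^n}|x|^b\bar m\,dx<\infty$, and $\bar m\ge0$, $\not\equiv0$. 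Moreover $|\bar w/\bar m|\equiv1$, so $\int_{\mathbb R^n}\bar m\big|\tfrac{\bar w}{\bar m}\big|^r\,dx=\int_{\mathbb R^n}\bar m\,dx$, and after simplifying the exponent $\frac{n\alpha}{r}+\frac{(\alpha+1)r-n\alpha}{r}=1+\alpha$,
\begin{align*}
\Gamma_\alpha\le G_\alpha(\bar m,\bar w)=C_L^{\frac{n\alpha}{r}}\Big(\int_{\mathbb R^n}\bar m\,dx\Big)^{1+\alpha}\Big(\int_{\mathbb R^n}\bar m^{1+\alpha}\,dx\Big)^{-1}=:g(\alpha).
\end{align*}
Because $\bar m$ decays exponentially, $g$ is continuous on the compact interval $[\tfrac rn-\epsilon,\tfrac rn]$ and $\int_{\mathbb R^n}\bar m^{1+\alpha}\,dx$ is bounded below there by a positive constant; hence $\Gamma_\alpha\le C_2:=\sup_{[\frac rn-\epsilon,\frac rn]}g<\infty$.

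For the lower bound I would argue as follows. Let $(m,w)\in\mathcal A$; if $\int_{\mathbb R^n}m\big|\tfrac wm\big|^r\,dx=+\infty$ then $G_\alpha(m,w)=+\infty$ and there is nothing to check, so assume it is finite, whence $m\in L^{1+\alpha}(\mathbb R^n)\cap W^{1,q}(\mathbb R^n)$ with $\tfrac1q=\tfrac1r+\tfrac1{r'(1+\alpha)}$ by Lemma \ref{lemma21-crucial} and interpolation, so that Corollary \ref{lemma21-crucial-cor} applies and supplies a constant $C=C(r,n,\alpha)>0$ with
\begin{align*}
\int_{\mathbb R^n}m^{1+\alpha}\,dx\le C(r,n,\alpha)\Big(\int_{\mathbb R^n}m\,dx\Big)^{\frac{(\alpha+1)r-n\alpha}{r}}\Big(\int_{\mathbb R^n}m\big|\tfrac{w}{m}\big|^r\,dx\Big)^{\frac{n\alpha}{r}},
\end{align*}
hence $G_\alpha(m,w)\ge C_L^{n\alpha/r}/C(r,n,\alpha)$, and taking the infimum over $\mathcal A$, $\Gamma_\alpha\ge C_L^{n\alpha/r}/C(r,n,\alpha)$. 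It then remains to see that $C(r,n,\alpha)$ does not blow up as $\alpha\nearrow\frac rn$. Tracing its construction through the proofs of Lemma \ref{lemma21-crucial} and Corollary \ref{lemma21-crucial-cor}, it is assembled from $q(\alpha)=\frac{r(1+\alpha)}{r+\alpha}$, the interpolation exponent $\theta(\alpha)=\frac{nr'\alpha}{n\alpha+r'(1+\alpha)}$ arising in \eqref{Gag} (with $\hat q,\beta$ replaced by $q(\alpha),1+\alpha$), the constant $C_q$ of Lemma \ref{proposition-lemma21-FP}, the Gagliardo--Nirenberg--Sobolev constant $\mathcal S_{q,r}$, and powers of the form $\tfrac1{r'-\theta}$. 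As $\alpha\to\frac rn$ one computes $q(\alpha)\to\frac{n+r}{n+1}\in(1,\infty)$ and $\theta(\alpha)\to\frac n{n+1}\in(0,1)$, so for $\epsilon$ small $q(\alpha)$ stays in a compact subinterval of $(1,\infty)$ (hence $C_{q(\alpha)}$ is bounded), $r'-\theta(\alpha)\ge r'-1>0$ is bounded away from $0$, and the interpolation \eqref{Gag} remains strictly subcritical (so $\mathcal S_{q(\alpha),r}$ is finite and continuous in $\alpha$). Consequently $\sup_{[\frac rn-\epsilon,\frac rn]}C(r,n,\alpha)<\infty$, and since $C_L^{n\alpha/r}$ is bounded below by a positive constant on that interval, $\Gamma_\alpha\ge C_1>0$.

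I expect the only genuine obstacle to be this last uniformity statement — ruling out a blow-up of the Gagliardo--Nirenberg constant exactly at the critical exponent $\alpha=\frac rn$. This reduces to the observation that at $\alpha=\frac rn$ the inequality is still strictly subcritical (the interpolation exponent $\theta$ stays strictly below $1$, and $q$ avoids the degenerate endpoints), which is transparent from the explicit formulas for $q(\alpha)$ and $\theta(\alpha)$ recorded above.
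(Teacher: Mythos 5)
Your proof is correct. The upper bound is exactly the paper's argument (test $G_\alpha$ on $(e^{-|x|},\nabla e^{-|x|})$ and use that the resulting expression is continuous in $\alpha$ on a compact interval). The lower bound, however, takes a genuinely different route. You apply the Gagliardo--Nirenberg-type inequality of Corollary \ref{lemma21-crucial-cor} directly at the varying exponent $\alpha$ and then shoulder the real burden yourself: showing that the constant $C(r,n,\alpha)$ in that corollary — assembled from $q(\alpha)=\tfrac{r(1+\alpha)}{r+\alpha}$, the interpolation exponent $\theta(\alpha)$, the Riesz-transform constant $C_{q}$ of Lemma \ref{proposition-lemma21-FP}, and the interpolation constant — stays bounded as $\alpha\nearrow \tfrac rn$, because $q(\alpha)\to\tfrac{n+r}{n+1}$ and $\theta(\alpha)\to\tfrac{n}{n+1}$ remain strictly away from the degenerate endpoints. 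The paper instead avoids any uniformity claim about that constant: it argues by contradiction, assumes $\liminf_{\alpha\nearrow r/n}\Gamma_\alpha=0$, takes the minimizers $(m_\alpha,w_\alpha)$ (whose existence for $\alpha<\tfrac rn$ comes from Lemma \ref{sect3-lemma32}) normalized so that $\int m_\alpha=\int m_\alpha^{1+\alpha}=1$, and applies Corollary \ref{lemma21-crucial-cor} only at one \emph{fixed} subcritical exponent $\alpha_0$, concluding $\|m_\alpha\|_{L^{1+r/n}}\to 0$ and hence, by interpolation, a contradiction with the normalization. Your version is more quantitative and more elementary in that it needs no existence of minimizers and treats the endpoint $\alpha=\tfrac rn$ on the same footing as the rest of the interval; the price is that you must reopen the proof of Corollary \ref{lemma21-crucial-cor} to certify the continuity of its constant (including continuity of the Gagliardo--Nirenberg constant in the exponents over a compact, strictly subcritical range — standard, but worth a sentence). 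The paper's version is less explicit but immune to that bookkeeping. Both are valid proofs of the lemma.
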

\begin{proof}
To establish the upper bound uniformly in $\alpha$, we set $\tilde m=e^{-|x|}$ with $\tilde w=\nabla \tilde m$.  Noting that $(\tilde m,\tilde w)\in\mathcal A$ for any $\alpha\in(0,\frac{r}{n}),$ one has
\begin{align}\label{upperboundCalpha}
\Gamma_{\alpha}\leq G_{\alpha}(\tilde m,\tilde w)=\frac{\Big(C_L\int_{\mathbb R^n}\tilde m\Big|\frac{\tilde w}{\tilde m}\Big|^{r}\, dx\Big)^{\frac{n\alpha}{r}}\Big(\int_{\mathbb R^n}\tilde m\,dx\Big)^{\frac{(\alpha+1)r-n\alpha}{r}}}{\int_{\mathbb R^n}\tilde m^{\alpha+1}\, dx}
\leq C_2(C_L,r)<+\infty.
\end{align}

It is left to establish the lower bound satisfied by $\Gamma_{\alpha}$ uniformly in $\alpha$.  To this end, we argue by contradiction and assume
\begin{align}\label{contradiction-assumption}
\liminf_{\alpha\nearrow \frac{r}{n}} \Gamma_{\alpha}=0.
\end{align}
Because of Lemma \ref{sect3-lemma32}, we denote  $(m_{\alpha},w_{\alpha})\in \mathcal A$ as a minimizer of problem (\ref{sect2-equivalence-scaling}).  Since (\ref{sect2-equivalence-scaling}) is invariant under the scaling $s(t^{n}m(tx),t^{n+1}w(tx))$ for any $s>0$ and $t>0,$ we normalize $m_{\alpha}$ to get
\begin{align}\label{normalize-alpha1-alpha}
\int_{\mathbb R^n} m_{\alpha}\, dx=\int_{\mathbb R^n}m^{\alpha+1}_{\alpha}\, dx\equiv 1.
\end{align}
Then it follows from (\ref{contradiction-assumption}) and  (\ref{sect2-equivalence-scaling}) that as $\alpha\nearrow \frac{r}{n},$
\begin{align}\label{eq3.18}
\int_{\mathbb R^n}m_{\alpha}\bigg|\frac{w_{\alpha}}{m_{\alpha}}\bigg|^{r}\,dx\rightarrow 0.
\end{align}
We claim that there exists  $\alpha_0\in\big(0,\frac{r}{n}\big)$ such that
\begin{equation}\label{eq3.19}
  q_{\alpha_0}<1+\frac{r}{n} ~\text{ and }~  q^*_{\alpha_0}>1+\frac{r}{n}.
\end{equation}
where $q_{\alpha}$ and ${q}^*_{\alpha}$ with $\alpha>0$ are defined by
\begin{align}\label{defqstar}
\frac{1}{q_{\alpha}}:=\frac{1}{r}+\frac{1}{(1+\alpha)r'} ~\text{ and }~
{q}^*_{\alpha}:=\left\{\begin{array}{ll}
\frac{nq_{\alpha}}{n-q_{\alpha}},&q_{\alpha}<n,\\
+\infty, &q_{\alpha}\geq n.
\end{array}
\right.
\end{align}
To prove our claim, we choose $\alpha=\alpha^*:=\frac{r}{n}$ and compute
$q_{\alpha^*}=\frac{n+r}{n+1}<1+\frac{r}{n}.$
Moreover, one finds if $q_{\alpha^*}<n,$ then $q^*_{\alpha^*}=\frac{n(n+r)}{n^2-r}>1+\frac{r}{n};$ otherwise if $q_{\alpha^*}\geq n,$ then  $q^*_{\alpha^*}=+\infty>1+\frac{r}{n}.$  Hence, by the continuity of $q_{\alpha}$ and $q^*_{\alpha}$ with respect to $\alpha$, we finish the proof of  claim \eqref{eq3.19}.

With this claim, we invoke Corollary \ref{lemma21-crucial-cor} and choose $\alpha=\alpha_0$ in (\ref{lemma24eq28w1q}) to get
\begin{align}\label{G-Nbeforeinterpolation}
\Vert \nabla m_\alpha\Vert_{L^{q_{\alpha_0}}(\mathbb R^n)}\leq {\bar C}_{\alpha_0}\Big(\int_{\mathbb R^n}m_\alpha\Big|\frac{w_\alpha}{m_\alpha}\Big|^{r}\,dx\Big)^{\frac{1}{r}}\Vert m_\alpha\Vert_{L^{1+\alpha_0}(\mathbb R^n)}^{\frac{1}{r'}}.
\end{align}
 Noting that as $\alpha\nearrow \frac{r}{n},$ $1<1+\alpha_0<1+\alpha$.  By H\"{o}lder's inequality and (\ref{normalize-alpha1-alpha}), we have
\begin{align}\label{globalboundmq0}
\limsup_{\alpha\nearrow \frac{r}{n}}\Vert m_{\alpha}\Vert_{L^{1+\alpha_0}(\mathbb R^n)}\leq C, ~\text{where  $C>0$ does not depend on $\alpha.$}
\end{align}
  Invoking Gagliardo-Nirenberg's inequality, we obtain from \eqref{eq3.18}, (\ref{G-Nbeforeinterpolation})  and (\ref{globalboundmq0}) that
\begin{align*}
\Vert m_{\alpha}\Vert_{L^{1+\frac{r}{n}}(\mathbb R^n)}\leq &C_{\alpha_0} \Vert\nabla m_{\alpha}\Vert_{L^{q_{\alpha_0}}(\mathbb R^n)}^{\theta}\Vert m_{\alpha}\Vert_{L^1(\mathbb R^n)}^{1-\theta}
\leq \tilde C_{\alpha_0}\Big(\int_{\mathbb R^n}m_{\alpha}\Big|\frac{w_{\alpha}}{m_{\alpha}}\Big|^{r}\, dx\Big)^{\frac{\theta}{r}}\rightarrow 0,~~\text{as}~~\alpha\nearrow \frac{r}{n},
\end{align*}
where  $\theta\in(0,1)$ and $\tilde C_{\alpha_0}>0$ are  independent of $\alpha.$  Recall  (\ref{normalize-alpha1-alpha}) and  thanks  to H\"{o}lder's inequality, one has 
\begin{align*}
\Vert m_{\alpha}\Vert_{L^{\alpha+1}(\mathbb R^n)}\leq \Vert m_{\alpha}\Vert_{L^1(\mathbb R^n)}^{1-\theta_{\alpha}}\Vert m_{\alpha}\Vert_{L^{1+\frac{r}{n}}(\mathbb R^n)}^{\theta_{\alpha}}=\Vert m_{\alpha}\Vert_{L^{1+\frac{r}{n}}(\mathbb R^n)}^{\theta_{\alpha}}\rightarrow 0, \ \ \text{as~~}{\alpha}\nearrow \frac{r}{n},
\end{align*}
where constants $\theta_{\alpha}\rightarrow 1$, which reaches a contradiction to (\ref{normalize-alpha1-alpha}).  Thus, there exists  $C_1>0$ independent of $\alpha$ such that
\begin{align}\label{lemma33lowebounddesired20231027}
0<C_1\leq \Gamma_{\alpha}.
\end{align}
Combining (\ref{lemma33lowebounddesired20231027}) with (\ref{upperboundCalpha}), we complete the proof of (\ref{317inlemma33alphaC2}).
\end{proof}
With the uniform boundedness of $\Gamma_{\alpha}$, we next establish the uniform $L^\infty$ bound of $m_{\alpha}$ as $\alpha\nearrow\frac{r}{n}$, which is
\begin{lemma}\label{lemma34uniformlinfboundmalpha}
Let $(u_{\alpha},\lambda_{\alpha},m_{\alpha})\in C^2(\mathbb R^n)\times \mathbb R\times W^{1,p}(\mathbb R^n)$, $\forall p>1$ be the solution of
\begin{align}\label{lemma34mualphaeq}
\left\{\begin{array}{ll}
-\Delta u+C_H|\nabla u|^{r'}+\lambda=-m^{\alpha},&x\in\mathbb R^n,\\
-\Delta m-C_H r'\nabla\cdot(m|\nabla u|^{r'-2}\nabla u)=0,&x\in\mathbb R^n,\\
\int_{\mathbb R^n}m\, dx=M_{\alpha}.
\end{array}
\right.
\end{align}
Define $w_{\alpha}=-C_Hr'm_{\alpha}|\nabla u_{\alpha}|^{r'-2}\nabla u_{\alpha}$.
 Assume that each $u_{\alpha}$ is bounded from below  and there exists a constant $C>0$ independent of $\alpha$, such that
\begin{align}\label{energypohoboundedness}
\limsup_{\alpha\nearrow\frac{r}{n}}\int_{\mathbb R^n}m_{\alpha}|\nabla u_{\alpha}|^{r'}\,dx\leq C,\ \ \lim_{\alpha\nearrow \frac{r}{n}}\int_{\mathbb R^n}m_{\alpha}\, dx=\lim_{\alpha\nearrow \frac{r}{n}} M_{\alpha}\leq C, \ \ \limsup_{\alpha\nearrow \frac{r}{n}}|\lambda_{\alpha}|\leq C,
\end{align}
then there exists $C_1>0$  independent of $\alpha$ such that
\begin{align}\label{linfinityboundofmuniform}
\limsup_{\alpha\nearrow \frac{r}{n}}\Vert m_{\alpha}\Vert_{L^\infty(\mathbb R^n)}\leq C_1.
\end{align}

\end{lemma}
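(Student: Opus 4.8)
The plan is a bootstrap that alternates between the Hamilton--Jacobi equation (to raise the local integrability of $\nabla u_\alpha$) and the Fokker--Planck equation (to raise that of $m_\alpha$), with every constant kept uniform as $\alpha\nearrow\alpha^\ast=r/n$. Since $w_\alpha=-C_Hr'm_\alpha|\nabla u_\alpha|^{r'-2}\nabla u_\alpha$ and $(r'-1)r=r'$, one has $|w_\alpha/m_\alpha|^r=(C_Hr')^r|\nabla u_\alpha|^{r'}$, so the first condition in \eqref{energypohoboundedness} says exactly that $\Lambda_{r,\alpha}:=\int_{\mathbb R^n}m_\alpha|w_\alpha/m_\alpha|^r\,dx$ is uniformly bounded; together with $\|m_\alpha\|_{L^1}=M_\alpha\le C$ and the fact that $(m_\alpha,w_\alpha)$ solves \eqref{sect2-FP-eq}, Corollary \ref{lemma21-crucial-cor} gives $\|m_\alpha\|_{L^{1+\alpha}(\mathbb R^n)}\le C$ uniformly, and interpolating with $L^1$ produces a \emph{fixed} exponent $a_0>1$ with $\|m_\alpha\|_{L^{a_0}(\mathbb R^n)}\le C$ uniformly. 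If $r>n$ we stop here: Lemma \ref{lemma21-crucial} gives $\|m_\alpha\|_{W^{1,\hat q}}\le C$ with $\hat q=r>n$, and $W^{1,r}(\mathbb R^n)\hookrightarrow L^\infty(\mathbb R^n)$. So assume $1<r\le n$, i.e. $r'\ge\frac n{n-1}$, which is precisely the regime covered by Theorem \ref{thmmaximalregularity}.

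For the iteration I maintain the invariant $\sup_{x_0\in\mathbb R^n}\|m_\alpha\|_{L^{a_k}(B_1(x_0))}\le C$, uniformly in $\alpha$. Given this, a finite covering by balls of radius $1/2$ gives the same bound on balls of radius $2$; then $f_\alpha:=-m_\alpha^\alpha-\lambda_\alpha$ satisfies $\|f_\alpha\|_{L^{p}(B_2(x_0))}\le C$ with $p:=a_k/\alpha$ (and $p>n/r$ since $a_k\ge a_0>1$ and $\alpha\le\alpha^\ast$), uniformly in $x_0,\alpha$. If $p\le n$, Theorem \ref{thmmaximalregularity} on $B_1(x_0)\subset\subset B_2(x_0)$ yields $\|\nabla u_\alpha\|_{W^{1,p}(B_1(x_0))}\le C$, hence by Sobolev $\|\nabla u_\alpha\|_{L^{p^\ast}(B_1(x_0))}\le C$ with $\frac1{p^\ast}=\frac1p-\frac1n$, uniformly; the drift $\tilde b_\alpha:=C_Hr'|\nabla u_\alpha|^{r'-2}\nabla u_\alpha$ then lies in $L^{\sigma}(B_1(x_0))$ with $\sigma=p^\ast/(r'-1)$, uniformly. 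Writing the Fokker--Planck equation as $-\Delta m_\alpha=\nabla\cdot(m_\alpha\tilde b_\alpha)$ with $m_\alpha\tilde b_\alpha\in L^t$, $\frac1t=\frac1{a_k}+\frac1\sigma$, interior Calder\'on--Zygmund estimates plus Sobolev embedding upgrade $m_\alpha$ to $L^{a_{k+1}}(B_{1/2}(x_0))$ with $\frac1{a_{k+1}}=\frac1{a_k}+\frac1\sigma-\frac1n$ (with $a_{k+1}=\infty$ if the right-hand side is $\le 0$), uniformly in $x_0$; covering balls of radius $1$ by balls of radius $1/2$ restores the invariant at level $a_{k+1}$. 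A short computation using $(r'-1)\alpha\le(r'-1)\alpha^\ast=r'/n$ gives $\frac1{a_k}-\frac1{a_{k+1}}\ge\frac{r'}n\bigl(1-\frac1{a_k}\bigr)\ge\frac{r'}n\bigl(1-\frac1{a_0}\bigr)>0$, so the reciprocal exponent drops by a fixed positive amount each round. Hence after a number of steps bounded uniformly in $\alpha$ one reaches $p=a_k/\alpha\ge n$ (or $a_{k+1}=\infty$ earlier); then $\nabla u_\alpha\in W^{1,p}_{\mathrm{loc}}\hookrightarrow L^\infty_{\mathrm{loc}}$ with a uniform bound, so $\tilde b_\alpha$ is uniformly bounded on every $B_1(x_0)$, and Moser iteration for $-\Delta m_\alpha=\nabla\cdot(m_\alpha\tilde b_\alpha)$ gives $\|m_\alpha\|_{L^\infty(B_{1/2}(x_0))}\le C_1$ uniformly in $x_0$ and $\alpha$; taking the supremum over $x_0$ yields \eqref{linfinityboundofmuniform}.

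The substantive difficulty is uniformity rather than any new idea: one must check that the constants produced by Theorem \ref{thmmaximalregularity}, by the Sobolev embeddings, and by the Fokker--Planck regularity stay bounded as $\alpha\nearrow\alpha^\ast$ and as the ball center $x_0$ ranges over $\mathbb R^n$ — this is why all estimates are kept on balls of a fixed radius and why $p=a_k/\alpha$ must remain in a compact subinterval of $(n/r,\infty)$ throughout the finitely many iterations. One should also treat with mild care the borderline exponents $p=n$ and $t=n$, replacing if needed the Sobolev step by $W^{1,t}\hookrightarrow L^q$ for all $q<\infty$, and the excluded range $r>n$ (equivalently $r'<\frac n{n-1}$), which as noted is settled directly via $W^{1,r}\hookrightarrow L^\infty$.
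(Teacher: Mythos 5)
Your argument is correct in outline, but it takes a genuinely different route from the paper's. The paper proves Lemma \ref{lemma34uniformlinfboundmalpha} by contradiction and blow-up: assuming $\mu_\alpha=\Vert m_\alpha\Vert_{L^\infty}^{-1/n}\to0$, it rescales so that $\Vert\bar m_\alpha\Vert_{L^\infty}\equiv1$, shows that $\int\bar m_\alpha^{1+\alpha}$, $\int\bar m_\alpha|\nabla\bar u_\alpha|^{r'}$ and $\Vert\bar w_\alpha\Vert_{L^p}$ all vanish, upgrades this to $\Vert\bar m_\alpha\Vert_{C^{0,\theta'}}\to0$ via Lemma \ref{sect2-lemma21-gradientu} and Lemma \ref{lemma21-crucial}, and contradicts the normalization at a maximum point. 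You instead run a direct, quantitative bootstrap on balls of fixed radius, alternating Theorem \ref{thmmaximalregularity} for the Hamilton--Jacobi equation with Calder\'on--Zygmund and Sobolev estimates for the Fokker--Planck equation; your computation $\frac{1}{a_k}-\frac{1}{a_{k+1}}=\frac{r'}{n}-\frac{(r'-1)\alpha}{a_k}\ge\frac{r'}{n}\big(1-\frac{1}{a_0}\big)>0$ is right (it uses exactly $(r'-1)\alpha\le r'/n$), so the scheme terminates after a number of steps independent of $\alpha$, and your treatment of $r>n$ via $W^{1,r}(\mathbb R^n)\hookrightarrow L^\infty(\mathbb R^n)$ matches how the paper handles that regime elsewhere. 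Your route is in fact close in spirit to the paper's later Lemma \ref{blowupanalysismlinfboundcritical}, where Theorem \ref{thmmaximalregularity} is combined with Fokker--Planck regularity to obtain local $L^\infty$ bounds; note that there the iteration is collapsed to a single step, since already $p=1+1/\alpha$ places the drift $|\nabla u_\alpha|^{r'-1}$ in $L^{pr}_{\rm loc}$ with $pr=n+r>n$, after which Theorem 1.6.5 of \cite{bogachev2022fokker} gives $L^\infty_{\rm loc}$ directly --- you could shorten your proof the same way. What your approach buys is an explicit, non-compactness bound that is uniform over ball centers by translation invariance; what it costs is the uniformity bookkeeping you yourself flag: since the constant in Theorem \ref{thmmaximalregularity} is not stated to depend continuously on $p$ and your $p=a_k/\alpha$ moves with $\alpha$, you should fix the exponent ladder independently of $\alpha$ (lowering each exponent slightly by H\"older on the bounded ball) so that only finitely many fixed values of $p$ are ever used, and perturb off the borderline cases $p=n$, $t=n$ as you indicate.
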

\begin{proof}
Motivated by the argument of \cite[Theorem 4.1]{cesaroni2018concentration},
to prove (\ref{linfinityboundofmuniform}), we argue by contradiction and suppose that up to a subsequence,
\begin{align}\label{blowupassumption}
\mu_{\alpha}:=\Vert m_{\alpha}\Vert_{L^\infty(\mathbb R^n)}^{-\frac{1}{n}}\rightarrow 0 \ \ \text{~as~}\alpha\nearrow\frac{r}{n}.
\end{align}
Since $u_{\alpha}$ is bounded from below, we fix $0=u_{\alpha}(0)=\inf\limits_{x\in \mathbb R^n} u_{\alpha}(x)$ without loss of generality.  Define
\begin{align}\label{blowupanalysisrescaling}
\bar u_{\alpha}:=\mu_{\alpha}^{\frac{2-r'}{r'-1}}u_{\alpha}(\mu_{\alpha}x)+1,\ 
\bar m_{\alpha}:=\mu_{\alpha}^n m_{\alpha}(\mu_{\alpha} x) ~\text{ and }~
\bar w_{\alpha}:=\mu_{\alpha}^{n+1}w_{\alpha}(\mu_{\alpha}x),
\end{align}
then we have from (\ref{energypohoboundedness}) and (\ref{blowupassumption}) that up to a subsequence,
\begin{align}\label{eq3.36}
\int_{\mathbb R^n}\bar m_{\alpha}\,dx=\int_{\mathbb R^n} m_{\alpha}\, dx=M_{\alpha},\ 
\int_{\mathbb R^n}\bar m_{\alpha}^{\alpha+1}\, dx=\mu_{\alpha}^{\alpha n}\int_{\mathbb R^n}m_{\alpha}^{\alpha+1}\, dx\rightarrow 0 \ \ \text{as~}\alpha\nearrow \frac{r}{n},
\end{align}
and
\begin{align}\label{338thevanishofmgradu}
\int_{\mathbb R^n}\bar m_{\alpha}|\nabla \bar u_{\alpha}|^{r'}\,dx=\mu_{\alpha}^{r}\int_{\mathbb R^n}m_{\alpha}|\nabla u_{\alpha}|^{r'}\, dx\rightarrow 0 \ \ \text{as~}\alpha\nearrow \frac{r}{n}.
\end{align}
Noting the definition of $w_{\alpha}$, \eqref{MFG-L} and (\ref{blowupanalysisrescaling}), we have
\begin{align}\label{operatorbodyconverge0}
C_L\int_{\mathbb R^n}\bigg|\frac{\bar w_{\alpha}}{\bar m_{\alpha}}\bigg|^{r} \bar m_{\alpha}\,dx=(r'-1)C_H\int_{\mathbb R^n}\bar m_{\alpha}|\nabla\bar u_{\alpha}|^{r'}\,dx\rightarrow 0, \ \ \text{as~}\alpha\nearrow \frac{r}{n}.
\end{align}
In light of (\ref{blowupassumption}), we obtain from (\ref{blowupanalysisrescaling}) that
\begin{align}\label{342malphainfequiv1always}
\Vert \bar m_{\alpha}\Vert_{L^\infty}\equiv 1.
\end{align}
This together with \eqref{eq3.36} indicates that for any $q>1+\alpha$, 
\begin{align}\label{343mL1plusgammaovernnorm}
\int_{\mathbb R^n}\bar m_{\alpha}^{q}\, dx\leq \bigg(\int_{\mathbb R^n}\bar m_{\alpha}^{1+\alpha}\, dx\bigg)\|\bar m_\alpha\|_{L^\infty(\mathbb \R^n)}^{q-\alpha-1}\rightarrow 0 \text{ as $\alpha\nearrow \frac{r}{n}$}.
\end{align}
On the other hand, by using (\ref{blowupanalysisrescaling}), we have from (\ref{lemma34mualphaeq}) that
\begin{align}\label{limitingproblembarubarm}
\left\{\begin{array}{ll}
-\Delta_x\bar u_{\alpha}+C_H|\nabla_x \bar u_{\alpha}|^{r'}+\lambda_{\alpha}\mu_{\alpha}^{r}=-\mu_{\alpha}^{r-n\alpha}\bar m_{\alpha}^{\alpha+1},&x\in\mathbb R^n,\\
-\Delta_x \bar m_{\alpha}-C_Hr'\nabla_x\cdot(\bar m_{\alpha}|\nabla_x \bar u_{\alpha}|^{r'-2}\nabla_x \bar u_{\alpha})=0,&x\in\mathbb R^n,\\
\int_{\mathbb R^n}\bar m_{\alpha}\, dx=M_{\alpha}.
\end{array}
\right.
\end{align}
In light of (\ref{energypohoboundedness}) and (\ref{blowupassumption}), one finds
$\lambda_{\alpha}\mu_{\alpha}^{r}\rightarrow 0\ \ \text{as~} \alpha\nearrow \frac{r}{n}.$
In addition, thanks to the fact $r\geq n\alpha$, we obtain
$$
0\leq \mu_{\alpha}^{r-n\alpha}\Vert\bar m_{\alpha}^{1+\alpha}\Vert_{L^\infty(\mathbb \R^n)}\leq  \Vert\bar m_{\alpha}\Vert_{L^\infty(\mathbb \R^n)}^{\alpha+1}\leq 1.
$$
Thus, one applies Lemma \ref{sect2-lemma21-gradientu} on the $\bar u_\alpha$-equation in (\ref{limitingproblembarubarm}) to arrive at
\begin{align}\label{346gradualphaestimate}
\limsup_{\alpha\nearrow \frac{r}{n}}\Vert\nabla \bar u_{\alpha}\Vert_{L^\infty(\mathbb \R^n)}\leq C<\infty.
\end{align}
  Since $\bar w_{\alpha}=-C_Hr'\bar m_{\alpha}|\nabla \bar u_{\alpha}|^{r'-2}\nabla \bar u_{\alpha}$,
we deduce  from (\ref{346gradualphaestimate}) that
\begin{align}\label{350barwalphaLinflessC}
\limsup_{\alpha\nearrow \frac{r}{n}}\Vert \bar w_{\alpha}\Vert_{L^\infty(\mathbb \R^n)}\leq C<\infty.
\end{align}

Now, we focus on H\"{o}lder estimates of $\bar m_{\alpha}$.  By using (\ref{operatorbodyconverge0}) and H\"{o}lder's inequality , we get as $\alpha\nearrow \frac{r}{n},$
\begin{align}\label{351barwalphaL1lessC}
\int_{\mathbb R^n}|\bar w_{\alpha}|\, dx=\int_{\mathbb R^n}|\bar w_{\alpha}|\bar m_{\alpha}^{\frac{1-r}{r}}\bar m_{\alpha}^{\frac{r-1}{r}}\, dx\leq &\bigg(\int_{\mathbb R^n}|\bar w_{\alpha}|^{r}|\bar m_{\alpha}|^{1-r}\, dx\bigg)^{\frac{1}{r}}\bigg(\int_{\mathbb R^n}\bar m_{\alpha}\, dx\bigg)^{\frac{1}{r'}}\rightarrow 0.
\end{align}
Combining (\ref{350barwalphaLinflessC}) with (\ref{351barwalphaL1lessC}), one has
\begin{align}\label{350inproveoptimal}
\int_{\mathbb R^n}|\bar w_{\alpha}|^p\,dx\rightarrow 0\ \ \text{as~}\alpha\nearrow \frac{r}{n}, \ \ \forall 1<p<+\infty.
\end{align}
For any $q>n$, by using the $\bar m_\alpha$-equation in (\ref{limitingproblembarubarm}), we obtain as $\alpha\nearrow \frac{r}{n},$
\begin{align}\label{350inproveoptimal1}
\Big|-\int_{\mathbb R^n}\bar m_{\alpha}\Delta \varphi \, dx\Big|=\Big|\int_{\mathbb R^n}\bar w_{\alpha}\cdot\nabla \varphi \, dx\Big|\leq \Big(\int_{\mathbb R^n}|\bar w_{\alpha}|^q\,dx\Big)^{\frac{1}{q}}\Vert \nabla \varphi\Vert_{L^{q'}}\rightarrow 0, \ \ \forall \varphi\in C_{c}^\infty(\mathbb R^n).
\end{align}
With (\ref{350inproveoptimal}) and (\ref{350inproveoptimal1}), one applies Lemma \ref{lemma21-crucial} to get
$
\Vert \nabla \bar m_{\alpha}\Vert_{L^q}\rightarrow 0 \ \ \text{as~}\alpha\nearrow \frac{r}{n}. 
$
Combine this with \eqref{343mL1plusgammaovernnorm} we obtain that, 
$\Vert \bar m_{\alpha}\Vert_{W^{1,q}(\mathbb R^n)}\rightarrow 0\ \ \text{as~}\alpha\nearrow \frac{r}{n}.$
Thanks to Sobolev embedding theorem, one further has for some $\theta'\in(0,1),$
\begin{align}\label{malphaholdercontinuousuniformlyinalpha}
\Vert \bar m_{\alpha}\Vert_{C^{0,\theta'}(\mathbb R^n)}\rightarrow 0 \ \ \text{as~}\alpha\nearrow \frac{r}{n}.
\end{align}

Let $x_{\alpha}$ be a maximum point of $\bar m_{\alpha}$, i.e.,  $\bar m_{\alpha}(x_{\alpha})=\Vert \bar m_{\alpha}\Vert_{L^\infty(\mathbb R^n)}=1.$  Then we obtain from (\ref{malphaholdercontinuousuniformlyinalpha}) that there exists $R_1>0$ independent of $\alpha$ such that
$|\bar m_{\alpha}(x)|\geq \frac{1}{2},\forall x\in B_{R_1}(x_{\alpha})$.
It follows that
\begin{align*}
\bigg(\frac{1}{2}\bigg)^{1+\frac{r}{n}}|B_{R_1}|\leq \int_{B_{R_1}(x_{\alpha})}\bar m_{\alpha}^{1+\frac{r}{n}}\, dx\leq \int_{\mathbb R^n}\bar m_{\alpha}^{1+\frac{r}{n}}\, dx,
\end{align*}
which is contradicted to  \eqref{343mL1plusgammaovernnorm}.  This completes the proof of (\ref{linfinityboundofmuniform}).
\end{proof}

Now, we are ready to show Theorem \ref{thm11-optimal} by using the approximation argument, which is stated as follows:

\medskip

\textbf{Proof of Theorem \ref{thm11-optimal}:}
\begin{proof}
Recall that $(\bar m_{\alpha,M},\bar{w}_{\alpha,M}, \lambda_{\alpha,M})$ denotes the minimizer of problem (\ref{sect3-Calpha-equivalent}) for any $M>0$, which satisfies system (\ref{eq-attained-sub}) and estimates \eqref{eq3.2} and \eqref{eq3.3}.

Choosing $$M=M_{\alpha}:=e^{\frac{r-n\alpha}{(\alpha+1)r-n\alpha}}\big[(\alpha+1)\Gamma_{\alpha}\big]^{\frac{r}{(\alpha+1)r-n\alpha}}$$ in \eqref{eq-attained-sub}, one can get from (\ref{sect3-318lambdam}) that
$$S_{\alpha,M_\alpha}:=\Bigg[\frac{M_\alpha^{\frac{(\alpha+1)r-n\alpha}{r}}}{(1+\alpha)\Gamma_{\alpha}}\Bigg]^{\frac{r}{r-n\alpha}}\equiv e.$$
Then as $\alpha\nearrow \frac{r}{n}$, we find up to a subsequence,
\begin{align}\label{356Malphagammaalpharelation}
\frac{M_{\alpha}^{\frac{(\alpha+1)r-n\alpha}{r}}}{(\alpha+1)\Gamma_{\alpha}}\rightarrow 1, \ \ \frac{(\alpha+1)r-n\alpha}{r}\rightarrow \frac{r}{n}.
\end{align}
Without confusing the readers, rewrite $(\bar m_{\alpha,M},\bar{w}_{\alpha,M}, \lambda_{\alpha,M})$ as $(\bar m_{\alpha,M_{\alpha}},\bar{w}_{\alpha,M_{\alpha}}, \lambda_{\alpha,M_{\alpha}})$ since $M=M_\alpha$ depends on $\alpha$. 
 We also recall from \eqref{eq-attained-sub} that $(\bar m_{\alpha,M_{\alpha}},\bar u_{\alpha,M_\alpha}, \bar{w}_{\alpha,M_{\alpha}}, \lambda_{\alpha,M_{\alpha}})$ satisfies
  \begin{align}\label{eq3.50}
\left\{\begin{array}{ll}
-\Delta u+C_H|\nabla u|^{r'}+\lambda_{\alpha, M_\alpha}=-m^{\alpha},&x\in\mathbb R^n,\\
\Delta m+C_Hr'\nabla\cdot(m|\nabla u|^{r'-2}\nabla u)=0,&x\in\mathbb R^n,\\
 w=-C_Hr' m|\nabla u|^{r'-2}\nabla u,\ \int_{\mathbb R^n}m\, dx=M_\alpha.
\end{array}
\right.
\end{align}
 By using Lemma \ref{uniformlyboundC1CalphaC2}, one can see that as $\alpha\nearrow \frac{r}{n}$, up to a subsequence, $\Gamma_{\alpha}\rightarrow \bar \Gamma_{\alpha^*}:=\liminf\limits_{\alpha\nearrow \frac{r}{n}} \Gamma_{\alpha}>0$.  Moreover, (\ref{356Malphagammaalpharelation}) implies $M_{\alpha}\rightarrow M_{\alpha^*}:=M^*,$ where
\begin{align}\label{mstarlimitfinal}
M^*=\Bigg[\Bigg(1+\frac{r}{n}\Bigg)\bar \Gamma_{\alpha^*}\Bigg]^{\frac{n}{r}}, \ \ \alpha^*=\frac{r}{n}.
\end{align}
In addition, by using (\ref{sect3-318lambdam}), we obtain as $\alpha\nearrow \frac{r}{n}$, up to a subsequence,
\begin{align}\label{lambdalimitbehavior}
\lambda_{\alpha, M_{\alpha}}\rightarrow \lambda_{\alpha^*}:=-\frac{r}{nM^*},
\end{align}
and it follows from (\ref{sect3-poho-final}) that
\begin{align}\label{eq3.58}
\int_{\mathbb R^n}{\bar m}_{\alpha,M_{\alpha}}\,dx=M_{\alpha}\rightarrow M^*>0,\ \ \int_{\mathbb R^n}{\bar m}_{\alpha,M_{\alpha}}^{\alpha+1}\, dx\rightarrow  1+\frac{r}{n}, \ \ C_L\int_{\mathbb R^n}\bar m_{\alpha,M_{\alpha}}\bigg|\frac{\bar w_{\alpha,M_{\alpha}}}{\bar m_{\alpha,M_{\alpha}}}\bigg|^{r}\,dx\rightarrow 1.
\end{align}
According to Lemma \ref{lemma34uniformlinfboundmalpha}, we obtain from  \eqref{lambdalimitbehavior} and \eqref{eq3.58} that 
\begin{equation}\label{eq3.54}
\limsup_{\alpha\nearrow \frac{r}{n}}\Vert \bar m_{\alpha, M_\alpha}\Vert_{L^\infty(\mathbb R^n)}<\infty.
\end{equation}
We then deduce from \eqref {usolutiongradientestimatepre} with $b=0$ that
\begin{align}\label{364supnablauupper}
\limsup_{\alpha\nearrow \frac{r}{n}}\Vert \nabla \bar u_{\alpha,M_{\alpha}}\Vert_{L^\infty}<\infty.
\end{align}
Noting from  the definition of $\bar w_{\alpha,M_{\alpha}}$, one further has
\begin{align}\label{361sandwichbefore}
\limsup_{\alpha\nearrow \frac{r}{n}}\Vert \bar w_{\alpha,M_{\alpha}}\Vert_{L^\infty}<\infty.
\end{align}
Similar to the argument of \eqref{malphaholdercontinuousuniformlyinalpha}, we can  use \eqref{eq3.58}-\eqref{361sandwichbefore} to  get  that 
\begin{align}\label{eq3.60}
\limsup_{\alpha\nearrow \frac{r}{n}}\Vert \bar m_{\alpha,M_{\alpha}}\Vert_{W^{1,q}(\mathbb R^n)}<+\infty \ \forall \ q>n, ~\text{ and }~\limsup_{\alpha\nearrow \frac{r}{n}}\Vert\bar m_{\alpha,M_{\alpha}}\Vert_{C^{0,\tilde\theta}(\mathbb R^n)}<\infty ~\text{ for some}~\tilde\theta\in(0,1).
\end{align}
since  each $\bar u_{\alpha,M_\alpha}\in C^2(\mathbb R^n)$ is bounded from below, we may assume that $\bar u_{\alpha,M_\alpha}(0)=0=\inf_{x\in \mathbb R^n}\bar u_{\alpha,M_\alpha}(x)$. We then deduce from $u$-equation of \eqref{eq3.50}  that $ \bar m_{\alpha,M_{\alpha}}^\alpha(0)\geq -\lambda_{\alpha, M_{\alpha}}$, which together with \eqref{eq3.58}  and \eqref{lambdalimitbehavior} implies that  there exist  $\delta_1, R>0$ independent of $\alpha$ such that,
\begin{equation}\label{eq3.62}
\bar m_{\alpha,M_{\alpha}}(x)>\frac{\delta_1}{2}>0,\text{ for }|x|<R.
\end{equation}

 We rewrite the $u$-equation of \eqref{eq3.50} as
     \begin{equation}\label{eq3.56}
     -\Delta \bar u_{\alpha,M_\alpha}=-C_H|\nabla \bar u_{\alpha,M_\alpha}|^{r'}+h_\alpha(x) ~\text{with }~h_\alpha(x):=-\lambda_{\alpha, M_\alpha}-\bar m_{\alpha,M_\alpha}^{\alpha},x\in\mathbb R^n\end{equation}
In light of   (\ref{364supnablauupper}), we deduce that $|\bar u_{\alpha,M_\alpha}(x)|\leq C(|x|+1)$ with $C>0$ independent of $\alpha$.
We then derive from the  classical $W^{2,p}$ estimate to derive  from \eqref{eq3.54} and \eqref{364supnablauupper} that, for any $\bar R>0$ and $p>n$, 
 \begin{equation*}
 \|\bar u_{\alpha,M_\alpha}\|_{W^{2,p}(B_{\bar R+1})}\leq C\left(\|\bar u_{\alpha,M_\alpha}\|_{L^{p}(B_{2\bar R}(0))}+\|h_\alpha\|_{L^{p}(B_{2\bar R}(0))}+\||\nabla\bar u_{\alpha,M_\alpha}|^{r'}\|_{L^{p}(B_{2\bar R}(0))}\right)
 \leq C_{p,\bar R}<\infty,
 \end{equation*}
  where the constant  $C_{p, \bar R}>0$ is independent of $\alpha$. It then follows from the Sobolev embedding theorem that 
$\|\bar u_{\alpha,M_\alpha}\|_{C^{1,\theta_1}(B_{\bar R+1}(0))}\leq C_{\theta_1,\bar R}<\infty \text{ for some $\theta_1\in(0,1)$.}$
This combines  with (\ref{eq3.60}) gives 
$$\||\bar u_{\alpha,M_\alpha}|^{r'}\|_{C^{0,\theta_2}(B_{\bar R+1}(0))} +\|h_\alpha\|_{C^{0,\theta_2}(B_{\bar R+1}(0))}\leq C_{\theta_2,\bar R}<\infty\text{ for some $\theta_2\in(0,1)$.}.$$
 Then by using Schauder's estimates, we have from \eqref{eq3.56} that
 \begin{equation}\label{eq4.54}
     \|\bar u_{\alpha,M_\alpha}\|_{C^{2,\theta_3}(B_{R}(0))} \leq C_{\theta_3,\bar R}<\infty, \text{ for some $\theta_3\in(0,1)$.}
 \end{equation}


 Now, letting $\bar R\to\infty$ and proceeding the standard diagonalization procedure, we can apply Arzel\`{a}-Ascoli theorem to get from \eqref{eq3.60} and\eqref{eq4.54} that there exists  $(m_{\alpha^*},u_{\alpha^*})\in W^{1,p}(\mathbb R^n)\times C^2(\mathbb R^n)$  such that
\begin{align}\label{269locallyuniform}
 \bar m_{\alpha,M_{\alpha}}\rightharpoonup m_{\alpha^*} \text{ in }W^{1,p}(\mathbb R^n), \text{ and }\bar  u_{\alpha,M_{\alpha}}\rightarrow  u_{\alpha^*} \ \ \text{in  } C^2_{\rm loc}(\mathbb R^n),  \text{ as }\alpha\nearrow \frac{r}{n}.
\end{align}
This together with \eqref{eq3.50} and \eqref{lambdalimitbehavior} implies that
 $(m_{\alpha^*},u_{\alpha^*})\in W^{1,p}(\mathbb R^n)\times C^2(\mathbb R^n)$ satisfies 
\begin{align}\label{limitingproblemminimizercritical}
\left\{\begin{array}{ll}
-\Delta u+C_H|\nabla u|^{r'}-\frac{r}{nM^*}=-m^{\frac{r}{n}},&x\in\mathbb R^n,\\
-\Delta m-r'C_H\nabla\cdot(m|\nabla u|^{r'-2}\nabla u)=0,&x\in\mathbb R^n,\\
w=-C_Hr'm|\nabla u|^{r'-2}\nabla u.
\end{array}
\right.
\end{align}
Thanks to \eqref{eq3.62} and Fatou's lemma, one finds that 
\begin{equation}\label{eq3.67}
\int_{\mathbb R^n}m_{\alpha^*}\,dx=a\in(0,M^*].
\end{equation} 
  Moreover, we deduce from Lemma \ref{mdecaylemma}  that  there exists some $\kappa,C>0$ such that $m_{\alpha^*}(x)<Ce^{-\kappa|x|}.$  In addition, from \eqref{364supnablauupper} we have $\Vert \nabla u_{\alpha^*}\Vert_{L^\infty}<\infty$.   Then, by applying Lemma \ref{poholemma}, Pohozaev identities, one has 
\begin{align}\label{370criticalpohoidentity}
C_L\int_{\mathbb R^n}\Bigg|\frac{w_{\alpha^*}}{m_{\alpha^*}}\Bigg|^{r}m_{\alpha^*}\,dx=\frac{n}{n+r}\int_{\mathbb R^n}m_{\alpha^*}^{1+\frac{r}{n}}\, dx.
\end{align}

Next, we discuss the relationship between $\bar\Gamma_{\alpha^*}:=\liminf\limits_{\alpha\nearrow\frac{r}{n}}\Gamma_{\alpha}$ and $\Gamma_{\alpha^*}$ with $\alpha^*=\frac{r}{n}.$  We claim that
\begin{align}\label{ourclaimtwolimit}
\bar \Gamma_{\alpha^*}=\Gamma_{\frac{r}{n}}.
\end{align}
 To show this, we first note from Lemma \ref{sect3-lemma32} that  
\begin{align}\label{337beforecomparecbarc}
\Gamma_{\alpha}=G_{\alpha}(\bar m_{\alpha,M_{\alpha}},\bar w_{\alpha,M_{\alpha}})=&G_{\frac{r}{n}}(\bar m_{\alpha,M_{\alpha}},\bar w_{\alpha,M_{\alpha}})\frac{\Big(C_L\int_{\mathbb R^n}\bar m_{\alpha,M_{\alpha}}\Big|\frac{\bar w_{\alpha,M_{\alpha}}}{\bar m_{\alpha,M_{\alpha}}}\Big|^r\, dx\Big)^{\frac{n\alpha}{r}}\Big(\int_{\mathbb R^n}\bar m_{\alpha,M_{\alpha}}\,dx\Big)^{\frac{(\alpha+1)r-n\alpha}{r}}}{\Big(C_L\int_{\mathbb R^n}\bar m_{\alpha,M_{\alpha}}\Big|\frac{\bar w_{\alpha,M_{\alpha}}}{\bar m_{\alpha,M_{\alpha}}}\Big|^r\, dx\Big)\Big(\int_{\mathbb R^n}\bar m_{\alpha,M_{\alpha}}\,dx\Big)^{\frac{r}{n}}}\cdot\frac{\int_{\mathbb R^n}\bar m_{\alpha,M_{\alpha}}^{\frac{r}{n}+1}\,dx}{\int_{\mathbb R^n}\bar m^{\alpha+1}_{\alpha,M_{\alpha}}\,dx}\nonumber\\
\geq& \Gamma_{\frac{r}{n}}\frac{\Big(C_L\int_{\mathbb R^n}\bar m_{\alpha,M_{\alpha}}\Big|\frac{\bar w_{\alpha,M_{\alpha}}}{\bar m_{\alpha,M_{\alpha}}}\Big|^r\, dx\Big)^{\frac{n\alpha}{r}}\Big(\int_{\mathbb R^n}\bar m_{\alpha,M_{\alpha}}\,dx\Big)^{\frac{(\alpha+1)r-n\alpha}{r}}}{\Big(C_L\int_{\mathbb R^n}\bar m_{\alpha,M_{\alpha}}\Big|\frac{\bar w_{\alpha,M_{\alpha}}}{\bar m_{\alpha,M_{\alpha}}}\Big|^r\, dx\Big)\Big(\int_{\mathbb R^n}\bar m_{\alpha,M_{\alpha}}\,dx\Big)^{\frac{r}{n}}}\cdot\frac{\int_{\mathbb R^n}\bar m_{\alpha,M_{\alpha}}^{\frac{r}{n}+1}\,dx}{\int_{\mathbb R^n}\bar m_{\alpha,M_{\alpha}}^{\alpha+1}\,dx}.
\end{align}
Invoking \eqref{eq3.58}, we deduce that  as $\alpha\nearrow \frac{r}{n},$ 
\begin{align*}
\frac{\Big(C_L\int_{\mathbb R^n}\bar m_{\alpha,M_{\alpha}}\Big|\frac{\bar w_{\alpha,M_{\alpha}}}{\bar m_{\alpha,M_{\alpha}}}\Big|^r\, dx\Big)^{\frac{n\alpha}{r}}\Big(\int_{\mathbb R^n}\bar m_{\alpha,M_{\alpha}}\,dx\Big)^{\frac{(\alpha+1)r-n\alpha}{r}}}{\Big(C_L\int_{\mathbb R^n}\bar m_{\alpha,M_{\alpha}}\Big|\frac{\bar w_{\alpha,M_{\alpha}}}{\bar m_{\alpha,M_{\alpha}}}\Big|^r\, dx\Big)\Big(\int_{\mathbb R^n}\bar m_{\alpha,M_{\alpha}}\,dx\Big)^{\frac{r}{n}}}\cdot\frac{\int_{\mathbb R^n}\bar m_{\alpha,M_{\alpha}}^{\frac{r}{n}+1}\,dx}{\int_{\mathbb R^n}\bar m_{\alpha,M_{\alpha}}^{\alpha+1}\,dx}\rightarrow 1.
\end{align*}
Then one takes the limit in \eqref{337beforecomparecbarc} to get
\begin{align}\label{gammaalphastar}
\bar \Gamma_{\alpha^*}:=\liminf_{\alpha\nearrow\frac{r}{n}}\Gamma_{\alpha}\geq \Gamma_{\frac{r}{n}}.
\end{align}
To finish the proof of our claim, it is left to show that the ``=" holds in \eqref{gammaalphastar}.  On the contrary, if  $\Gamma_{\frac{r}{n}}<\bar \Gamma_{\alpha^*}$,  then by the definition of $\Gamma_{\frac{r}{n}},$ one finds there exists $(\hat m,\hat w)\in \mathcal A$ given in (\ref{sect2-equivalence-scaling}) such that
\begin{align}\label{340limit1}
G_{\frac{r}{n}}(\hat m,\hat w)\leq \Gamma_{\frac{r}{n}}+\delta<\Gamma_{\frac{r}{n}}+2\delta<\bar \Gamma_{\alpha^*},
\end{align}
where $\delta>0$ is sufficiently small.  
On the other hand, from the definition of $\Gamma_{\alpha},$ we find
\begin{align}\label{342limit2}
G_{\frac{r}{n}}(\hat m,\hat w)=&G_{\alpha}(\hat m,\hat w)\frac{{\Big(C_L\int_{\mathbb R^n}\hat  m\Big|\frac{\hat w}{\hat m}\Big|^r\, dx\Big)^{\frac{n\alpha}{r}}\Big(\int_{\mathbb R^n}\hat m\,dx\Big)^{\frac{(\alpha+1)r-n\alpha}{r}}}}{\Big(C_L\int_{\mathbb R^n}\hat m\Big|\frac{\hat w}{\hat m}\Big|^r\, dx\Big)\Big(\int_{\mathbb R^n}\hat m\,dx\Big)^{\frac{r}{n}}}\cdot\frac{\int_{\mathbb R^n}{\hat m}^{\frac{r}{n}+1}\,dx}{\int_{\mathbb R^n}\hat m^{\alpha+1}\,dx}\nonumber\\
\geq& \Gamma_{\alpha}\frac{{\Big(C_L\int_{\mathbb R^n}\hat  m\Big|\frac{\hat w}{\hat m}\Big|^r\, dx\Big)^{\frac{n\alpha}{r}}\Big(\int_{\mathbb R^n}\hat m\,dx\Big)^{\frac{(\alpha+1)r-n\alpha}{r}}}}{\Big(C_L\int_{\mathbb R^n}\hat m\Big|\frac{\hat w}{\hat m}\Big|^r\, dx\Big)\Big(\int_{\mathbb R^n}\hat m\,dx\Big)^{\frac{r}{n}}}\cdot\frac{\int_{\mathbb R^n}{\hat m}^{\frac{r}{n}+1}\,dx}{\int_{\mathbb R^n}\hat m^{\alpha+1}\,dx}.
\end{align}
Since
\begin{align*}
\frac{{\Big(C_L\int_{\mathbb R^n}\hat  m\Big|\frac{\hat w}{\hat m}\Big|^r\, dx\Big)^{\frac{n\alpha}{r}}\Big(\int_{\mathbb R^n}\hat m\,dx\Big)^{\frac{(\alpha+1)r-n\alpha}{r}}}}{\Big(C_L\int_{\mathbb R^n}\hat m\Big|\frac{\hat w}{\hat m}\Big|^r\, dx\Big)\Big(\int_{\mathbb R^n}\hat m\,dx\Big)^{\frac{r}{n}}}\cdot\frac{\int_{\mathbb R^n}{\hat m}^{\frac{r}{n}+1}\,dx}{\int_{\mathbb R^n}\hat m^{\alpha+1}\,dx}\rightarrow 1 \ \ \text{~as~}\alpha\nearrow \frac{r}{n},
\end{align*}
we take the limit in (\ref{340limit1}) and (\ref{342limit2}) to obtain
\begin{align*}
\bar\Gamma_{\alpha^*}=\liminf_{\alpha\nearrow \frac{r}{n}}\Gamma_{\alpha}\leq \Gamma_{\frac{r}{n}}+\delta<\Gamma_{\frac{r}{n}}+2\delta\leq \liminf_{\alpha\nearrow \frac{r}{n}}\Gamma_{\alpha},
\end{align*}
which reaches a contradiction.  Now, one has \eqref{ourclaimtwolimit}, i.e., $\bar \Gamma_{\alpha^*}=\Gamma_{\frac{r}{n}}$ holds.

Next, we prove $(m_{\alpha^*},w_{\alpha^*})\in \mathcal A.$
 Since $(m_{\alpha^*},w_{\alpha^*})$ solves (\ref{limitingproblemminimizercritical}) and $m_{\alpha^*}\in C^{0,\theta}(\mathbb R^n)$ with $\theta\in(0,1)$, we conclude from (\ref{269locallyuniform}) and Lemma \ref{sect2-lemma21-gradientu} that $u_{\alpha^*}\in C^1(\mathbb R^n).$  Then by standard elliptic estimates, the boundedness of $\Vert\nabla u_{\alpha^*}\Vert_{L^\infty}$ and the exponentially decaying property of $m_{\alpha^*}$, one can prove that $(m_{\alpha^*},w_{\alpha^*}) \in \mathcal A$.

 On the other hand, 
we have from (\ref{eq3.58}) and \eqref{gammaalphastar} that
\begin{align}\label{combining1strongconverge}
\liminf_{\alpha\nearrow \frac{r}{n}}\Gamma_{\alpha}=\frac{n}{n+r}\big(M^*\big)^{\frac{r}{n}}=\Gamma_{\frac{r}{n}},
\end{align}
where $M^*$ is given in \eqref{mstarlimitfinal}.  Then with $(m_{\alpha^*},w_{\alpha^*}) \in \mathcal A$, it follows from \eqref{eq3.67}, (\ref{370criticalpohoidentity}) and (\ref{combining1strongconverge}) that  
\begin{align}\label{combining1strongconverge2}
\Gamma_{\frac{r}{n}}=\frac{n}{n+r}\big(M^*\big)^{\frac{r}{n}}\leq \frac{C_L\int_{\mathbb R^n}\Big|\frac{w_{\alpha^*}}{m_{\alpha^*}}\Big|^{r}m_{\alpha^*}\,dx\Big(\int_{\mathbb R^n}m_{\alpha^*}\, dx\Big)^{\frac{r}{n}}}{\int_{\mathbb R^n}m_{\alpha^*}^{1+\frac{r}{n}}\, dx}=\frac{n}{n+r}a^{\frac{r}{n}}\leq \frac{n}{n+r}\big(M^*\big)^{\frac{r}{n}} ,
\end{align}
which indicates $(m_{\alpha^*},w_{\alpha^*}) \in \mathcal A$ is an minimizer of $\Gamma_{\frac{r}{n}}$  as well as 
\begin{align*}
\int_{\mathbb R^n}m_{\alpha^*}\,dx=M^* ~\text{ and }~\bar  m_{\alpha,M_{\alpha}}\rightarrow m_{\alpha^*} \ \ \text{in~} L^1(\mathbb R^n) \text{ as }\alpha\nearrow \frac{r}{n}.
\end{align*}
This together with \eqref{limitingproblemminimizercritical} indicates \eqref{limitingproblemminimizercritical0}.  The proof  of Theorem \ref{thm11-optimal} is finished.

\end{proof}
With the proof of Theorem \ref{thm11-optimal}, we have studied the existence of ground states for potential-free MFG systems and established Gagliardo-Nirenberg type inequality under the mass critical exponent case.  In Section \ref{sect4-criticalmass}, \ref{sect5preciseblowup} and \ref{refinedblow}, we shall apply the inequality to investigate the blow-up behaviors of ground states to problem (\ref{minimization-problem-critical}) as $M\nearrow M^*$ when $\alpha=\frac{r}{n}$.

\section{Existence of Minimizers: Critical Mass Phenomenon}\label{sect4-criticalmass}
This section is devoted to the proof of Theorem \ref{thm11}.  More precisely, we intend to prove that  the minimization problem (\ref{minimization-problem-critical}) with energy $\mathcal E(m,w)$ being given by \eqref{41engliang}
has a minimizer $(m,w)\in \mathcal K_{M}$ if and only if $M<M^*$, where  $\mathcal K_{M}$ is defined by (\ref{constraint-set-K}). In addition, we show that there exists $(u,\lambda)\in C^2(\mathbb R^n)\times \mathbb R$ such that $(m,u,\lambda)\in W^{1,p}(\mathbb R^n)\times C^2(\mathbb R^n)\times \mathbb R$ is a solution to (\ref{125potentialfreesystem}) when $V$ is assumed to satisfy (\ref{V2mainasumotiononv}) when $r>1.$ Recall from 
the definition of $\Gamma_{\alpha^*}$ given in (\ref{sect2-equivalence-scaling}) that
 \begin{align}\label{optimalinequality415}
\int_{\mathbb R^n}m^{1+\frac{r}{n}}\,dx \leq \frac{C_L\big(1+\frac{r}{n}\big)}{(M^*)^{\frac{r}{n}}}\bigg(\int_{\mathbb R^n}\big|\frac{w}{m}\big|^{r}m\,dx\bigg)\bigg(\int_{\mathbb R^n}m\,dx\bigg)^{\frac{r}{n}},~~\forall (m,w)\in\mathcal A,
\end{align}
where $\mathcal A$ is given by (\ref{mathcalA-equivalence}).  

We would like to mention that there exists a threshold of $r$ while proving the existence of minimizers to problem (\ref{minimization-problem-critical}).  Indeed, as shown in Lemma \ref{lemma21-crucial}, if $r>n$, one can show the uniform boundedness of $m_k$ in $L^{\infty}(\mathbb R^n)$ and $C^{0,\theta}(\mathbb R^n)$ for some $\theta\in(0,1)$ if $(m_k,w_k)$ is a minimizing sequence.  However, when $r\leq n,$ we must follow the procedure shown in \cite{cesaroni2018concentration} to perform the regularization on (\ref{41engliang}) due to the worse regularity of $m$.


When $r\leq n$, we let $ \eta_{\epsilon}\geq 0$ be the standard mollifier satisfying
$$\int_{\mathbb R^n}\eta_{\epsilon}\,dx=1, \ \ \text{supp}\eta_{\epsilon}\in B_{\epsilon}(0),$$
where $\epsilon>0$ is sufficiently small, then consider the following auxiliary minimization problem:
\begin{align}\label{Eepsilonminimizerproblem1}
e_{\epsilon,M}:=\inf_{(m,w)\in\mathcal A_M}\mathcal E_{\epsilon}(m,w),
\end{align}
where ${\mathcal A}_M$ is given by (\ref{mathcalAMbegining}) and
\begin{align}\label{approxenergy}
\mathcal E_{\epsilon}(m,w):=C_L\int_{\mathbb R^n}\Big|\frac{w}{m}\Big|^{r}m\,dx+\int_{\mathbb R^n}V(x)m\,dx-\frac{1}{1+\frac{r}{n}}\int_{\mathbb R^n}(\eta_{\epsilon}\ast m)^{1+\frac{r}{n}}\, dx.
\end{align}
With the approximation energy (\ref{approxenergy}), we are able to study the existence of minimizers for \eqref{minimization-problem-critical} when $r\leq n$ by taking the limit.
 However, as discussed in \cite{cesaroni2018concentration}, it is necessary to study the uniformly boundedness of $m_{\epsilon}$ in $L^{\infty}$ when we assume $(m_{\epsilon},w_{\epsilon})$ as a minimizer for (\ref{approxenergy}).

Following the strategies shown above, we can prove Conclusion (i) stated in Theorem \ref{thm11} under the case $M<M^*$.  We would like to remark that with (\ref{V2mainassumption_2}), the assumption (V2) imposed on potential $V$, the condition $\int_{\mathbb R^n}|x|^bm\,dx<+\infty$ in (\ref{mathcalAMbegining}) must be satisfied for any minimizer.  Next, we state some crucial propositions and lemmas, which will be used in the proof of Theorem \ref{thm11}, as follows:

\begin{lemma}\label{lem4.1}
Let $p^*=\frac{np}{n-p}$ if $1\leq p<n$ and $p^*=\infty$ if $p\geq n$. Assume that $0\leq V(x)\in L_{\rm loc}^\infty(\mathbb R^n)$ satisfies $\liminf\limits_{|x|\to\infty}V(x)=\infty$ and define
$$\mathcal{W}_{p,V}:=\bigg\{m \big|\ m\in W^{1,p}(\mathbb R^n)\cap L^1(\mathbb R^n) \text{ and }\int_{\mathbb R^n}V(x)|m|\,dx<\infty\bigg\}.$$
Then, the embedding $\mathcal{W}_{p,V}\hookrightarrow L^q(\mathbb R^n)$ is compact for any $1\leq q<p^*$.
\end{lemma}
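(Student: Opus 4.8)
The plan is to prove the compact embedding $\mathcal{W}_{p,V}\hookrightarrow L^q(\mathbb{R}^n)$ by combining local compactness, coming from the Rellich--Kondrachov theorem, with a tightness estimate at infinity coming from the coercivity of the potential $V$. First I would fix a bounded sequence $\{m_k\}\subset \mathcal{W}_{p,V}$, i.e. there is a constant $A>0$ with $\|m_k\|_{W^{1,p}(\mathbb{R}^n)}+\|m_k\|_{L^1(\mathbb{R}^n)}+\int_{\mathbb{R}^n}V(x)|m_k|\,dx\le A$ for all $k$. By reflexivity of $W^{1,p}$ (for $1<p<\infty$; the case $p=1$ can be treated with the analogous $BV$-type argument or simply excluded since the paper only uses $p>1$), and weak-$*$ compactness, up to a subsequence $m_k\rightharpoonup m$ in $W^{1,p}(\mathbb{R}^n)$, with $m\in L^1(\mathbb{R}^n)$ and $\int V|m|\,dx\le A$ by weak lower semicontinuity (applied on bounded sets and then Fatou). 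On each ball $B_R$, Rellich--Kondrachov gives $m_k\to m$ strongly in $L^q(B_R)$ for every $q<p^*$, after passing to a further subsequence and a diagonal argument over $R\in\mathbb{N}$.

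Next I would control the mass of $m_k$ outside a large ball. Given $\varepsilon>0$, use $\liminf_{|x|\to\infty}V(x)=\infty$ to pick $R_\varepsilon$ so large that $V(x)\ge \varepsilon^{-1}$ for $|x|\ge R_\varepsilon$. Then
\begin{align*}
\int_{|x|\ge R_\varepsilon}|m_k|\,dx\le \varepsilon\int_{|x|\ge R_\varepsilon}V(x)|m_k|\,dx\le \varepsilon A,
\end{align*}
so the sequence is tight in $L^1$. To upgrade this $L^1$-tightness to $L^q$-tightness for $1\le q<p^*$, interpolate: on the region $\{|x|\ge R_\varepsilon\}$ write $q=\theta\cdot 1+(1-\theta)\,s$ for a suitable exponent $s$ with $q<s<p^*$ and $\theta\in(0,1)$, so that by H\"older's inequality
\begin{align*}
\|m_k\|_{L^q(|x|\ge R_\varepsilon)}\le \|m_k\|_{L^1(|x|\ge R_\varepsilon)}^{\theta}\,\|m_k\|_{L^s(|x|\ge R_\varepsilon)}^{1-\theta}\le (\varepsilon A)^{\theta}\,\|m_k\|_{L^s(\mathbb{R}^n)}^{1-\theta}.
\end{align*}
Since $s<p^*$, the Gagliardo--Nirenberg--Sobolev inequality bounds $\|m_k\|_{L^s(\mathbb{R}^n)}$ by a constant $C(A)$ depending only on $A$ (using $\|\nabla m_k\|_{L^p}$ and $\|m_k\|_{L^1}$), hence $\|m_k\|_{L^q(|x|\ge R_\varepsilon)}\le C(A)\,\varepsilon^{\theta}$ uniformly in $k$. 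The same bound holds for the limit $m$ by Fatou.

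Finally I would assemble the pieces: for any $\varepsilon>0$, choose $R_\varepsilon$ as above; then
\begin{align*}
\|m_k-m\|_{L^q(\mathbb{R}^n)}\le \|m_k-m\|_{L^q(B_{R_\varepsilon})}+\|m_k\|_{L^q(|x|\ge R_\varepsilon)}+\|m\|_{L^q(|x|\ge R_\varepsilon)}.
\end{align*}
The last two terms are each $\le C(A)\varepsilon^{\theta}$, and the first term tends to $0$ as $k\to\infty$ by the local strong convergence. Letting $k\to\infty$ and then $\varepsilon\to 0$ yields $m_k\to m$ in $L^q(\mathbb{R}^n)$, proving compactness. I expect the main (though still routine) obstacle to be bookkeeping the interpolation exponents so that the tail estimate genuinely holds for the full range $1\le q<p^*$ — in particular handling $q$ close to $p^*$ requires choosing $s$ between $q$ and $p^*$ and invoking the uniform $L^s$ bound from Sobolev embedding, which is where the $W^{1,p}$-boundedness (not just $L^1$-tightness) is essential; the edge case $p\ge n$, where $p^*=\infty$, needs the same argument with $s$ any finite exponent larger than $q$.
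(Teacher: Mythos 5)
Your proof is correct and is the standard argument (local Rellich--Kondrachov compactness plus a tail estimate at infinity coming from the coercivity of $V$); the paper itself does not prove this lemma but simply cites Reed--Simon and Bartsch--Wang, whose proofs run along exactly these lines, so you are supplying the details the paper omits. The only slip is cosmetic: the interpolation relation should read $\frac{1}{q}=\theta+\frac{1-\theta}{s}$ rather than $q=\theta\cdot 1+(1-\theta)s$, which is what your subsequent H\"older inequality actually uses.
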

\begin{proof}
We refer the readers to \cite[Theorem XIII.67]{reed2012methods} or \cite[Theorem 2.1]{bartsch1995} for the detailed discussions.
\end{proof}

When $r\leq n,$ we have the following lemma for the uniformly boundedness of $\Vert m_{\epsilon}\Vert_{L^{\infty}}:$

{
\begin{lemma}\label{blowupanalysismlinfboundcritical}
Suppose that $V(x)$ is locally H\"{o}lder continuous and satisfies \eqref{V2mainasumotiononv}.
Let  $(u_{k},\lambda_{k},m_{k})\in C^2(\mathbb R^n)\times \mathbb R\times (L^1(\mathbb R^n)\cap L^{1+\alpha^*}(\mathbb R^n)) $ be solutions to the following systems
\begin{align}\label{eq-potential-newest}
\left\{\begin{array}{ll}
-\Delta u_k+C_H|\nabla u_k|^{r'}+\lambda_k=V-g_k[m_k], &x\in\mathbb R^n,\\
\Delta m_k+C_Hr'\nabla\cdot(m_k|\nabla u_k|^{r'-2}\nabla u_k)=0, &x\in\mathbb R^n,\\
\int_{\mathbb R^n}m_k\,dx=M,
\end{array}
\right.
\end{align}
where $\alpha^*=\frac{r}{n}$ with $1<r\leq n$, $g_k: L^1(\mathbb \R^n) \mapsto L^1(\mathbb \R^n)$ with $\theta\in(0,1)$ satisfies for all $m\in L^p(\mathbb R^n)$, $p\in[1,\infty]$ and $k\in \mathbb N$,
\begin{equation}\label{gkm-newest-estimate-crucial2024207}
\|g_k[m]\|_{L^p(\mathbb R^n)}\leq K\bigg(\|m^{\alpha^*}\|_{L^p(\mathbb R^n)}+1\bigg) \ \text{ for some }K>0,
\end{equation}
and 
\begin{equation}\label{gkm-newest-estimate-crucial202420711}
\|g_k[m]\|_{L^p(B_R(x_0))}\leq K\bigg(\|m^{\alpha^*}\|_{L^p(B_{2R}(x_0))}+1\bigg) \ \text{ for any }R>0\text{ and }x_0\in \mathbb R^n.
\end{equation}
Assume that
\begin{align}\label{assumptionsincrucialemmanewest}
\sup_k\|m_k\|_{L^1(\mathbb R^n)}<\infty,~~\sup_k\|m_k\|_{L^{1+\alpha^*}(\mathbb R^n)}<\infty,~~\sup_k\int_{\mathbb R^n}Vm_k\,dx<\infty,~~\sup_k|\lambda_k|<\infty,
\end{align}
and for all $k$, $u_k$ is bounded from below uniformly.  Then we have 
\begin{equation}\label{uniformlyboundmkinlinfty}
\limsup_{k\to\infty}\|m_k\|_{L^\infty(\mathbb \R^n)}<\infty.
\end{equation}
\end{lemma}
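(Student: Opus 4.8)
\emph{Strategy.} The plan is to argue by contradiction through a blow-up analysis, adapting the scheme of \cite[Theorem 4.1]{cesaroni2018concentration} to the critical exponent $\alpha^*=\frac rn$ and to the present nonlocal coupling. Suppose $L_k:=\|m_k\|_{L^\infty(\mathbb R^n)}\to+\infty$ along a subsequence, set $\mu_k:=L_k^{-1/n}\to 0$, and pick $x_k$ with $m_k(x_k)\ge\tfrac34 L_k$. Introduce the rescaled triple
$$\hat u_k(y):=\mu_k^{\frac{2-r'}{r'-1}}u_k(\mu_k y+x_k)+c_k,\qquad \hat m_k(y):=\mu_k^n m_k(\mu_k y+x_k),\qquad \hat w_k(y):=\mu_k^{n+1}w_k(\mu_k y+x_k),$$
where $c_k$ is chosen so that $\inf\hat u_k=0$ (here the uniform lower bound on $u_k$ is used). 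Then $\|\hat m_k\|_{L^\infty}\le 1$, $\hat m_k(0)\ge\tfrac34$, $\int_{\mathbb R^n}\hat m_k\,dy=M$, $\hat w_k=-C_Hr'\hat m_k|\nabla\hat u_k|^{r'-2}\nabla\hat u_k$, and $(\hat u_k,\hat m_k)$ solves the rescaled MFG system with right-hand side $\hat f_k(y):=\mu_k^r[V(\mu_k y+x_k)-g_k[m_k](\mu_k y+x_k)-\lambda_k]$. Because $r=n\alpha^*$, the coupling rescales favorably: for $|y|\le\rho$ the estimate \eqref{gkm-newest-estimate-crucial202420711} with $p=\infty$ gives $\mu_k^r|g_k[m_k](\mu_k y+x_k)|\le K(\mu_k^r\|m_k^{\alpha^*}\|_{L^\infty(B_{2\rho\mu_k}(x_k))}+\mu_k^r)\le K(\|\hat m_k\|_{L^\infty}^{\alpha^*}+\mu_k^r)\le 2K$, while $\mu_k^r|\lambda_k|\to 0$ by \eqref{assumptionsincrucialemmanewest}. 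The only delicate term is the potential contribution $\mu_k^r V(\mu_k y+x_k)$.

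\emph{Passing to the limit once $\hat f_k$ is locally bounded.} Granting that $\hat f_k$ is bounded in $L^\infty_{\mathrm{loc}}$, the argument closes as follows. The local gradient estimate for Hamilton--Jacobi equations (the interior bound \eqref{26lemmagradientestimategeneralvu} behind Lemma \ref{sect2-lemma21-gradientu}) yields $\|\nabla\hat u_k\|_{L^\infty_{\mathrm{loc}}}\le C$, hence $\|\hat w_k\|_{L^\infty_{\mathrm{loc}}}\le C$ since $\|\hat m_k\|_{L^\infty}\le 1$. From the Fokker--Planck equation $-\Delta\hat m_k=\nabla\cdot\hat w_k$ and Lemma \ref{proposition-lemma21-FP} (or the interior Calder\'on--Zygmund estimate of Lemma \ref{basicellipticregulartrudinger}), $\hat m_k$ is bounded in $W^{1,q}_{\mathrm{loc}}(\mathbb R^n)$ for every $q$, hence in $C^{0,\theta}_{\mathrm{loc}}(\mathbb R^n)$ for some $\theta\in(0,1)$. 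Bootstrapping the $\hat u_k$-equation (Schauder) and invoking Arzel\`a--Ascoli, along a subsequence $\hat u_k\to\hat u_0$ in $C^2_{\mathrm{loc}}$ and $\hat m_k\to\hat m_0$ in $C^0_{\mathrm{loc}}$. Now the key quantitative input is
$$\int_{\mathbb R^n}\hat m_k^{1+\alpha^*}\,dy=\mu_k^{n\alpha^*}\int_{\mathbb R^n}m_k^{1+\alpha^*}\,dx=\mu_k^{r}\,\|m_k\|_{L^{1+\alpha^*}(\mathbb R^n)}^{1+\alpha^*}\longrightarrow 0,$$
using $\sup_k\|m_k\|_{L^{1+\alpha^*}(\mathbb R^n)}<\infty$ from \eqref{assumptionsincrucialemmanewest}. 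By Fatou's lemma $\int_{\mathbb R^n}\hat m_0^{1+\alpha^*}\,dy=0$, so $\hat m_0\equiv0$, which contradicts $\hat m_0(0)=\lim_k\hat m_k(0)\ge\tfrac34$. This proves \eqref{uniformlyboundmkinlinfty}.

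\emph{Control of the potential — the main obstacle.} It remains to bound $\mu_k^rV(\mu_k\cdot+x_k)$ on fixed balls, i.e.\ to rule out that the concentration point $x_k$ escapes to a region where $V$ is very large. If $\{x_k\}$ stays bounded this is immediate: $V\in L^\infty_{\mathrm{loc}}$ and $\mu_k\to0$ give $\mu_k^rV(\mu_k\cdot+x_k)\to0$ locally uniformly, and the previous step applies verbatim. If $|x_k|\to+\infty$, the oscillation bound \eqref{V2mainassumption_2} gives $V(\mu_k y+x_k)\le C_2V(x_k)$ for $|y|\le\rho$ and $k$ large, so it suffices to control $\mu_k^rV(x_k)$. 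Here I would first rescale at the \emph{balanced} scale $\sigma_k:=(1+V(x_k)+\mu_k^{-r})^{-1/r}\le\mu_k$: at this scale the right-hand side is automatically bounded in $L^\infty_{\mathrm{loc}}$ (by \eqref{V2mainassumption_2}, the coupling bound above, and $\sigma_k^r\mu_k^{-r}\le1$), so the argument of the second paragraph shows that the density rescaled at scale $\sigma_k$ converges to $0$ locally uniformly; tracing this back gives $\sigma_k^n m_k(x_k)\to0$, equivalently $\mu_k^{-r}/(1+V(x_k)+\mu_k^{-r})\to0$, which is possible only if $V(x_k)\gg\mu_k^{-r}$. In that regime one uses $\sup_k\int_{\mathbb R^n}Vm_k\,dx<\infty$ together with the growth $V(x)\ge C_1(1+|x|^b)$ from \eqref{V2mainassumption_1}: the gradient bound for $u_k$ furnished by Lemma \ref{sect2-lemma21-gradientu} controls the drift in the Fokker--Planck equation near $x_k$, and a Harnack-type lower bound for $m_k$ on a ball around $x_k$ comparable to the concentration scale shows that the spike of $m_k$ carries a definite amount of $V$-weighted mass; since $V(x_k)\to+\infty$ this contradicts $\int_{\mathbb R^n}Vm_k\,dx\le C$. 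This last step — extracting the lower mass estimate without circularly assuming regularity of $m_k$ at the concentration scale — is the point I expect to require the most care, and it is the only place where the lower bound assumption on $u_k$ and the full strength of (V2) are genuinely needed.

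\emph{Remark on the general coupling.} Throughout, the only properties of $g_k$ used are the two a priori bounds \eqref{gkm-newest-estimate-crucial2024207}--\eqref{gkm-newest-estimate-crucial202420711}; the first controls the global norms that enter through \eqref{assumptionsincrucialemmanewest} and the bootstrap, while the second is what makes the rescaled coupling term $\mu_k^r g_k[m_k](\mu_k\cdot+x_k)$ harmless at the critical scaling $r=n\alpha^*$. In particular the argument applies to the regularized coupling $g_k[m]=(\eta_{\varepsilon_k}\ast m)^{\alpha^*}$ used in Section \ref{sect4-criticalmass} for the case $r\le n$.
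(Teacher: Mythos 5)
Your core blow-up mechanism (rescale at $\mu_k=\|m_k\|_{L^\infty}^{-1/n}$, use the critical scaling identity $\int_{\mathbb R^n}\hat m_k^{1+\alpha^*}\,dy=\mu_k^{r}\|m_k\|_{L^{1+\alpha^*}}^{1+\alpha^*}\to0$ together with local compactness to force $\hat m_0\equiv0$ against $\hat m_0(0)\geq\tfrac34$) is sound and is essentially the device the paper itself uses in Lemma~\ref{lemma34uniformlinfboundmalpha} for the potential-free problem. It closes the argument whenever the rescaled right-hand side is locally bounded, in particular when $\{x_k\}$ stays bounded. The gap is exactly where you flagged it, and the sketch you give does not close it. In the regime $|x_k|\to\infty$ with $V(x_k)\gg\mu_k^{-r}$, the only scale at which the Hamilton--Jacobi right-hand side — hence the drift in the Fokker--Planck equation, hence any uniform Harnack/H\"older estimate for $m_k$ — is controlled is $\sigma_k\sim V(x_k)^{-1/r}$ (rescaling at any larger scale $\rho$ leaves $\rho^{r}V(x_k)$ unbounded). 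A Harnack lower bound at that scale yields at best $m_k\geq c\,m_k(x_k)\geq c\,\mu_k^{-n}$ on $B_{\sigma_k}(x_k)$, hence
$$\int_{\mathbb R^n}Vm_k\,dx\ \gtrsim\ V(x_k)\,\mu_k^{-n}\,\sigma_k^{n}\ \sim\ V(x_k)^{1-\frac{n}{r}}\,\mu_k^{-n}.$$
For $r=n$ this diverges and you do get a contradiction with $\sup_k\int Vm_k\,dx<\infty$; but for $r<n$ the exponent $1-\tfrac nr$ is negative, and since \eqref{V2mainassumption_1} permits exponential growth of $V$ there is no a priori upper bound on $V(x_k)$ in terms of $\mu_k^{-1}$: once $V(x_k)\geq\mu_k^{-nr/(n-r)}$ the right-hand side above stays bounded and no contradiction results. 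Note also that invoking Lemma~\ref{sect2-lemma21-gradientu} to obtain $|\nabla u_k|\leq C(1+V)^{1/r'}$ presupposes a uniform $L^\infty$ bound on the coupling $g_k[m_k]$, i.e.\ on $\|m_k\|_{L^\infty}^{\alpha^*}$ — precisely what is at stake; and the very setup of the contradiction assumes each $\|m_k\|_{L^\infty(\mathbb R^n)}$ is finite and nearly attained, which does not follow a priori from $m_k\in L^1\cap L^{1+\alpha^*}$.

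The paper closes this case with a different architecture that never localizes at the maximum of $m_k$. It first obtains uniform \emph{local} $L^\infty$ bounds directly, by applying the maximal regularity of Theorem~\ref{thmmaximalregularity} to the HJB equation with right-hand side in $L^{1+1/\alpha^*}$ (note $1+1/\alpha^*>n/r$) and then Fokker--Planck regularity. It then proves the pointwise decay $\sup_k\|m_k^{\alpha^*}/V\|_{L^\infty(\mathbb R^n\setminus B_R)}\to0$: assuming $m_{k_l}^{\alpha^*}(x_l)\geq\varepsilon V(x_l)$ and rescaling at the scale $V(x_l)^{-1/r}$, the rescaled density at the origin is of order \emph{one} (not $o(1)$ as at your maximum point), so the spike carries $V$-weighted mass $\gtrsim V(x_l)\to\infty$, contradicting $\sup_k\int Vm_k\,dx<\infty$. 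This yields the global gradient bound $|\nabla u_k|\leq C(1+V^{1/r'})$, then a Lyapunov-function estimate $\sup_k\int m_kV^{p}\,dx<\infty$ for all large $p$, whence $\sup_k\|m_k\|_{L^q}<\infty$ for every $q$ (writing $m_k^{q}=m_k\cdot m_k^{q-1}\leq Cm_kV^{(q-1)/\alpha^*}$ outside a large ball), a uniform global $W^{1,q}$ bound by the duality Lemma~\ref{proposition-lemma21-FP}, and finally $L^\infty$ by Sobolev embedding with $q>n$. To repair your proof you would need to import at least this weighted-integrability step (or some substitute) to exclude the runaway-potential regime.
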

}

\begin{proof}
We first establish the local uniformly estimates of $m_k$.  To begin with, one finds from \eqref{assumptionsincrucialemmanewest} and (\ref{gkm-newest-estimate-crucial2024207}) that
\begin{align}
\|g_k[m]\|_{L^{1+\frac{1}{\alpha^*}}(\mathbb R^n)}\leq K\bigg(\|m^{\alpha^*}\|_{L^{1+\frac{1}{\alpha^*}}(\mathbb R^n)}+1\bigg) <\infty.
\end{align}
Then by maximal regularities shown in Theorem \ref{thmmaximalregularity} and the uniformly local H\"{o}lder continuity of $V$, we have
\begin{align}
\sup_k\Vert |\nabla u_k|^{r'}\Vert_{L^{1+\frac{1}{\alpha}}_{\text{loc}}(\mathbb R^n)}<\infty,
\end{align}
which implies
\begin{align}
\sup_{k}\Vert |\nabla u_k|^{r'-1}\Vert_{L^{{(1+\frac{1}{\alpha^*})r}}_{\text{loc}}(\mathbb R^n)}<\infty\text{~with~}\bigg(1+\frac{1}{\alpha^*}\bigg)r>n.
\end{align}
Focusing on Fokker-Planck equations, one applies Theorem 1.6.5 in \cite{bogachev2022fokker} to obtain that 
\begin{align}\label{localestimatemlinftyinlemma42crucial}
\sup_{k}\Vert m_k\Vert_{L_{\text{loc}}^\infty(\mathbb R^n)}<\infty,
\end{align}
which is local uniformly estimates satisfied by $m_k$.
\\
Next, we claim that 
\begin{align}\label{crucialandnewclaim}
\lim_{R\rightarrow +\infty}\sup_{k}\bigg\Vert \frac{m_k^{\alpha^*}}{V}\bigg\Vert_{L^\infty(\mathbb R^n\backslash B_R(0))}=0.
\end{align}
To show (\ref{crucialandnewclaim}), we argue by contradiction and assume there exist $\varepsilon>0,$ $|x_l|\rightarrow +\infty$ and $k_l\rightarrow +\infty$ such that
\begin{align}\label{411contradictionkeystuff}
\frac{m_{k_l}^{\alpha^*}}{V}(x_l)\geq \varepsilon.
\end{align}
Then we define
\begin{align}\label{rescaling2024206}
v_l(x)=a_l^{r-2}u_{k_l}(x_l+a_lx),~~\mu_l(x)=a_l^n m_{k_l}(x_l+a_lx),
\end{align}
where $a_l$ will be determined later.  Upon substituting (\ref{rescaling2024206}) into (\ref{eq-potential-newest}), one obtains 
\begin{align}\label{413rescale2024206}
\left\{\begin{array}{ll}
-\Delta v_l+C_H|\nabla v_l|^{r'}+a_l^{r}\lambda_{k_l}=a_l^{r}V(x_l+a_lx)-a^r_lg_l[ a_l^{-r/\alpha^*}\mu_l], &x\in\mathbb R^n,\\
\Delta \mu_l+C_Hr'\nabla\cdot(|\nabla v_l|^{r'-2}\nabla v_l\mu_l)=0,&x\in\mathbb R^n.
\end{array}
\right.
\end{align}
Choosing $a_l^{r}=\frac{1}{V(x_l)}$, one finds from (\ref{V2mainassumption_1}) that  
\begin{align}\label{eq5.17}
a_l^{r}=\frac{1}{V(x_l)}\rightarrow 0,
\end{align}
where $|x_l|\rightarrow \infty$.  In light of the assumption (\ref{V2mainassumption_2}), we have 
\begin{align}\label{mildassumptiononVinlemmacritical}
\Vert a_l^{r}V_{k_l}(x_l+a_l x)\Vert_{L^\infty(B_1(0))}\leq C_2.
\end{align}
In addition, we combine (\ref{gkm-newest-estimate-crucial2024207}) with (\ref{gkm-newest-estimate-crucial202420711}) to obtain for $l$ large,
\begin{align}\label{418combinecrucialnewest1}
\Vert a^r_l g_l[\mu_l a_l^{-r/\alpha^*}]\Vert_{L^{{1+\frac{1}{\alpha^*}}}(B_1(0))}= &{a^r_l}\Vert g_l[\mu_l a_l^{-r/\alpha^*}]\Vert_{L^{{1+\frac{1}{\alpha^*}}}(B_1(0))}\nonumber\\
\leq & a^r_lK(\Vert \mu_l^{\alpha^*} a_l^{-r}\Vert_{L^{1+\frac{1}{\alpha^*}}(B_2(0))}+1)\leq K\Vert \mu_l^{\alpha^*} \Vert_{L^{1+\frac{1}{\alpha^*}}(B_2(0))}+1.
\end{align}
On the other hand, from \eqref{assumptionsincrucialemmanewest} and \eqref{eq5.17}, one has the fact that
\begin{align}\label{419combinecrucialnewest2}
\Vert \mu_l^{\alpha^*}\Vert^{1+\frac{1}{\alpha^*}}_{L^{1+\frac{1}{\alpha^*}}(B_2(0))}=a_l^r\Vert m_{k_l}\Vert_{L^{1+\alpha^*}(B_{2a_l}(x_l))}\rightarrow 0\text{~as~}l\rightarrow +\infty.
\end{align}
We combine \eqref{mildassumptiononVinlemmacritical}, (\ref{418combinecrucialnewest1}) with (\ref{419combinecrucialnewest2}) and similarly use the maximal regularities shown in \cite{cirant2021problem} to get 
\begin{align*}
\Vert |\nabla v_l|^{r'}\Vert_{L^{1+\frac{1}{\alpha^*}}(B_{{1}/{2}}(0))}\leq C, \text{~for~}l~\text{large},
\end{align*}
where constant $C>0$ independent of $l.$  Then focusing on the second equation of (\ref{413rescale2024206}), we similarly apply the standard elliptic regularity estimates (See Theorem 1.6.5 in \cite{bogachev2022fokker}) to obtain $\mu_l\in C^{0,\theta}(B_{1/4}(0))$ with $\theta\in(0,1)$ independent of $l$.  With the local H\"{o}lder's continuity of $\mu_l$, we have from (\ref{411contradictionkeystuff}) that 
\begin{align*}
\mu^{\alpha^*}_l(0)=m_{k_l}^{\alpha^*}(x_l)a_l^{r}=\frac{m_{k_l}^{\alpha^*}}{V}(x_l)\geq \varepsilon,
\end{align*}
which implies there exists $\delta>0$ such that 
\begin{align*}
\mu_l\geq \delta \text{~in~}B_R(0),
\end{align*}
where $R>0$ is independent of $l.$ Then it follows that if $R$ is small, 
\begin{align*}
\int_{\mathbb R^n}m_{k_l} V\,dx\geq& \delta a_l^{-\frac{r}{\alpha^*}}\int_{B_{a_lR}(0)}V(x_l+y)\,dy
\geq \frac{\delta}{2}a_l^{-n}V(x_l)|B_{a_lR}(0)|
\geq C\delta V(x_l)\rightarrow +\infty,
\end{align*}
where $C>0$ is some constant.  This is contradicted to (\ref{assumptionsincrucialemmanewest}) then finishes the proof of claim (\ref{crucialandnewclaim}).
\\
With the aid of (\ref{crucialandnewclaim}), we find there exists constant $C>0$ such that 
\begin{align}\label{wefind2024207}
|V-g_k[m_k]|\leq C (V+1).
\end{align}
Proceeding the same argument shown in the proof of Lemma \ref{sect2-lemma21-gradientu}, one obtains  
\begin{align}\label{nablaukVknewestcriticalbound}
|\nabla u_k|\leq C(1+V^{\frac{1}{r'}}),
\end{align}
where $C>0$ is some constant.  
\\
Finally, we establish the global uniformly estimate of $m$ in $L^\infty$, i.e. prove \eqref{uniformlyboundmkinlinfty}.  To this end, we set $\phi_k=u^p_k$ for $p>1$ and show $\phi_k$ are Lyapunov functions.  Indeed, since $u_k$ solve HJ equations, we have
\begin{align*}
-\Delta \phi_k+C_Hr'|\nabla u_k|^{r'-2}\nabla u_k\cdot\nabla\phi_k=pu_k^{p-1}G_k(x),
\end{align*}
where $G_k(x)$ are defined by
\begin{align}\label{GKxuniformlylowerboundcriticalnew}
G_k(x):=-(p-1)\frac{|\nabla u_k|^{2}}{u_k}-C_H|\nabla u_k|^{r'}+C_Hr'|\nabla u_k|^{r'}-\lambda_k+V-g_k[m_k].
\end{align}
To estimate (\ref{GKxuniformlylowerboundcriticalnew}), we use (\ref{wefind2024207}) and (\ref{crucialandnewclaim}) to find
\begin{align*}
G_k(x)\geq (p-1)|\nabla u_k|^{r'}\bigg(\frac{C_H(r'-1)}{(p-1)}-\frac{|\nabla u_k|^{2-r'}}{u_k}\bigg)-\lambda_k+CV\geq 1\text{~for all~}|x|>R,
\end{align*}
where $C>0$ and $R>0$ independent of $k.$ 
 Similarly as results obtained in \cite{metafune2005global}, one gets 
 \begin{align}\label{upperbounduniformlyestimateinlemma42}
 \sup\limits_{k}\int_{\mathbb R^n} m_k|V|^p\,dx<+\infty,~\text{and}~\sup\limits_{k}\int_{\mathbb R^n} m_k|\nabla u_k|^p\,dx<+\infty,
 \end{align}
 for large $p>1$.  Next, we perform the global estimates of $m_k$ in $L^q$ with any $q>1$.  In fact, we claim that 
 \begin{align}\label{mkglobalestimatesinlqwholespace}
 \sup_{k}\int_{\mathbb R^n}m_k^q\,dx \leq C_q<+\infty,~\forall q>1,
 \end{align}
 where $C_q>0$ is some constant.  Thanks to (\ref{crucialandnewclaim}) and (\ref{localestimatemlinftyinlemma42crucial}), one has for any $k$, $\exists R_0>0$ such that 
\begin{align}\label{431upperboundmkcr0anyxgreatR0}
m_k(x)\leq C_{R_0}V^{\frac{1}{\alpha^*}},~~\forall |x|>R_0,
\end{align}
where $C_{R_0}>0$ is some constant.  Moreover, (\ref{431upperboundmkcr0anyxgreatR0}) together with (\ref{upperbounduniformlyestimateinlemma42}) gives us
\begin{align}\label{432incrucialemmaavoidblowup}
\int_{B_{R_0}^c}m_k^q\,dx=\int_{B_{R_0}^c}m_km_k^{q-1}\,dx\leq C_{R_0}^{q-1}\int_{\mathbb R^n}m_kV^{\frac{q-1}{\alpha^*}}\,dx\leq C_{R_0,q},
\end{align}
where $C_{R_0,q}$ is some positive constant.  Moreover, it follows from (\ref{localestimatemlinftyinlemma42crucial}) that 
\begin{align}\label{432incrucialemmaavoidblowup2}
\int_{B_{R_0}(0)}m_k^q\,dx\leq \Vert m_k\Vert^q_{L^\infty(B_{R_0}(0))}|B_{R_0}(0)|\leq {\tilde C}_{R_0,q},
\end{align}
where ${\tilde C}_{R_0,q}$ is some positive constant.  Hence, claim  (\ref{mkglobalestimatesinlqwholespace}) holds by invoking (\ref{432incrucialemmaavoidblowup}) and (\ref{432incrucialemmaavoidblowup2}).  We next prove for any $ \varphi\in C_c^{\infty}(\mathbb R^n)$ and $q>1$ that 
\begin{align}\label{434expectationtheglobalboundmkinlqdual}
\bigg|\int_{\mathbb R^n}m_k|\nabla u_k|^{r'-2}\nabla u_k\nabla\varphi\,dx\bigg|\leq C_q\Vert \nabla \varphi\Vert_{L^{q'}(\mathbb R^n)},
\end{align}
where $C_q>0$ is some constant.  Indeed, for any fixed $q>1$, we use H\"{o}lder's inequality to get that 
\begin{align}\label{threetermsholdersinequalitynew}
\int_{\mathbb R^n} m_k|\nabla u_k|^{r'-1}|\nabla\varphi|\,dx=&\int_{\mathbb R^n} m_k^{\frac{1}{p'}}m_k^{\frac{1}{p}}|\nabla u_k|^{r'-1}|\nabla\varphi|\,dx\nonumber\\
\leq &\Big(\int_{\mathbb R^n}m_k^{\frac{s}{p'}}\,dx\Big)^{\frac{1}{s}}\cdot\Big(\int_{\mathbb R^n}m_k|\nabla u_k|^{(r'-1)p}\,dx\Big)^{\frac{1}{p}}\cdot\Big(\int_{\mathbb R^n}|\nabla \varphi|^{q'}\,dx\Big)^{\frac{1}{q'}},
\end{align}
where $p$ and $s$ are chosen such that $p>\max\{q,\frac{1}{r'-1}\}$ and $s>p'$ with
\begin{align}
\frac{1}{s}+\frac{1}{p}=\frac{1}{q},~\frac{1}{p'}+\frac{1}{p}=1.
\end{align}
We combine (\ref{upperbounduniformlyestimateinlemma42}) with (\ref{mkglobalestimatesinlqwholespace}) and obtain from (\ref{threetermsholdersinequalitynew}) that \eqref{434expectationtheglobalboundmkinlqdual} holds.  On the other hand, we apply the integration by parts on Fokker-Planck equations to get
\begin{align}\label{combineintegralbypartsmsecondequation}
\int_{\mathbb R^n}m_k\Delta \varphi\,dx=C_Hr'\int_{\mathbb R^n}m_k|\nabla u_k|^{r'-2}\nabla u_k\cdot \nabla \varphi\,dx.
\end{align}
Combining (\ref{combineintegralbypartsmsecondequation}) with (\ref{434expectationtheglobalboundmkinlqdual}), we use Lemma \ref{proposition-lemma21-FP} to obtain
\begin{align}\label{this439inlemmacrucial2024213}
\sup_{k}\Vert\nabla m_k\Vert_{W^{1,q}(\mathbb R^n)}\leq C_q<+\infty,~\forall q>1.
\end{align}
(\ref{this439inlemmacrucial2024213}) together with (\ref{mkglobalestimatesinlqwholespace}) implies
\begin{align}
\sup_{k}\Vert m_k\Vert_{W^{1,q}(\mathbb R^n)}\leq C_q<+\infty.
\end{align}
By choosing $q>n$ and using the Sobolev embedding theorem, we find (\ref{uniformlyboundmkinlinfty}) holds. 
\end{proof}

{
We remark that the conclusion stated in Lemma \ref{blowupanalysismlinfboundcritical} also holds if $\alpha^*$ is replaced by $\alpha$ and satisfies $0<\alpha<\frac{r}{n}.$  Furthermore, it is worthy mentioning that our approach employed in the proof of Lemma \ref{blowupanalysismlinfboundcritical} is distinct from the blow-up analysis shown in  \cite[Theorem 4.1]{cesaroni2018concentration}.  With our novel arguments, we are able to relax the polynomial growth assumption on $V$ and analyze the existence and blow-up behaviors of minimizers under a class of $V$ satisfying (\ref{V2mainasumotiononv}) when $r\leq n.$ }

We next focus on the case of $r>n$ and give some preliminary results for the proof of Theorem \ref{thm11}.  First of all, if problem (\ref{minimization-problem-critical}) is attained by a minimizer $(m,w)$, then we have
\begin{proposition}\label{prop41relationminimiersolutionsect4}
Assume that $V$ satisfies (V1) and (V2) stated in Subsection \ref{mainresults11}. Let
 $(m,w)\in \mathcal{K}_M$ be a  minimizer to problem (\ref{minimization-problem-critical}). Then there exists a solution $(u,m,\lambda)$ to  (\ref{125potentialfreesystem}), which satisfies
\begin{align}\label{47lowerboundgeneralvu}
|\nabla u(x)|\leq C\Big(1+V^{\frac{1}{r'}}(x)\Big),~~u(x)\geq CV^{\frac{1}{r'}}(x)-C^{-1}, x\in \mathbb R^n,
\end{align}
for some constant $C>0.$
\end{proposition}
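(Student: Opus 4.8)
The plan is to break the proof into three steps: first upgrade the regularity of the minimizing density $m$, then produce the pair $(u,\lambda)$ by solving the Hamilton--Jacobi equation with the density frozen on the right-hand side, and finally run a Fenchel-duality/squeezing argument to check that this $u$ is exactly the one coupling to $(m,w)$, so that $(m,u,\lambda)$ solves the full system \eqref{125potentialfreesystem}. Once $(u,\lambda)$ is in hand, the two estimates in \eqref{47lowerboundgeneralvu} cost nothing extra: the upper bound $|\nabla u|\le C(1+V)^{1/r'}$ is Lemma \ref{sect2-lemma21-gradientu} applied with $V_k=V$ and $f_k=-m^{r/n}$, and the lower bound $u\ge CV^{1/r'}-C^{-1}$ is the part of Lemmas \ref{lowerboundVkgenerallemma22}/\ref{lemma22preliminary}(iii) that uses precisely the Harnack-type hypothesis \eqref{V2mainassumption_2} contained in (V2).

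For the regularity of $m$: from minimality $\mathcal E(m,w)=e_{\alpha^*,M}<\infty$, so Corollary \ref{lemma21-crucial-cor} forces $\Lambda_r:=\int_{\mathbb R^n}m|w/m|^r\,dx<\infty$. Lemma \ref{lemma21-crucial} with $\hat q=r>n$ then gives $m\in W^{1,r}(\mathbb R^n)\cap L^1(\mathbb R^n)\hookrightarrow C^{0,1-n/r}(\mathbb R^n)$, so $m$ is bounded, $m(x)\to0$ as $|x|\to\infty$, and $w\in L^1(\mathbb R^n)\cap L^r(\mathbb R^n)$. Hence $g:=V-m^{r/n}$ is locally H\"older continuous, bounded below, and $g(x)\to+\infty$ by \eqref{V2mainassumption_1}, so Lemma \ref{lemma22preliminary}(iii) yields a classical solution $(u,\lambda)\in C^2(\mathbb R^n)\times\mathbb R$ of $-\Delta u+C_H|\nabla u|^{r'}+\lambda=V-m^{r/n}$ with $u$ bounded from below and satisfying \eqref{47lowerboundgeneralvu}.

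The core is identifying the coupling. Using the Fenchel inequality $mL(-w/m)\ge -w\cdot\nabla u-C_Hm|\nabla u|^{r'}$ pointwise, the Fokker--Planck constraint $\int\nabla m\cdot\nabla u=\int w\cdot\nabla u$, and the HJB equation (so that $-\int\nabla m\cdot\nabla u=\int m\,\Delta u=\int m(C_H|\nabla u|^{r'}+\lambda-V+m^{r/n})$), one obtains $\mathcal E(m,w)\ge \lambda M+\tfrac{r}{n+r}\int_{\mathbb R^n}m^{1+r/n}\,dx$. For the opposite bound I would use the competitor $(\hat m,\hat w)$, where $\hat m>0$ is the stationary Fokker--Planck density with drift $\nabla H(\nabla u)$ and mass $M$ — its existence, decay, membership in $\mathcal K_M$ and the bound $\int V\hat m<\infty$ following from the confinement $u\ge CV^{1/r'}-C^{-1}$ as in \cite{cesaroni2018concentration} — and $\hat w=-C_Hr'\hat m|\nabla u|^{r'-2}\nabla u$; here Fenchel is an equality, the same computation gives $\mathcal E(\hat m,\hat w)=\lambda M+\int(\hat m\,m^{r/n}-\tfrac{n}{n+r}\hat m^{1+r/n})\,dx$, and the pointwise map $s\mapsto s\,m^{r/n}-\tfrac{n}{n+r}s^{1+r/n}$ is maximal at $s=m$, so $e_{\alpha^*,M}\le\mathcal E(\hat m,\hat w)\le\lambda M+\tfrac{r}{n+r}\int m^{1+r/n}$. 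All inequalities are therefore equalities; equality in Fenchel forces $w=-C_Hr'm|\nabla u|^{r'-2}\nabla u$ a.e.\ on $\{m>0\}$ (and $w=0$ on $\{m=0\}$), which together with $(m,w)\in\mathcal K_M$ is exactly the Fokker--Planck equation for $m$. A bootstrap on $-\Delta m=-\nabla\cdot\bigl(C_Hr'm|\nabla u|^{r'-2}\nabla u\bigr)$, whose drift lies in $C^1_{\rm loc}$ since $u\in C^2$, then upgrades $m$ to $W^{1,p}(\mathbb R^n)$ for all $p$, and the strong maximum principle gives $m>0$; alternatively, the same conclusion can be reached through the first-order optimality inequality on the convex set $\mathcal K_M$, with $u$ playing the role of the Lagrange multiplier of the Fokker--Planck constraint.

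The main obstacle is the whole-space bookkeeping behind this duality: since (V2) only controls $V$ from above by $e^{\delta|x|}$, the functions $u,\nabla u$ may grow exponentially while competitors in $\mathcal K_M$ carry only the integrability $\int Vm<\infty$, so the integrations by parts $\int\nabla m\cdot\nabla u=\int w\cdot\nabla u$ and $-\int\nabla m\cdot\nabla u=\int m\Delta u$ need genuine justification via cut-offs and a careful growth/decay balance (using $|\nabla u|^{r'}\le C(1+V)$ weighted against $m$, for which $\int m(1+V)<\infty$), and the auxiliary density $\hat m$ must be built with enough decay — coming from the confining drift $-\nabla H(\nabla u)$ and the bound $u\ge CV^{1/r'}-C^{-1}$ — to make $\int V\hat m<\infty$ and all the integrals above absolutely convergent. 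A secondary point is that $\mathcal E$ is not convex, because of the concave coupling $-\tfrac{n}{n+r}\int m^{1+r/n}$; this is handled, as above, by freezing $m^{r/n}$ in the HJB right-hand side and using concavity of $s\mapsto s\,m^{r/n}-\tfrac{n}{n+r}s^{1+r/n}$ only to produce one-sided bounds, never full convexity.
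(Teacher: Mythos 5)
Your proposal is correct and follows essentially the same route as the paper: the paper's own proof consists of establishing the extended weak Fokker--Planck formulation \eqref{withourclaimcirantnewprop34} for test functions with $V$-controlled growth (via the cut-off argument you flag as the "main obstacle"), and then invoking the freeze-the-coupling/convex-duality identification of $w=-C_Hr'm|\nabla u|^{r'-2}\nabla u$ from \cite{cesaroni2018concentration}, which is exactly your Fenchel-squeezing step; the gradient and lower bounds \eqref{47lowerboundgeneralvu} come from Lemmas \ref{sect2-lemma21-gradientu} and \ref{lowerboundVkgenerallemma22} in both treatments. The only remark worth making is that your regularity step takes $\hat q=r>n$, which is consistent with the context in which the paper states and uses this proposition.
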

\begin{proof}
The proof  is similar as shown in  \cite[Proposition 3.4]{cesaroni2018concentration}.   Define the  test function space
\begin{align}\label{mathcalhatAfurtherseeprop3611}
\mathcal {  B}:=\bigg\{ \psi\in C^2(\mathbb R^n)\bigg|\limsup_{|x|\rightarrow \infty}\frac{|\nabla\psi(x)|}{V^{\frac{1}{r'}}(x)}<+\infty,~\limsup_{|x|\rightarrow \infty}\frac{|\Delta\psi(x)|}{V(x)}<+\infty\bigg\}.
\end{align}
By using the condition \eqref{V2mainassumption_3}, one can  obtain from (\ref{mathcalhatAfurtherseeprop3611}) that
\begin{equation}\label{eq4.9}
\limsup_{|x|\rightarrow \infty}\frac{|\psi(x)|}{|x|V^{\frac{1}{r'}}}<+\infty.
\end{equation}
We claim that
\begin{align}\label{withourclaimcirantnewprop34}
-\int_{\mathbb R^n}m\Delta \psi\,dx=\int_{\mathbb R^n}w\cdot\nabla \psi\,dx,\ ~
\text{for all  $\psi\in\mathcal {B}.$}\end{align}
Similarly as in \cite[Proposition 3.4]{cesaroni2018concentration}, we consider a radial smooth cutoff function $\chi$ satisfying
$\chi(x)=1$ if $x\in B_1(0),$ and $\chi(x)=0$ if $x\in B_2^c(0)$.
Define $\chi_R(x):=\chi\big(\frac{x}{R}\big),$ then we have $|\nabla \chi_R(x)|\leq CR^{-1}$ and $\Delta \chi_R(x)|\leq CR^{-2}$ for some $C>0.$  Noting that $\Delta m=\nabla\cdot w$ in the weak sense, we test the equation against $\psi\chi_R$ with $\psi\in\mathcal{B}$ and integrate it by parts to get
\begin{align}\label{ibp49beforetakelimitcirant}
-\int_{\mathbb R^n}m(\chi_R\Delta \psi+2\nabla\psi\cdot \nabla \chi_R+\psi\Delta \chi_R)\,dx=\int_{B_{2R}}w\cdot(\chi_R\nabla \psi+\psi\nabla \chi_R)\,dx.
\end{align}
Since $(m,w)\in \mathcal{K}_{M}$ is a minimizer of (\ref{minimization-problem-critical}), we obtain $\int_{\mathbb R^n}Vm\,dx<\infty $ and 
 \begin{equation*}
\int_{\mathbb R^n}|w|V^{\frac{1}{r'}}\,dx=\int_{\mathbb R^n}|w|m^{-\frac{1}{r'}}m^{\frac{1}{r'}}V^{\frac{1}{r'}}\,dx\leq \left(\int_{\mathbb R^n}\Big|\frac{w}{m}\Big|^{r}m\,dx\right)^r\left(\int_{\mathbb R^n}mV\,dx\right)^{r'}<\infty.\end{equation*}
We thus get from \eqref{mathcalhatAfurtherseeprop3611} that 
\begin{align*}
\int_{\mathbb R^n}|w\nabla\psi|\,dx\leq C\int_{\mathbb R^n}|w|V^{\frac{1}{r'}}\,dx<\infty,~~\int_{\mathbb R^n}m|\Delta \psi|\,dx\leq C\int_{\mathbb R^n}mV\,dx<\infty.
\end{align*}
Moreover, it follows  from  \eqref{mathcalhatAfurtherseeprop3611} and \eqref{eq4.9} that
\begin{align*}
\int_{R\leq |x|\leq 2R} m|\psi||\Delta\chi_R|\,dx,
\int_{\mathbb R^n}m|\nabla\psi||\nabla\chi_R|\,dx\leq \frac{C}{R}\int_{R\leq |x|\leq 2R}mV^{\frac{1}{r'}}\,dx\overset{R\to\infty}\longrightarrow 0,
\end{align*}
and
\begin{align*}
\int_{\mathbb R^n}|w||\psi||\nabla\chi_R|\,dx\leq \frac{C}{R}\int_{R\leq |x|\leq 2R}|w|V^{\frac{1}{r'}}|x|\,dx\leq 2C\int_{R\leq |x|\leq 2R}wV^{\frac{1}{r'}}\,dx\overset{R\to\infty}\longrightarrow 0.
\end{align*}
 Upon collecting the above two estimates,  we take the limit in (\ref{ibp49beforetakelimitcirant}) to prove our claim.

With the claim (\ref{withourclaimcirantnewprop34}), we can follow the subsequent arguments shown in \cite[Proposition 3.4]{cesaroni2018concentration} to complete the proof of this proposition and the detailed argument is omitted.
\end{proof}
With the aid of Proposition \ref{prop41relationminimiersolutionsect4}, we study the regularity of $m$ when $r>n$, which is
\begin{proposition}\label{prop41relationminimiersolutionsect4-next}
Let $(m,w, u, \lambda)$ be the solution given in Proposition \ref{prop41relationminimiersolutionsect4}, then  $m\in W^{1,p}(\mathbb R^n)$ for all $p>1.$
\end{proposition}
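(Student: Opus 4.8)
The plan is to bootstrap the regularity of $m$ by exploiting the gradient bound on $u$ from Proposition \ref{prop41relationminimiersolutionsect4} together with the uniform estimates available in the regime $r > n$. First I would record the starting regularity: since $(m,w)\in\mathcal K_M$ is a minimizer, we have $m\in W^{1,\hat q}(\mathbb R^n)\cap L^1(\mathbb R^n)$ with $\hat q = r > n$ by \eqref{hatqconstraint}, and moreover $\Lambda_r := \int m|w/m|^r\,dx < \infty$ since $\mathcal E(m,w) < \infty$. By Sobolev embedding $W^{1,r}(\mathbb R^n)\hookrightarrow L^\infty(\mathbb R^n)\cap C^{0,\theta}(\mathbb R^n)$ because $r>n$, so in particular $m\in L^\infty(\mathbb R^n)$ and $m$ is bounded; combined with the exponential-type decay coming from Lemma \ref{mdecaylemma} (applicable since $\lambda < 0$, $u$ bounded below, $\nabla u$ controlled by \eqref{47lowerboundgeneralvu}), we get $m\in L^p(\mathbb R^n)$ for every $p\in[1,\infty]$.

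Next I would upgrade the gradient term. From \eqref{47lowerboundgeneralvu} we have $|\nabla u|\leq C(1+V^{1/r'})$, hence $|\nabla u|^{r'-1}\leq C(1+V^{1/r'})^{r'-1} = C(1+V)^{(r'-1)/r'} \le C(1+V)$. The key point is to control the drift $b := C_H r' |\nabla u|^{r'-2}\nabla u$ in a suitable weighted/local space. Using the estimates $\int m V^p\,dx < \infty$ and $\int m|\nabla u|^p\,dx < \infty$ for large $p$ — which follow from the Lyapunov-function argument already carried out in the proof of Lemma \ref{blowupanalysismlinfboundcritical} (the function $\phi = u^p$), or directly from \cite{cesaroni2018concentration} under (V1)--(V2) — together with the $L^\infty$ and decay bounds on $m$, I would estimate, for any test function $\varphi\in C_c^\infty(\mathbb R^n)$ and any fixed $q>1$,
\begin{align*}
\Big|\int_{\mathbb R^n} m\,|\nabla u|^{r'-2}\nabla u\cdot\nabla\varphi\,dx\Big|
&\leq \int_{\mathbb R^n} m^{1/p'} m^{1/p}|\nabla u|^{r'-1}|\nabla\varphi|\,dx\\
&\leq \Big(\int_{\mathbb R^n} m^{s/p'}\,dx\Big)^{1/s}\Big(\int_{\mathbb R^n} m|\nabla u|^{(r'-1)p}\,dx\Big)^{1/p}\|\nabla\varphi\|_{L^{q'}(\mathbb R^n)},
\end{align*}
where $p,s$ are chosen with $p>\max\{q,(r'-1)^{-1}\}$, $1/s+1/p=1/q$, $1/p+1/p'=1$; both factors on the right are finite by the preceding bounds. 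Since $\Delta m = C_H r'\,\nabla\cdot(m|\nabla u|^{r'-2}\nabla u)$ weakly, this says $|\int m\,\Delta\varphi\,dx|\leq C_q\|\nabla\varphi\|_{L^{q'}}$ for all $q>1$.

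Then I would invoke Lemma \ref{proposition-lemma21-FP} with $p=q$: the above inequality is exactly its hypothesis, so $m\in W^{1,q}(\mathbb R^n)$ with $\|\nabla m\|_{L^q(\mathbb R^n)}\leq C_q N$. Since $q>1$ is arbitrary and we already have $m\in L^q(\mathbb R^n)$ for all $q$, this gives $m\in W^{1,p}(\mathbb R^n)$ for every $p>1$, which is the claim. I expect the main obstacle to be establishing the weighted integrability $\int m|\nabla u|^{P}\,dx<\infty$ and $\int mV^P\,dx<\infty$ for $P$ large enough: this requires running the Lyapunov/Bernstein-type argument on the HJB equation carefully (controlling the "bad" term $C_H|\nabla u|^{r'}$ against the "good" sign of $C_H r'|\nabla u|^{r'}$ in the linearized operator and using $V-\lambda+m^{r/n}\to+\infty$), exactly as in Lemma \ref{blowupanalysismlinfboundcritical} and \cite[Section 3]{cesaroni2018concentration}; once that is in hand the rest is a clean duality-plus-bootstrap argument. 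A minor point to check is that all constants are finite (not merely that the integrals converge), which follows from the uniform bounds already recorded, and that the integration by parts defining the weak Fokker--Planck formulation extends to the polynomially growing test functions used here — but this is precisely what the cutoff argument of Proposition \ref{prop41relationminimiersolutionsect4} provides.
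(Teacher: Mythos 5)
Your proposal is correct and follows essentially the same route as the paper: the key step in both is the Lyapunov function $\phi=u^p$ on the HJB equation (using $m\in L^\infty$ from $W^{1,\hat q}\hookrightarrow L^\infty$ when $r>n$, so that the zeroth-order term $G$ is positive outside a ball, and then \cite{metafune2005global}) to get $\int_{\mathbb R^n} mV^p\,dx<\infty$, followed by the duality Lemma \ref{proposition-lemma21-FP} to conclude $\nabla m\in L^p$. The only cosmetic differences are that your appeal to Lemma \ref{mdecaylemma} is unnecessary (and not directly applicable, since that lemma concerns the potential-free system; $m\in L^p$ for all $p$ already follows from $m\in L^1\cap L^\infty$), and your three-factor H\"older split can be shortened to the paper's direct verification that $w=-C_Hr'm|\nabla u|^{r'-2}\nabla u\in L^p(\mathbb R^n)$ for all $p>1$.
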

\begin{proof}
The proof is based on the argument shown in \cite[Proposition 3.6]{cesaroni2018concentration}.  In light of \eqref{47lowerboundgeneralvu}, we may assume that $u(x)\geq1$ and set $\phi=u^p$ for $p>1$.  Since $u$ solves the HJ equation, we have
\begin{align}
-\Delta \phi+C_Hr'|\nabla u|^{r'-2}\nabla u\cdot\nabla\phi=pu^{p-1}G(x),
\end{align}
where $G(x)$ is defined by
\begin{align}
G(x):=-(p-1)\frac{|\nabla u|^{2}}{u}-C_H|\nabla u|^{r'}+C_Hr'|\nabla u|^{r'}-\lambda+V-m^{\frac{r}{n}}.
\end{align}
Noting that $m\in W^{1,\hat q}(\mathbb R^n)\hookrightarrow L^{\infty}(\mathbb R^n)$ for $r>n$,  we find
\begin{align}
G(x)\geq (p-1)|\nabla u|^{r'}\bigg(\frac{C_H(r'-1)}{(p-1)}-\frac{|\nabla u|^{2-r'}}{u}\bigg)-C+V\geq 1\text{~for all~}|x|>R.
\end{align}
In view of the results obtained in \cite{metafune2005global}, we use (\ref{47lowerboundgeneralvu}) to get $\int_{\mathbb R^n} m|V|^p\,dx<+\infty$ for all $p>1$.  Then one finds $w=-C_Hr'm|\nabla u|^{r'-2}\nabla u\in L^p(\mathbb R^n)$ for all $p>1$  by using \eqref{47lowerboundgeneralvu} and the fact $m\in L^{\infty}(\mathbb R^n)$.  Finally, it follows from Lemma \ref{proposition-lemma21-FP} that $m\in W^{1,p}$ for all $p>1$.
\end{proof}

With the preliminary results shown above, we are ready to prove Theorem \ref{thm11}.

\medskip
\textbf{Proof of Theorem \ref{thm11}:}
\begin{proof}
Firstly, we shall show Conclusion (ii) in Theorem \ref{thm11}.  Recall that $\big(m_{\alpha^*},w_{\alpha^*}, u_{\alpha^*}\big)$ given in Theorem \ref{thm11-optimal} is a minimizer of problem (\ref{sect2-equivalence-scaling}) with $\alpha=\alpha^*=\frac{r}{n}$.  For the simplicity of notations, we rewrite $(m_{\alpha^*},w_{\alpha^*},u_{\alpha^*})$ as $(m_*,w_*,u_*)$, then define
\begin{align}\label{scalingwitht}
( m_{*}^t, w_*^t)=\bigg(\frac{M}{M^*}t^n m_*(t(x-x_0)),\frac{M}{M^*}t^{n+1} w_*(t(x-x_0))\bigg)\in \mathcal K_{M}, \ \ \forall t>0,~ x_0\in\mathbb R^n.
\end{align}
where  the constraint set $\mathcal K_{M}$ and $M^*>0$ are  defined by \eqref{constraint-set-K}  and (\ref{Mstar-critical-mass}), respectively.
Recall that $u_* \in C^2(\mathbb R^n)$ and $m_*$ decays exponentially   as stated in Theorem \ref{thm11-optimal}, then we employ Lemma \ref{poholemma} to get
\begin{align}\label{invoke45}
C_L\int_{\mathbb R^n}\bigg|\frac{ w_*}{ m_*}\bigg|^{r} m_*\,dx=\frac{n}{n+r}\int_{\mathbb R^n}m_*^{1+\frac{r}{n}}\,dx.
\end{align}
Invoking \eqref{invoke45}, we substitute (\ref{scalingwitht}) into (\ref{41engliang}) to obtain that if $M>M^*,$
\begin{align}\label{supercriticalmasscase}
e_{\alpha^*,M}\leq \mathcal E(m_*^t,w_*^t)=&\frac{M}{M^*}\bigg(C_Lt^{r}\int_{\mathbb R^n}\Big|\frac{ w_*}{ m_*}\Big|^{r} m_*\,dx+\int_{\mathbb R^n}V(x)m_*\, dx\bigg)-\frac{t^r}{1+\frac{r}{n}}\bigg(\frac{M}{M^*}\bigg)^{1+\frac{r}{n}}\int_{\mathbb R^n}m_*^{1+\frac{r}{n}}\,dx\nonumber\\
=&\frac{M}{M^*}\Big[1-\bigg(\frac{M}{M^*}\bigg)^{\frac{r}{n}}\Big]\frac{t^{r}}{1+\frac{r}{n}}\int_{\mathbb R^n} m_*^{1+\frac{r}{n}}\,dx +MV(x_0)+o_t(1)\nonumber\\
&\rightarrow -\infty \ \ \text{as~}t\rightarrow +\infty.
\end{align}
It immediately follows that $e_{\alpha^*,M}=-\infty$ for $M>M^*,$ there is no minimizer to problem (\ref{minimization-problem-critical}).

\vskip.1truein

To prove Conclusion (i) in Theorem \ref{thm11}, we divide the argument into two cases: $r\leq n$ and $r>n$.  For the former case, we first consider the auxiliary problem (\ref{Eepsilonminimizerproblem1}), 
with $\mathcal E_{\epsilon}(m,w)$ being given by (\ref{approxenergy}).
By using Young's inequality for convolution, we have the fact that
\begin{align}\label{inlightofsect4}
\int_{\mathbb R^n}m^{1+\frac{r}{n}}\,dx \geq \int_{\mathbb R^n}(m\ast\eta_{\epsilon})^{1+\frac{r}{n}}\,dx \overset{\epsilon\to0^+}\longrightarrow \int_{\mathbb R^n}m^{1+\frac{r}{n}}\,dx~ \text{ for any } m\in L^{1+\frac{r}{n}}(\mathbb R^n).
\end{align}
Then, one finds from \eqref{optimalinequality415}  that  
\begin{align}\label{418cruciallowerbound}
\mathcal E_{\epsilon}(m,w)\geq\mathcal E(m,w)\geq \Big[\Big(\frac{M^*}{M}\Big)^{\frac{r}{n}}-1\Big]\frac{n}{n+r}\int_{\mathbb R^n} m^{1+\frac{r}{n}}\,dx+\int_{\mathbb R^n}V(x)m\,dx.
\end{align}

Next, we show that there exist minimizers to problem (\ref{Eepsilonminimizerproblem1}).
Let $(m_{\epsilon,k},w_{\epsilon,k})\in\mathcal K_M$ be a minimizing sequence of (\ref{Eepsilonminimizerproblem1}).   Noting that if we take  $$(\hat m,\hat w):=\left(\frac{\|e^{-|x|}\|_{L^1(\mathbb R^n)}}{M} e^{-|x|}, \frac{\|e^{-|x|}\|_{L^1(\mathbb R^n)}}{M} \frac{x}{|x|}e^{-|x|}\right)\in\mathcal K_{M}, $$
then,
$$e_{\epsilon, M}\leq C_L\int_{\mathbb R^n}\Big|\frac{\hat w}{\hat m}\Big|^{r}\hat m\,dx+\int_{\mathbb R^n}V(x) \hat m\,dx<+\infty,$$
which implies that there exists $C>0$ independent of  $\epsilon$ such that
\begin{equation}\label{eq4.24}
\lim_{k\to\infty}\mathcal{E}_{\epsilon}(m_{\epsilon,k},w_{\epsilon,k})=e_{\epsilon, M}<C<+\infty.
\end{equation}
Since $M<M^*$, one concludes from \eqref{optimalinequality415}, (\ref{418cruciallowerbound}) and \eqref{eq4.24} that
\begin{align}\label{suanzitouguji}
\sup_{k\in\mathbb N^+}\int_{\mathbb R^n}m_{\epsilon,k}^{1+\frac{r}{n}}\,dx\leq C<+\infty,~~\sup_{k\in\mathbb N^+}\int_{\mathbb R^n}\bigg(\big|\frac{w_{\epsilon,k}}{m_{\epsilon,k}}\big|^{r}m_{\epsilon,k}+V(x)m_{\epsilon,k}\bigg)\,dx\leq C<+\infty,
\end{align}
where $C>0$ is  independent of  $\epsilon$.  The subsequent argument for proving Conclusion (i) is similar as discussed in \cite{cesaroni2018concentration}, for the sake of completeness, we give the proof briefly.  Indeed, with the aid of the key Lemma \ref{lemma21-crucial}, we obtain from (\ref{suanzitouguji}) that
\begin{align}\label{411mepsilonk}
\sup_{k\in\mathbb N^+}\Vert m_{\epsilon,k}\Vert_{W^{1,\hat q}(\mathbb R^n)}\leq C<+\infty \ \text{ and } \ \sup_{k\in\mathbb N^+}\Vert w_{\epsilon,k}\Vert_{L^{p}(\mathbb R^n)}\leq C<+\infty,\ \ \text{for any }p\in[1,\hat q],
\end{align}
where $\hat q$ is given in (\ref{hatqconstraint}) and $C>0$ is some constant independent of $\epsilon$. As a consequence, there exists $(m_{\epsilon},w_{\epsilon})\in W^{1,\hat q}(\mathbb R^n)\times  L^{\hat q}(\mathbb R^n)$ such that
\begin{align}\label{convergencesect4firstone}
(m_{\epsilon,k},w_{\epsilon,k})\overset{k}\rightharpoonup (m_{\epsilon},w_{\epsilon}) \text{~in~}  W^{1,\hat q}(\mathbb R^n)\times L^{\hat q}(\mathbb R^n).
\end{align}
In light of the assumption (V1), $\lim\limits_{|x|\rightarrow \infty}V(x)=+\infty$, given in Subsection \ref{mainresults11}, one can deduce from Lemma \ref{lem4.1} that
\begin{align}\label{convergencesect4secondone}
m_{\epsilon,k}\overset{k}\rightarrow m_{\epsilon} \text{~in~} L^1(\mathbb R^n)\cap L^{1+\frac{r}{n}}(\mathbb R^n) .
\end{align}
Therefore, up to a subsequence,
\begin{equation}\label{eq4.26}
\int_{\mathbb R^n}(\eta_{\epsilon}\ast m_{\epsilon,k})^{1+\frac{r}{n}}\,dx\overset{k}\rightarrow \int_{\mathbb R^n}(\eta_{\epsilon}\ast m_{\epsilon})^{1+\frac{r}{n}}\,dx.\end{equation}
In addition, thanks to the convexity of $\int_{\mathbb R^n}\big|\frac{w}{m}\big|^{r}m\,dx $,  by letting $k\to\infty$ in \eqref{suanzitouguji}, we see that there exists $C>0$  independent of  $\epsilon>0$ such that
\begin{align}\label{eq4.32}
\int_{\mathbb R^n}\Big|\frac{w_{\epsilon}}{m_{\epsilon}}\Big|^rm_{\epsilon}\,dx+\int_{\mathbb R^n}V(x)m_{\epsilon}\,dx\leq\liminf_{k\rightarrow +\infty }\int_{\mathbb R^n}\big|\frac{w_{\epsilon,k}}{m_{\epsilon,k}}\big|^{r}m_{\epsilon,k}\,dx+\int_{\mathbb R^n}V(x)m_{\epsilon,k}\,dx\leq C<+\infty.
\end{align}
It then follows that 
 \begin{equation}\label{eq4.33}
 \int_{\mathbb R^n}|w_\epsilon|V^{\frac{1}{r'}}\,dx\leq \left(\int_{\mathbb R^n}\Big|\frac{w_\epsilon}{m_\epsilon}\Big|^{r}m_\epsilon\,dx\right)^r\left(\int_{\mathbb R^n}V m_\epsilon \,dx\right)^{r'}\leq C<\infty.
 \end{equation}
 and 
  \begin{equation}\label{eq4.330}
 \int_{\mathbb R^n}|w_\epsilon|\,dx\leq \left(\int_{\mathbb R^n}\Big|\frac{w_\epsilon}{m_\epsilon}\Big|^{r}m_\epsilon\,dx\right)^r\left(\int_{\mathbb R^n} m_\epsilon \,dx\right)^{r'}\leq C<\infty.
 \end{equation}
From \eqref{convergencesect4firstone}, \eqref{convergencesect4secondone} and \eqref{eq4.330} we deduce that $(m_{\epsilon},w_{\epsilon})\in \mathcal K_{M}$. Moreover, it follows from \eqref{eq4.26} and \eqref{eq4.32} that
\begin{align*}
e_{\epsilon, M}=\lim_{k\rightarrow\infty}\mathcal E_{\epsilon}(m_{\epsilon,k},w_{\epsilon,k})\geq \mathcal E_{\epsilon}(m_{\epsilon},w_{\epsilon})\geq e_{\epsilon,M},
\end{align*}
which indicates $(m_{\epsilon},w_{\epsilon})\in \mathcal K_{M}$ is a minimizer of problem (\ref{Eepsilonminimizerproblem1}).  {Finally, similarly as the proof of Proposition 3.4 in \cite{cesaroni2018concentration} and the arguments shown in Proposition \ref{prop41relationminimiersolutionsect4} and Proposition \ref{prop41relationminimiersolutionsect4-next}, we apply Lemma \ref{lemma22preliminary} to obtain that}
there exists $u_{\epsilon}\in C^2(\mathbb R^n)$ bounded from below (depending on $\epsilon$)  and $\lambda_{\epsilon}\in \mathbb R$ such that
\begin{align}\label{eq4.31}
\left\{\begin{array}{ll}
-\Delta u_{\epsilon}+C_H|\nabla u_{\epsilon}|^{r'}+\lambda_{\epsilon}=V(x)-(\eta_{\epsilon}\ast m_{\epsilon})^{\frac{r}{n}}\ast \eta_{\epsilon},\\
\Delta m_{\epsilon}+C_Hr'\nabla\cdot (m_{\epsilon}|\nabla u_{\epsilon}|^{r'-2}\nabla u_{\epsilon})=0,\\
w_{\epsilon}=-C_Hr'm_{\epsilon}|\nabla u_{\epsilon}|^{r'-2}\nabla u_{\epsilon}, \ \int_{\mathbb R^n}m_{\epsilon}\,dx=M<M^*.
\end{array}
\right.
\end{align}

For each fixed $\epsilon>0$, we deduce from Lemma \ref{sect2-lemma21-gradientu} that, there exists $C_\epsilon>0$ depends on $\epsilon$ such that
$|\nabla u_{\epsilon}(x)|\leq C_\epsilon(1+V(x))^{\frac{1}{r'}}$. Since $u_\epsilon\in C^2(\mathbb R^n)$ satisfies the first equation of \eqref{eq4.31} in classical sense and $(\eta_{\epsilon}\ast m_{\epsilon})^{\frac{r}{n}}\ast \eta_{\epsilon}\in L^\infty(\mathbb R^n)$, we deduce  that $|\Delta u_{\epsilon}(x)|\leq C_\epsilon(1+V(x))$.  We claim that  
\begin{equation}\label{eq4.34}
|\lambda_\epsilon|\leq C<\infty, \text{ with $C>0$   independent of  $\epsilon>0$}.
\end{equation}
By using the $m$-equation of \eqref{eq4.31}, we have from \eqref{eq4.33} that
\begin{align*}
\int_{\mathbb R^n}m\Delta u_{\epsilon}\,dx=\int_{\mathbb R^n}w_{\epsilon}\cdot\nabla u_{\epsilon}\,dx=-C_Hr'\int_{\mathbb R^n}m_{\epsilon}|\nabla u_{\epsilon}|^{r'}\nabla u_{\epsilon}\,dx.
\end{align*}
In addition, we test the $u$-equation of \eqref{eq4.31} against $m_\epsilon$ and integrate to get
\begin{equation}\label{eq4.301}
\begin{split}
\lambda_\epsilon M&=-(1-r')C_H\int_{\mathbb R^n}m_\epsilon|\nabla u_\epsilon|^{r'}\,dx+\int_{\mathbb R^n}V m_\epsilon\,dx-\int_{\mathbb R^n}(\eta_{\epsilon}\ast m_{\epsilon})^{1+\frac{r}{n}}\, dx\\
&=C_L\int_{\mathbb R^n}m_\epsilon\bigg|\frac{w_\epsilon}{m_\epsilon}\bigg|^{r}\, dx+\int_{\mathbb R^n}V m_\epsilon\,dx-\int_{\mathbb R^n}(\eta_{\epsilon}\ast m_{\epsilon})^{1+\frac{r}{n}}\, dx\end{split}
\end{equation}
where we have used the fact that $C_L=\frac{1}{r}(r'C_H)^{\frac{1}{1-r'}}$ in the second equality.
Finally, we collect \eqref{eq4.32}, \eqref{eq4.33} and \eqref{eq4.301} to find (\ref{eq4.34}) holds.

{
Next, we let $\epsilon\rightarrow 0$ and shall find the minimizer $(m_{M},w_{M})$ to problem (\ref{minimization-problem-critical}).  Noting $(m_{\epsilon},u_{\epsilon},\lambda_{\epsilon})$ satisfies (\ref{assumptionsincrucialemmanewest}) with $k$ replaced by $\epsilon.$  
We apply Young's inequality to get
 \begin{align*}
\sup_{k}\Vert (\eta_k\ast m_k)^{\alpha^*}\ast \eta_{k}\Vert_{L^{1+\frac{1}{\alpha^*}}(\mathbb R^n)}\leq \sup_{k}\Vert  m_k^{\alpha^*}\Vert_{L^{1+\frac{1}{\alpha^*}}(\mathbb R^n)}<\infty,
\end{align*}
and
 \begin{align*}
\sup_{k}\Vert (\eta_k\ast m_k)^{\alpha^*}\ast \eta_{k}\Vert_{L^{1+\frac{1}{\alpha^*}}(B_R(x_0))}\leq \sup_{k}\Vert  m_k^{\alpha^*}\Vert_{L^{1+\frac{1}{\alpha^*}}(B_{2R}(x_0))}<\infty,~\text{for~any~}x_0\in\mathbb R^n\text{ and }R\text{ large.}
\end{align*}}
Then, with the aid of \eqref{eq4.32} and \eqref{eq4.34}, we can invoke Lemma \ref{blowupanalysismlinfboundcritical} to derive that
 \begin{equation}\label{eq4.35}
 \limsup_{\epsilon\to 0^+}\|m_\epsilon\|_{L^\infty(\mathbb R^n)}<\infty.
 \end{equation}
Then, by using Lemma \ref{sect2-lemma21-gradientu}, we find
 \begin{equation}\label{eq4.36}
 |\nabla u_{\epsilon}(x)|\leq C(1+V(x))^{\frac{1}{r'}}, \text{ where  $C>0$ is independent of $\epsilon$.}
 \end{equation}
 Since $u_{\epsilon}$ is bounded from below,  without loss of generality, we assume that $u_{\epsilon}(0)=0$.
Thanks to \eqref{29uklemma22}, one finds that $u_{\epsilon}(x)\geq C_\epsilon V^{\frac{1}{r'}}(x)-C_\epsilon\to +\infty$ as $|x|\to+\infty$, which indicates each $u_\epsilon(x)\in C^2( \mathbb R^n)$ admits its minimum at some finite point $x_{\epsilon}$. using  \eqref{eq4.34}, \eqref{eq4.35} and the coercivity of $V$, we can deduce from the $u$-equation of \eqref{eq4.31}  that $x_{\epsilon}$ is uniformly bounded with respect to $\epsilon$.  It then follows from $u_{\epsilon}(0)=0$ and \eqref{eq4.36} that
there exists $C>0$ independent of $\epsilon$ such that
\begin{equation*}
-C\leq u_\epsilon(x)\leq C|x|(1+V(x))^{\frac{1}{r'}} \text{ for all } x\in\mathbb R^n,
\end{equation*}
where we have used \eqref{V2mainassumption_3} in the second inequality.
Since $u_{\epsilon}$ are bounded from below uniformly, one can employ Lemma \ref{lowerboundVkgenerallemma22} to get that
 $u_{\epsilon}(x)\geq CV^{\frac{1}{r'}}(x)-C \text{ with $C>0$ independent of $\epsilon$.}$
 In summary, under the assumptions \eqref{V2mainasumotiononv}, we get that,
 \begin{equation}\label{eq4.37}
  C_1V^{\frac{1}{r'}}(x)-C_1\leq u_{\epsilon}\leq C_2|x|(1+V(x))^{\frac{1}{r'}},\text{ for all } x\in\mathbb R^n.
 \end{equation}
 where $C_1,C_2>0$ are independent of $\epsilon$.
 
 In light of \eqref{eq4.35} and \eqref{eq4.36}, we find for any $R>1$ and $p>1$,
 \begin{equation}\label{eq4.306}
 \|w_{\epsilon}\|_{L^p(B_{2R}(0))}=C_Hr'\|m_{\epsilon}|\nabla u_{\epsilon}|^{r'-1}\|_{L^p(B_{2R}(0))}\leq C_{p,R}<\infty,\end{equation}
 where the constant  $C_{p, R}>0$ depends only on $p$, $R$ and is independent of $\epsilon$.  By applying Lemma \ref{proposition-lemma21-FP}, we obtain from \eqref{eq4.306} that $\|m_\epsilon\|_{W^{1,p}(B_{2R}(0))}\leq C_{p,R}<\infty$. Taking $p>n$ large enough, we employ Sobolev embedding theorem to get
\begin{equation}\label{eq4.38}
\|m_\epsilon\|_{C^{0,\theta_1}(B_{2R}(0))}\leq C_{\theta_1,R}<\infty \text{ for some $\theta_1\in(0,1)$.}
\end{equation}

To estimate $u_{\epsilon},$ we rewrite the $u$-equation of \eqref{eq4.31} as
\begin{equation}\label{eq4.39}
-\Delta u_{\epsilon}=-C_H|\nabla u_{\epsilon}|^{r'}+f_\epsilon(x)\ \text{ with }f_\epsilon(x):=-\lambda_{\epsilon}+V(x)-(\eta_{\epsilon}\ast m_{\epsilon})^{\frac{r}{n}}\ast \eta_{\epsilon},
 \end{equation}
By using \eqref{eq4.35}, \eqref{eq4.36} and the fact that $V$ is locally H\"older continuous, we obtain that for any $p>1$,
 $$\|f_\epsilon\|_{L^{p}(B_{2R}(0))}+\||\nabla u_\epsilon|^{r'}\|_{L^{p}(B_{2R}(0))}\leq C_{p,R}<\infty.$$
Then by $W^{2,p}$ estimates, one gets from \eqref{eq4.39}  and \eqref{eq4.37} that
 \begin{equation}
 \|u_\epsilon\|_{W^{2,p}(B_{R+1})}\leq C_{p,R}\left(\|u_\epsilon\|_{L^{p}(B_{2R}(0))}+\|f_\epsilon\|_{L^{p}(B_{2R}(0))}+\||\nabla u_\epsilon|^{r'}\|_{L^{p}(B_{2R}(0))}\right)
 \leq \tilde C_{p,R}<\infty.
 \end{equation}
Taking $p>n$ large enough, we obtain that
\begin{equation}\label{eq4.41}
\|u_\epsilon\|_{C^{1,\theta_2}(B_{R+1}(0))}\leq C_{\theta_2,R}<\infty, \text{ for some $\theta_2\in(0,1)$.}
\end{equation}
Combining \eqref{eq4.38} with (\ref{eq4.41}), one finds
$$\||\nabla u_{\epsilon}|^{r'}\|_{C^{0,\theta_3}(B_{R+1}(0))} +\|f_\epsilon\|_{C^{0,\theta_3}(B_{R+1}(0))}\leq C_{\theta_3,R}<\infty, \text{ for some $\theta_3\in(0,1)$.}$$
 Then by using Schauder's estimates, we have from \eqref{eq4.39} that
 \begin{equation}\label{eq4.401}
     \|u_{\epsilon}\|_{C^{2,\theta_4}(B_{R}(0))} \leq C_{\theta_4,R}<\infty, \text{ for some $\theta_4\in(0,1)$.}
 \end{equation}
 Now, letting $R\to\infty$ and proceeding the standard diagonalization procedure, we can apply Arzel\`{a}-Ascoli theorem to get that there exists $u_M\in C^2(\mathbb R^n)$ such that
 \begin{equation}\label{eq4.42}
 u_{\epsilon}\overset{\epsilon\to0^+}\longrightarrow u_{M} \text{ in }C^{2,\theta_5}_{\rm loc}(\mathbb R^n) \text{ for some $\theta_5\in(0,1)$.}
 \end{equation}
On the other hand, it follows from Lemma \ref{lemma21-crucial} and \eqref{eq4.32} that,  there exists $(m_{M},w_{M})\in W^{1,\hat q}(\mathbb R^n)\times \big(L^{1}(\mathbb R^n)\cap L^{\hat q}(\mathbb R^n) \big)$ such that
\begin{align}\label{eq4.43}
m_{\epsilon}\overset{\epsilon\to0^+}\to m_{M} \text{ a.e. in $\mathbb R^n$,  \ and \ \ }(m_{\epsilon},w_{\epsilon})\overset{\epsilon\to0^+}\rightharpoonup (m_{M},w_{M}) \text{~in~}  W^{1,\hat q}(\mathbb R^n)\times  L^{\hat q}(\mathbb R^n).
\end{align}
Moreover, with the help of Lemma \ref{lem4.1}, we have
\begin{align}\label{eq4.44}
m_{\epsilon}\overset{\epsilon\to0^+}\rightarrow m_{M} \text{~in~} L^1(\mathbb R^n)\cap L^{1+\frac{r}{n}}(\mathbb R^n) .
\end{align}

Letting $\epsilon\to0^+$ in \eqref{eq4.31}, we then conclude from \eqref{eq4.34} and \eqref{eq4.42}-\eqref{eq4.44} that there exists $\lambda_M\in \mathbb R$ such that
$(m_M, u_M, w_M)$ satisfies  \eqref{125potentialfreesystem}.
In particular, we obtain from \eqref{eq4.36} and \eqref{eq4.37} that
 \begin{equation}\label{eq4.46}
 |\nabla u_{M}(x)|\leq C(1+V(x))^{\frac{1}{r'}}, C_1|x|^{1+\frac{b}{r'}}-C_1\leq u_{M}\leq C_2|x|^{1+\frac{b}{r'}}+C_2, \  \text{ for all } x\in\mathbb R^n.
 \end{equation}
  Recall that $m_{\epsilon}\to m_{M} $ a.e. as $\epsilon\to0^+$ in $\mathbb R^n$, it follows from \eqref{eq4.35}   that $m_M\in L^\infty(\mathbb R^n)$.  Then, proceeding the same argument as shown in the proof of Proposition \ref{prop41relationminimiersolutionsect4-next}, one can further obtain from \eqref{125potentialfreesystem} and \eqref{eq4.46} that
  \begin{equation}\text{$w_{M}=-C_Hr'm_{M}|\nabla u_{M}|^{r'-2}\nabla u_{M}\in L^p(\mathbb R^n)$ and $m_M\in W^{1,p}(\mathbb R^n)$
 for all $p>1$.}\end{equation}

 We finally prove that $(m_{M},w_{M})\in\mathcal K_{M}$ is a minimizer of $e_{\alpha^*,M}$.  To this end, we claim that for $M<M^*$, the following conclusion holds:
 \begin{equation}\label{eq4.406}
 \lim_{\epsilon\to0^+}e_{\epsilon, M}=e_{\alpha^*,M},
 \end{equation}
 where $e_{\alpha^*,M}$ is given in \eqref{minimization-problem-critical}.  Indeed, in view of \eqref{inlightofsect4}, one can easily find that $ \lim\limits_{\epsilon\to0^+}e_{\epsilon, M}\geq e_{\alpha^*,M}$. On the other hand, for any $\delta>0$, we choose $(m,w)\in\mathcal K_{M}$ such that $\mathcal{E}(m,w)\leq e_{\alpha^*,M}+\frac{\delta}{2}$.  Then by using \eqref{inlightofsect4}, we deduce  that for $\epsilon>0$  small enough,  $\mathcal{E}_\epsilon(m,w)\leq \mathcal{E}(m,w)+\frac{\delta}{2} $.  Hence,
 $$e_{\epsilon, M}\leq \mathcal{E}_\epsilon(m,w)\leq \mathcal{E}(m,w)+\frac{\delta}{2} \leq e_{\alpha^*,M}+{\delta}.$$
 Letting $\epsilon\to0^+$ and then $\delta\to 0^+$, one has $ \lim\limits_{\epsilon\to0^+}e_{\epsilon, M}\leq e_{\alpha^*,M}$.  Now, we finish the proof of \eqref{eq4.406}.  
 
  We collect  \eqref{eq4.43}, \eqref{eq4.44}, \eqref{eq4.406} and the convexity of $\int_{\mathbb R^n}\big|\frac{w}{m}\big|^{r}m\,dx $ to get
 $$e_{\alpha^*,M}=\lim_{\epsilon\to0^+}e_{\epsilon, M}=\lim_{\epsilon\to0^+}\mathcal{E}_\epsilon(m_\epsilon,w_\epsilon)\geq \mathcal{E}(m_M,w_M)\geq  e_{\alpha^*,M},$$
which implies $(m_{M},w_{M})\in\mathcal K_{M}$ is a minimizer of $e_{\alpha^*,M}$.  This completes the proof of Conclusion (i) for the case of $r\leq n$.

 \vskip.1truein

  Now, we consider the case of $r>n$. Compared to the former case of $r\leq n$,  since now $W^{1,\hat q}(\mathbb R^n)\hookrightarrow C^{0,\theta}(\mathbb R^n)$ for some $\theta\in(0,1)$, we can avoid discussing the auxiliary problem \eqref{Eepsilonminimizerproblem1} and consider  the minimization problem \eqref{minimization-problem-critical} directly.
  Let $(m_{k},w_{k})\in\mathcal K_{M}$ be a minimizing sequence of (\ref{ealphaM-117}).  Since $M<M^*$, we similarly obtain from (\ref{418cruciallowerbound}) that
  \begin{align*}
\sup_{k\in\mathbb N^+}\int_{\mathbb R^n}m_{k}^{1+\frac{r}{n}}\,dx\leq C<+\infty,~~\sup_{k\in\mathbb N^+}\int_{\mathbb R^n}\bigg(\big|\frac{w_{k}}{m_{k}}\big|^{r}m_{k}+V(x)m_{k}\bigg)\,dx\leq C<+\infty.
\end{align*}
Then we use Lemma \ref{lemma21-crucial} and Lemma \ref{lem4.1} to conclude that there exists $(m_{M},w_{M})\in W^{1,\hat q}(\mathbb R^n)\times \big(L^{1}(\mathbb R^n)\cap L^{\hat q}(\mathbb R^n) \big)$ such that
\begin{align*}
m_{k}\overset{k}\to m_{M} \text{ a.e. in $\mathbb R^n$,  \  \ \ }(m_{k},w_{k})\overset{k}\rightharpoonup (m_{M},w_{M}) \text{~in~}  W^{1,\hat q}(\mathbb R^n)\times  L^{\hat q}(\mathbb R^n),
\end{align*}
and
\begin{align*}
m_{k}\overset{k}\rightarrow m_{M} \text{~in~} L^1(\mathbb R^n)\cap L^{1+\frac{r}{n}}(\mathbb R^n) .
\end{align*}
Thus,
$$e_{\alpha^*,M}=\lim_{k\to\infty}\mathcal{E}(m_k,w_k)\geq \mathcal{E}(m_M,w_M)\geq  e_{\alpha^*,M},$$
which indicates that $(m_{M},w_{M})\in\mathcal K_{M}$ is a minimizer of $e_{\alpha^*,M}$. Moreover, by using Proposition \ref{prop41relationminimiersolutionsect4} and Proposition \ref{prop41relationminimiersolutionsect4-next}, one finds there exists $u_M\in C^2(\mathbb R^n)$ such that $(m_M,u_M,\lambda_M)\in W^{1,p}(\mathbb R^n) \times C^2(\mathbb R^n) \times\mathbb R$ is a solution to (\ref{125potentialfreesystem}).

 \vskip.1truein

It remains to study the critical case $M=M^*$ and show Conclusion (iii).  To this end, we first prove that up to a subsequence,
\begin{align}\label{criticalfirstwantshow}
\lim_{ M\nearrow M^*}e_{\alpha^*,M}=e_{\alpha^*,M^*}=0,
\end{align}
where $\inf_{x\in\mathbb R^n} V(x)=0$ was used, which is stated in (\ref{V2mainassumption_1}).  By the definition of $e_{\alpha^*,M^*}$ given in  \eqref{minimization-problem-critical}, for any $\delta>0$, $\exists (m,w)\in\mathcal A_{M^*}$ such that
\begin{align}\label{combine419-thm12}
e_{\alpha^*,M^*}\leq \mathcal E(m,w)\leq e_{\alpha^*,M^*}+\delta.
\end{align}
Noting that $\frac{M}{M^*}(m,w)\in\mathcal A_{M}$, we have
\begin{align}\label{combine420-thm12}
e_{\alpha^*,M}&\leq \mathcal E\bigg(\frac{M}{M^*}m,\frac{M}{M^*}w\bigg)\\
=&\mathcal E(m,w)\nonumber
+\Big(\frac{M}{M^*}-1\Big)\bigg[C_L\int_{\mathbb R^n}\Big|\frac{w}{m}\Big|^{r}m\,dx+\int_{\mathbb R^n}V(x)m\,dx\bigg]\nonumber
+\frac{n}{n+r}\bigg[1-\bigg(\frac{M}{M^*}\bigg)^{1+\frac{r}{n}}\bigg]\int_{\mathbb R^n}m^{1+\frac{r}{n}}\,dx.
\end{align}
By straightforward computation, one has as $M\nearrow M^*,$ 
\begin{align}\label{combine421-thm12}
\Big(\frac{M}{M^*}-1\Big)\Big[C_L\int_{\mathbb R^n}\bigg|\frac{w}{m}\bigg|^{r}m\,dx+\int_{\mathbb R^n}V(x)m\,dx\Big]+\frac{n}{n+r}\bigg[1-\bigg(\frac{M}{M^*}\bigg)^{1+\frac{r}{n}}\bigg]\int_{\mathbb R^n}m^{1+\frac{r}{n}}\,dx\rightarrow 0.
\end{align}
We collect (\ref{combine419-thm12}), (\ref{combine420-thm12}) and (\ref{combine421-thm12}) to find
\begin{align}\label{424limitbefore}
\limsup_{M\nearrow M^*}e_{\alpha^*,M}\leq e_{\alpha^*,M^*}+\delta, \ \ \forall \delta>0.
\end{align}
Letting $\delta\rightarrow 0,$ we obtain from (\ref{424limitbefore}) that
\begin{align}\label{combine4331critical}
\limsup_{M\nearrow M^*}e_{\alpha^*,M}\leq e_{\alpha^*,M^*}.
\end{align}
On the other hand, let $(\bar m_{\alpha^*,M},\bar w_{\alpha^*,M})\in\mathcal A_M$ be a minimizer of $e_{\alpha^*,M}=\inf_{(m,w)\in\mathcal A_M}\mathcal E(m,w)$ for any fixed $M\in(0,M^*)$.  Then we have $\frac{M^*}{M}({\bar m}_{\alpha^*,M},{\bar w}_{\alpha^*,M})\in\mathcal A_{M^*}$ and
\begin{align*}
e_{\alpha^*,M^*}\leq &\mathcal E\Big(\frac{M^*}{M}(\bar m_{\alpha^*,M},\bar w_{\alpha^*,M})\Big)\\
=&\frac{M^*}{M}\Big[C_L\int_{\mathbb R^n}\Big|\frac{\bar w_{\alpha^*,M}}{\bar m_{\alpha^*,M}}\Big|^{r}{\bar m}_{\alpha^*,M}\,dx+\int_{\mathbb R^n}V(x)\bar m_{\alpha^*,M}\, dx-\Big(\frac{M^*}{M}\Big)^{\frac{r}{n}}\Big(1+\frac{r}{n}\Big)\int_{\mathbb R^n}{\bar m}_{\alpha^*,M}^{1+\frac{r}{n}}\,dx\Big]\\
\leq &\frac{M^*}{M}\mathcal E(\bar m_{\alpha^*,M},\bar w_{\alpha^*,M})=\frac{M^*}{M}e_{\alpha^*,M},\ \ \forall M<M^*.
\end{align*}
  It follows that
\begin{align}\label{combine4332critical}
e_{\alpha^*,M^*}\leq \liminf_{M\nearrow M^*}\frac{M^*}{M}e_{\alpha^*,M}=\lim_{M\nearrow M^*} e_{\alpha^*,M}.
\end{align}
Combining (\ref{combine4331critical}) with (\ref{combine4332critical}), one has
\begin{align}\label{togetherwithenergycriticalvalue}
\lim_{M\nearrow M^*} e_{\alpha^*,M}=e_{\alpha^*,M^*}\geq 0.
\end{align}
By using assumptions (V1) and (V2) shown in Subsection \ref{mainresults11} for potential $V$, we set $M=M^*$ in (\ref{supercriticalmasscase}) to get
\begin{align*}
e_{\alpha^*,M^*}\leq \mathcal E(m_*^t,w_*^t)=M^*V(x_0)+o_t(1)\rightarrow 0,\text{~if~}V(x_0)=0\text{~and~}t\rightarrow +\infty.
\end{align*}
Hence $e_{\alpha^*,M^*}\leq 0$, which together with (\ref{togetherwithenergycriticalvalue}) implies (\ref{criticalfirstwantshow}).

Now, we argue by contradiction to show Conclusion (iii).
 Assume that $e_{\alpha^*,M^*}$ has a minimizer $(\hat m,\hat w)\in\mathcal A_{M^*}$, then one applies (\ref{criticalfirstwantshow}) to obtain
\begin{align*}
0=\mathcal E(\hat m,\hat w)=\int_{\mathbb R^n}C_L\Big|\frac{\hat w}{\hat m}\Big|^{r}\hat m\,dx -\frac{1}{1+\frac{r}{n}}\int_{\mathbb R^n}\hat m^{1+\frac{r}{n}}\, dx+\int_{\mathbb R^n}V(x)\hat m\,dx\geq 0,
\end{align*}
which together with (\ref{optimalinequality415})  implies
\begin{align}\label{eq5.85}
C_L\int_{\mathbb R^n}\Big|\frac{\hat w}{\hat m}\Big|^{r}\hat m\,dx=\frac{1}{1+\frac{r}{n}}\int_{\mathbb R^n}\hat m^{1+\frac{r}{n}}\,dx\text{~and~}\int_{\mathbb R^n}V(x)\hat m\,dx =0.
\end{align}
As a consequence, we obtain $\text{supp}V(x)\cap \text{supp~}\hat m=\emptyset.$  However, as exhibited in (\ref{V2mainassumption_3}), if $r\leq n,$ one has $\text{supp}V=\mathbb R^n,$ which indicates $\hat m=0$ a.e..  Otherwise if $r>n,$ we have facts that $(\hat m,\hat w)$ is a minimizer of $e_{\alpha^*,M^*}$ and $\hat m\in W^{1,r}(\mathbb R^n)\hookrightarrow C^{0,\theta}(\mathbb R^n)$ for some $\theta\in (0,1)$ by Lemma \ref{lemma21-crucial}.  Moreover, in light of Lemma \ref{lemma22preliminary}, one finds there exists a bounded from below solution $\hat u\in C^2(\mathbb R^n)$ and $\lambda\in \mathbb R$ such that 
\begin{align*}
\left\{\begin{array}{ll}
-\Delta \hat u+C_H|\nabla \hat u|^{r'}+\lambda=V(x)-\hat m^{\frac{r}{n}}, &x\in\mathbb R^n,\\
-\Delta \hat m-C_Hr'\nabla\cdot(m|\nabla \hat u|^{r'-2}\nabla\hat u)=0,&x\in\mathbb R^n,\\
\hat m\geq 0\text{~in~}\mathbb R^n,~~\int_{\mathbb R^n}\hat m\,dx=M^*>0.
\end{array}
\right.
\end{align*}
By standard elliptic estimates, one gets from the H\"{o}lder continuity of $V$ that $\hat u\in C^{2,\theta_1}_{\text{loc}}(\mathbb R^n)$ for some $\theta_1\in(0,1).$  Then, we apply the maximum principle as shown in \cite{bernardini2023ergodic} to find $\hat m>0$ in $\mathbb R^n$ when $r>n.$ This indicates $\int_{\mathbb R^n}V(x)\hat m\,dx >0$, which however contradicts \eqref{eq5.85}.   This completes the proof of Conclusion (iii).
\end{proof}

Theorem \ref{thm11} demonstrates that under some technical assumptions on potential $V$, there exists the critical mass phenomenon to problem (\ref{minimization-problem-critical}), where $\alpha=\frac{r}{n}.$  In detail, there exists $M^*>0$ such that when $M<M^*$, there exists at least one minimizer; otherwise if $M\geq M^*,$ problem (\ref{minimization-problem-critical}) has no minimzer.  A natural question is the asymptotic behaviors of minimizers to (\ref{minimization-problem-critical}) as $M\nearrow M^*$ and we investigate this in   Section \ref{sect5preciseblowup}.


\section{Blow-up Behaviors of Ground States as $M\nearrow M^*$ }\label{sect5preciseblowup}
In this section, we discuss the asymptotic profiles of least energy solutions to (\ref{MFG-SS}) as $M\nearrow M^*.$  More precisely, we shall prove Theorem \ref{thm13basicbehavior} and Theorem \ref{thm14preciseblowup}.  We would like emphasize that the results are parallel to Theorem 1.2 in \cite{cesaroni2018concentration} but they only consider the subcritical mass exponent case.
 \subsection{Basic Asymptotic Profiles of Ground States}

This subsection is devoted to the proof of Theorem \ref{thm13basicbehavior} and the investigation of the rough asymptotics.

\medskip
\textbf{Proof of Theorem \ref{thm13basicbehavior}:} 
\begin{proof}
To show Conclusion (i), we argue by contradiction and assume that
$$\limsup_{M\nearrow M^*}\int_{\mathbb R^n}\bigg|\frac{w_M}{m_M}\bigg|^{r}m_M\,dx<+\infty.$$
By using Lemma \ref{lemma21-crucial}, one further obtains
\begin{align}\label{inlightofsect5blowupbasicpro}
\limsup_{M\nearrow M^*}\Vert m_M\Vert_{W^{1,\hat q}(\mathbb R^n)}, ~\limsup_{M\nearrow M^*}\Vert w_M\Vert_{L^{\hat q}(\mathbb R^n)},~\limsup_{M\nearrow M^*}\Vert w_M\Vert_{L^1(\mathbb R^n)}<+\infty
\end{align}
 It follows that there exists $(m,w)\in  W^{1,\hat q}(\mathbb R^n)\times  L^{\hat q}(\mathbb R^n)  $ such that as $M\nearrow M^*$,
\begin{align}\label{combine1sect5}
m_M\rightharpoonup m \text{~in~} W^{1,\hat q}(\mathbb R^n),~\text{ and }~w_M\rightharpoonup w\text{~in~}  L^{\hat q}(\mathbb R^n).
\end{align}

We next prove that $(m,w)\in \mathcal K_{M^*}$ defined by (\ref{constraint-set-K}).  Firstly, in light of (\ref{inlightofsect5blowupbasicpro}), we have the fact that 
\begin{align}\label{combinesect5new53}
\limsup_{M\nearrow M^*}\int_{\mathbb R^n}V(x)m_M\,dx<+\infty.
\end{align}
  Noting that potential $V$ satisfies (V1), (V2) and (V3) given in Subsection \ref{mainresults11}, we combine (\ref{combine1sect5}) with (\ref{combinesect5new53}) to obtain from Lemma \ref{lem4.1} that 
\begin{align}\label{eq5.4}
m_M\rightarrow  m \text{~in~} L^1(\mathbb R^n)\cap L^{1+\frac{r}{n}}(\mathbb R^n), ~\text{as $M\nearrow M^*$},
\end{align}
which indicates $\int_{\mathbb R^n}m\,dx=M^*.$  In addition, we apply (\ref{combine1sect5}) to get $\Delta m= \nabla\cdot w$ {weakly.} 
 Then, similarly as in \eqref{eq2.31}, we have  $w\in L^1(\mathbb R^n)$.  By summarizing the argument above, we find $(m,w)\in \mathcal K_{M^*}$ and  
$\liminf\limits_{M\nearrow M^*} \mathcal E(m_M,w_M)\geq\mathcal E(m,w)$ thanks to \eqref{combine1sect5} and \eqref{eq5.4}.  Furthermore, we obtain from (\ref{criticalfirstwantshow}) that
$$e_{\alpha^*,M^*}\geq \mathcal E(m,w)\geq e_{\alpha^*,M^*}.$$
Thus, $(m,w)$ is a minimizer of $e_{\alpha^*,M^*},$ which is a contradiction to Conclusion (iii) in Theorem \ref{thm11}.  This finishes the proof of Conclusion (i).

(ii).  Recall that $\varepsilon_M$, which will be denoted as $\varepsilon$ for simplicity,  is defined as \eqref{thm51property1}. As stated in Conclusion (i) of Theorem \ref{thm11}, we have the facts that each $u_M\in C^2(\mathbb R^n)$ is bounded from below and satisfies $\lim\limits_{|x|\rightarrow+\infty} u_M(x)=+\infty$.  Hence, there exists $x_{\varepsilon}\in \mathbb R^n$ such that $u_M(x_{\varepsilon})=\inf\limits_{x\in\mathbb R^n}u_{M}(x)$, which implies $0=u_{\varepsilon}(0)=\inf\limits_{x\in\mathbb R^n}u_{\varepsilon}(x)$ thanks to the definition given in (\ref{thm51property2}).  

From \eqref{125potentialfreesystem} and (\ref{thm51property2})  we see that $(u_\varepsilon, m_\varepsilon, w_\varepsilon)$ satisfies 
 \begin{align}\label{eqscalingaftersect5mfgnew}
 \left\{\begin{array}{ll}
 -\Delta u_{\varepsilon}+C_H|\nabla u_{\varepsilon}|^{r'}+\lambda_M \varepsilon^{r}=-m_{\varepsilon}^{\frac{r}{n}}+\varepsilon^{r}V(\varepsilon x+x_{\varepsilon}),\\
 -\Delta m_{\varepsilon}-C_Hr'\nabla\cdot(m_{\varepsilon}|\nabla u_{\varepsilon}|^{r'-2}\nabla u_{\varepsilon})=-\Delta m_{\varepsilon}+\nabla \cdot w_{\varepsilon}=0,\\
 \int_{\mathbb R^n}m_{\varepsilon}\,dx=M.
 \end{array}
 \right.
 \end{align}  
By (\ref{thm51property1}), \eqref{optimalinequality415} and \eqref{criticalfirstwantshow}, we have 
\begin{align}\label{54equalitynormalize}
C_L\int_{\mathbb R^n}\bigg|\frac{w_{\varepsilon}}{m_{\varepsilon}}\bigg|^{r}m_{\varepsilon}\,dx=\varepsilon^{-r}C_L\int_{\mathbb R^n}\bigg|\frac{w_{M}}{m_{M}}\bigg|^{r}m_{M}\,dx\equiv 1,\ 
\int_{\mathbb R^n}m_{\varepsilon}^{1+\frac{r}{n}}\,dx=\varepsilon^{-r}\int_{\mathbb R^n}m_M^{1+\frac{r}{n}}\,dx\rightarrow \frac{r+n}{n},
\end{align}
and 
\begin{align}\label{59limitafterV}
\int_{\mathbb R^n} V(\varepsilon x+x_{\varepsilon})m_{\varepsilon}\,dx=\int_{\mathbb R^n}V(x)m_M\,dx\rightarrow 0\text{~as~}M\nearrow M^*.
\end{align}
Similar to \eqref{eq4.301}, we deduce from \eqref{eqscalingaftersect5mfgnew} and \eqref{54equalitynormalize}
that 
\begin{align*}
M\lambda_M=\mathcal E(m_M,w_M)-\frac{r}{n+r}\int_{\mathbb R^n}m_M^{1+\frac{r}{n}}\,dx=o(1)-\frac{r}{n+r}\varepsilon^r\int_{\mathbb R^n}m_{\varepsilon}^{1+\frac{r}{n}}\,dx,
\end{align*}
which implies 
\begin{align}\label{eq5.8}
\lambda_M\varepsilon^{r}\rightarrow -\frac{r}{M^*n}\text{~as~}M\nearrow M^*.
\end{align}

In light of the $u$-equation in (\ref{eqscalingaftersect5mfgnew}), we apply the maximum principle to obtain
\begin{align}\label{maximumprincipleconclusionsect5}
\lambda_M\varepsilon^{r}\geq -m^{\frac{r}{n}}_{\varepsilon}(0)+\varepsilon^rV(x_{\varepsilon})\geq -m^{\frac{r}{n}}_{\varepsilon}(0),
\end{align}
which implies 
\begin{align}\label{eq510mlowerbounduniform}
m^{\frac{r}{n}}_{\varepsilon}(0)\geq -\lambda_M\varepsilon^{r}\geq C>0,
\end{align}
 where and in the arguments below,  $C>0$ denotes a  constant independent of $\varepsilon,$ which may change line to line. 

{
We next claim that there exists some constant $C>0$ such that
\begin{align}\label{eq6.110}
\varepsilon^rV(x_{\varepsilon} )\leq C.
\end{align}
If not, we can find some subsequence $\varepsilon_l\rightarrow 0$ such that $\varepsilon_l^rV(x_{\varepsilon_l})\rightarrow +\infty$.  By using (\ref{maximumprincipleconclusionsect5}), one has
\begin{align*}
\frac{m_{\varepsilon_l}^{\alpha^*}(0)}{\varepsilon^r_l V(x_{\varepsilon_l})}\geq C,
\end{align*}
where $C>0$ is some constant independent of $\varepsilon_l.$ Similarly, let
\begin{align*}
v_{l}(x)=a_{l}^{r-2}u_{l}(a_{l}x),~~\mu_{l}(x)=a_{l}^{n}m_{l}(a_{l}x),~~a_{l}=\frac{1}{\varepsilon_l^{r}V(x_{\varepsilon_l})},
\end{align*}
then we have
\begin{align*}
a_{l}^r=\frac{1}{\varepsilon_l^rV(x_{\varepsilon_l})}\rightarrow 0,~~a_{l}^r\varepsilon_l^rV(x_{\varepsilon_l})=1.
\end{align*}
In light of (\ref{V2mainassumption_2}), one finds 
\begin{align*}
a_{l}^r\varepsilon^r_l V(a_{l}\varepsilon_l x+x_{\varepsilon_l})=\frac{V(a_{\varepsilon_l}\varepsilon_l x+x_{\varepsilon_l})}{V(x_{\varepsilon_l})}\leq C_2,
\end{align*}
where $C_2>0$ is some constant independent of $l.$  Noting that
\begin{align*}
\Vert \mu_l^{\alpha^*}\Vert^{1+\frac{1}{\alpha^*}}_{L^{1+\frac{1}{\alpha^*}}(B_1(0))}=a_l^r\Vert m_{\varepsilon_l}\Vert_{L^{1+\alpha^*}(B_{a_l}(x_l))}\rightarrow 0\text{~as~}l\rightarrow +\infty,
\end{align*}
we apply the maximal regularity shown in Theorem \ref{thmmaximalregularity} to obtain
\begin{align*}
\Vert |\nabla v_{l}|^{r'}\Vert_{L^{1+\frac{1}{\alpha^*}}(B_{1/2})}\leq C,
\end{align*}
where $C>0$ is some constant, which implies $\mu_{l}\in C^{0,\theta}(B_{1/4}(0))$ with $\theta\in(0,1).$
\\
since $\mu_{l}(0)=a_{l}^nm_{l}(0)$, one finds
\begin{align*}
\mu_{l}^{\alpha^*}(0)=\frac{m^{\alpha^*}_{l}(0)}{\varepsilon_l^{r}V(x_{\varepsilon_l})}\geq C>\delta>0.
\end{align*}
By using the H\"{o}lder's continuity of $\mu_l$, we obtain
\begin{align*}
\mu_{l}(x)\geq \delta>0\text{~in~}B_R(0),
\end{align*}
where $R\in(0,\frac14)$ is some constant independent of $l.$
We have
\begin{align*}
&\int_{\mathbb R^n} V(\varepsilon_l x+x_{\varepsilon_l})m_{\varepsilon_l} (x)\,dx\nonumber\\
=&\int_{\mathbb R^n}V(\varepsilon_l a_{\varepsilon_l }x+x_{\varepsilon_l})\mu_{l}\,dx\geq \delta\int_{B_{R}(0)}V(\varepsilon_l a_{l} x+x_{\varepsilon_l})\,dx\geq \frac{\delta}{\varepsilon_l^r}\rightarrow +\infty.\nonumber
\end{align*}
However, as $\varepsilon\rightarrow 0,$
\begin{align*}
\int_{\mathbb R^n} V(\varepsilon x+x_{\varepsilon})m_{\varepsilon} (x)\,dx\rightarrow 0,
\end{align*}
which is a contradiction.  This completes the proof of our claim \eqref{eq6.110}.
}




	On the other hand, noting that $V$ satisfies (\ref{V2mainassumption_2}), one further finds for $R>0$ large enough,
			\begin{align}\label{eq512insect5new}
				\varepsilon^rV(\varepsilon x+x_{\varepsilon})\leq C_R<+\infty,\text{ for all }|x|\leq 4R,
			\end{align}
			where positive constant $C_R$ depends on $R$ but is independent of $\varepsilon.$ 
			
			Similarly as shown in the proof of Theorem \ref{thm11}, we  estimate  $\nabla u_{\varepsilon}$ and rewrite the $u$-equation of \eqref{eqscalingaftersect5mfgnew} as
			\begin{equation}\label{negativedeltaueq6521}
				-\Delta u_{\varepsilon}=-C_H|\nabla u_{\varepsilon}|^{r'}+g_\varepsilon(x)\ \text{ with }g_\varepsilon(x):=-\lambda_{M}\varepsilon^r +\varepsilon^r  V(x_{\varepsilon}+\varepsilon x)-m^{\frac{r}{n}}_{\varepsilon}.
			\end{equation}
			Noting that $m^{\alpha^*}\in L^{1+\frac{1}{\alpha^*}}(\mathbb R^n)$, we have from the maximal regularity shown in Theorem \ref{thmmaximalregularity} that $|\nabla u|^{r'}\in L_{\rm loc}^{1+\frac{1}{\alpha^*}}(\mathbb R^n)$, i.e. $|\nabla u|^{r'-1}\in L_{\rm loc}^{\big(1+\frac{1}{\alpha^*}\big)r'}(\mathbb R^n)$, which implies $m\in C^{0,\theta}_{\text{loc}}(\mathbb R^n)$ for some $\theta\in(0,1)$ by using the $m$-equation in (\ref{eqscalingaftersect5mfgnew}). Thanks to Lemma \ref{sect2-lemma21-gradientu}, we further obtain 
			\begin{align*}
				|\nabla u_{\varepsilon}|\leq C_R<+\infty,~~\forall |x|<2R,
			\end{align*}
			which implies
			\begin{align*}
			\Vert |\nabla u_{\varepsilon}|^{r'}\Vert_{L^p(B_{2R}(0))}+\Vert g_{\varepsilon}\Vert_{L^p(B_{2R}(0)}\leq C_{p,R}<+\infty.
			\end{align*}
   Then, similarly as the derivation of \eqref{eq4.38}  and \eqref{eq4.401},  we can obtain that for some $\theta\in(0,1)$,
 \begin{align}\label{eq514upperboundmcalpha}
    \Vert m_{\varepsilon}\Vert_{C^{0,\theta}(B_{2R}(0))}\leq C<\infty,~~
 \end{align}
and
 \begin{align}\label{eq515C2thetaquestion}
\Vert u_{\varepsilon}\Vert_{C^{2,\theta}(B_R(0))}\leq C<\infty.
 \end{align}
  In light of (\ref{eq510mlowerbounduniform}), one has from (\ref{eq514upperboundmcalpha}) that there exists a constant $R_0\in(0,1)$ such that 
 \begin{align}\label{eq516lowerboundmindependent}
 m_{\varepsilon}(x)\geq C>0,~~\forall |x|<R_0.
 \end{align}

Now, we claim that up to a subsequence,
\begin{equation}\label{eq5.18}
    \lim_{\varepsilon\to0}x_{\varepsilon}= x_0 \ \text{ with }~V(x_0)=0.
\end{equation}
If not, one has either $|x_{\varepsilon}|\rightarrow +\infty$ or $x_{\varepsilon}\rightarrow x_0$ with $V(x_0)>0.$  In the two cases, we both get 
$\lim\limits_{x_\varepsilon\to0}V(\varepsilon x+x_\varepsilon)\geq A$ a.e. in $\mathbb R^n$ for some $A>0$.
   Hence, it follows from \eqref{eq516lowerboundmindependent} that 
\begin{align*}
\lim_{\varepsilon\to 0}\int_{\mathbb R^n}V(\varepsilon x+x_{\varepsilon})m_{\varepsilon}\,dx\geq \frac{A}{2}\int_{B_{R_0}(0)}m_{\varepsilon} (x)\, dx\geq \frac{AC}{2}|B_{R_0}(0)|,
\end{align*}
which reaches a contradiction to (\ref{59limitafterV}).  Therefore \eqref{eq5.18} is proved.

In light of (\ref{54equalitynormalize}), there exists $(m_0,w_0) \in W^{1,\hat q}(\mathbb R^n)\times \big(L^1(\mathbb R^n)\cap L^{\hat q}(\mathbb R^n)\big)$ such that 
\begin{align}\label{eq5.20519}
(m_{\varepsilon},w_{\varepsilon})\rightharpoonup (m_0,w_0) \ \ \text{in~} W^{1,\hat q}(\mathbb R^n)\times  L^{\hat q}(\mathbb R^n), \ \text{ as }\varepsilon\to 0,
\end{align}
where  $m_0\not\equiv 0$ by \eqref{eq516lowerboundmindependent}  and  $\hat q$ is given by (\ref{hatqconstraint}).  Furthermore, thanks to (\ref{eq515C2thetaquestion}), we find $u_{\varepsilon}\rightarrow u_0$ in $C^2_{\rm loc}(\mathbb R^n)$.  Moreover,  it follows from \eqref{eqscalingaftersect5mfgnew} and \eqref{eq5.8} that  $(m_0,u_0)$ satisfies
 \begin{align} \label{eq5.20}
 \left\{\begin{array}{ll}
 -\Delta u_0+C_H|\nabla u_0|^{r'}-\frac{r}{M^*n}=-m_{0}^{\frac{r}{n}},\\
 -\Delta m_0=-C_Hr'\nabla\cdot(m_0|\nabla u_0|^{r'-2}\nabla u_0)=-\nabla w_0,\\
 0<\int_{\mathbb R^n}m _0\,dx\leq M^*,~~w_0=-C_Hm_0|\nabla u_0|^{r'-2}\nabla u_0,
 \end{array}
 \right.
 \end{align}
 where similarly as in Section \ref{sect3-optimal} (see the proof of \eqref{combining1strongconverge2}), we have used Pohozaev identities given in Lemma \ref{poholemma} to obtain that $(m_0, w_0)$ is a minimizer of \eqref{sect2-equivalence-scaling}  and $\int_{\mathbb R^n}m _0\,dx=M^*$.  We thus see that $(u_0,m_0,w_0)$ satisfies \eqref{limitingproblemminimizercritical0} by recalling \eqref{eq5.20}.
 In addition, we find $m_\varepsilon\to m_0$ in $L^1(\mathbb R^n)$, and then by \eqref{eq5.20519}, we  obtain that  
 \begin{align*}
m_{\varepsilon}\to m_0  \ \ \text{in~} L^{p}(\mathbb R^n),~ \forall ~ p\in[1,{\hat q}^*)  \ \text{ as }\varepsilon\to 0,
\end{align*}
which inidicates \eqref{eq1.40}. 

It is left to prove \eqref{eq141realtionuminmmax} when $V$ satisfies (\ref{cirant-V}).  To this end, we argue by contradiction and assume that, up to a subsequence, 
 \begin{align}\label{eq5.23}
 \liminf_{\varepsilon\rightarrow 0}\frac{|\bar x_{\varepsilon}-x_{\varepsilon}|}{\varepsilon}=+\infty.
 \end{align}
Define 
\begin{align}\label{eq5.24}
\left\{\begin{array}{ll}
\bar m_{\varepsilon}(x):=\varepsilon^{n}m_M(\varepsilon x+\bar x_{\varepsilon})=m_{\varepsilon}\Big(x+\frac{\bar x_{\varepsilon}-x_{\varepsilon}}{\varepsilon}\Big),\\
\bar w_{\varepsilon}(x):=\varepsilon^{n+1}w_M(\varepsilon x+\bar x_{\varepsilon})=w_{\varepsilon}\Big(x+\frac{\bar x_{\varepsilon}-x_{\varepsilon}}{\varepsilon}\Big),\\
\bar u_{\varepsilon}(x):=\varepsilon^{\frac{2-r'}{r'-1}} u_M(\varepsilon x+\bar x_{\varepsilon})=u_{\varepsilon}\Big(x+\frac{\bar x_{\varepsilon}-x_{\varepsilon}}{\varepsilon}\Big).
\end{array}
\right.
\end{align}
We claim that there exist constants $R_0>0$ and $C>0$ independent of $\varepsilon$ such that 
\begin{align}\label{claimuniformlowerboundmcriticalsect5}
\bar m_{\varepsilon}(x)\geq C>0,\ \ \forall~ |x|<R_0.
\end{align}
In light of \eqref{eq5.24}, one finds (\ref{claimuniformlowerboundmcriticalsect5}) is equivalent to 
\begin{align}\label{eq5.25lowerboundsect5}
m_{\varepsilon}(x)\geq C>0, \ \ \forall~\Big|x-\frac{\bar x_{\varepsilon}-x_{\varepsilon}}{\varepsilon}\Big|<R_0.
\end{align}
Noting \eqref{eq510mlowerbounduniform}, we have the fact that 
\begin{equation}\label{eq5.27}
    \bar m_\varepsilon(0)=\|\bar m_\varepsilon\|_{L^\infty(\mathbb R^n)}=\| m_\varepsilon\|_{L^\infty(\mathbb R^n)}>C>0.
\end{equation}
To show (\ref{claimuniformlowerboundmcriticalsect5}) or \eqref{eq5.25lowerboundsect5}, we divide our discussions into two cases, $r\leq n$ and $r>n.$  Focusing on the case $r\leq n$, we impose (\ref{cirant-V}) on the potential $V.$  Noting the first equation in \eqref{eqscalingaftersect5mfgnew}, one finds $\bar u_{\varepsilon}$ satisfies
\begin{align}
-\Delta \bar u_{\varepsilon}+C_H|\nabla \bar u_{\varepsilon}|^{r'}=\bar g_{\varepsilon}(x):=-\lambda_M\varepsilon^r-\bar m_{\varepsilon}^{\frac{r}{n}}+\varepsilon^rV(\varepsilon x+\bar x_{\varepsilon}).
\end{align}  Similarly as the argument shown in  \cite[Theorem 4.1]{cesaroni2018concentration}, we consider the following two cases:  

\textbf{Case 1:} Assume that there exists some constant $C>0$ independent of $\varepsilon$ such that $\bar x_{\varepsilon}$ satisfies
 \begin{align*}
 \limsup_{\varepsilon\rightarrow 0}\varepsilon^{r}V(\bar x_{\varepsilon})\leq C<+\infty.
 \end{align*}
 Then by using \eqref{eq5.27}, one can proceed the same argument shown as \eqref{negativedeltaueq6521}-\eqref{eq514upperboundmcalpha} and \eqref{eq516lowerboundmindependent} to obtain the claim (\ref{claimuniformlowerboundmcriticalsect5}).
 
\textbf{Case 2:}  Assume that $\bar x_{\varepsilon}$ satisfies
\begin{align}\label{eq5.29}
\liminf_{\varepsilon\rightarrow 0} \varepsilon^rV(\bar x_{\varepsilon})=+\infty.
\end{align}
We define
\begin{align}\label{eq5.30}
\tilde m(x)=\varepsilon^n m_M(\varepsilon x)=m_{\varepsilon}\bigg(x-\frac{x_{\varepsilon}}{\varepsilon}\bigg),\ 
\tilde u(x)=\varepsilon^{\frac{2-r'}{r'-1}}u_M(\varepsilon x),
\tilde w(x)=\varepsilon^{n+1}w_M(\varepsilon x),
\end{align}
then find from \eqref{eqscalingaftersect5mfgnew} that $(\tilde m,\tilde u,\tilde w)$ satisfies 
\begin{align}
\left\{\begin{array}{ll}
-\Delta \tilde u+C_H|\nabla \tilde u|^{r'}+\lambda_M\varepsilon^{r}=\varepsilon^{r}V(\varepsilon x)-\tilde m_{\varepsilon}^{\frac{r}{n}},&x\in\mathbb R^n,\\
-\Delta \tilde m-C_Hr'\nabla\cdot(\tilde m|\nabla \tilde u|^{r'-2}\nabla \tilde u)=0,&x\in\mathbb R^n,\\
\int_{\mathbb R^n}\tilde m_{\varepsilon}\,dx =M\nearrow M^*, \ \ \tilde w_{\varepsilon}=-C_Hr'\tilde m|\nabla \tilde u|^{r'-2}\nabla \tilde u.
\end{array}
\right.
\end{align}
Noting that $V$ satisfies (\ref{cirant-V}), we employ Lemma \ref{sect2-lemma21-gradientu} to get 
\begin{align}\label{eq5.31sect5gradtildeu}
|\nabla \tilde u_\varepsilon|\leq C\big(1+\sigma_\varepsilon^{\frac{1}{r'}}|x|^{\frac{b}{r'}}\big), \ \ \sigma_\varepsilon:=\varepsilon^{r+b}.
\end{align}
 Denote $y_{\varepsilon}:=\frac{x_{\varepsilon}}{\varepsilon}$ and $\bar y_{\varepsilon}:=\frac{\bar x_{\varepsilon}}{\varepsilon}$, which are  the minimum and maximum points of $\tilde u_\varepsilon(x)$ and $\tilde m_\varepsilon(x),$ respectively. Thanks to \eqref{eq5.18}, we obtain 
$|y_{\varepsilon}|\leq C\varepsilon^{-1}.$  Then, it follows from  (\ref{eq5.31sect5gradtildeu}) that
\begin{align}\label{eq5.33}
|\tilde u_{\varepsilon}(0)|\leq |\tilde u_{\varepsilon}(y_{\varepsilon})|+| y_{\varepsilon}|\sup_{|y|\leq |y_{\varepsilon}|}|\nabla \tilde u_{\varepsilon}(y)|
\leq 1+C\varepsilon^{-1}+C\varepsilon^{-1}\sigma_\varepsilon^{\frac{1}{r'}}|y_{\varepsilon}|^{\frac{b}{r'}}\leq 1+C\varepsilon^{-1}.
\end{align}
As a consequence,
\begin{align}\label{eq534uniformupperboundsect5}
\tilde u_{\varepsilon}(x)\leq \tilde u_{\varepsilon}(0)+|x|\sup|\nabla u_{\varepsilon}(x)|
\leq 1+C\varepsilon^{-1}+\sigma_\varepsilon^{\frac{1}{r'}}|x|^{\frac{b}{r'}+1}.
\end{align}
 Invoking \eqref{eq5.29}, \eqref{eq5.33} and (\ref{eq534uniformupperboundsect5}), we proceed the same argument shown in \cite[Theorem 4.1]{cesaroni2018concentration} to get $\tilde m_{\varepsilon}\in C^{0,\theta}(B_R(\bar y_\varepsilon))$ with $\theta\in(0,1)$ and $R>0$ independent of $\varepsilon.$  Noting that $\bar y_\varepsilon$ is maximum point of $\tilde m_\varepsilon(x)$, we thus obtain from \eqref{eq5.27} and \eqref{eq5.30} that $\tilde m_{\varepsilon}(\bar y_{\varepsilon})\geq C>0.$
Hence, one finds there exists some $R_0>0$ independent of $\varepsilon$ such that
\begin{align*}
\tilde m_{\varepsilon}(x)>\frac{C}{2}>0,\forall~|x-\bar y_{\varepsilon}|<R_0.
\end{align*}
Since $\bar y_\varepsilon=\frac{\bar x_\varepsilon}{\varepsilon}$, the above estimate together with \eqref{eq5.30}  indicates that (\ref{eq5.25lowerboundsect5}) holds.

When $r>n,$ with the assumptions (\ref{V2mainasumotiononv}) imposed on potential $V$, using \eqref{54equalitynormalize} and noting that $W^{1,\hat q}(\mathbb R^n)\hookrightarrow C^{0,\theta}(\mathbb R^n)$ for some $\theta\in(0,1)$, we deduce from Lemma \ref{lemma21-crucial} that $m_{\varepsilon}$ is uniformly bounded in $C^{0,\theta}(\mathbb R^n)$.  Then claim (\ref{claimuniformlowerboundmcriticalsect5}) follows drectly by noting \eqref{eq5.27}. 

In summary, if $V$ satisfies (\ref{cirant-V}) when $r\leq n$ or (\ref{V2mainasumotiononv}) when $r>n$, we obtain (\ref{claimuniformlowerboundmcriticalsect5})  and \eqref{eq5.25lowerboundsect5} hold.  However, \eqref{eq5.25lowerboundsect5} together with \eqref{eq5.23} contradicts the fact that  $m_{\varepsilon}$ converges strongly  to $m_0$ in $L^1(\mathbb R^n)$.  Therefore, \eqref{eq141realtionuminmmax} is proved and we  complete the proof of Theorem \ref{thm13basicbehavior}. 

\end{proof}
Theorem \ref{thm13basicbehavior} indicates that as $M\nearrow M^*,$ the ground states $(m_M,w_M)$ to problem (\ref{minimization-problem-critical}) form some profile with the striking structure, where the leading stone is given by $(m_0,w_0),$ the minimizer to problem (\ref{optimal-inequality-sub}).  In fact, with the precise local expansion of potential $V$, we are able to capture the asymptotic of scaling factor and the location of concentrated profile, which will be shown in Subsection \ref{refinedblow}.


\subsection{Refined Asymptotic Profiles of Ground States}\label{refinedblow}
In this subsection, we shall discuss the refined blow-up behaviors of the rescaled minimizer $(m_{\varepsilon},w_{\varepsilon}).$  As shown in Section \ref{sect5preciseblowup}, $(m_{\varepsilon},w_{\varepsilon},u_{\varepsilon})$ converges to $(m_0,w_0,u_0)$ with $$\text{$m_{\varepsilon}\rightarrow m_0$ in $L^p(\mathbb R^n)\ \forall ~p\in[1,{\hat q}^*) $, $w_{\varepsilon}\rightharpoonup w_0$ $L^{\hat q}(\mathbb R^n)$ and $u_{\varepsilon}\rightarrow u_0$ in $C^2_{\text{loc}}(\mathbb R^n)$}.$$
where $(m_0,w_0)$ is a minimizer of $\Gamma_{\alpha^*}$ and $(u_0, m_0,w_0)$ satisfies \eqref{limitingproblemminimizercritical0}. In addition, we deduce from Lemmas \ref{mdecaylemma} and \ref{poholemma} that 
there exist  $\delta_1>0$ and $C_{\delta_1}>0$ such that,
\begin{equation}\label{eq6.1}
 m_0(x)\leq C_{\delta_1}C^{-\delta_1|x|},\end{equation}
and
\begin{align}\label{6dian2}
1=C_L\int_{\mathbb R^n}\Big|\frac{w_0}{m_0}\Big|^{r}m_0\,dx=\frac{n}{n+r}\int_{\mathbb R^n}m_0^{1+\frac{r}{n}}\,dx.
\end{align}
Before proving Theorem \ref{thm14preciseblowup}, we recall the following assumptions on potential $V$:

Suppose $V(x)$ has $l\in \mathbb N$ distinct zeros defined by $\{P_1,\cdots, P_l\}\subset \mathbb{R}^n$; moreover, $\exists a_i>0$, $q_i>0$, $d>0$ such that
\begin{align}\label{taylorexpansionV}
V(x)=a_i|x-P_i|^{q_i}+O(|x-P_i|^{q_i+1}), \ \ \text{~if~}|x-P_i|\leq d.
\end{align}
Define $q=\max\{q_1,\cdots,q_l\}$ and $Z=\{P_i~|~q_i=q, i=1,\cdots,l\}$, then we denote
\begin{equation}\label{eq6.3}
\mu=\min\{\mu_i~|~P_i\in Z, i=1,\cdots,l\} ~\text{ with }~\mu_i=\min\limits_{y\in\mathbb R^n}H_i(y),\ H_i(y)=\int_{\mathbb R^n} a_i|x+y|^{q_i}m_0(x)\,dx,
\end{equation}
and define set $Z_0=\{P_i~|~P_i\in Z\text{~and~} \mu_i=\mu, i=1,\cdots,l\},$ which consists of all weighted flattest zeros of $V(x).$  With the notations, we establish the precise upper bound of $e_{\alpha^*,M}$  as $M\nearrow M^*$, which is
\begin{lemma}\label{lemma61}
As $M\nearrow M^*,$ $e_{\alpha^*,M}$ given in (\ref{minimization-problem-critical}) satisfies
\begin{align}\label{eq6.5}
e_{\alpha^*,M}\leq [1+o(1)]\frac{q+r}{q}\bigg(\frac{q\mu}{r}\bigg)^{\frac{r}{r+q}}\bigg[1-\bigg(\frac{M}{M^*}\bigg)^{\frac{r}{n}}\bigg]^{\frac{q}{r+q}}.
\end{align}
\end{lemma}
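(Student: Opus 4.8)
\textbf{Proof proposal for Lemma \ref{lemma61}.}
The plan is to produce an explicit trial pair $(m,w)\in\mathcal K_M$ built from the limit profile $(m_0,w_0)$ of Theorem \ref{thm11-optimal}, rescaled to the correct concentration length scale and centered at a flattest zero of $V$, and then optimize the scaling parameter. Concretely, fix $P_i\in Z_0$ and a minimizer $y_0$ of $H_i$ (so $H_i(y_0)=\mu$), and for a parameter $\tau>0$ to be chosen set
\begin{align*}
m^\tau(x):=\frac{M}{M^*}\,\tau^{n}\,m_0\big(\tau(x-P_i)-y_0\big),\qquad
w^\tau(x):=\frac{M}{M^*}\,\tau^{n+1}\,w_0\big(\tau(x-P_i)-y_0\big).
\end{align*}
Since $(m_0,w_0)$ solves the continuity equation $\Delta m_0=\nabla\cdot w_0$ and $m_0$ decays exponentially by \eqref{eq6.1}, one checks directly that $(m^\tau,w^\tau)\in\mathcal K_M$ (mass $=M$, and $\int V m^\tau<\infty$ because $V$ has at most exponential growth by (V2)). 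I would then evaluate $\mathcal E(m^\tau,w^\tau)$ term by term using the change of variables $z=\tau(x-P_i)-y_0$.

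The first key step is the scaling computation. The kinetic term scales as $C_L\int |w^\tau/m^\tau|^r m^\tau\,dx=\frac{M}{M^*}\tau^r\,C_L\int|w_0/m_0|^r m_0\,dz=\frac{M}{M^*}\tau^r$ by \eqref{6dian2}; similarly $\int (m^\tau)^{1+r/n}\,dx=\big(\tfrac{M}{M^*}\big)^{1+r/n}\tau^r\int m_0^{1+r/n}\,dz=\big(\tfrac{M}{M^*}\big)^{1+r/n}\tau^r\,\tfrac{n+r}{n}$, again by \eqref{6dian2}. Hence the Lagrangian-minus-coupling part contributes $\frac{M}{M^*}\tau^r\big[1-\big(\tfrac{M}{M^*}\big)^{r/n}\big]$. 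For the potential term, use the Taylor expansion \eqref{taylorexpansionV}: since $m_0$ concentrates and decays exponentially, $\int V(x)m^\tau(x)\,dx=\frac{M}{M^*}\int V(P_i+\tau^{-1}(z+y_0))\,m_0(z)\,dz=\frac{M}{M^*}\big[\tau^{-q}\,a_i\!\int|z+y_0|^{q}m_0(z)\,dz+O(\tau^{-q-1})\big]=\frac{M}{M^*}\big[\tau^{-q}\mu+o(\tau^{-q})\big]$, where the exponential decay of $m_0$ controls the error integral uniformly as $\tau\to\infty$ (one truncates at $|z|\le d\tau/2$ and estimates the tail by \eqref{eq6.1}). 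Collecting, $\mathcal E(m^\tau,w^\tau)\le [1+o(1)]\big(\tfrac{M}{M^*}\big)\big\{A\tau^r+\mu\tau^{-q}\big\}$ with $A:=1-\big(\tfrac{M}{M^*}\big)^{r/n}>0$ for $M<M^*$.

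The second step is elementary optimization: minimizing $g(\tau)=A\tau^r+\mu\tau^{-q}$ over $\tau>0$ gives $\tau_*=\big(\tfrac{q\mu}{rA}\big)^{1/(r+q)}$ and $g(\tau_*)=\tfrac{q+r}{q}\big(\tfrac{q\mu}{r}\big)^{r/(r+q)}A^{q/(r+q)}$ (this also explains the value of $\varepsilon$ predicted in \eqref{131thm14}). Plugging in $A=1-(M/M^*)^{r/n}$ and absorbing the factor $M/M^*\to1$ into the $1+o(1)$ yields exactly \eqref{eq6.5}. The main obstacle — really the only nontrivial point — is making the error estimate in the potential term \emph{uniform} in the regime $\tau\to\infty$ while $M\nearrow M^*$: one must verify that the $O(|x-P_i|^{q_i+1})$ remainder in \eqref{taylorexpansionV}, integrated against $m_0$ after rescaling, is genuinely $o(\tau^{-q})$ and not merely $O(\tau^{-q})$, and that the contribution from the region $|x-P_i|>d$ (where \eqref{taylorexpansionV} fails and $V$ may grow like $e^{\delta|x|}$) is exponentially small; both follow from \eqref{eq6.1}, but care is needed because the optimal $\tau_*$ degenerates as $A\to0$, i.e. as $M\nearrow M^*$, so $\tau_*\to\infty$ and the localization must be quantified against that blow-up rate. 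Finally, since we only need the one-sided bound \eqref{eq6.5}, no compactness or lower-semicontinuity argument is required here — the matching lower bound is presumably the subject of a separate lemma.
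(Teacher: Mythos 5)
Your proposal is correct and follows essentially the same route as the paper: the same rescaled trial pair $\big(\tfrac{M}{M^*}\tau^n m_0(\tau(x-P_i)-y_0),\tfrac{M}{M^*}\tau^{n+1}w_0(\tau(x-P_i)-y_0)\big)$, the same use of the Pohozaev normalization \eqref{6dian2} to evaluate the kinetic and coupling terms, a dominated-convergence treatment of the potential term split into the regions $|x-P_i|\lesssim d$ and beyond (handled by the exponential decay \eqref{eq6.1} against the at-most-exponential growth in (V2)), and the same optimization $\tau_*=\big(\tfrac{q\mu}{rA}\big)^{1/(r+q)}$. The uniformity concern you flag is resolved exactly as you suggest: the paper proves $\int Vm_\tau\,dx=\mu\tau^{-q}(1+o(1))$ as a single limit $\tau\to\infty$ and then substitutes $\tau=\tau(M)$.
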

\begin{proof}
Choose some $P_i\in Z_0$  and let  $y_i\in \mathbb R^n$ satisfy
\begin{align}\label{6dian3def}
\mu=\mu_i=\inf_{y\in\mathbb R^n} H_i(y)=H_i(y_i).
\end{align}
Define $$m_{\tau}=\frac{M}{M_*}\tau^{n}m_0\big[\tau\big(x-\frac{y_i}{\tau}-P_i\big)\big] ~\text{ and }~ w_{\tau}=\frac{M}{M^*}\tau^{n+1}w_0\big[\tau\big(x-\frac{y_i}{\tau}-P_i\big)\big] ~\text{ for~  $\tau>0.$ }$$
 Then we  find
\begin{align}\label{66estimatepotential}
\int_{\mathbb R^n}V(x)m_{\tau}(x)\,dx =&\frac{M}{M^*}\int_{\mathbb R^n}V\Big(\frac{x+y_i}{\tau}+P_i\Big)m_0(x)\,dx
=|\tau|^{q_i}\frac{M}{M^*}\int_{\mathbb R^n}\frac{V\Big(\frac{x+y_i}{\tau}+P_i\Big)}{\Big|\frac{x+y_i}{\tau}\Big|^{q_i}}|x+y_i|^{q_i}m_0(x)\,dx.
\end{align}
Next, we establish the desired estimate for the last equality in (\ref{66estimatepotential}).  To begin with, from \eqref{V2mainassumption_1}, we see that  $\exists \tilde K>|P_i|$ such that
\begin{align}\label{eq6.7}
V(x)<C_2e^{\delta|x|},\ \ \text{for~}|x|>\tilde K,
\end{align}
Define
$$II:=\frac{V\Big(\frac{x+y_i}{\tau}+P_i\Big)}{\Big|\frac{x+y_i}{\tau}\Big|^{q_i}}|x+y_i|^{q_i}m_0,$$
then we decompose the estimates of $II$ into the following three cases:

\textbf{Case 1:} When  $0\leq \frac{|x+y_i|}{\tau}<d$, we use (\ref{taylorexpansionV}) to get
\begin{align}\label{IIestimate1}
|II|=\frac{V\Big(\frac{x+y_i}{\tau}+P_i\Big)}{\Big|\frac{x+y_i}{\tau}\Big|^{q_i}}|x+y_i|^{q_i}m_0 \leq 2a_im_0.
\end{align}

\textbf{Case 2:} When  $d\leq \Big|\frac{x+y_i}{\tau}\Big|\leq 2\tilde K,$ one has from $\tilde K>|P_i|$ that
$
\Big|\frac{x+y_i}{\tau}+P_i\Big|\leq 2\tilde K+|P_i|\leq 3\tilde K,
$
which implies
\begin{align}\label{IIestimate2}
\vert II\vert \leq d^{-q_i}\bigg[\sup_{|x|<3\tilde K}V(x)\bigg]m_0(x).
\end{align}

\textbf{Case 3:} When $\Big|\frac{x+y_i}{\tau}\Big|\geq 2\tilde K$ holds, we find $\Big|\frac{x+y_i}{\tau}+P_i\Big|\geq 2\tilde K-|P_i|\geq \tilde K$ and $\Big|\frac{x}{\tau}\Big|\geq \Big|\frac{x+y_i}{\tau}\Big|-\Big|\frac{y_i}{\tau}\Big|\geq \tilde K$ provided $\tau>0$ large enough. As a consequence, we obtain from \eqref{eq6.7} that
\begin{align*}
&\frac{V\Big(\frac{x+y_i}{\tau}+P_i\Big)}{\Big|\frac{x+y_i}{\tau}\Big|^{q_i}}|x+y_i|^{q_i}=\tau^{q_i}V\Big(\frac{x+y_i}{\tau}+P_i\Big)\leq C_{2}\tau^{q_i}e^{\delta\big|\frac{x+y_i}{\tau}+P_i\big|}
\leq C_{2}\tau^{q_i}e^{3\delta|\frac{x}{\tau}|},\ \text{as~}\tau\rightarrow+\infty,
\end{align*}
which together with \eqref{eq6.1} gives that for $\tau$ sufficiently large,
\begin{align}\label{collect3sect6}
\vert II\vert\leq & C_{2}C_{\delta_1}\tau^{q_i}e^{3\delta|\frac{x}{\tau}|}e^{-\delta_1|x|}
\leq C_{\delta_1}\tau^{q_i}e^{-\frac{\delta_1}{2}|x|}\leq C_{\delta_1}\tau^{q_i}e^{-\frac{\delta_1}{4}|x|}e^{-\frac{\delta_1}{4}\tau \tilde K}
\leq C_{\delta_1}e^{-\frac{\delta_1}{4}|x|}.
\end{align}
  Upon collecting (\ref{IIestimate1}), (\ref{IIestimate2}) and (\ref{collect3sect6}), we arrive at
\begin{equation*}
\vert II\vert\leq \varphi(x):=\left\{\begin{array}{ll}
2a_im_0(x),&\big|\frac{x+y_i}{\tau}\big|<d,\\
d^{-q_i}\Bigg[\sup\limits_{|x|<3\tilde K}V(x)\Bigg]m_0(x),&d<\big|\frac{x+y_i}{\tau}\big|<2\tilde K,\\
C_{\delta_1}e^{-\frac{\delta_3}{4}|x|},&\big|\frac{x+y_i}{\tau}\big|>2\tilde K.
\end{array}
\right.
\end{equation*}
In addition, by using (\ref{taylorexpansionV}), one has the fact that $\lim\limits_{\tau\rightarrow+\infty}II=a_i|x+y_i|^{q_i}m_0(x)$ \text{~a.e.~} in $\mathbb R^n.$
Hence, we employ the Lebesgue dominated convergence theorem to obtain that
\begin{align*}
\lim_{\tau\rightarrow+\infty}|\tau|^{-q_i}\int_{\mathbb R^n} V(x)m_{\tau}(x)\,dx
=\frac{M}{M^*}\int_{\mathbb R^n}a_i|x+y_i|^{q_i}m_0(x)\,dx=\mu,
\end{align*}
where we have used the definition of $\mu$ given by \eqref{6dian3def}. From the  definition of $(m_{\tau},w_{\tau})$ and \eqref{6dian2} we have
\begin{align*}
C_L\int_{\mathbb R^n}\bigg|\frac{w_{\tau}}{m_{\tau}}\bigg|^{r}m_{\tau}\,dx-\frac{1}{1+\frac{r}{n}}\int_{\mathbb R^n}m_{\tau}^{1+\frac{r}{n}}\,dx
=\frac{M \tau^r}{M^*}\bigg[1-\bigg(\frac{M}{M^*}\bigg)^{\frac{r}{n}}\bigg].
\end{align*}
Let $\tau=\Bigg(\frac{q\mu}{r[1-(\frac{M}{M^*})^{\frac{r}{n}}]}\Bigg)^{\frac{1}{r+q}} \to\infty$ as $M\nearrow M^*$, and note that $q_i=q$, we then  deduce from   the above two estimates  that 
\begin{align*}
\mathcal E(m_{\tau},w_{\tau})
=\frac{M}{M^*}\bigg[\bigg(1-\bigg(\frac{M}{M^*}\bigg)^{\frac{r}{n}}\bigg)\tau^{r}+\mu\tau^{-q}\bigg]+o(\tau^{-q})=\frac{q+r}{q}\Big(\frac{q\mu}{r}\Big)^{\frac{r}{r+q}}\bigg[1-\bigg(\frac{M}{M^*}\bigg)^{\frac{r}{n}}\bigg]^{\frac{q}{r+q}}(1+o(1)),
\end{align*}
which completes the proof of the lemma.
\end{proof}
With the upper bound of $e_{\alpha^*,M}$ given in Lemma \ref{lemma61}, we shall prove Theorem \ref{thm14preciseblowup}, which is

\medskip
\textbf{Proof of Theorem \ref{thm14preciseblowup}:}
\begin{proof}
As shown in Theorem \ref{thm13basicbehavior}, one has $x_{\varepsilon}\rightarrow P_i$ for some $1\leq i\leq l.$  Moreover, since $(m_M,w_M)$ denotes the minimizer of (\ref{minimization-problem-critical}), we obtain
\begin{align}
e_{\alpha^*,M}=\mathcal E(m_M,w_M)=&\varepsilon^{-r}C_L\int_{\mathbb R^n}\bigg|\frac{w_{\varepsilon}}{m_{\varepsilon}}\bigg|^{r}m_{\varepsilon}\,dx-\frac{\varepsilon^{-r}}{1+\frac{r}{n}}\int_{\mathbb R^n}m_{\varepsilon}^{1+\frac{r}{n}}\,dx+\int_{\mathbb R^n}V(\varepsilon x+x_{\varepsilon})m_{\varepsilon}(x)\,dx\nonumber\\
\geq &\varepsilon^{-r}\Big[\Big(\frac{M^*}{M}\Big)^{\frac{r}{n}}-1\Big]\frac{n}{n+r}\int_{\mathbb R^n}m_{\varepsilon}^{1+\frac{r}{n}}\,dx+\int_{\mathbb R^n}V(\varepsilon x+x_{\varepsilon})m_{\varepsilon}(x)\,dx.\label{eq6.10}
\end{align}
We have the facts that 
\begin{align}\label{eq6.11}
\int_{\mathbb R^n}V(\varepsilon x+x_{\varepsilon})m_{\varepsilon}(x)\,dx=\varepsilon^{q_i}\int_{\mathbb R^n}\frac{V(\varepsilon x+x_{\varepsilon})}{|\varepsilon x+x_{\varepsilon}-P_i|^{q_i}}\Big|x+\frac{x_{\varepsilon}-P_i}{\varepsilon}\Big|^{q_i}m_{\varepsilon}(x)\,dx,
\end{align}
and in view of $x_{\varepsilon}\rightarrow P_i$,
\begin{align}\label{eq6.12}
\lim_{\varepsilon\rightarrow 0}\frac{V(\varepsilon x+x_{\varepsilon})}{|\varepsilon x+x_{\varepsilon}-P_i|^{q_i}}=\lim_{\varepsilon\rightarrow 0}\frac{a_i|\varepsilon x+x_{\varepsilon}-P_i|^{q_i}+O(|\varepsilon x+x_{\varepsilon}-P_i|^{q_i+1})}{|\varepsilon x+x_{\varepsilon}-P_i|^{q_i}}=a_i, \ \text{~a.e.~in~}\mathbb R^n.
\end{align}
 Now, we claim that
\begin{align}\label{629uniformlyboundxp}
\text{$q_i=q=\max\{q_1,\cdots,q_l\}$ and }~\limsup_{\varepsilon\rightarrow 0}\Big|\frac{x_{\varepsilon}-P_i}{\varepsilon}\Big| \text{~is uniformly bounded.}
\end{align}
If not, we have either $q_i<q$ or up to  a subsequence, $\lim_{\varepsilon\rightarrow 0}\big|\frac{x_{\varepsilon}-P_i}{\varepsilon}\big|=+\infty$.  With the help of \eqref{eq1.40}, \eqref{eq6.11} and \eqref{eq6.12}, we apply Fatou's lemma to get for any constant $\beta\gg1$ large enough,
\begin{align*}
\lim_{\varepsilon\to 0}\varepsilon^{-q}\int_{\mathbb R^n}V(\varepsilon x+x_{\varepsilon})m_{\varepsilon}\,dx
=\lim_{\varepsilon\to 0}\varepsilon^{q_i-q}\int_{\mathbb R^n}\frac{V(\varepsilon x+x_{\varepsilon})}{|\varepsilon x+x_{\varepsilon}-P_i|^{q_i}}\Big|x+\frac{x_{\varepsilon}-P_i}{\varepsilon}\Big|^{q_i}m_{\varepsilon}\,dx\geq \beta\gg1.
\end{align*}
As a consequence, we obtain from \eqref{54equalitynormalize} and \eqref{eq6.10} that 
\begin{align*}
e_{\alpha^*,M}\geq& \varepsilon^{-r}\Big[\Big(\frac{M^*}{M}\Big)^{\frac{r}{n}}-1\Big]\frac{1}{1+\frac{r}{n}}\int_{\mathbb R^n}m_{\varepsilon}^{1+\frac{r}{n}}\,dx+\beta\varepsilon^{q}
=[1+o_{\varepsilon}(1)]\Big[\Big(\frac{M^*}{M}\Big)^{\frac{r}{n}}-1\Big]\varepsilon^{-r}+\beta\varepsilon^{q}\\
\geq &(1+o_{\varepsilon}(1))\frac{q+r}{q}\bigg(\frac{q\beta}{r}\bigg)^{\frac{r}{r+q}}\Bigg[\Big(\frac{M^*}{M}\Big)^{\frac{r}{n}}-1\Bigg]^{\frac{q}{r+q}}, ~\text{ where }~ \beta\gg1.
\end{align*}
This contradicts to Lemma \ref{lemma61}, which finishes the proof of our claim \eqref{629uniformlyboundxp}.

In view of (\ref{629uniformlyboundxp}), we find  $\exists y_0\in\mathbb R^n$ such that up to a subsequence,
\begin{align*}
\lim_{\varepsilon\rightarrow 0}\frac{x_{\varepsilon}-P_i}{\varepsilon}=y_0.
\end{align*}
We next show $y_0$ satisfies  (\ref{132thm14}). 
Since  $q_i=q,$ it follows from \eqref{taylorexpansionV}, (\ref{eq6.3}), \eqref{eq1.40} and  Fatou's lemma that 
\begin{align}
\lim_{\varepsilon\rightarrow 0}\varepsilon^{-q}\int_{\mathbb R^n} V(\varepsilon x+x_{\varepsilon})m_{\varepsilon}\,dx&=\lim_{\varepsilon\rightarrow 0}\int_{\mathbb R^n} \frac{V\Big(\varepsilon \big(x+\frac{x_{\varepsilon}-P_i}{\varepsilon}\big)+P_i\Big)}{|\varepsilon \big(x+\frac{x_{\varepsilon}-P_i}{\varepsilon}\big)|^q}\big| x+\frac{x_{\varepsilon}-P_i}{\varepsilon}\big|^q m_{\varepsilon}\,dx\nonumber\\
&\geq \int_{\mathbb R^n}a_i|x+y_0|^{q}m_0\,dx  \label{byusingsect61}
\geq \mu,
\end{align}
where  the last two equalities hold if and only if (\ref{132thm14}) holds. As a consequence, we obtain
\begin{equation}\label{eq6.16}
\begin{split}
e_{\alpha^*,M}\geq &\varepsilon^{-r}\Big[\Big(\frac{M^*}{M}\Big)^{\frac{r}{n}}-1\Big](1+o(1))+\varepsilon^{q}\mu(1+o(1))\\
\geq &(1+o(1))\frac{q+r}{q}\Big(\frac{q\mu}{r}\Big)^{\frac{r}{r+q}}\Big[\Big(\frac{M^*}{M}\Big)^{\frac{r}{n}}-1\Big]^{\frac{q}{r-q}}\\
=&(1+o(1))\frac{q+r}{q}\Big(\frac{q\mu}{r}\Big)^{\frac{r}{r+q}}\Big[1-\Big(\frac{M}{M^*}\Big)^{\frac{r}{n}}\Big]^{\frac{q}{r-q}}\Big(\frac{M^*}{M}\Big)^{\frac{rq}{n(r-q)}}\\
=&(1+o(1))\frac{q+r}{q}\Big(\frac{q\mu}{r}\Big)^{\frac{r}{r+1}}\Big[1-\Big(\frac{M}{M^*}\Big)^{\frac{r}{n}}\Big]^{\frac{q}{r-q}},
\end{split}
\end{equation}
where the ``=" holds in the second inequality  if and only if
\begin{align}\label{eq6.17}
\varepsilon=\left[\frac{r}{q\mu}\left[1-\big(\frac{M}{M^*}\big)^{\frac{r}{n}}\right]\right]^{\frac{1}{r+q}}(1+o(1)).
\end{align}
We collect the lower bound \eqref{eq6.16} and the upper bound \eqref{eq6.5} to find all equalities in \eqref{eq6.16} must hold.  It follows that all ``=" in \eqref{byusingsect61} and \eqref{eq6.17} also hold.  Therefore, we obtain
 (\ref{131thm14}) and (\ref{132thm14}), which finishes the proof of Theorem \ref{thm14preciseblowup}.
\end{proof}
Theorem \ref{thm14preciseblowup} demonstrates that when potential $V$ admits typical local expansions e.g. power asymptotic function and the nonlinearity exponent in system (\ref{MFG-SS}) is critical, there exist ground states to problem (\ref{minimization-problem-critical}) for $M<M^*$, which concentrate at the location where the potential $V$ is weighted flattest as $M\nearrow M^*.$  Moreover, we have captured the asymptotics of scaling factor $\varepsilon.$

\section{Discussion}

First of all, we have analyzed the $W^{2,p}$ regularity of the solution to the HJ equation (\ref{newsectioneq1}) with subquadratic gradient terms by performing blow-up analysis.  With the aid of the regularity results, we have employed the variational method to study the existence of least energy solutions to system \eqref{MFG-SS} with the decreasing cost and some technical assumptions imposed on potential $V$ when the nonlinearity exponent in the power local coupling is mass critical.  Based on the seminal work of Cirant et al. \cite{cesaroni2018concentration}, we have classified the ground states to the associated energy (\ref{energy-dual}) in terms of the total mass of population density $m$ when the coupling exponent $\alpha=\frac{r}{n}$.  More precisely, we have obtained there exists a critical mass $M^*$ such that if the total mass of density denoted by $M$ satisfying $M<M^*$, problem (\ref{minimization-problem-critical}) admits minimizers; otherwise, there is no minimizer to (\ref{minimization-problem-critical}) by using the assumptions (V3) imposed on potential $V$ when the exponent in the local coupling is mass critical.  Furthermore, we have showed there exists the concentration phenomenon as $M\nearrow M^*$ and captured the asymptotic behaviors of minimizers.  In particular, while proving our main results, Gagliardo-Nirenberg type's inequality corresponding to the ground states for the potential-free MFG system has been established.  In our argument, some innovation ideas what we have proposed is the application of Pohozaev identities on proving strongly $L^1(\mathbb R^n)$ convergence of population density, which avoids the discussion of concentration compactness as shown in \cite{cesaroni2018concentration}.  On the other hand, while studying the existence of least energy solutions, we relax the $|x|^b$ restriction on potential $V$ by applying maximal regularities shown in Theorem \ref{thmmaximalregularity} and \cite{cirant2022local}. 

We would like to point out that there are also some interesting questions that deserve investigations in the future.   While establishing the optimal inequality in Section \ref{sect3-optimal}, we imposed some technical assumption on $m$, which is the boundedness of $\int_{\mathbb R^n}m|x|^b\,dx$ for sufficiently small $b>0.$  We believe that this condition is unnecessary since our techniques arise from the continuity method but the existence of minimizers to problem (\ref{sect2-equivalence-scaling}) in the mass subcritical exponent case must be established with this technical assumption on $m$ due to the approaches taken in \cite{cesaroni2018concentration}.  To remove this assumption, we must propose some new ideas and it seems significant to understand the properties of principal part in $\mathcal E$ given by (\ref{energy-dual}) more clearly.  Other interesting question is the properties, such as radially symmetry, uniqueness and positivity of ground states, when system (\ref{MFG-SS}) is potential-free.  Since the potential-free system is also coupled, it seems need other ideas rather than moving plane methods to show the symmetry of solutions.     One of interesting directions is the extension of the potential functions $V$.  We impose some mild assumptions (\ref{V2mainasumotiononv}) on $V$, which does not include the super exponential growth case and the logarithmic potential.  It seems a challenge to remove the technical assumptions due to the lower bound estimates of the value function $u.$ 

\begin{appendices}
\setcounter{equation}{0}
\renewcommand\theequation{A.\arabic{equation}}

\section{Ground States of Schr\"{o}dinger equations and MFGs with $r'=2$}\label{appendixA}
This Appendix is devoted to 
the relationship between the ground states to (\ref{MFG-SS}) and (\ref{nonlinear-Schrodinger}), which is
\begin{proposition}\label{corequivschromfg}
Assume $r=r'=2,$ $H$ is given by (\ref{MFG-H}), $f(m)=-C_fm^{\alpha}$ with $\alpha\in\big(0,\frac{r}{n}\big)$ and $V$ satisfies (\ref{cirant-V}) in system (\ref{MFG-SS}). Let $(m_M,w_M)$ be minimizers to problem (\ref{ealphaM-117}).  Correspondingly, $(m_M,u_M,\lambda_M)\in W^{1,p}(\mathbb R^n)\times C^2(\mathbb R^n)\times \mathbb R$ solves (\ref{MFG-SS}) for any fixed $M>0.$  In addition, let $v_M$ be minimizers to problem 
\begin{align}\label{appendixa1problem}
\min\limits_{v\in H^1(\mathbb R^n),\int_{\mathbb R^n}v^2\,dx=M}\mathcal F(v)
\end{align}
with $\mathcal F(v)$ defined by (\ref{variation-schrodinger}). 
 Then we have that on one hand, $\bar v_M:=\sqrt{m_M}$ give rise to minimizers of the problem (\ref{appendixa1problem}); on the other hand, $(\bar m_M,\bar w_M):=(v_M^2,2v_M\nabla v_M)$ give rise to minimizers of problem (\ref{ealphaM-117}). 
\end{proposition}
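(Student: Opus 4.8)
The plan is to establish the equivalence of the two constrained minimization problems by showing that the two energy functionals, restricted to the relevant admissible classes, agree along the natural change of variables $m = v^2$, $w = 2v\nabla v$, and that this change of variables is a bijection (up to the issue of the sign/phase of $v$) between the admissible sets once one knows that minimizers of one problem yield admissible competitors for the other. Concretely, I would first record the elementary computation that when $r=r'=2$, $C_L = \frac{1}{4C_H}$, and for a pair of the form $(m,w) = (v^2, 2v\nabla v)$ with $v \in H^1(\mathbb R^n)$, $v \ge 0$, one has $mL(-w/m) = C_L |w|^2/m = C_L\, |2v\nabla v|^2/v^2 = 4C_L |\nabla v|^2 = \frac{1}{C_H}|\nabla v|^2$; together with $F(m) = -\frac{C_f}{\alpha+1} v^{2(\alpha+1)}$ and $V m = V v^2$, this shows $\mathcal E(v^2, 2v\nabla v) = \mu \mathcal F(v)$ up to the explicit constant $\mu = (r/C_H)^{r-1}$ appearing in \eqref{nonlinear-Schrodinger} (one must keep careful track of the normalization constants so that the two functionals differ only by a fixed positive multiple). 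Crucially the mass constraints match: $\int m\,dx = \int v^2\,dx = M$.

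Next I would handle the two directions separately. For the direction ``$\bar v_M := \sqrt{m_M}$ is a minimizer of \eqref{appendixa1problem}'': since $(m_M,w_M)$ is a minimizer of \eqref{ealphaM-117}, the regularity theory quoted earlier (the minimizer solves \eqref{MFG-SS} with $m_M \in W^{1,p}$, $u_M \in C^2$, $m_M$ decays exponentially, and $m_M > 0$ by the strong maximum principle applied to the Fokker--Planck equation) guarantees that $\sqrt{m_M} \in H^1(\mathbb R^n)$ — here one uses $m_M > 0$ and $m_M \in W^{1,p}$ with $|\nabla \sqrt{m_M}| = |\nabla m_M|/(2\sqrt{m_M})$, whose $L^2$ bound follows from testing appropriately, or more cleanly from the identity $\int |\nabla\sqrt{m_M}|^2 = \frac14\int |w_M|^2/m_M < \infty$ which is exactly the finiteness of the Lagrangian term. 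Moreover $w_M = 2\sqrt{m_M}\,\nabla\sqrt{m_M}$ because the Fokker--Planck relation $\nabla m_M + m_M C_H |\nabla u_M|^{r'-2}\nabla u_M = 0$ trivializes in the quadratic case (this is precisely the point noted after \eqref{FPeqpartially} that \eqref{FPeqpartially} holds when $r'=2$), so $(m_M,w_M)$ really is of the product form. Then for any competitor $v$ with $\int v^2 = M$, the pair $(v^2, 2v\nabla v) \in \mathcal K_M$ (one checks the weak PDE constraint $\int \nabla(v^2)\cdot\nabla\varphi = \int 2v\nabla v\cdot\nabla\varphi$ trivially, and $\int V v^2 < \infty$ may be assumed WLOG since otherwise $\mathcal F(v) = +\infty$), hence $\mu\mathcal F(v) = \mathcal E(v^2,2v\nabla v) \ge e_{\alpha,M} = \mathcal E(m_M,w_M) = \mu\mathcal F(\sqrt{m_M})$, giving minimality. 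The reverse direction is symmetric: given a minimizer $v_M$ of \eqref{appendixa1problem}, replacing $v_M$ by $|v_M|$ does not increase $\mathcal F$, so WLOG $v_M \ge 0$; standard regularity for \eqref{nonlinear-Schrodinger} gives $v_M > 0$ and smooth, so $(v_M^2, 2v_M\nabla v_M) \in \mathcal K_M$ with finite energy, and for any $(m,w) \in \mathcal K_M$ with finite Lagrangian term one shows $\sqrt m \in H^1$ and $\mathcal E(m,w) \ge \mu\mathcal F(\sqrt m) \ge \mu\mathcal F(v_M) = \mathcal E(v_M^2, 2v_M\nabla v_M)$, where the first inequality uses that among all $w$ compatible with a given $m$ via the PDE constraint, the minimal Lagrangian cost is achieved by the gradient flow $w = \nabla m$; but in fact the key subtlety is that for general $(m,w)\in\mathcal K_M$ one has $\int C_L|w|^2/m \ge \int C_L |\nabla m|^2/m = 4C_L\int|\nabla\sqrt m|^2$ by the PDE constraint plus Cauchy--Schwarz, so the inequality $\mathcal E(m,w) \ge \mu\mathcal F(\sqrt m)$ holds for \emph{all} admissible pairs, not just product ones.

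The main obstacle I anticipate is the careful verification of the functional-space memberships and the PDE-constraint compatibility at the endpoints of the equivalence — specifically, (i) showing $\sqrt{m} \in H^1(\mathbb R^n)$ for an arbitrary admissible $(m,w)$ with finite energy, which requires $m > 0$ or at least a density/approximation argument near the zero set of $m$ (here the exponential decay and positivity of the minimizer $m_M$ makes the minimizer case clean, but the general competitor case needs the observation that the Lagrangian term controls $\int|\nabla\sqrt m|^2$ directly without needing pointwise positivity, via $|\nabla\sqrt m|^2 = \frac14|\nabla m|^2/m \le \frac14 |w|^2/m$ a.e. on $\{m>0\}$ and $\nabla\sqrt m = 0$ a.e. on $\{m = 0\}$), and (ii) pinning down all the multiplicative constants $C_H, C_L, \mu, C_f$ so that $\mathcal E = \mu\mathcal F$ exactly rather than just up to an unspecified positive factor — this is bookkeeping but it is where sign and normalization errors creep in. A secondary point worth stating explicitly is that the assumption $\alpha \in (0, r/n)$ (mass subcritical) is what guarantees both problems actually have minimizers, so the proposition is really asserting that the two nonempty sets of minimizers correspond; the proof itself, being a pointwise functional identity plus the variational inequality, does not otherwise use subcriticality.
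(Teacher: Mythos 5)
Your overall strategy coincides with the paper's: both arguments hinge on the identity $\mathcal E(v^2,2v\nabla v)=\mathcal F(v)$ (up to the normalization constants) together with the claim that for an admissible pair $(m,w)\in\mathcal K_M$ the Lagrangian term is minimized, among all $w$ compatible with $m$ through the constraint $\Delta m=\nabla\cdot w$, by $w=\nabla m$. The one place where your write-up has a genuine gap is precisely at that claim: you assert $\int_{\mathbb R^n}|w|^2/m\,dx\ge\int_{\mathbb R^n}|\nabla m|^2/m\,dx$ ``by the PDE constraint plus Cauchy--Schwarz,'' but Cauchy--Schwarz alone does not deliver this. Writing $w=\nabla m+g$ with $\nabla\cdot g=0$ weakly, one has
\begin{equation*}
\int_{\mathbb R^n}\frac{|w|^2}{m}\,dx=\int_{\mathbb R^n}\frac{|\nabla m|^2}{m}\,dx+2\int_{\mathbb R^n}\nabla(\log m)\cdot g\,dx+\int_{\mathbb R^n}\frac{|g|^2}{m}\,dx,
\end{equation*}
and the whole point is to show the cross term vanishes. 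This is an orthogonality statement, not an inequality: it follows from $\nabla\cdot g=0$ only after an integration by parts against $\log m$, which is not an admissible test function, so one must cut off at radius $R$ and prove the boundary term $\int_{\partial B_{R_k}}\log m\, (g\cdot\nu)\,dS$ tends to zero along a sequence $R_k\to\infty$. The paper does exactly this, controlling $\int|\log m_M|\,|g_M|\,dx$ via the elementary bound $m|\log m|^2\le C(m^2+\sqrt m)$, the exponential decay of $m_M$, and the finiteness of $\int|g_M|^2/m_M\,dx$. Equivalently, your duality version (test the constraint against $\varphi\approx\log m$ and apply Cauchy--Schwarz) requires the same truncation and the same boundary-term estimates. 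Without this step your key inequality is unproven, and with it your argument becomes essentially the paper's.

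Two smaller remarks. First, you obtain $w_M=2\sqrt{m_M}\,\nabla\sqrt{m_M}$ by invoking the trivialization \eqref{FPeqpartially} of the Fokker--Planck equation in the quadratic case; the paper instead derives $g_M\equiv 0$ as a consequence of the minimality of $(m_M,w_M)$ once the cross term is known to vanish, which keeps the appendix self-contained and avoids relying on an unproved remark from the introduction. Second, your observation that the finiteness of the Lagrangian term directly yields $\sqrt{m}\in H^1$ on $\{m>0\}$, with $\nabla\sqrt m=0$ a.e.\ on $\{m=0\}$, is a clean way to handle the membership issue for general competitors; the paper only needs it for the minimizer, where it follows from the drift bound $\int|\nabla m_M|^2/m_M\,dx\le \hat C\int m_M|\nabla u_M|^2\,dx$.
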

\begin{proof}
First of all, we show that $\nabla\sqrt{m_M}\in L^2(\mathbb R^n).$  To this end, by applying Theorem 3.1 in \cite{metafune2005global}, one obtains
\begin{align}\label{followsfrombeforeinrocor1}
\int_{\mathbb R^n}\frac{|\nabla m_M|^2}{m_M}\,dx\leq \hat C\int_{\mathbb R^n}m_M|\nabla u_M|^2\, dx,
\end{align}
where $\hat C>0$ is some constant.  As shown in \cite{cesaroni2018concentration}, when potential $V$ satisfies (\ref{cirant-V}), $u_M$ at most grows algebraically but $m_M$ has the exponential decay property.  It follows from (\ref{followsfrombeforeinrocor1}) that 
\begin{align}\label{boundinvokejfacompare}
\int_{\mathbb R^n}\frac{|\nabla m_M|^2}{m_M}\,dx\leq \hat C\int_{\mathbb R^n}m_M|\nabla u_M|^2\, dx<+\infty,
\end{align}
which indicates $\nabla\sqrt{m_M}\in L^2(\mathbb R^n).$

On the other hand, since $(m_M,w_M)\in W^{1,p}(\mathbb R^n)\times L^p(\mathbb R^n)$ for any $p>1$, are minimizers to (\ref{ealphaM-117}) \cite{cesaroni2018concentration}, we find for any fixed $M>0,$ 
\begin{align}
II_1:=\int_{\mathbb R^n}\frac{|\nabla m_M+g_M|^2}{m_M}\,dx
\end{align}
exists with $w_M=\nabla m_M+g_M$ and $\nabla \cdot g_M=0$ weakly.  In addition, invoking (\ref{boundinvokejfacompare}) we have the facts that $\int_{\mathbb R^n}\frac{|\nabla m_M|^2}{m_M}\,dx$ and $\int_{\mathbb R^n}\frac{|g_M|^2}{m_M}\,dx$ are well-defined.  Then, it follows that for any $R\rightarrow +\infty,$
\begin{align}
\lim_{R\rightarrow +\infty}\int_{B_R(0)}\frac{|\nabla m_M+g_M|^2}{m_M}\, dx=II_1<+\infty.
\end{align}
Noting that $m>0$ for any $x\in\mathbb R^n$, by straightforward computation, one has
\begin{align}\label{real119relationIIfcompare}
II_1=\lim_{R\rightarrow+\infty}\bigg[\int_{B_R(0)}\frac{|\nabla m_M|^2}{m_M}\,dx +2\int_{B_R(0)}\frac{\nabla m_M}{m_M}\cdot g_M\,dx+\int_{B_R(0)}\frac{|g_M|^2}{m_M}\,dx\bigg].
\end{align}
where $\frac{\nabla m_M}{m_M}=\nabla \log m_M$ is well-defined.  Now, we claim that there exists a sequence $ R_k\rightarrow +\infty$ as $k\rightarrow +\infty$ such that 
\begin{align}\label{keyclaim119after-compare-schormfg}
\int_{B_{R_k}(0)}(\nabla \log m_M)\cdot g_M\, dx \rightarrow 0.
\end{align}
Since $g_M\in L^2(B_{R}(0))$ and $\nabla \log m_M \in L^2(B_R(0))$ for any large but fixed $R>0,$ we apply the integration by parts to get
\begin{align}\label{120-schromfgcompare}
\int_{B_{R_k}(0)}(\nabla \log m_M)\cdot g_M\, dx=\int_{\partial B_{R_k}(0)}\log m_M g_M \cdot \nu\,dS,
\end{align}
where we have used $\nabla\cdot g_M=0$ weakly.  Next, we shall show that 
\begin{align}\label{121-schrodingerequivalmfg}
\int_{\partial B_{R_k}(0)}|\log m_M|\cdot|g_M|\,dS\rightarrow 0,\text{~as~}k\rightarrow\infty.
\end{align}
Indeed, if the following fact holds: 
\begin{align}\label{toshowthisequilschromfg}
\int_{\mathbb R^n}|\log m_M|\cdot|g_M|\,dx<+\infty,
\end{align}
then we obtain the desired conclusion \eqref{121-schrodingerequivalmfg} since we have the fact that 
\begin{align*}
\int_{\mathbb R^n}|\log m_M|\cdot |g_M|\,dx=\int_0^{+\infty}\bigg(\int_{\partial B_R(0)} |\log m_M|\cdot |g_M|\,dS\bigg)\, dR.
\end{align*}
To prove (\ref{toshowthisequilschromfg}), we apply H\"{o}lder's inequality to get
\begin{align}\label{123rightsideschrmfg}
   \bigg(\int_{\mathbb R^n}|\log m_M|\cdot|g_M|\,dx\bigg)^2 \leq\bigg(\int_{\mathbb R^n}m_M|\log m_M|^2\,dx \bigg)\cdot\bigg(\int_{\mathbb R^n}\frac{|g_M|^2}{m_M}\,dx\bigg).
\end{align}
To estimate the first term in Right Hand Side of (\ref{123rightsideschrmfg}), we use the fact $m_M|\log m_M|^2\leq C \big(m_M^2+\sqrt{m_M}\big)$ with $m_M\geq 0$ and positive constant $C$ to obtain 
\begin{align}
\int_{\mathbb R^n}m_M|\log m_M|^2\,dx\leq C\Big(  \int_{\mathbb R^n} m_M^2\,dx +\int_{\mathbb R^n} \sqrt{m_M}\,dx\Big).
\end{align}
Since $m_M\in L^2(\mathbb R^n)$ and $0<m_M<e^{-\delta|x|}$ for some $\delta>0$, we have $\sqrt{m_M}\in L^1(\mathbb R^n),$ which completes the proof of (\ref{121-schrodingerequivalmfg}).  In view of (\ref{120-schromfgcompare}), one finds as $k\rightarrow +\infty$, the claim \eqref{keyclaim119after-compare-schormfg} holds.  Focusing on (\ref{real119relationIIfcompare}), we have $g_M\equiv 0$ to guarantee $(m_M,w_M)$ are minimizers to problem (\ref{ealphaM-117}), which implies the energy (\ref{energy-dual}) satisfied by $(m_M,w_M)$ is exactly reduced to (\ref{variation-schrodinger}) by using the transform $m=v^2$.  Now, we have the fact that the infima of problem (\ref{ealphaM-117}) and problem (\ref{appendixa1problem}) are the same.

Conversely, for any minimizer $v_M$ to problem (\ref{appendixa1problem}), it is straightforward to verify that $(\bar m_M,\bar w_M):=(v_M^2,2v_M\nabla v_M)\in \mathcal K_M$ given by (\ref{constraint-set-K}), which is a minimizer to problem (\ref{ealphaM-117}).  It completes the proof of our proposition.
\end{proof}
\end{appendices}




\bibliographystyle{abbrv}
\bibliography{ref}

\end{document}